\renewcommand{\paragraph}{%
  \@startsection{paragraph}{4}%
  {\z@}{2ex \@plus .5ex \@minus .2ex}{-1em}%
  {\normalfont\normalsize\bfseries}%
}
\titleformat*{\section}{\LARGE\bfseries}
\titleformat*{\subsection}{\Large\bfseries}
\titleformat*{\subsubsection}{\large\bfseries}
\newtheorem{thm}{Theorem}[section]
\newtheorem{cor}[thm]{Corollary}
\newtheorem{lem}[thm]{Lemma}
\newtheorem{prop}[thm]{Proposition}
\newtheorem{prob}[thm]{Problem}
\newtheorem{conj}[thm]{Conjecture}
\theoremstyle{definition}
\newtheorem{defn}[thm]{Definition}
\newtheorem{conv}[thm]{Convention}
\theoremstyle{remark}
\newtheorem{rem}[thm]{Remark}
\newtheorem{ex}[thm]{Example}
\renewcommand{\L }{{\mathcal L} }
\renewcommand{\d }{{\rm d} }
\newcommand{\dl}{\widehat{\d}}
\newcommand{\e }{\varepsilon }
\renewcommand{\kappa }{\varkappa}
\newcommand{\h}{\hookrightarrow _h }
\renewcommand{\H}{\mathcal{H}}
\newcommand{\AH}{\mathcal {AH}}
\newcommand{\AHG}{\mathcal{AH}(G)}
\newcommand{\HG}{\mathcal{H}(G)}
\newcommand{\AHH}{\mathcal{AH}(H)}
\newcommand{\GG}{\mathcal{G}(G)}
\newcommand{\AcG}{\mathcal{A}_{cb}(G)}
\newcommand{\LG}{\mathcal L(G)}
\newcommand{\Hi}{\{ H_1, \ldots, H_n\}}
\newcommand{\Hl }{\Hi}
\newcommand{\acts}{\curvearrowright}
\newcommand{\act}{\curvearrowright}
\newcommand{\ztwo} {\mathbb{Z} \times \mathbb{Z}}
\newcommand{\Ga}{\Gamma}
\newcommand{\PN}{\mathcal P (\omega)}
\newcommand{\op}{\operatorname}
\begin{document}

\title{Hyperbolic structures on groups}
\author{C. Abbott, S. Balasubramanya, D. Osin}
\date{}
\maketitle

\begin{abstract}
For every group $G$, we define the set of \emph{hyperbolic structures} on $G$, denoted $\H (G)$, which consists of equivalence classes of (possibly infinite) generating sets of $G$ such that the corresponding Cayley graph is hyperbolic; two generating sets of $G$ are \emph{equivalent} if the corresponding word metrics on $G$ are bi-Lipschitz equivalent. Alternatively, one can define hyperbolic structures in terms of cobounded $G$-actions on hyperbolic spaces.  We are especially interested in the subset $\AHG\subseteq \H (G)$ of \emph{acylindrically hyperbolic structures} on $G$, i.e., hyperbolic structures corresponding to acylindrical actions. Elements of $\H (G)$ can be ordered in a natural way according to the amount of information they provide about the group $G$. The main goal of this paper is to initiate the study of the posets $\H(G)$ and $\AHG$ for various groups $G$. We discuss basic properties of these posets such as cardinality and existence of extremal elements, obtain several results about hyperbolic structures induced from hyperbolically embedded subgroups of $G$, and study to what extent a hyperbolic structure is determined by the set of loxodromic elements and their translation lengths.
\end{abstract}

\tableofcontents


\section{Introduction}


It is customary in geometric group theory to study groups as metric spaces. The standard way to convert a group $G$ into a geometric object is to fix a generating set $X$ and endow $G$ with the corresponding word metric $\d_X$. However, not all generating sets are equally good for this purpose: the most informative metric space is obtained when $X$ is finite, while the space corresponding to $X=G$ forgets the group almost completely. We begin with an attempt to formalize this observation by ordering generating sets according to the amount of information about the group $G$ retained by $(G, \d_X)$.

\begin{defn}\label{def-GG}
Let $X$, $Y$ be two generating sets of a group $G$. We say that $X$ is \emph{dominated} by $Y$, written $X\preceq Y$, if the identity map on $G$ induces a Lipschitz map between metric spaces $(G, \d_Y)\to (G, \d_X)$. This is obviously equivalent to the requirement that $\sup_{y\in Y}|y|_X<\infty$, where $|\cdot|_X=\d_X(1, \cdot)$ denotes the word length with respect to $X$.  It is clear that $\preceq$ is a preorder on the set of generating sets of $G$ and therefore it induces an equivalence relation in the standard way:
$$
X\sim Y \;\; \Leftrightarrow \;\; X\preceq Y \; {\rm and}\; Y\preceq X.
$$
We denote by $[X]$ the equivalence class of a generating set $X$ and by $\GG$ the set of all equivalence classes of generating sets of $G$. The preorder $\preceq$ induces an order relation $\preccurlyeq $ on $\GG$ by the rule
$$
[X]\preccurlyeq [Y] \;\; \Leftrightarrow \;\; X\preceq Y.
$$
\end{defn}

For example, finite generating sets of a finitely generated group are all equivalent and the corresponding equivalence class is the largest element of $\GG$; for every group $G$, $[G]$ is the smallest element of $\GG$. Note also that our order on $\GG$ is ``inclusion reversing": if $X$ and $Y$ are generating sets of $G$ such that $X\subseteq Y$, then $Y\preceq X$.

We are now ready to introduce the main notion of this paper. Given a generating set $X$ of a group $G$, we denote by $\Gamma (G,X)$ the corresponding Cayley graph.

\begin{defn}
A \emph{hyperbolic structure} on $G$ is an equivalence class $[X]\in \GG$ such that $\Gamma (G,X)$ is hyperbolic.  We denote the set of hyperbolic structures by $\H (G)$ and endow it with the order induced from $\GG$.
\end{defn}

Since hyperbolicity of a space is a quasi-isometry invariant, the definition above is independent of the choice of a particular representative in the equivalence class $[X]$. Using the standard argument from the proof of the Svarc-Milnor Lemma, it is easy to show that elements of $\H (G)$ are in one-to-one correspondence with equivalence classes of cobounded actions of $G$ on hyperbolic spaces considered up to a natural equivalence: two actions $G\curvearrowright S$ and $G\curvearrowright T$ are equivalent if there is a coarsely $G$-equivariant quasi-isometry $S\to T$.

We are especially interested in the subset of \emph{acylindrically hyperbolic structures} on $G$, denoted $\AHG$, which consists of hyperbolic structures $[X]\in \H (G)$ such that the action of $G$ on the corresponding Cayley graph $\Gamma (G,X)$ is acylindrical. Recall that an isometric action of a group $G$ on a metric space $(S,\d)$ is \emph{acylindrical} \cite{Bow} if for every constant $\e$ there exist constants $R=R(\e)$ and $N=N(\e)$ such that for every $x,y\in S$ satisfying $\d(x,y)\ge R$, we have
$$
\# \{ g\in G\mid \d(x,gx)\le\e, \; \d(y, gy)\le \e\} \le N.
$$
Groups acting acylindrically on hyperbolic spaces have received a lot of attention in the recent years. For a brief survey we refer to \cite{Osi18}.

The goal of our paper is to initiate the study of the posets $\H (G)$ and $\AH (G)$ for various groups $G$ and suggest directions for the future research. Our main results are discussed in the next section. Some open problems are collected in \ref{sec:OP}.

\paragraph{Acknowledgements.} The authors would like to thank Spencer Dowdall for useful conversations about mapping class groups, and Henry Wilton, Dani Wise and Hadi Bigdely for helpful discussions of 3-manifold groups.  The authors also thank the anonymous referee for useful comments.  The first author was partially supported by the NSF RTG award DMS-1502553. The third author was supported by the NSF grants DMS-1308961 and DMS-1612473.


\section{Main results}


\paragraph{2.1. General classification and examples.}

We assume the reader to be familiar with the standard terminology and refer to Section 3.1 for definitions and details. Given a hyperbolic space $S$, we denote by $\partial S$ its Gromov boundary.

\begin{defn}
We say that a hyperbolic structure $[X]\in \H (G)$ is
\begin{enumerate}
\item[---] \emph{elliptic} if $|\partial \Gamma (G,X)|=0$ (equivalently, $\Gamma (G,X)$ is bounded);
\item[---] \emph{lineal} if $|\partial \Gamma (G,X)|=2$ (equivalently, $\Gamma (G,X)$ is quasi-isometric to a line);
\item[---] \emph{quasi-parabolic} if $|\partial \Gamma (G,X)|=\infty$ and $G$ fixes a point of $\partial \Gamma (G,X)$;
\item[---] \emph{of general type} if $|\partial \Gamma (G,X)|=\infty$ and $G$ does not fix any point of $\partial \Gamma (G,X)$.
\end{enumerate}
The sets of elliptic, lineal, quasi-parabolic, and general type hyperbolic structures on $G$ are denoted by $\H_e(G)$, $\H_{\ell} (G)$, $\H_{qp} (G)$, and $\H_{gt}(G)$ respectively.
\end{defn}

It is easy to verify that if $X,Y$ are equivalent generating sets of $G$ such that the corresponding Cayley graphs $\Gamma (G,X)$ and $\Gamma (G,Y)$ are hyperbolic, then the identity map $G\to G$ induces a $G$-equivariant homeomorphism $\partial \Gamma (G,X)\to \partial \Gamma (G,Y)$. Therefore the definition above is independent of the choice of a particular representative in the equivalence class $[X]$. Our first result is an immediate consequence of Gromov's classification of groups acting on hyperbolic spaces \cite[Section 8]{Gro} and the well-known fact that a cobounded action cannot be parabolic.

\begin{thm}\label{main0}
For every group $G$, we have
$$
\H(G)=\H_e(G)\sqcup \H_{\ell} (G)\sqcup \H_{qp} (G)\sqcup \H_{gt}(G).
$$
The subsets $\H_e(G)\sqcup \H_{\ell} (G)$ and $\H_e(G)\sqcup \H_{\ell} (G)\sqcup \H_{qp} (G)$ are initial segments of the poset $\H(G)$.
\end{thm}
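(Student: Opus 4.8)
The plan is to prove two things: first, that the decomposition $\H(G)=\H_e(G)\sqcup \H_{\ell}(G)\sqcup \H_{qp}(G)\sqcup \H_{gt}(G)$ is indeed a partition, and second, that the two indicated unions are initial segments (downward-closed subsets) of the poset $\H(G)$.

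For the partition statement, I would invoke Gromov's classification of isometric group actions on hyperbolic spaces. Every action of $G$ on a hyperbolic space falls into exactly one of five classes according to the behaviour on the Gromov boundary: the action is bounded, horocyclic (parabolic), lineal, focal (quasi-parabolic with a fixed point and a loxodromic), or of general type. A cobounded action cannot be horocyclic/parabolic (a standard fact, cited in the excerpt), and since the action of $G$ on any Cayley graph $\Gamma(G,X)$ is cobounded, the horocyclic case is excluded. This leaves exactly the four cases, which correspond respectively to $\H_e$, $\H_\ell$, $\H_{qp}$, $\H_{gt}$ as classified by $|\partial\Gamma(G,X)|$ and the existence of a global fixed point on the boundary. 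Mutual exclusivity is clear from the definitions since the defining conditions on $|\partial\Gamma(G,X)|\in\{0,2,\infty\}$ and on the presence of a global fixed point are pairwise incompatible. One should also note that $\H(G)$ is nonempty (the class $[G]$ is always elliptic), though this is not strictly needed.

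For the initial-segment statement, suppose $[X]\in\H(G)$ and $[Y]\preccurlyeq[X]$; I must show that if $[X]\in\H_e(G)\sqcup\H_\ell(G)$ then so is $[Y]$, and likewise for $\H_e(G)\sqcup\H_\ell(G)\sqcup\H_{qp}(G)$. The key observation is that $[Y]\preccurlyeq[X]$ means the identity map $(G,\d_X)\to(G,\d_Y)$ is Lipschitz, hence $G$-equivariant and coarsely surjective (both Cayley graphs have $G$ acting coboundedly), so it extends to a Lipschitz, $G$-equivariant, coarsely surjective map of hyperbolic spaces. Such a map does not necessarily preserve boundary points, but it does preserve the coarse large-scale structure enough to constrain the action type: a Lipschitz $G$-equivariant coarsely surjective map from $S$ to $T$ cannot increase the ``size'' of the action type. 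More precisely: if $G$ acts on $T=\Gamma(G,Y)$ with an unbounded action that is of general type, one shows this forces $G$ to contain two independent loxodromic elements whose images in the general-type action must come from loxodromic elements of the $X$-action (a loxodromic element for $Y$ has positive stable translation length, and under a Lipschitz map this cannot happen unless it is already loxodromic or at least unbounded-orbit for $X$). The cleanest route is via the dichotomy in terms of loxodromic elements: $[Z]\in\H_{gt}(G)$ iff the $Z$-action has two independent loxodromics; $[Z]\in\H_e(G)\sqcup\H_\ell(G)$ iff the $Z$-action has no loxodromic element with an independent conjugate (equivalently the limit set has at most $2$ points). So I would argue contrapositively: if $[Y]\notin\H_e\sqcup\H_\ell$, then the $Y$-action has a pair of independent loxodromics (or at least one loxodromic and non-lineal behaviour), and pushing these forward along the Lipschitz $G$-map shows the $X$-action is non-elliptic non-lineal as well, i.e. $[X]\notin\H_e\sqcup\H_\ell$. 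The same pushing-forward argument, now tracking whether $G$ fixes a boundary point, handles the second initial segment: if the $Y$-action is of general type (no global boundary fixed point, infinitely many boundary points), then the $X$-action is too, because a $G$-equivariant quasi-isometric-type (Lipschitz, coarsely onto) map sends a global fixed point of the target back to a global fixed point of the source, and conversely an independent pair of loxodromics downstairs lifts to one upstairs.

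The main obstacle will be making precise how a merely Lipschitz (not quasi-isometric) $G$-equivariant coarsely surjective map interacts with boundaries and loxodromic elements, since such a map need not extend continuously to boundaries and may collapse structure. The right technical tool is to phrase everything in terms of orbit maps and stable translation lengths: for a fixed $g\in G$, the orbit $\{g^n\cdot o\}$ in $\Gamma(G,X)$ maps under the Lipschitz map to the orbit in $\Gamma(G,Y)$ with $\d_Y(o,g^n o)\le C\,\d_X(o,g^n o)$; hence if $g$ has positive stable length in the $Y$-metric it has positive stable length in the $X$-metric, i.e. $Y$-loxodromic implies $X$-loxodromic. Independence of loxodromics (distinct fixed-point pairs on the boundary) likewise transfers because it is detected by the failure of orbits to fellow-travel, a condition preserved under Lipschitz maps in the reverse direction. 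Packaging these observations — that $X$-elliptic forces $Y$-elliptic or $Y$-lineal, and that a global boundary fixed point for $G$ on $\Gamma(G,X)$ is inherited from one on $\Gamma(G,Y)$ — is the crux; once it is in place, both initial-segment claims follow formally from Theorem~\ref{main0}'s partition together with the characterisations of the four types via loxodromic elements.
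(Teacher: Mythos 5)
Your proposal is correct and follows essentially the same route as the paper: the partition comes from Gromov's classification together with the fact that cobounded actions cannot be parabolic, and the initial-segment claims are proved by transferring loxodromic elements and their independence from the smaller structure to the larger one via translation numbers and the (in)finiteness of Hausdorff distances between orbits (the paper's Lemma~\ref{indlox}). Your final "stable translation length plus failure to fellow-travel" packaging is exactly the paper's argument, and it correctly sidesteps the boundary-extension issue you flag in the middle of the write-up.
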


Recall that a subset $B$ of a poset $A$ is called an \emph{initial segment} if for any $a\in A$ and any $b\in B$, $a\le b$ implies $a\in B$.

Elliptic structures are the easiest to classify: we always have $\H_e (G)=\{ [G]\}$. The only element of $\H_e (G)$ is called the \emph{trivial hyperbolic structure}; all other hyperbolic structures on $G$ are called \emph{non-trivial.}

Next, we discuss lineal hyperbolic structures. We say that a structure $[X]\in \H_\ell (G)$ is \emph{orientable} if $G$ fixes $\partial \Gamma (G,X)$ pointwise. For instance, the lineal structure on $\mathbb Z$ corresponding to a finite generating set is orientable, while the lineal structure on the infinite dihedral group corresponding to a finite generating set is not. We denote the set of orientable lineal hyperbolic structures on a group $G$ by $\H_\ell^+ (G)$. Our next theorem gives a complete description of possible isomorphism types of $\H _\ell(G)$ and $\H_\ell^+ (G)$.

\begin{thm}\label{lineal}
For every group $G$, the following holds.
\begin{enumerate}
\item[(a)] $\H_\ell (G)$ is an antichain (and hence so is $\H_\ell ^+(G)$).
\item[(b)] The cardinality of $\H _\ell^+(G)$ is $0$, $1$, or at least continuum. On the other hand, for every cardinal $\kappa$ there exists a group $G$ such that $|\H_\ell (G)|=\kappa$.
\end{enumerate}
\end{thm}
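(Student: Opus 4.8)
\emph{Proposed proof.}

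For part (a) the plan is to prove the sharper statement that $[X]\preccurlyeq[Y]$ with $[X],[Y]\in\H_\ell(G)$ already forces $[X]=[Y]$; the antichain claim for $\H_\ell(G)$ follows, and then for $\H_\ell^+(G)\subseteq\H_\ell(G)$ as well. Fix coarsely $G$‑equivariant quasi‑isometries $\phi_X\colon\Gamma(G,X)\to\mathbb R$ and $\phi_Y\colon\Gamma(G,Y)\to\mathbb R$ (these exist since a lineal Cayley graph is a quasi‑line and the action is cobounded). Conjugating the isometric $G$‑actions through $\phi_X$ and $\phi_Y$ makes $G$ act on $\mathbb R$ by coarse isometries; in each case the orientation‑preserving elements form a subgroup of index $\le 2$, and on the intersection $K$ of the two (index $\le 4$ in $G$) the action is by coarse translations, recorded by homogeneous quasimorphisms $\beta_X,\beta_Y\colon K\to\mathbb R$. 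Coboundedness of the $G$‑action forces the $K$‑orbits to stay coarsely dense, so $\beta_X,\beta_Y$ are nonzero with coarsely dense images. Now $[X]\preccurlyeq[Y]$ says exactly that $\mathrm{id}_G\colon\Gamma(G,Y)\to\Gamma(G,X)$ is coarsely Lipschitz, and it is onto; hence $F:=\phi_X\circ\mathrm{id}_G\circ\phi_Y^{-1}\colon\mathbb R\to\mathbb R$ is coarsely Lipschitz, coarsely surjective, and coarsely $K$‑equivariant, so $F(t+\beta_Y(k))\approx F(t)+\beta_X(k)$ and $F(\beta_Y(k))\approx\beta_X(k)$ for $t\in\mathbb R$, $k\in K$. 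The quasimorphism identities $\beta_\bullet(k_1k_2)\approx\beta_\bullet(k_1)+\beta_\bullet(k_2)$ then make $F$ coarsely additive on the coarsely dense set $\beta_Y(K)$, hence on all of $\mathbb R$; the standard fact that a coarsely additive, coarsely Lipschitz map $\mathbb R\to\mathbb R$ has the form $t\mapsto\lambda t+O(1)$ applies, and $\lambda\ne 0$ because $F$ is coarsely surjective onto an unbounded space. Thus $F$, and therefore $\mathrm{id}_G\colon(G,\d_Y)\to(G,\d_X)$, is a quasi‑isometry, i.e.\ $\d_X$ and $\d_Y$ are bi‑Lipschitz equivalent and $[X]=[Y]$. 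The only delicate point is the passage to $K$ to reduce to the orientable case, together with uniform control of the coarse constants.

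For the cardinality statement in part (b) I would identify $\H_\ell^+(G)$ with the projective space $\mathbb P(Q(G))$ of the real vector space $Q(G)$ of homogeneous quasimorphisms $G\to\mathbb R$; the claim follows since $|\mathbb P(V)|$ is $0$, $1$, or $\ge\mathfrak c$ according as $\dim_{\mathbb R}V$ is $0$, $1$, or $\ge 2$. To a nonzero $q\in Q(G)$ assign $X_q:=\{g:|q(g)|\le C\}$, with $C$ large relative to the defect of $q$ and to $|q(g_0)|$ for a fixed $g_0$ with $q(g_0)\ne 0$. The crucial point is that the level sets of $q$ have uniformly bounded diameter in $\Gamma(G,X_q)$: if $|q(g)-q(h)|$ is bounded then $g^{-1}h$ has bounded $|q|$‑value and, after subtracting a suitable power of $g_0$, can be written as a uniformly bounded product of elements of $X_q$. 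It follows that $X_q$ generates $G$, that $q\colon\Gamma(G,X_q)\to\mathbb R$ is a quasi‑isometry, that $|g|_{X_q}$ is comparable to $|q(g)|$ up to uniform constants, and — since $G$ acts through $q$ by coarse translations fixing both ends — that $[X_q]\in\H_\ell^+(G)$. Conversely, for $[X]\in\H_\ell^+(G)$ its translation‑number quasimorphism $\beta_X\in Q(G)\setminus\{0\}$ satisfies $|g|_X\asymp|\beta_X(g)|$ uniformly, so $[X]=[X_{\beta_X}]$ and $q\mapsto[X_q]$ is onto. Finally $[X_q]=[X_{q'}]$ iff $|q|$ and $|q'|$ are comparable up to uniform additive and multiplicative constants; homogenizing along powers this forces $\{q=0\}=\{q'=0\}$, and then $q'=\lambda q$: were $q,q'$ not proportional, one could choose $g_1,g_2$ with $q(g_i)\ne 0$ and $q'(g_1)/q(g_1)\ne q'(g_2)/q(g_2)$, and the elements $g_1^n g_2^{m(n)}$, where $m(n)$ is the nearest integer to $-nq(g_1)/q(g_2)$, would have $|q|$ bounded while $|q'|\to\infty$, a contradiction. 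So $q\mapsto[X_q]$ descends to a bijection $\mathbb P(Q(G))\to\H_\ell^+(G)$. I expect this last step, ``not proportional $\Rightarrow$ genuinely different structures,'' to be the main obstacle: it combines the homogeneity and defect estimates for quasimorphisms with the rounding argument that mixes the values of $q$ and $q'$.

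For the remaining assertion — that every cardinal $\kappa$ equals $|\H_\ell(G)|$ for some $G$ — I would exploit the non‑orientable lineal structures, which are flexible. A non‑orientable lineal structure on $G$ is given by an index‑two subgroup $G^+$ together with an orientable lineal structure on $G^+$ whose translation‑number quasimorphism lies in the $(-1)$‑eigenspace of the $G/G^+$‑action on $Q(G^+)$ (so that the nontrivial coset reverses the quasi‑line). Take $G$ finite for $\kappa=0$, and $G=\mathbb Z$ for $\kappa=1$, where $\H_\ell(G)=\H_\ell^+(G)=\mathbb P(Q(\mathbb Z))=\mathbb P(\mathbb R)$ is a point. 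For the other values one constructs $G$ so that $|\mathbb P(Q(G))|$ together with the count of admissible pairs $(G^+,\text{structure on }G^+)$ over all index‑two subgroups totals exactly $\kappa$: a finite $\kappa\ge 2$ or $\kappa=\aleph_0$ from an amenable group carrying at most one orientable structure and having exactly $\kappa$ (resp.\ countably many) index‑two subgroups each with a single compatible structure, and any $\kappa\ge\mathfrak c$ by arranging $\dim Q(G)$ (e.g.\ for a suitable free group) large enough that $|\mathbb P(Q(G))|=\kappa$. Beyond bookkeeping, the inputs here are just the description of non‑orientable structures and the first part of (b) applied to the subgroups $G^+$.
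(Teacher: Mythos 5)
Your part (a) is correct but takes a genuinely different route from the paper: the paper deduces the antichain property from the stronger fact that every lineal structure is a \emph{minimal} non-trivial hyperbolic structure (Proposition \ref{minact}, via coarse transitivity of a lineal action on pairs of equidistant points), whereas you prove rigidity directly by conjugating both actions onto $\mathbb R$, passing to the finite-index subgroup $K$ preserving both orientations, and showing that a coarsely Lipschitz, coarsely surjective, coarsely $K$-equivariant map $\mathbb R\to\mathbb R$ is affine up to bounded error. That argument is sound (modulo the uniformity of constants you flag), though it only yields that comparable lineal structures coincide, while the paper's minimality statement is reused elsewhere (Corollary \ref{qp-to-l}, Theorem \ref{n-gt}). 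For the trichotomy in (b) you use essentially the paper's mechanism --- the case analysis on the dimension of the space of homogeneous quasimorphisms, the generating sets $X_q=\{g:|q(g)|<C\}$ of Lemma \ref{pc-to-ls}, and the rounding argument on $g_1^ng_2^{m(n)}$ for non-proportional $q,q'$ (the paper's (\ref{wxmyn})) --- repackaged as a bijection $\H_\ell^+(G)\cong\mathbb P(Q(G))$, which is a correct strengthening. One caution: ``equal zero sets implies proportional'' is not a valid intermediate step for homogeneous quasimorphisms; what your rounding argument actually shows is that non-proportionality violates two-sided comparability of $|q|$ and $|q'|$, and the degenerate case $q(h)=0\ne q'(h)$ must be handled separately via the sequence $h^n$.

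The genuine gap is the final assertion of (b), that every cardinal $\kappa$ is realized as $|\H_\ell(G)|$. You reduce this to ``bookkeeping'' over index-two subgroups $G^+$ and $(-1)$-eigenvectors of $Q(G^+)$, but no group is exhibited, and the proposed count faces concrete obstructions: the number of index-two subgroups of any group is $2^d-1$ for some $d$ (or infinite), so it cannot be dialed to an arbitrary finite value; you must simultaneously guarantee $\H_\ell^+(G)=\emptyset$ (else a continuum of orientable structures appears by the first half of (b)); and hitting an arbitrary infinite cardinal $\kappa$ (say strictly between $\aleph_0$ and $\mathfrak c$, or of countable cofinality) by arranging $\dim Q(G)$ is not obviously achievable. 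The paper resolves this with one explicit family, $G_\kappa=\bigoplus_{\kappa} D_\infty$: there are no nonzero pseudocharacters (each $t_i$ is conjugate to $t_i^{-1}$), so $\H_\ell^+(G_\kappa)=\emptyset$; for any lineal structure exactly one generator $t_i$ can be loxodromic, since $a_it_ia_i^{-1}=t_i^{-1}$ together with $a_it_j=t_ja_i$ is incompatible with two loxodromic generators; the complementary normal subgroup is elliptic by Lemma \ref{pqp}; and the resulting structure is determined by the index $i$, giving exactly $\kappa$ lineal structures. Some such explicit construction is needed; the eigenspace framework alone does not produce it.
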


We note that even very ``small" groups can have huge sets of lineal hyperbolic structures; e.g., $\H_\ell^+ (\mathbb Z^2)=\H_\ell (\mathbb Z^2)$ is an antichain of cardinality continuum (see Example \ref{ex-Zn}).

The posets $\H _{qp} (G)$ and $\H _{gt} (G)$ can have a much more complicated structure, and a complete classification of them up to isomorphism seems to be out of reach at the moment. We mention two examples here.

\begin{ex}\label{ZwrZ}[Prop. \ref{ZwrZ}]
Let $\mathcal D$ denote the set of natural numbers ordered according to divisibility : $m\preccurlyeq n$ if $m \mid n$. It is not difficult to show that $\H_{qp}(\mathbb Z\, {\rm wr}\, \mathbb Z)$ contains an isomorphic copy of $\mathcal D$. In particular, every finite poset embeds in $\H_{qp}(\mathbb Z\, {\rm wr}\, \mathbb Z)$. And that is only the visible part of the iceberg: we also show that $\H_{qp}(\mathbb Z\, {\rm wr}\, \mathbb Z)$ contains an antichain of cardinality continuum.
\end{ex}

The next example is somewhat counterintuitive and should be compared to Theorem \ref{main1} discussed below.

\begin{thm}\label{n-gt-intr}
For every $n\in \mathbb N$, there exists a finitely generated group $G$ such that $\H_\ell(G)=\H_{qp}(G)=\emptyset$ and $\H_{gt}(G)$ is an antichain of cardinality $n$ (see Fig. \ref{hasse}).
\end{thm}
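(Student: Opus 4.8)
The plan is to construct $G$ as a carefully chosen amalgam or graph-of-groups combination of $n$ pieces, each contributing exactly one general type structure, glued so that no new structures arise and none of the $n$ becomes comparable to another. The natural first attempt is to take $n$ groups $G_1, \dots, G_n$, each admitting essentially a unique nontrivial hyperbolic structure which is of general type (for instance a free group, or more robustly a group that is acylindrically hyperbolic with a ``rigid'' hyperbolic structure), and form $G = G_1 * \cdots * G_n$. The free product visibly acts on a Bass--Serre tree, which already gives a general type structure refining all the ``factor'' structures, so this would produce comparabilities and extra structures --- hence a free product is the wrong combination. Instead I would glue along a common infinite subgroup designed to be elliptic in every structure in sight, so that the only hyperbolic structures on $G$ are the ones ``visible'' on the individual factors.

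The key technical input is the theory of hyperbolically embedded subgroups and the induced-structure machinery referenced in the excerpt (the results of Section 2.2 on structures induced from hyperbolically embedded subgroups). Concretely: I would take a finitely generated group $Q$ with $\H(Q) = \{[Q]\}$ only (an infinite group with Serre's property (FA) and no unbounded hyperbolic action at all, e.g.\ a suitable Kazhdan group, or more simply a group like $SL_3(\mathbb Z)$ which is known to have no nonelementary hyperbolic actions and no parabolic or lineal ones either by property (T) arguments), and then for each $i$ build $G_i$ containing $Q$ such that $G_i$ has exactly one nontrivial hyperbolic structure $\sigma_i$, which is of general type, and in which $Q$ is elliptic. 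Then set $G$ to be the multiple amalgam $G_1 *_Q G_2 *_Q \cdots *_Q G_n$ (all copies of $Q$ identified). Because $Q$ is the edge group and $Q$ is elliptic in each $\sigma_i$, the structures $\sigma_i$ should each extend to $G$; because $Q$ has no unbounded actions, the Bass--Serre tree of this decomposition gives only an elliptic or bounded contribution --- more precisely, one argues that any hyperbolic action of $G$ restricts to an action of $Q$ which is elliptic, and then a ping-pong / acylindrical-intersection argument shows the only possibilities are the $\sigma_i$ and the trivial structure. One then checks $\H_\ell(G) = \H_{qp}(G) = \emptyset$ directly: $G$ has no lineal action since each factor doesn't and $Q$ is big, and no quasi-parabolic action by a similar restriction argument combined with Theorem \ref{main0} (the initial-segment statement forces any quasi-parabolic structure to sit below a general type one, and one rules that out by inspecting what happens over $Q$).

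The remaining point is to pin down the building blocks $G_i$ with a \emph{unique} nontrivial structure of general type. Here I would appeal to known rigidity examples: a group like a hyperbolic group with no proper infinite-index hyperbolically embedded subgroups and no nontrivial Dehn fillings (e.g.\ a random group, or a group built from small cancellation so that its only hyperbolic structures are the trivial one and the ``obvious'' one) containing $Q$ as a suitably embedded subgroup. Actually a cleaner route: let $G_i = Q \times F_i$ is wrong (that's lineal-free but $Q$ is not elliptic in the right way and the product has no general type structure); instead take $G_i$ to be an HNN-style or Rips-construction group surjecting onto $\mathbb{Z}$ or onto a hyperbolic group with kernel containing $Q$, arranged via the Rips construction so that $G_i$ is hyperbolic, $Q \hookrightarrow G_i$, and $\H(G_i) = \{[G_i], \sigma_i\}$ with $\sigma_i \in \H_{gt}$. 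The Rips construction is attractive because it gives us a hyperbolic group with a normal subgroup behaving badly, and one can engineer the acylindrical-structure poset to be small.

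The main obstacle I expect is the rigidity of the pieces: proving that each $G_i$ has \emph{no other} hyperbolic structures besides the trivial one and $\sigma_i$ --- in particular that it has no lineal or quasi-parabolic structure --- requires genuine control, and the cleanest argument will probably route through showing $G_i$ has property (FA) ``modulo'' its hyperbolic action, or through a complete enumeration of its hyperbolically embedded subgroups. Equally delicate is verifying that the amalgamation over $Q$ does not create comparabilities: one must show $\sigma_i \not\preccurlyeq \sigma_j$ for $i \neq j$ in $\H(G)$, which amounts to showing that the generators of $G_j$ are not boundedly distorted in the Cayley graph realizing $\sigma_i$ --- this should follow because an element of $G_j \setminus Q$ acts loxodromically on its own factor's space but the $\sigma_i$-action, restricted to $G_j$, is elliptic (it factors through $G_j \to G_j/\langle\langle \text{stuff}\rangle\rangle$ being bounded), giving the needed incomparability and hence the antichain structure of size $n$.
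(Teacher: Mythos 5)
Your overall strategy --- glue $n$ rigid pieces, each carrying a unique general type structure --- is the right instinct, but both halves of your construction run into concrete obstructions that the paper's actual proof (Theorem \ref{n-gt}) is designed to avoid. First, the building blocks. You propose to obtain $G_i$ with $\H(G_i)=\{[G_i],\sigma_i\}$ via the Rips construction, i.e.\ as a non-elementary hyperbolic group. This cannot work: by Corollary \ref{hpl} (equivalently, Theorem \ref{main3} applied to the structure of a finite generating set) every non-elementary hyperbolic group carries $2^{\aleph_0}$ distinct general type structures, and more generally Theorem \ref{main1}(c) shows that \emph{any} acylindrically hyperbolic group has $\AHG$ containing a copy of $\PN$. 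So a group with a unique nontrivial hyperbolic structure, that structure being of general type, is necessarily \emph{not} acylindrically hyperbolic --- which is exactly why the theorem is advertised as counterintuitive. The paper manufactures such a block as a Le Boudec group $G\le Aut(T)$ acting on a regular tree, dense in $Aut(T)$ and with commensurated torsion vertex stabilizers: density gives coarse transitivity on pairs of equidistant points, hence \emph{minimality} of $\sigma([G\acts T])$ (Proposition \ref{minact}), while torsion-plus-commensurated stabilizers force every vertex stabilizer to be elliptic in any hyperbolic structure, hence \emph{maximality} (Corollary \ref{maxact-cor}); a structure that is both largest and minimal pins down $\H(G)$ completely. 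You correctly flagged this rigidity as ``the main obstacle,'' but the routes you offer to overcome it are ruled out by the paper's own results.

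Second, the gluing. Your amalgam $G_1*_Q\cdots*_Q G_n$ acts cocompactly on its Bass--Serre tree, and for $n\ge 2$ with $Q$ proper in each factor this action is unbounded and of general type regardless of whether $Q$ has property (FA): property (FA) only makes $Q$ elliptic in the tree (which it is anyway, being the edge group); it does not make the whole $G$-action bounded. So your $G$ acquires at least one extra general type structure beyond the $n$ you intend, and the count fails --- the same defect you correctly diagnosed for the free product. The paper instead takes the \emph{direct} product $G_n=G^n$ and invokes Lemma \ref{product}: in any cobounded action of a direct product on a hyperbolic space, either the action is lineal or one factor acts elliptically; since $G$ has no lineal structures, every nontrivial hyperbolic structure on $G^n$ factors through a single coordinate, and these form exactly the antichain of size $n$.
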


\begin{figure}
 \centering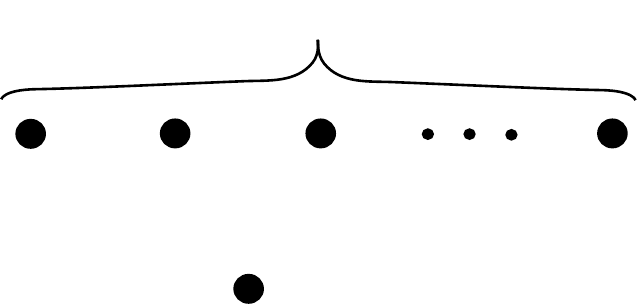\\
  \caption{The Hasse diagram of the poset $\H (G)$ for the group $G$ from Theorem \ref{n-gt-intr}.}\label{hasse}
\end{figure}

The reason we call Theorem \ref{n-gt-intr} counterintuitive is that for any hyperbolic structure $[X]\in \H_{gt} (G)$, we can find two loxodromic elements $g,h\in G$ with disjoint limit sets on $\partial \Gamma (G,X)$. By the standard ping-pong argument, sufficiently high powers of $g$ and $h$ generate a free subgroup $F\le G$ of rank $2$ whose orbits are quasi-convex in $\Gamma (G,X)$. It is well-known that collapsing collections of uniformly quasi-convex subsets of a hyperbolic space yields another hyperbolic space. Therefore passing from $[X]$ to $[X\cup H]$, where $H\le F$ is a non-trivial finitely generated subgroup of $F$, we obtain hyperbolic structures on $G$ which are strictly smaller than $[X]$ and it seems plausible that these structures are distinct for ``sufficiently distinct" subgroups $H\le F$.  Theorem \ref{n-gt-intr} applied to $n=1$ shows that this approach actually does not work: all hyperbolic structures on the group $G$ from the above theorem produced in this way will be trivial.

On the other hand, the idea described in the previous paragraph does work if the hyperbolic structure $[X]$ is acylindrical. For this reason, the poset of acylindrically hyperbolic structures exhibits a much more rigid behavior. In the next theorem we denote by $\PN$ the poset of all subsets of $\mathbb N$ ordered by inclusion.

\begin{thm}\label{main1}
For every group $G$, exactly one of the following conditions holds.
\begin{enumerate}
\item[(a)] ${\rm card}\, \AHG=1$, i.e., the only acylindrically hyperbolic structure on $G$ is trivial.
\item[(b)] ${\rm card}\, \AHG =2$. This is equivalent to $G$ being virtually infinite cyclic. In this case, the only non-trivial acylindrically hyperbolic structure on $G$ is lineal.
\item[(c)] ${\rm card}\,\AHG\ge 2^{\aleph _0}$. This is equivalent to $G$ being acylindrically hyperbolic. In this case, all non-trivial acylindrically hyperbolic structures on $G$ are of general type and $\AHG$ contains a copy of $\PN$.
\end{enumerate}
\end{thm}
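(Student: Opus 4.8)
The plan is to establish the trichotomy by separately analyzing the three mutually exclusive possibilities and showing each corresponds to a structural property of $G$. First I would dispense with the easy implications: if $G$ is not acylindrically hyperbolic and not virtually cyclic, then by definition $G$ admits no non-elementary acylindrical action on a hyperbolic space, and one checks that any acylindrical cobounded action is then elliptic or lineal; moreover any lineal acylindrical action would force $G$ to be virtually cyclic (a lineal acylindrical action has a loxodromic element whose centralizer has finite index, by the acylindricity bound applied to the two endpoints of a fixed axis), so we land in case (a). If $G$ is virtually infinite cyclic, the finite-generating-set structure is lineal and acylindrical, giving ${\rm card}\,\AHG\ge 2$; one then argues there are no general type or quasi-parabolic acylindrical structures (such a structure would contain a non-abelian free subgroup or an infinitely generated subgroup fixing a boundary point, contradicting virtual cyclicity) and at most one lineal one up to equivalence, yielding exactly case (b). The content of the theorem is case (c): $G$ acylindrically hyperbolic implies ${\rm card}\,\AHG\ge 2^{\aleph_0}$, with all non-trivial structures of general type, and $\PN$ embedding into $\AHG$.

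For case (c), I would start from a fixed non-elementary acylindrical action of $G$ on a hyperbolic space $S$, giving a generating set $X$ with $[X]\in\AH_{gt}(G)$. The first step is to show every non-trivial acylindrical structure is of general type: a lineal or quasi-parabolic acylindrical structure would again force $G$ virtually cyclic (ruled out, since acylindrically hyperbolic groups are not virtually cyclic), so only elliptic and general type remain, and elliptic means trivial. The heart of the argument is the construction of a $\PN$-family. The natural approach is to use the theory of hyperbolically embedded subgroups: an acylindrically hyperbolic group contains a virtually free hyperbolically embedded subgroup, and in fact one can find a hyperbolically embedded free subgroup $F=F(x_1,x_2,\dots)$ of countably infinite rank, or at least engineer, for each subset $A\subseteq\mathbb N$, a generating set $Y_A = X\cup\{\text{cyclic subgroups indexed by }A\}$ whose associated Cayley graph is obtained from $\Gamma(G,X)$ by coning off a collection of quasiconvex subsets indexed by $A$. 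One shows the resulting action remains acylindrical (coning off hyperbolically embedded subgroups preserves acylindricity — this is a known result in the literature on $\mathcal{AH}(G)$), that $[Y_A]\in\AHG$, and that $A\subseteq B$ iff $[Y_A]\succcurlyeq[Y_B]$, with distinct $A$ giving distinct classes. This last point is where the rigidity of acylindricity is essential and is what fails in Theorem \ref{n-gt-intr}: acylindricity guarantees that a generator coned off in $Y_A$ but not $Y_B$ has a well-controlled translation length that cannot be bounded using the elements retained in $Y_B$, so the bi-Lipschitz classes are genuinely different.

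The main obstacle I expect is the careful construction and verification of the $\PN$-family — specifically, proving both that each $[Y_A]$ is acylindrical and that the map $A\mapsto[Y_A]$ is an order-embedding $\PN\hookrightarrow\AHG$. Acylindricity of the coned-off action should follow from a combination theorem for acylindrical actions relative to hyperbolically embedded subgroups (cf.\ work of Osin and of Dahmani–Guirardel–Osin), but applied to an \emph{infinite} family of subgroups, which requires the hyperbolically embedded family to be chosen with uniform constants; this is the delicate technical point. The order-embedding property requires showing $[Y_A]\npreceq[Y_B]$ whenever $A\not\subseteq B$, which I would prove by exhibiting an element $g$ (a high power of $x_i$ for some $i\in A\setminus B$) that has bounded $Y_A$-length but, using acylindricity of the $Y_B$-action and the fact that $g$ acts loxodromically there, unbounded $|g^n|_{Y_B}$; quantitatively this uses the linear lower bound on translation length of loxodromic elements in acylindrical actions. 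The remaining cardinality and ``exactly one'' bookkeeping, and the equivalences with virtual cyclicity / acylindrical hyperbolicity, are then routine given the established classification results cited in the excerpt.
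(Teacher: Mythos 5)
Your treatment of cases (a) and (b) matches the paper's: both reduce to the classification of acylindrical actions on hyperbolic spaces (elliptic, or lineal with $G$ virtually cyclic, or general type --- Theorem \ref{tricho}) and to the observation that for a virtually cyclic group any non-elliptic cobounded acylindrical action is proper, so the only structures are $[G]$ and $[X]$ with $X$ finite (Lemma \ref{vc}). The paper likewise reduces (c) to embedding $\PN$ into $\AHG$, which it deduces from Theorem \ref{main3}.

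The gap is in your construction of the $\PN$-family. You propose to find a hyperbolically embedded free subgroup $F(x_1,x_2,\dots)$ of infinite rank (equivalently an infinite, uniformly hyperbolically embedded family of cyclic subgroups) and, for each $A\subseteq\mathbb N$, to cone off the subgroups indexed by $A$. You correctly flag that acylindricity of the resulting actions for an \emph{infinite} family ``requires the hyperbolically embedded family to be chosen with uniform constants,'' but you do not resolve this, and it is not routine: the paper's machinery for preserving acylindricity under induced structures (Theorem \ref{genmain2}, resting on Lemma \ref{lem:Z} and Corollary \ref{Yibound}) is stated and proved only for finite collections, precisely because finiteness is what produces the uniform constants $B$ and $D$ there; even hyperbolicity of $\Gamma(G,X\cup\bigcup_{i\in A}\langle x_i\rangle)$ for infinite $A$ needs uniform quasiconvexity. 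The paper sidesteps the infinite-family problem entirely: it fixes a \emph{single} strongly hyperbolically embedded subgroup $H\cong F_2\times K(G)$ acting purely loxodromically (Proposition \ref{strongheF2}), shows that $\mathcal{AH}(F_2)$ already contains a copy of $\PN$ inside one loxodromic equivalence class --- Proposition \ref{prop:f2}, built from Kapovich's generating sets $Z_S$ of $F(a,b,c)$ attached to infinite families $S$ of $7$-aperiodic words satisfying a $C^*(\lambda)$ small cancellation condition --- and pushes this copy into $\AHG$ via the induced structure map $\iota_Y$, which only requires the finite-collection case of Theorem \ref{genmain2}. So the continuum comes from the internal richness of $\mathcal{AH}(F_2)$, not from an infinite hyperbolically embedded family in $G$; to close your route you would need a new uniform version of Proposition \ref{lem:Hacyl} and Theorem \ref{genmain2} for infinite collections. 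A further, minor point: your family distinguishes structures by their sets of loxodromic elements ($x_i$ is elliptic for $Y_A$ but loxodromic for $Y_B$ when $i\in A\setminus B$), so even if completed it could not recover the stronger Theorem \ref{main3}; for Theorem \ref{main1} alone this does not matter.
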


Recall that a group is \emph{acylindrically hyperbolic} if it admits a non-elementary acylindrical action on a hyperbolic space \cite{Osi16}. The class of acylindrically hyperbolic groups  includes many examples of interest: all non-elementary hyperbolic and relatively hyperbolic groups, all but finitely many mapping class groups of punctured closed surfaces, $Out(F_n)$ for $n\ge 2$, non-virtually cyclic groups acting properly on proper $CAT(0)$ spaces and containing a rank one isometry, most $3$-manifold groups, groups of deficiency at least $2$ and many other examples. For details we refer to  \cite{DGO,MO,Osi15,Osi16} and references therein.

\paragraph{2.2. Induced hyperbolic structures.}
The proof of the second statement of part (c) of Theorem \ref{main1} (as well as the proof of a much stronger fact, Theorem \ref{main3} discussed below) makes use of hyperbolic structures on groups induced from hyperbolic structures on subgroups. We restrict to the case of a single subgroup here and refer to Section 5 for the general case.

Let $H$ be a subgroup of a group $G$ and let $X$ be a relative generating set of $G$ with respect to $H$.  That is, $G=\langle X\cup H\rangle $. It is easy to see that the map sending a generating set $Y$ of $H$ to $X\cup Y$ gives rise to a map $$\iota_X\colon \mathcal G(H)\to \GG,$$ which can be thought of as a particular case of the induced action map studied in \cite{AHO}. In general, very little can be said about $\iota _X$. However, it behaves well if $H$ is hyperbolically embedded in $G$ (see Section 5.1 for the definition).

\begin{thm}\label{main2}
Let $H$ be a hyperbolically embedded subgroup of a group $G$. Then there exists a relative generating set $X$ of $G$ with respect to $H$ such that $\iota _X$ defines injective, order preserving maps $\H (H)\to \H (G)$ and $\AHH\to \AHG$.
\end{thm}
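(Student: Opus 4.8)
The plan is to pick the relative generating set $X$ with some care, then to deduce that $\iota_X$ is order preserving and sends $\H(H)$ into $\H(G)$ and $\AHH$ into $\AHG$ from the theory of induced actions, and finally to prove injectivity --- in fact order-reflection --- by a hands-on analysis of geodesics in $\Gamma(G,X\cup Y)$. First, the choice of $X$. By hypothesis $H\hookrightarrow_h(G,X_0)$ for some relative generating set $X_0$; deleting the elements of $X_0\cap H$, whose edges are parallel to $H$-edges and so do not affect $\Gamma(G,X_0\sqcup H)$, we may assume $X_0\cap H=\emptyset$. Using known structural results on hyperbolically embedded subgroups \cite{DGO,Osi16}, I would then enlarge $X_0$ to a relative generating set $X$ with $X\cap H=\emptyset$, $H\hookrightarrow_h(G,X)$, and such that the natural action $G\curvearrowright\Gamma(G,X\sqcup H)$ is acylindrical. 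This last requirement is forced: bounded actions are acylindrical, so $[H]\in\AHH$, and since $\iota_X([H])=[X\cup H]=[X\sqcup H]$, the action on $\Gamma(G,X\sqcup H)$ must be acylindrical if $\iota_X$ is to map $\AHH$ into $\AHG$.

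Granting this choice, the ``positive'' statements are mostly formal. If $Y$ generates $H$ then $X\cup Y$ generates $G$, and if $\sup_{y\in Y_1}|y|_{Y_2}<\infty$ then every $z\in X\cup Y_1$ has $|z|_{X\cup Y_2}$ bounded (by $1$ if $z\in X$, by $\sup_{y\in Y_1}|y|_{Y_2}$ if $z\in Y_1$); hence $\iota_X$ descends to a well-defined, order preserving map $\mathcal G(H)\to\GG$. Moreover $\Gamma(G,X\cup Y)$ is exactly the graph obtained from $\Gamma(G,X\sqcup H)$ by replacing, inside each coset $gH$, the complete subgraph on $gH$ by a copy of $\Gamma(H,Y)$; this realizes, up to the natural equivalence, the induced action of \cite{AHO} in the case of a single hyperbolically embedded subgroup (alternatively, one inflates the cosets, which are diameter-$1$ hence trivially quasiconvex subsets of $\Gamma(G,X\sqcup H)$, and applies a combination theorem). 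Consequently $\Gamma(G,X\cup Y)$ is hyperbolic whenever $\Gamma(H,Y)$ is, and --- using acylindricity of $G\curvearrowright\Gamma(G,X\sqcup H)$ --- the action $G\curvearrowright\Gamma(G,X\cup Y)$ is acylindrical whenever $H\curvearrowright\Gamma(H,Y)$ is; coboundedness is clear. Thus $\iota_X$ restricts to maps $\H(H)\to\H(G)$ and $\AHH\to\AHG$.

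The heart of the matter is injectivity, which I would derive from the following claim $(\ast)$: for every generating set $Y$ of $H$ and every $m\in\mathbb N$, the set $B_m=\{h\in H:\,|h|_{X\cup Y}\le m\}$ has finite diameter in the word metric $\d_Y$. Indeed, if $\iota_X[Y_1]\preccurlyeq\iota_X[Y_2]$, say $\sup_{z\in X\cup Y_1}|z|_{X\cup Y_2}=m$, then $Y_1\subseteq B_m$ computed with respect to $Y_2$, so $\sup_{y\in Y_1}|y|_{Y_2}<\infty$, i.e. $[Y_1]\preccurlyeq[Y_2]$; thus $\iota_X$ is an order embedding, in particular injective (in fact on all of $\mathcal G(H)$). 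To prove $(\ast)$ I would induct on $m$. Let $h\in B_m$ and let $p$ be an $(X\cup Y)$-geodesic from $1$ to $h$. If $p$ uses only $Y$-edges then $\d_Y(1,h)\le m$ and we are done. Otherwise a routine reduction --- peeling off an initial or terminal maximal $Y$-block of $p$, which changes $h$ by a one-sided multiplication by an element of $H$ of $\d_Y$-length $\le m$, and invoking the inductive hypothesis at parameter $m-1$ --- lets us assume $p$ begins and ends with an $X$-edge. Collapsing each maximal $Y$-block of $p$ to a single $H$-edge produces a path $\bar p$ of length $\le m$ from $1$ to $h$ in $\Gamma(G,X\sqcup H)$, and closing it up with an $H$-edge $e$ from $h$ to $1$ gives a cycle $c$ of length $\le m+1$. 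If some interior vertex $g$ of $p$ lies in $H$, then both $|g|_{X\cup Y}$ and $|g^{-1}h|_{X\cup Y}$ are at most $m-1$, so $\d_Y(1,h)\le\d_Y(1,g)+\d_Y(1,g^{-1}h)$ is bounded by the inductive hypothesis. Otherwise $e$ is an isolated $H$-component of $c$ --- the only coset it could share is $H$ itself, which $\bar p$ meets only at its two endpoints --- so the isolated-components inequality of \cite{DGO} gives $\widehat{\d}(1,h)\le D(m+1)$ for the relative metric $\widehat{\d}$ on $H$; since $(H,\widehat{\d})$ is proper, $h$ ranges over a finite subset of $H$, on which $\d_Y$ is bounded. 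This establishes $(\ast)$.

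I expect $(\ast)$ to be the main obstacle. The idea is clear, but making the induction rigorous requires careful bookkeeping of the syllable structure of an $(X\cup Y)$-geodesic and of its repeated entries into and exits from the coset $H$, together with a clean application of the isolated-components machinery and of properness of $\widehat{\d}$. A second, more technical point is the very first step: producing a relative generating set $X$ for which $G\curvearrowright\Gamma(G,X\sqcup H)$ is acylindrical while keeping $H\hookrightarrow_h(G,X)$, which rests on somewhat delicate structural results about hyperbolically embedded subgroups.
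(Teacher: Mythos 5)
Your choice of $X$ (so that $H\h (G,X)$ and $G\curvearrowright\Gamma(G,X\sqcup H)$ is acylindrical, via \cite[Theorem 5.4]{Osi16}) matches the paper, and your order-reflection claim $(\ast)$ is a sound, if hands-on, substitute for the paper's route to injectivity: the paper instead quotes Theorem \ref{AHO}(b), i.e.\ the fact from \cite{AHO} that the restriction of the induced action to $H$ is weakly equivalent to $H\curvearrowright\Gamma(H,Y)$, which yields order-reflection in two lines. Your induction for $(\ast)$ (peel off boundary $Y$-blocks, split at interior vertices lying in $H$, otherwise apply the isolated-component inequality and properness of $(H,\widehat{\d}\,)$) is essentially a correct re-derivation of that fact, so this part is fine, just not where the difficulty lies.

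The genuine gap is the sentence ``using acylindricity of $G\curvearrowright\Gamma(G,X\sqcup H)$ --- the action $G\curvearrowright\Gamma(G,X\cup Y)$ is acylindrical whenever $H\curvearrowright\Gamma(H,Y)$ is.'' This is not formal and does not follow from the blow-up description of $\Gamma(G,X\cup Y)$: it is precisely Theorem \ref{genmain2} of the paper, whose proof occupies several pages. The problem is that passing from $\Gamma(G,X\sqcup H)$ to $\Gamma(G,X\cup Y)$ replaces diameter-one cosets by unbounded copies of $\Gamma(H,Y)$, so a pair of points at distance $R$ in the new metric may be at bounded distance in the old one (e.g.\ both inside a single coset, or separated only by one long $H$-component of a geodesic); for such pairs the acylindricity of $G\curvearrowright\Gamma(G,X\sqcup H)$ gives no information, and one must instead control the coarse stabilizer using the acylindricity of $H\curvearrowright\Gamma(H,Y)$ (the paper's Case 2d), the properness of the relative metric $\widehat{\d}$ (Case 2c), or reach a contradiction via the isolated-component inequality (Cases 2a, 2b) --- all after a careful classification of how $H$-components of the four sides of the quadrilateral $[1,a]\cup[a,ag]\cup[ag,g]\cup[g,1]$ can be connected. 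So you have correctly assembled the inputs, but the main obstacle is misdiagnosed: it is not $(\ast)$ but the acylindricity transfer, and as written your argument simply asserts the hardest step.
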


This result is applied to prove that $\AHG$ is sufficiently complicated for every acylindrically hyperbolic group $G$ as follows: every acylindrically hyperbolic group $G$ contains a hyperbolically embedded subgroup isomorphic to $F_2\times K$, where $F_2$ is free of rank $2$ and $K$ is a finite group \cite[Theorem 2.24]{DGO}. By Theorem \ref{main2} it suffices to show that $\mathcal {AH}(F_2\times K)$ is sufficiently complicated, and the latter poset is much easier to understand. Yet another application is obtained by combining Theorem \ref{main2} with a ``relatively hyperbolic" version of the $SQ$-universality of $F_2$  (see \cite{Ols95} and \cite{AMO}).

\begin{cor}\label{SQ}
Let $G$ be an acylindrically hyperbolic group and let $H$ be a finitely generated group. Then $\H (H)$ embeds into $\H (G)$ as a poset. In particular, the posets of hyperbolic structures of any two finitely generated acylindrically hyperbolic groups embed in each other.
\end{cor}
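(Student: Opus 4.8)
The plan is to reduce the problem to the single group $F_2$ and then transport hyperbolic structures from $F_2$ up into $G$ using Theorem~\ref{main2}. Two ingredients are needed. The first is the relatively hyperbolic form of the $SQ$-universality of $F_2$ from \cite{Ols95} and \cite{AMO}, used in the shape: \emph{every countable (in particular, every finitely generated) group $H$ is a hyperbolically embedded subgroup of some $2$-generated group $Q$}. The second is an elementary \emph{quotient observation}: for any group $P$ and any normal subgroup $N\trianglelefteq P$ there is a canonical injective, order-preserving map $\H(P/N)\to\H(P)$. Granting these, the corollary becomes a short chain of embeddings, the starting point being that an acylindrically hyperbolic group $G$ contains a hyperbolically embedded subgroup isomorphic to $F_2\times K$ with $K$ finite, by \cite[Theorem~2.24]{DGO}.

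\textbf{The quotient observation.} Given a generating set $\bar Y$ of $P/N$ with hyperbolic Cayley graph, choose a lift $Y_0\subseteq P$ of $\bar Y$ and put $Y=Y_0\cup N$; this is a generating set of $P$. The coset projection $P\to P/N$ induces a map $\Gamma(P,Y)\to\Gamma(P/N,\bar Y)$ which one checks to be a $(1,1)$-quasi-isometry: edges of $\Gamma(P,Y)$ labelled by elements of $N$ collapse to loops, while any $\bar Y$-geodesic from $\bar p$ to $\bar q$ lifts to a $Y_0$-path from $p$ terminating within a single $N$-edge of $q$. Hence $\Gamma(P,Y)$ is hyperbolic, and $[\bar Y]\mapsto[Y_0\cup N]$ is a well-defined map $\H(P/N)\to\H(P)$ — independence of the choice of lift and of the representative of $[\bar Y]$ is automatic because all of $N$ is thrown into $Y$. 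Order-preservation is a one-line word-length estimate, and injectivity holds because an equivalence $Y_0'\cup N\sim Y_0''\cup N$ of generating sets of $P$ projects to an equivalence of the corresponding generating sets of $P/N$ (the $N$-generators map to the identity).

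\textbf{Assembling the embedding.} Pick a $2$-generated group $Q$ with $H$ hyperbolically embedded in $Q$, and write $Q=F_2/N_1$. Then
\begin{align*}
\H(H)\hookrightarrow\H(Q)=\H(F_2/N_1)&\hookrightarrow\H(F_2)=\H\bigl((F_2\times K)/(\{1\}\times K)\bigr)\\
&\hookrightarrow\H(F_2\times K)\hookrightarrow\H(G),
\end{align*}
where the first and last arrows are instances of Theorem~\ref{main2} (for $H\hookrightarrow_h Q$ and for $F_2\times K\hookrightarrow_h G$), and the two interior arrows are instances of the quotient observation. A composition of injective order-preserving maps is again injective and order-preserving, so $\H(H)$ embeds into $\H(G)$ as a poset. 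The last assertion of the corollary is then immediate: for finitely generated acylindrically hyperbolic $G_1,G_2$, apply what has just been proved with $(H,G)=(G_1,G_2)$ and with $(H,G)=(G_2,G_1)$ (note these are mutual embeddings, not an isomorphism).

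\textbf{Expected difficulty.} I expect no serious obstacle: the only genuinely new input is the quotient observation, whose verification is routine. The one conceptual point worth flagging is why a detour through quotients is forced at all — a general finitely generated $H$ need not embed into $G$ (e.g. $\mathbb Z^2$ into $F_2$), so $H$ cannot be placed directly inside a hyperbolically embedded subgroup of $G$, and the passage $\H(P/N)\hookrightarrow\H(P)$ is exactly what repairs this. If one wishes to sidestep the hyperbolically embedded refinement of $SQ$-universality, one may instead take any presentation $H=F_m/R$, obtain $\H(H)\hookrightarrow\H(F_m)$ from the quotient observation, and use that $F_2$ contains a malnormal copy of $F_m$ (which is automatically quasiconvex, hence hyperbolically embedded in $F_2$), so that $\H(F_m)\hookrightarrow\H(F_2)$ by Theorem~\ref{main2}; the remainder of the chain is unchanged.
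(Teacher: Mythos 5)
Your proposal is correct and follows essentially the same route as the paper: the paper's proof combines exactly your two ingredients (Lemma 5.19, which is your ``quotient observation,'' and Lemma 5.20, the hyperbolically-embedded form of $SQ$-universality from \cite{Ols95,AMO}) with Theorem 1.9/5.16 to produce the same chain $\H(H)\hookrightarrow\H(F_2)\hookrightarrow\H(F_2\times K)\hookrightarrow\H(G)$. Your explicit verification of the quotient observation via $Y=Y_0\cup N$ is just an unpacking of the paper's one-line argument through cobounded actions, so there is nothing genuinely different to compare.
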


\paragraph{2.3. $Out(G)$-action on $\AH (G)$.}
An additional motivation for studying the poset of hyperbolic structures of a group $G$ stems from the fact that $Out(G)$ admits a natural action on $\HG$. Indeed, for every automorphism $\alpha$ of a group $G$ and any generating set $X$ of $G$, $\alpha (X)$ also generates $G$. Obviously two generating sets $X$ and $Y$ of $G$ are equivalent if and only if $\alpha(X)$ and $\alpha(Y)$ are. This allows us to define an action of $Aut(G)$ on $\GG$ by the rule
$$
\alpha([X])=[\alpha(X)]
$$
for all $[X]\in \GG$.

It is easy to see that every inner automorphism $\alpha \in Aut(G)$ stabilizes $\GG$ pointwise and thus we obtain an action of $Out(G)$ on $\GG$. This action is order preserving and leaves $\H (G)$ and $\AHG$ setwise invariant. Recall that every acylindrically hyperbolic group has a (unique) maximal finite normal  subgroup, called the \emph{finite radical} of $G$  and denoted $K(G)$ \cite[Theorem 2.24]{DGO}. If $K(G)=1$, Theorem \ref{main2} can be used to construct, for every automorphism $\alpha \in Out (G)$, an acylindrically hyperbolic structure on $G$ that is not fixed by $\alpha$. More generally, we prove the following.

\begin{thm}\label{main6}
Let $G$ be an acylindrically hyperbolic group.
\begin{enumerate}
\item[(a)] If $G$ is finitely generated, then the kernel of the action of $Out(G)$ on $\AHG$ is finite.
\item[(b)] If $G$ has trivial finite radical, then the action of $Out(G)$ on $\AHG$ is faithful.
\end{enumerate}
\end{thm}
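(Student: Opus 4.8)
The plan is to deduce (a) from (b). The finite radical $K=K(G)$ is a finite characteristic subgroup, so $Aut(G)$ acts on $G/K$ and induces a homomorphism $\Phi\colon Out(G)\to Out(G/K)$ whose kernel is finite: an automorphism of $G$ inducing the identity on $G/K$ sends each element $x$ of a fixed finite generating set to $xk_x$ with $k_x\in K$ and is therefore determined by the finite tuple $(k_x)$. Moreover, $[X]\mapsto[\overline{X}]$, where $\overline{X}$ denotes the image of $X$ in $G/K$, is an $Out(G)$-equivariant poset isomorphism $\H(G)\to\H(G/K)$ carrying $\AHG$ onto $\AH(G/K)$: the Cayley graphs $\Gamma(G,X)$ and $\Gamma(G/K,\overline{X})$ are quasi-isometric, and acylindricity is preserved under passage to the quotient by a finite normal subgroup, since finite point stabilizers merely multiply the counting constant by $|K|$. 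As $K(G/K)=1$, part (b) applied to $G/K$ shows that $Out(G/K)$ acts faithfully on $\AH(G/K)$; hence the kernel of the action $Out(G)\curvearrowright\AHG$ is contained in $\ker\Phi$ and is finite.

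For (b) it suffices to prove that every $\alpha\in Aut(G)$ fixing all of $\AHG$ is inner. The key elementary observation is that $\alpha$ fixes $[X]\in\H(G)$ if and only if $\alpha$ is a bi-Lipschitz bijection of $(G,\d_X)$, because $\d_{\alpha(X)}=\d_X\circ(\alpha^{-1}\times\alpha^{-1})$. Consequently, if $\alpha$ fixes $[X]$ with bi-Lipschitz constant $L$, then for every $g\in G$ the stable translation length $\tau_X(g)=\lim_n|g^n|_X/n$ satisfies $L^{-1}\tau_X(g)\le\tau_X(\alpha(g))\le L\tau_X(g)$, since $\tau_X(\alpha(g))=\lim_n|\alpha(g^n)|_X/n$ and the word lengths $|\alpha(g^n)|_X$ and $|g^n|_X$ differ by at most a factor of $L$. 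In particular $g$ is loxodromic with respect to $[X]$ exactly when $\alpha(g)$ is.

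Now I would apply Theorem \ref{main2} to the maximal elementary subgroups $E_G(g)$ of generalized loxodromic elements $g\in G$, each of which is hyperbolically embedded. For such $g$ the theorem yields a relative generating set $X_g$ with $\mathcal{E}_g:=[X_g\cup E_G(g)]=\iota_{X_g}([E_G(g)])\in\AHG$, and in the corresponding action an infinite-order element is elliptic precisely when it is conjugate into $E_G(g)$, as happens for peripheral subgroups in relatively hyperbolic settings. Since $g$ is elliptic in $\mathcal{E}_g$ while $\alpha$ fixes $\mathcal{E}_g$, the previous paragraph forces $\alpha(g)$ to be elliptic in $\mathcal{E}_g$, so $\alpha(E_G(g))=E_G(\alpha(g))$ is conjugate to $E_G(g)$ — here one uses that $\alpha$ commutes with the formation of elementary closures and that the elementary closure of any infinite-order element of $E_G(g)$ equals $E_G(g)$. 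Thus $\alpha$ preserves the conjugacy class of every $E_G(g)$. The hard part is to upgrade this to a single conjugator: after composing $\alpha$ with one inner automorphism we may assume $\alpha(E_G(g_0))=E_G(g_0)$ for a fixed generalized loxodromic $g_0$, and for a generalized loxodromic $h$ independent from $g_0$ one would work inside the (general-type) structure induced via Theorem \ref{main2} from a hyperbolically embedded subgroup in which both $g_0$ and $h$ act loxodromically, such as $E_G(g_0^N)*E_G(h^N)$ equipped with a finite generating set. Comparing the bi-Lipschitz map $\alpha$ with left translation by the conjugator realizing $\alpha(h)$, and applying the translation-length estimate above to elements such as $g_0^Nh^N$, the almost malnormality of $E_G(g_0)$ and $E_G(h)$ forces that conjugator into $E_G(g_0)$; a further inner adjustment by an element of the self-normalizing subgroup $E_G(g_0)$ then gives $\alpha(E_G(h))=E_G(h)$ as well. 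Carrying out these adjustments coherently over all generalized loxodromic elements produces $c\in G$ such that $\iota_c^{-1}\circ\alpha$ fixes every $E_G(g)$ setwise.

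Finally, set $\beta=\iota_c^{-1}\circ\alpha$, so $\beta(E_G(g))=E_G(g)$ for every generalized loxodromic $g$. Applying this to the conjugate $tgt^{-1}$ (again generalized loxodromic, with $E_G(tgt^{-1})=tE_G(g)t^{-1}$) gives $\beta(t)E_G(g)\beta(t)^{-1}=tE_G(g)t^{-1}$, i.e.\ $t^{-1}\beta(t)\in N_G(E_G(g))=E_G(g)$, for every $t\in G$; hence $t^{-1}\beta(t)\in L:=\bigcap_g E_G(g)$. The subgroup $L$ is normal in $G$ and torsion, for an infinite-order $z\in L$ would be generalized loxodromic with $E_G(z)=E_G(g)$ for every generalized loxodromic $g$, contradicting the existence of two generalized loxodromics with distinct elementary closures. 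Being contained in the virtually cyclic $E_G(g_0)$, $L$ is finite, so $L\le K(G)=1$. Therefore $\beta=\mathrm{id}$ and $\alpha=\iota_c$ is inner. The principal obstacle is the single-conjugator step in the third paragraph; the remaining ingredients are Theorem \ref{main2} and standard properties of hyperbolically embedded elementary subgroups.
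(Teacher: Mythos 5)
Your proposal correctly identifies the first half of the argument: an automorphism $\alpha$ in the kernel of the action must send each generalized loxodromic element $g$ to an element commensurable with $g$, which you extract by fixing the induced structure $[X_g\cup E_G(g)]$ and using the dichotomy ``elliptic iff conjugate into the hyperbolically embedded subgroup.'' This matches the paper's claim $(\ast)$, which is proved there by a slightly different device (collapsing $E(g)$ but not $E(\alpha(g))$ in a structure built from the hyperbolically embedded pair $\{E(g),E(\alpha(g))\}$). Your reduction of (a) to (b) via the $Out(G)$-equivariant poset isomorphism $\AHG\to\mathcal{AH}(G/K(G))$ and the finiteness of $\ker\bigl(Out(G)\to Out(G/K(G))\bigr)$ is also sound and is a genuinely different route from the paper, which handles (a) directly by a counting argument over the finitely many values of a map $G\to K(G)$ on a finite generating set.

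The genuine gap is in part (b), at exactly the point you flag yourself: upgrading ``$\alpha$ preserves the conjugacy class of $E_G(g)$ for every generalized loxodromic $g$'' to ``a single inner automorphism corrects $\alpha$ on all of these subgroups simultaneously.'' Your sketch (pass to a general-type structure in which $g_0$ and $h$ are both loxodromic, compare $\alpha$ with a translation, invoke almost malnormality to trap the conjugator in $E_G(g_0)$, then ``carry out these adjustments coherently over all generalized loxodromic elements'') is not an argument: the coherence of the conjugators over infinitely many independent elements is the entire difficulty, and nothing in the proposal controls how the conjugator for $h$ interacts with the one for a third independent element $h'$. This is precisely the content of the classification of commensurating automorphisms of acylindrically hyperbolic groups, \cite[Corollary 7.4]{AMS}, which the paper invokes as a black box: any $\alpha$ satisfying $(\ast)$ has the form $\alpha(g)=wg\e(g)w^{-1}$ for some $w\in G$ and some map $\e\colon G\to K(G)$, whence $\alpha$ is inner when $K(G)=1$. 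Your closing step (from ``$\beta$ fixes every $E_G(g)$ setwise'' to $t^{-1}\beta(t)\in\bigcap_g E_G(g)=\{1\}$) is fine, but it rests entirely on the unproved single-conjugator claim. As written, the proposal therefore does not establish (b), and since your (a) is deduced from (b), it does not establish (a) either; citing the AMS result at the critical step would repair both.
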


The hypothesis that $G$ is finitely generated in part (a) cannot be dropped (see Example \ref{infker}). Our proof of Theorem \ref{main6} also makes use of results about pointwise inner automorphisms of acylindrically hyperbolic groups obtained in \cite{AMS}.

\paragraph{2.4. Loxodromic equivalence and rigidity.}
Recall that an element $g$ of a group $G$ acting on a metric space $S$ is called \emph{loxodromic} if the map $\mathbb Z\to S$ defined by $n\mapsto g^ns$ is a quasi-isometry for some (equivalently, any) base point $s\in S$. It is easy to see that equivalent actions of $G$ have the same sets of loxodromic elements. Thus we can define the set of \emph{loxodromic elements of a hyperbolic structure} $A=[X]\in \HG$, denoted $\L (A)$, as the set of all elements acting loxodromically on $\Gamma (G, X)$.

\begin{defn}\label{def-loxeq}
We say that two structures $A,B\in \HG$ are \emph{loxodromically equivalent} (written $A\sim _\L B$) if $\L (A)=\L(B)$.  We denote by $[A]^{\AH}_\L =\{ B\in \AHG \mid A\sim _L B\}$ the \emph{loxodromic equivalence class} of $A\in \AHG$.
\end{defn}

We first show that hyperbolic structures (and even acylindrically hyperbolic ones) are not determined by their sets of loxodromic elements. Moreover, for every non-elementary $A\in \AHG$, the loxodromic equivalence class $[A]^\AH_{\L}$ is as complicated as the poset $\AHG$ itself.

\begin{thm}\label{main3}
For every non-elementary $A\in \AHG$, the loxodromic equivalence class $[A]^\AH_{\L}$ contains an isomorphic copy of $\PN$.
\end{thm}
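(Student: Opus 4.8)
The plan is to construct, for a given non-elementary $A=[X]\in\AHG$, a collection of acylindrically hyperbolic structures on $G$, indexed by subsets of $\mathbb N$, that all have the same set of loxodromic elements as $A$ but are pairwise distinct (and comparable) exactly according to containment of the indexing subsets. The natural source of such structures is the collapsing construction sketched in the introduction: since $A$ is of general type, we may fix two loxodromic elements $g,h\in G$ with disjoint limit sets on $\partial\Gamma(G,X)$, and by ping-pong pass to high powers generating a free subgroup. In fact, for the construction to stay acylindrical we want a \emph{hyperbolically embedded} free subgroup, so the first step is to invoke the standard fact (from \cite{DGO}, as cited after Theorem \ref{main2}) that a general type acylindrical action yields a hyperbolically embedded subgroup; concretely, one extracts a free group $F=F(a_1,a_2,\ldots)$ of countably infinite rank, hyperbolically embedded in $G$, whose generators $a_i$ are loxodromic with respect to $X$. (One obtains infinite rank either directly from the ping-pong on infinitely many independent loxodromics produced inside a non-elementary acylindrical action, or by taking $F$ of rank $2$ and using a free basis of an infinite-rank free subgroup.)

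Next, for each subset $S\subseteq\mathbb N$, let $F_S=\langle a_i : i\in S\rangle\le F$ and set $X_S=X\cup F_S$, i.e., $A_S=\iota_X([F_S\text{-word metric}])$ in the notation of the induced-map discussion preceding Theorem \ref{main2}. Because $F$ is hyperbolically embedded, so is each $F_S$ (a free factor of a hyperbolically embedded free group is hyperbolically embedded), and Theorem \ref{main2} — or more precisely its multi-subgroup version from Section 5 — guarantees that each $A_S$ is an acylindrically hyperbolic structure and that $S\subseteq S'$ implies $A_S\preccurlyeq A_{S'}$ is compatible with the induced order. The key point for order-\emph{preservation in both directions} (i.e. that $A_S\prec A_{S'}$ strictly when $S\subsetneq S'$, and that incomparable $S,S'$ give incomparable $A_S,A_{S'}$) is that collapsing $F_S$ makes exactly the elements of $F_S$ bounded while the $a_j$ with $j\notin S$ remain loxodromic; this is where one uses that $F$ is hyperbolically embedded, so that distinct generators $a_i$ have, roughly, "independent" contributions that cannot be killed off by collapsing a sub-family not containing them. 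Thus the assignment $S\mapsto A_S$ is an order embedding of $\PN$ into $\AHG$, landing inside one loxodromic equivalence class.

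The heart of the theorem — and the place I expect to spend the most effort — is showing that all the $A_S$ are \emph{loxodromically equivalent to $A$}, i.e. $\L(A_S)=\L(A)$ for every $S$. Since $X\subseteq X_S$, passing from $X$ to $X_S$ can only lose loxodromics, so $\L(A_S)\subseteq\L(A)$ is immediate; the real content is the reverse inclusion: collapsing the (infinitely many) cosets of $F_S$ does not make any element $g\in G$ that was $X$-loxodromic become elliptic or parabolic in $\Gamma(G,X_S)$. The standard tool here is the geometry of hyperbolically embedded subgroups: $\Gamma(G,X_S)$ is obtained from $\Gamma(G,X\sqcup\{F_S\})$-type considerations, and one controls how a geodesic/quasigeodesic axis of $g$ in $\Gamma(G,X)$ interacts with cosets of $F_S$ using the "bounded coset penetration"-style properties and the distance formula for hyperbolically embedded subgroups (cf. the machinery of \cite{DGO}). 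Concretely, an axis of $g$ in $\Gamma(G,X)$ can only travel a uniformly bounded distance (in $\d_X$) inside any single coset of $F_S$ before leaving it — because $g\notin$ (a conjugate of) $F_S$ would otherwise fail to move a limit point, contradicting loxodromicity, and the hyperbolically embedded condition prevents long "backtracking" through such cosets — so after collapsing, the axis of $g$ in $\Gamma(G,X_S)$ still has positive translation length. Making this quantitative (getting a uniform lower bound on $\ell_{X_S}(g^n)$ growing linearly in $n$) is the main obstacle, and I would handle it by citing/adapting the relevant quasi-geodesic and "geodesics avoid deep coset penetration" lemmas for hyperbolically embedded subgroups rather than re-deriving them. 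Given $\L(A_S)=\L(A)$ for all $S$, the order-embedded copy of $\PN=\{A_S : S\subseteq\mathbb N\}$ lies entirely inside $[A]^{\AH}_\L$, which is exactly the assertion of Theorem \ref{main3}. Finally one notes that $[A]^{\AH}_\L$ is itself a subposet of $\AHG$, so "contains a copy of $\PN$" is meaningful as stated.
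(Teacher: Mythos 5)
Your construction cannot work as stated, and the obstruction is visible inside your own write-up: the two properties you need of the structures $A_S=[X\cup F_S]$ are mutually contradictory. On the one hand, your injectivity argument rests on the observation that ``collapsing $F_S$ makes exactly the elements of $F_S$ bounded while the $a_j$ with $j\notin S$ remain loxodromic'' --- and this part is correct: since $F_S\subseteq X_S$, every $f\in F_S$ satisfies $|f^n|_{X_S}\le 1$ for all $n$, hence $\tau_{A_S}(f)=0$ and $f\notin\L(A_S)$. On the other hand, the generators $a_i$ (and all nontrivial elements of $F_S$, being produced by ping-pong from loxodromics) lie in $\L(A)$. So $\L(A)\setminus\L(A_S)\supseteq F_S\setminus\{1\}\ne\emptyset$ for every $S\ne\emptyset$, and none of your structures except $A_\emptyset=A$ is loxodromically equivalent to $A$. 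The ``heart of the theorem'' paragraph, where you try to prove $\L(A)\subseteq\L(A_S)$ via bounded coset penetration, has no chance of succeeding for $g\in F_S$: no control on how an axis meets cosets can rescue an element whose entire cyclic subgroup has $X_S$-diameter $1$. This is exactly the phenomenon the paper flags after Theorem \ref{n-gt-intr}: collapsing subgroups produces \emph{strictly smaller} structures with strictly smaller loxodromic sets; it is the right tool for showing $\AHG$ is large, not for producing many structures inside a single loxodromic equivalence class.

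The paper's proof takes a genuinely different route precisely to avoid this. It first embeds $\PN$ into the loxodromic equivalence class of the standard structure on $F_2$ (Proposition \ref{prop:f2}) using I.~Kapovich's construction: for each infinite set $S$ of $7$-aperiodic words in $\{a,b\}$ satisfying the small cancellation condition $C^*$, one builds a generating set $Z_S$ of $F(a,b,c)$ so that $F(a,b)\curvearrowright\Gamma(F(a,b,c),Z_S)$ is acylindrical, has quasi-convex orbits, and --- crucially --- is \emph{purely loxodromic}. The resulting structures $[X_S]$ are distinguished not by which elements become elliptic (none do) but by which words become uniformly short, and the small cancellation condition is what makes $S\mapsto[X_S]$ injective. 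These structures are then transported into $\AHG$ by the induced-structure map $\iota_Y$ applied to a strongly hyperbolically embedded copy of $F_2\times K(G)$ acting purely loxodromically (Proposition \ref{strongheF2}), and loxodromic equivalence with the original structure is verified via Proposition \ref{lem:lox}. To repair your argument you would have to replace ``add the subgroup $F_S$ to the generating set'' by ``add a carefully chosen proper subset of a hyperbolically embedded $F_2$ that changes the metric on $F_2$ without bounding any cyclic subgroup,'' which is exactly what the aperiodic-word construction achieves.
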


We say that an acylindrically hyperbolic structure $A\in \AHG$ is \emph{purely loxodromic} if $\L (A)$ consists of all elements of $G$ of infinite order. For example, if $G$ is a hyperbolic group and $X$ is a finite generating set of $G$, then $[X]$ is purely loxodromic. Applying Theorem \ref{main3} to this structure, we obtain the following corollary. It can be thought as a generalization of a result of I. Kapovich \cite{Kap}, which in our terms states that the free group of rank $2$ has at least $2$ purely loxodromic acylindrically hyperbolic structures.

\begin{cor}\label{hpl}
For every non-elementary hyperbolic group $G$, there exist $2^{\aleph _0}$ distinct purely loxodromic acylindrically hyperbolic structures.
\end{cor}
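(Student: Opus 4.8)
The plan is to obtain this as an essentially immediate consequence of Theorem~\ref{main3}, applied to the ``canonical'' structure on a hyperbolic group. First I would fix a finite generating set $X$ of $G$ and record that $A:=[X]$ is a non-elementary acylindrically hyperbolic structure: the action of $G$ on $\Gamma(G,X)$ is proper and cobounded, so it is acylindrical (one may take $R(\e)$ arbitrary and $N(\e)$ a uniform bound on the number of group elements moving a given point a distance at most $\e$, which is finite because $X$ is finite), and since $G$ is non-elementary hyperbolic, $\partial\Gamma(G,X)$ is infinite and carries no global fixed point, so $A$ is of general type, hence non-elementary. As noted in the paragraph preceding the statement, $A$ is moreover purely loxodromic: in a hyperbolic group the infinite-order elements act loxodromically on any Cayley graph over a finite generating set and the torsion elements have bounded orbits, so $\L(A)$ is exactly the set of infinite-order elements of $G$.

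Next I would apply Theorem~\ref{main3} to $A$. It yields an order-embedding of $\PN$ into $[A]^{\AH}_\L$, so in particular $|[A]^{\AH}_\L|\ge 2^{\aleph_0}$. By the definition of the loxodromic equivalence class, every $B\in[A]^{\AH}_\L$ lies in $\AHG$ and satisfies $\L(B)=\L(A)=\{g\in G:\ g\text{ has infinite order}\}$; hence every such $B$ is again purely loxodromic. This produces $2^{\aleph_0}$ pairwise distinct purely loxodromic acylindrically hyperbolic structures on $G$. For the reverse inequality (reading the statement as an exact count), I would simply observe that $G$ is countable, so it has at most $2^{\aleph_0}$ subsets and a fortiori $|\GG|\le 2^{\aleph_0}$; combining the two bounds gives exactly $2^{\aleph_0}$.

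I do not expect any real obstacle here, since all the work is hidden inside Theorem~\ref{main3}; the only points needing a line of justification are that $[X]$ is acylindrical, non-elementary, and purely loxodromic, all of which are standard facts about hyperbolic groups. The comparison with I.~Kapovich's theorem is then automatic: his result amounts to the assertion that $F_2$ admits at least two purely loxodromic acylindrically hyperbolic structures, and the corollary upgrades ``at least two'' to ``continuum many'' for every non-elementary hyperbolic group.
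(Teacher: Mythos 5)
Your argument is correct and is essentially the paper's own proof: the authors likewise observe that $[X]$ is a purely loxodromic, non-elementary acylindrically hyperbolic structure for any finite generating set $X$ of a non-elementary hyperbolic group, and then apply Theorem~\ref{main3} to embed $\PN$ into $[X]^{\AH}_{\L}$, noting that loxodromic equivalence preserves the property of being purely loxodromic. Your extra remark that countability of $G$ gives the matching upper bound $2^{\aleph_0}$ is a harmless addition not present in the paper.
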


Our next result can be thought of as a hyperbolic analogue of the Culler--Morgan theorem \cite{CM} stating that a minimal non-quasi-parabolic action of a group $G$ on an $\mathbb R$-tree is determined up to a $G$-equivariant isometry by the set of loxodromic elements of $G$ and their translation lengths. We prove a similar theorem for general hyperbolic spaces with $G$-equivariant isometry replaced by the equivalence of actions. Recall that two actions of a group $G$ on metric spaces $S$ and $T$ are \emph{equivalent} if there exists a coarsely $G$-equivariant quasi-isometry $S\to T$.

Let $A=G\curvearrowright S$ be an action of a group $G$ on a metric space $S$ and let $g\in G$. The corresponding \emph{translation number} of $g$ is defined by
\begin{equation}\label{def-trn}
\tau_{A} (g)=\lim_{n\to \infty}\frac{\d (s,g^ns)}{n},
\end{equation}
where $s\in S$. It is well-known (and easy to check) that the limit always exists and is independent of the choice of the basepoint $s$.

\begin{defn}[Coarsely isospectral actions]\label{def-ci}
We say that two actions $A=G\curvearrowright S$ and $B=G\curvearrowright T$ of the same group $G$ are \emph{coarsely isospectral} if for every sequence of elements $(g_i)_{i\in \mathbb N} \subseteq G$, we have $$\lim\limits_{i\to\infty}\tau_{A}(g_i)=\infty \;\; \Longleftrightarrow \;\; \lim\limits_{i\to\infty}\tau_{B}(g_i)=\infty .$$
\end{defn}

\begin{thm}\label{main4}
Let $G\curvearrowright R$ and $G\curvearrowright S$ be non-quasi-parabolic cobounded actions of a group $G$ on hyperbolic spaces $S$ and $R$. Then $G\curvearrowright R$ and $G\curvearrowright S$ are coarsely isospectral if and only if $G\curvearrowright R$ and $G\curvearrowright S$ are equivalent.
\end{thm}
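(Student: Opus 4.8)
The plan is to prove the two implications separately, the easy one being that equivalent actions are coarsely isospectral. If $f\colon R\to S$ is a coarsely $G$-equivariant quasi-isometry, then for any $g\in G$ the orbit maps $n\mapsto g^n r_0$ in $R$ and $n\mapsto g^n f(r_0)$ in $S$ are matched up to bounded error, so $\tau_A(g)$ and $\tau_B(g)$ differ by a multiplicative and additive constant depending only on the quasi-isometry constants of $f$; in particular $\tau_A(g_i)\to\infty$ if and only if $\tau_B(g_i)\to\infty$. This gives one direction with essentially no work.

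For the converse — the substantive direction — I would first reduce to the case where both actions are of general type, disposing of the remaining Gromov types using Theorem \ref{main0}. Since the actions are cobounded and non-quasi-parabolic, each is elliptic, lineal, or of general type. If $A$ is elliptic then $\tau_A\equiv 0$, so coarse isospectrality forces $\tau_B$ to be bounded, which for a cobounded action forces $B$ to be elliptic as well, and then both are the trivial structure and we are done. If $A$ is lineal, then $G$ has a homomorphism to $\mathbb R$ (or to $\mathrm{Isom}(\mathbb R)$) whose ``translation length'' function is exactly $\tau_A$, up to scaling; coarse isospectrality pins down which elements have unbounded powers under $B$, and one argues $B$ must also be lineal with a proportional length function, whence the two lineal actions are equivalent (lineal structures are essentially classified by the associated quasi-character). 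The heart of the matter is thus the general-type case.

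In the general-type case the strategy is to show that the two word metrics are bi-Lipschitz equivalent, i.e. $[X]=[Y]$ in $\GG$ where $R=\Gamma(G,X)$, $S=\Gamma(G,Y)$ — this suffices since bi-Lipschitz equivalent Cayley graphs give equivalent actions. By symmetry it is enough to show $X\preceq Y$, i.e. $\sup_{x\in X}|x|_Y<\infty$. Suppose not: then there is a sequence $x_i\in X$ with $|x_i|_Y\to\infty$. Using that the action $A=G\curvearrowright R$ is of general type and acylindrical-free but still has ``generic'' loxodromic behavior, I would, after passing to a subsequence and multiplying the $x_i$ by a fixed loxodromic element and taking suitable powers (a ping-pong / WPD-type construction as in \cite{DGO}, or more elementary: combine each $x_i$ with a fixed loxodromic $g$ whose axis is in general position), produce elements $h_i\in G$ with $\tau_B(h_i)\to\infty$ while $\tau_A(h_i)$ stays bounded — or vice versa — contradicting coarse isospectrality. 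Concretely: in the $Y$-metric, $h_i$ travels a definite fraction of $|x_i|_Y$ per step because the relevant ping-pong moves long distances, so $\tau_B(h_i)\to\infty$; but $h_i$ is a bounded-length word in $X\cup\{g\}$, so $\tau_A(h_i)$ is bounded by that length times $\max(1,\tau_A(g))$-type estimates.

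The main obstacle is this last step: manufacturing, from a single pair of general-type hyperbolic actions with no a priori comparison between them, a single sequence of group elements witnessing the failure of coarse isospectrality. The difficulty is that a bad generator $x_i$ (long in $Y$, short in $X$) need not itself be loxodromic for either action, so one cannot directly read off translation numbers; the fix is to combine $x_i$ with a well-chosen loxodromic element so that the product is loxodromic for $B$ with large translation length (this uses that in a general-type action one can find loxodromics whose axes are in ``general position'' relative to any given group element, so that concatenation genuinely accumulates length), while keeping the product short as a word in the $X$-generators. Making the ``general position'' and the resulting length lower bound quantitative — uniformly in $i$ — is where the real work lies, and it is here that the hyperbolicity of $S$ (thin triangles, the behavior of loxodromic axes, and Gromov's classification) must be invoked carefully. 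Once that is in place, the contradiction is immediate and the theorem follows.
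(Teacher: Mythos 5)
Your overall strategy for the substantive direction coincides with the paper's: assume $X\not\sim Y$, extract generators $a_i\in X$ with $|a_i|_Y\to\infty$, and manufacture from them a sequence whose $B$-translation lengths blow up while the $A$-translation lengths stay bounded. However, there is a genuine gap exactly where you say ``the real work lies'': you describe, but do not prove, the key step of combining each long generator with a loxodromic element ``in general position'' so that the product has large translation length for $B$, uniformly in $i$. This is not a routine verification. The paper supplies it with two specific ingredients: Lemma \ref{z}, which produces a \emph{single} loxodromic $z$ and a subsequence $(b_i)$ with $\sup_{n,i}(z^n\mid b_i)_1<\infty$ and $\sup_{n,i}(z^{-n}\mid b_i^{-1})_1<\infty$ (this uses three independent loxodromics and a pigeonhole on limit points in $\partial\Gamma(G,Y)$ --- a single fixed $g$ will not do, since the $b_i$ could converge to one of its endpoints); and Lemma \ref{lemx0xn}, a broken-geodesic estimate which converts these uniform Gromov-product bounds into $|(b_i^{-1}z^k)^m|_Y\ge m(|z^k|_Y+|b_i|_Y)-2(2m-1)(C+8\delta)$, hence $\tau_B(b_i^{-1}z^k)\to\infty$, while trivially $\tau_A(b_i^{-1}z^k)\le|z^k|_X+1$. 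Without these (or equivalent) statements your argument is an outline of a proof rather than a proof.

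Your treatment of the lineal case also diverges from the paper's and is shakier. A lineal action need not be orientable, so there is in general no homomorphism to $\mathbb R$ computing $\tau_A$ (consider $D_\infty$); one must use the Busemann quasi-character, which is only a quasimorphism. Moreover, your plan to first show that the two actions have the same Gromov type and then classify lineal structures by their length functions is both unnecessary and harder to justify than what the paper does: it simply cases on the type of the action in which the $a_i$ become long. If $[Y]$ is lineal, then $(a_i)$ converges to a boundary point of the quasi-line $\Gamma(G,Y)$, and the associated Busemann pseudocharacter $p$ (Definition \ref{Bpc} and Remark \ref{rempq}) satisfies $|p(a_i)|\le\tau_B(a_i)$ and $|p(a_i)|\to\infty$, giving the contradiction against $\tau_A(a_i)\le|a_i|_X=1$ directly, with no need to compare the types of the two actions or to classify lineal structures.
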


In Example \ref{pqcis}, we show that for quasi-parabolic actions Theorem \ref{main4} does not hold. It is also worth noting that the assumption that the actions are cobounded cannot be dropped, see Example \ref{ex-non-cb}.

Equivalence classes of cobounded actions of a group $G$ on hyperbolic spaces are in one-to-one correspondence with hyperbolic structures on $G$ via the Svarc-Milnor map, see Section 2.2; thus our theorem can be equivalently restated in terms of hyperbolic structures (the notion of coarse isospectrality for structures is defined in the natural way, see Definition \ref{def-cis}). In particular, combining Theorem \ref{main4} with the fact that acylindrical actions cannot be quasi-parabolic, we obtain the following.

\begin{cor}\label{cormain4}
Coarsely isospectral acylindrically hyperbolic structures on the same group coincide.
\end{cor}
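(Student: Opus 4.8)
The plan is to deduce Corollary~\ref{cormain4} directly from Theorem~\ref{main4}, using the standard fact that an acylindrical action of a group on a hyperbolic space is never quasi-parabolic. As a first step I would invoke the Svarc-Milnor correspondence of Section~2.2 between acylindrically hyperbolic structures on $G$ and equivalence classes of cobounded acylindrical actions of $G$ on hyperbolic spaces: under it a structure $A=[X]\in\AHG$ is represented by the action $G\curvearrowright\Gamma(G,X)$, which is cobounded by construction and acylindrical by the definition of $\AHG$. Since acylindricity excludes the quasi-parabolic type, every $A\in\AHG$ is represented by a non-quasi-parabolic cobounded action on a hyperbolic space.

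The second step is to observe that coarse isospectrality of acylindrically hyperbolic structures, in the sense of Definition~\ref{def-cis}, is the same as coarse isospectrality, in the sense of Definition~\ref{def-ci}, of the associated actions. Here one uses that the translation number $\tau_A(g)$ of $g$ for the action $G\curvearrowright\Gamma(G,X)$ is the stable word length $\lim_{n\to\infty}|g^n|_X/n$, and that passing to an equivalent generating set --- or, more generally, to a quasi-isometric cobounded $G$-space --- changes these quantities only by uniformly bounded multiplicative constants. Consequently the condition $\tau(g_i)\to\infty$ along a sequence $(g_i)$ is unaffected by the choice of representative and is preserved under equivalence of actions, so the two notions of coarse isospectrality match.

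Granting these preliminaries, the corollary follows at once: given coarsely isospectral $A,B\in\AHG$, pick cobounded acylindrical actions $G\curvearrowright S$ and $G\curvearrowright T$ representing $A$ and $B$. By the first step both actions are non-quasi-parabolic, and by the second step they are coarsely isospectral; Theorem~\ref{main4} then provides a coarsely $G$-equivariant quasi-isometry $S\to T$, i.e.\ the two actions are equivalent, whence $A=B$ in $\HG$ and a fortiori in $\AHG$.

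The only genuine work lies in the second step --- checking that the stable translation length on a Cayley graph is a quasi-isometry invariant up to bounded multiplicative error for cobounded actions, so that Definitions~\ref{def-ci} and~\ref{def-cis} really coincide. This is routine estimation with bi-Lipschitz and quasi-isometry constants; all the substantive content is carried by Theorem~\ref{main4} and by the non-quasi-parabolicity of acylindrical actions.
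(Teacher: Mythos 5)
Your proposal is correct and follows exactly the paper's argument: the authors likewise observe that acylindrical actions cannot be quasi-parabolic (Theorem \ref{tricho}) and then apply Theorem \ref{main4} to the Cayley graph actions $G\curvearrowright\Gamma(G,X)$ and $G\curvearrowright\Gamma(G,Y)$. The compatibility of the two notions of coarse isospectrality that you flag as the ``only genuine work'' is already supplied by Lemma \ref{ciinv} and Definition \ref{def-cis}, so no additional estimation is needed.
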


\paragraph{2.5.  Hyperbolic and acylindrically hyperbolic accessibility.}
The famous Stallings' theorem states that every finitely generated group with infinitely many ends splits as the fundamental group of a graph of groups with finite edge groups. This was a starting point of an accessibility theory developed by Dunwoody. A finitely generated group $G$ is said to be \emph{accessible} if the process of iterated nontrivial splittings of $G$ over finite subgroups always terminates in a finite number of steps. Not every finitely generated group is accessible \cite{Dun93}, but finitely presented groups are \cite{Dun85}, as well as torsion free groups \cite{Lin}.

More generally, one can ask whether a given group has a maximal, in a certain precise sense, action on a tree satisfying various additional conditions on stabilizers (see, for example,  \cite{BF,S}). Yet another problem of similar flavor studied in the literature is whether a given group admits a maximal relatively hyperbolic structure \cite{BDM}. It is natural to ask a similar question in our setting.

\begin{defn}
We say that a group $G$ is \emph{$\H$-accessible} (respectively \emph{$\AH$-accessible}) if $\H (G)$ (respectively $\AHG$) contains the largest element.
\end{defn}

We begin with examples of inaccessible groups. It is easy to find examples of groups which are not $\H$-accessible, e.g., the direct product $F_2\times F_2$; however, this group is $\AH$-accessible, see Section 7.1. Finding $\AH$-inaccessible groups, especially finitely generated or finitely presented ones, is more difficult. We first show the following (see Theorem \ref{AH-inacc}).

\begin{thm} \label{inacc}
There exists a finitely presented group that is neither $\H$-accessible nor $\AH$-accessible.
\end{thm}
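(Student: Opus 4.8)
The goal is to produce a single group $G$ which is finitely presented and fails both accessibility properties. I would aim to build $G$ so that $\H(G)$ (and in fact $\AHG$) contains an infinite strictly ascending chain with no upper bound, which suffices to rule out a largest element. The natural strategy is to start from a concrete acylindrically hyperbolic group, produce inside it an explicit sequence of hyperbolically embedded subgroups $H_1, H_2, \dots$ arranged so that the induced structures (via Theorem \ref{main2}) form an unbounded chain, and then invoke Dunwoody's theorem (finitely presented groups are accessible — but here we want the opposite conclusion about \emph{hyperbolic} accessibility, so finite presentability is used only to guarantee $G$ is in the stated class, not to get accessibility). The key external inputs are: Theorem \ref{main2} (induced structures are injective and order-preserving), the Culler--Morgan-type rigidity Theorem \ref{main4}/Corollary \ref{cormain4} (to \emph{distinguish} the structures in the chain by translation spectra), and the classification Theorem \ref{main1}(c) guaranteeing that an acylindrically hyperbolic $G$ has $\AHG$ large.

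\medskip

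\textbf{Step 1: a candidate group.} I would take $G$ to be a finitely presented group that is acylindrically hyperbolic and has a nice subgroup structure — for instance a suitable Bestvina--Brady type group, a right-angled Artin group on a well-chosen graph, or a graph product, chosen so that one can exhibit an explicit nested or ``telescoping'' family of subgroups $K \le G$ that are hyperbolically embedded with respect to a fixed family of peripherals. The essential feature required is: there is a sequence of generating sets / relative generating sets $X_1, X_2, \ldots$ of $G$ with $[X_1] \prec [X_2] \prec \cdots$ in $\H(G)$, all acylindrically hyperbolic of general type, such that the sequence has \emph{no} upper bound in $\H(G)$. To get ``no upper bound'' I would arrange the translation length spectra to diverge: for each $i$ there is an element $g_i \in G$ with $\tau_{[X_j]}(g_i)$ bounded for $j \le i$ but $\tau_{[X_j]}(g_i) \to \infty$ appropriately, so that any hypothetical common upper bound $[Y]$ would have to make infinitely many incompatible elements simultaneously loxodromic with controlled lengths, contradicting coarse isospectrality constraints from Theorem \ref{main4}. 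Concretely it is cleaner to engineer an ascending chain in $\AHG$ using Theorem \ref{main2}: take $F_2 \times K \hookrightarrow_h G$, produce inside $\AH(F_2 \times K)$ (equivalently $\AH(F_2)$) an explicit ascending unbounded chain — e.g. the structures on $F_2$ obtained by coning off longer and longer cyclic subgroups or by the free-product constructions — and push it forward.

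\medskip

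\textbf{Step 2: unboundedness.} This is the crux. I would show the chain $\{[X_i]\}$ has no upper bound by a spectral argument. Suppose $[Y] \in \H(G)$ dominates every $[X_i]$. Domination means each $X_i$-metric is coarsely below the $Y$-metric, so $\tau_{[Y]}(g) \ge c\,\tau_{[X_i]}(g)$ for a fixed constant and all $g$ (up to additive error). Pick elements $g_i$ whose $[X_i]$-translation lengths are controlled but whose $[X_{i-1}]$-lengths are zero (elliptic in the smaller structure), built into the construction of the chain. Then the family $(g_i)$ must have translation lengths tending to infinity in $[Y]$, while in the union of the $[X_i]$ they behave like a bounded family relative to each fixed level — and one derives a contradiction with coarse isospectrality / the non-quasi-parabolic rigidity of Theorem \ref{main4} applied to $[Y]$ versus an appropriate ``limit'' structure, or more directly with an acylindricity bound on $\Gamma(G,Y)$ if we stay inside $\AHG$. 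The cleanest route: show no single acylindrical action can simultaneously realize all the $X_i$-actions as Lipschitz quotients, because acylindricity of $[Y]$ forces a uniform bound $N(\e)$ that is violated once the chain is long enough. I expect \textbf{this step to be the main obstacle}: making precise a sequence of subgroups / generating sets whose induced structures provably form a chain \emph{and} provably have no upper bound requires simultaneously controlling hyperbolicity constants, translation spectra, and acylindricity parameters along the whole infinite family, which is exactly the kind of delicate bookkeeping that such constructions demand.

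\medskip

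\textbf{Step 3: conclusion and the $\H$-statement.} Once the unbounded ascending chain in $\AHG$ is in hand, $G$ has no largest element in $\AHG$, hence is not $\AH$-accessible; and since the same chain lies in $\H(G)$ (with no upper bound there either, as $\AHG \subseteq \H(G)$ is compatible with the order and an upper bound in $\H(G)$ restricted appropriately would still violate the spectral bound), $G$ is not $\H$-accessible. Finite presentability of the chosen $G$ is verified directly from its construction (e.g. as an explicit graph product or RAAG-like group given by a finite presentation). I would then remark, as the excerpt promises via the pointer to Theorem \ref{AH-inacc}, that this sharpens the easy observation that $F_2 \times F_2$ is $\H$-inaccessible by exhibiting a single group witnessing \emph{both} failures in the finitely presented world.
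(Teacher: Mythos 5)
There is a genuine gap here, and it occurs exactly where you flag it: Step 2 is not an argument but a wish list, and the scaffolding you set up in Step 1 cannot support it. The specific plan of pushing an ascending chain forward from $\mathcal{AH}(F_2\times K)$ via Theorem \ref{main2} is dead on arrival: $F_2$ is hyperbolic, so $\mathcal{AH}(F_2)$ (hence $\mathcal{AH}(F_2\times K)$) has a largest element, every chain in it is bounded above, and since $\iota_X$ is order preserving its image is bounded above by the image of that top element. Even if you found an unbounded chain in some $\mathcal{AH}(H)$, unboundedness in the image of $\iota_X$ would not follow, because an upper bound in $\mathcal{AH}(G)$ need not lie in the image of $\iota_X$. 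Your fallback candidates are also problematic: right-angled Artin groups and mapping class groups are proved to be $\AH$-accessible in this very paper (Theorem \ref{main5}(b),(c)), so ``a RAAG on a well-chosen graph'' cannot be the example. Finally, the proposed contradiction via Theorem \ref{main4} against ``an appropriate limit structure'', or via ``a uniform acylindricity bound violated once the chain is long enough'', is not something that can be extracted from those statements as they stand.

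The paper's actual proof takes a different and more economical route that you may want to internalize: it does not build a chain at all, but exhibits \emph{two} incomparable structures with no common upper bound. One takes a finitely presented, non-hyperbolic normal subgroup $N$ of a hyperbolic group $G$ with $G/N\cong\mathbb Z$ (Brady's examples), chooses two elements $t$ and $s=ta^n$ whose elementary closures $E(t)$, $E(s)$ are separately and jointly hyperbolically embedded in $G$, and restricts the cobounded acylindrical actions of $N$ on $\Gamma(G,X\cup E(s))$ and $\Gamma(G,X\cup E(t))$ to get $A,B\in\mathcal{AH}(N)$. The key claim is that any $[Z]$ with $A\preccurlyeq[Z]$ and $B\preccurlyeq[Z]$ forces $Z$ to be finite: each $z\in Z$ is joined to $1$ by short geodesics $p_z$, $q_z$ in the two coned-off graphs, the loop $p_zq_z^{-1}$ has all its components isolated (the two peripherals are different, and each path is geodesic in its own graph), and Lemmas \ref{C} and \ref{lem:Z} then confine the labels to a finite set. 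Since $N$ is not hyperbolic, no finite generating set yields a hyperbolic structure, so neither $\H(N)$ nor $\mathcal{AH}(N)$ has a largest element. The essential idea your proposal is missing is precisely this mechanism for ruling out a common upper bound; ``finitely presented but not hyperbolic'' is what makes the contradiction land, not a divergence of translation spectra.
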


On the other hand, we prove that many groups traditionally studied in geometric group theory are $\AH$-accessible.

\begin{thm}\label{main5} The following groups are $\AH$-accessible.
\begin{enumerate}
\item[(a)] Finitely generated relatively hyperbolic groups whose parabolic subgroups are not acylindrically hyperbolic.
\item[(b)] Mapping class groups of punctured closed surfaces.
\item[(c)] Right-angled Artin groups.
\item[(d)] Fundamental groups of compact orientable $3$-manifolds with empty or toroidal boundary.
\end{enumerate}
\end{thm}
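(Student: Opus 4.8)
The plan is, for each group $G$ in the list, to exhibit a single acylindrically hyperbolic structure $A_G\in\AHG$ and to prove that $B\preccurlyeq A_G$ for every $B\in\AHG$. Since $[Y]\preccurlyeq[Z]$ is equivalent to $\sup_{z\in Z}|z|_Y<\infty$, it suffices to represent $A_G$ by a generating set of the form $Z=X\cup P_1\cup\dots\cup P_k$, where $X$ is finite and $P_1,\dots,P_k$ are subgroups of $G$, and then to show that each $P_i$ is elliptic in every hyperbolic structure belonging to $\AHG$: this gives $\sup_{z\in Z}|z|_Y<\infty$ for every $[Y]\in\AHG$, hence $[Y]\preccurlyeq[Z]=A_G$. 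The uniform idea is to choose each $P_i$ to be a subgroup that is neither virtually cyclic nor acylindrically hyperbolic. For such a subgroup, any acylindrical action on a hyperbolic space has bounded orbits: otherwise it would contain an element loxodromic for that action, and restricting the action to the weak hull of its limit set would exhibit the subgroup as acylindrically hyperbolic, or (in the lineal case) virtually cyclic, by Theorem \ref{main1}, contrary to hypothesis. Since acylindricity passes to the action of a subgroup on the same space, $P_i$ is then elliptic in the action of $G$ on $\Gamma(G,Y)$ for every $[Y]\in\AHG$. It thus remains to choose $A_G$, to verify $A_G\in\AHG$, and to identify the subgroups $P_i$ in each of the four cases.

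\emph{Part (a).} Let $G$ be finitely generated and hyperbolic relative to $\{H_1,\dots,H_n\}$. Virtually cyclic peripheral subgroups may be discarded without destroying relative hyperbolicity, so we may assume no $H_i$ is virtually cyclic; by hypothesis no $H_i$ is acylindrically hyperbolic. Fix a finite relative generating set $X$ and put $A_G=[X\cup\mathcal H]$ with $\mathcal H=H_1\cup\dots\cup H_n$. The action of $G$ on $\Gamma(G,X\cup\mathcal H)$ is cobounded, on a hyperbolic space, and acylindrical \cite{Osi16}, so $A_G\in\AHG$; by the previous paragraph each $H_i$, and a fortiori the whole generating set $X\cup\mathcal H$, is bounded in every $[Y]\in\AHG$, so $A_G$ is the largest element of $\AHG$.

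\emph{Parts (b)--(d).} Here one runs the same argument with case-specific choices. If $G=\mathrm{MCG}(\Sigma)$ is finite then $\AHG=\{[G]\}$, and if $G$ is an infinite hyperbolic group (the low-complexity cases) the word metric gives the largest element; otherwise let $A_G$ be the hyperbolic structure given by the action of $G$ on the curve complex $\mathcal C(\Sigma)$, which is cobounded, hyperbolic, and acylindrical by Bowditch \cite{Bow}, and take $P_1,\dots,P_k$ to be one curve stabilizer of each topological type (finitely many, since there are finitely many $G$-orbits of curves). Every element $g$ of a curve stabilizer is reducible or of finite order, and some power of $g$ fixes a curve $c$, so that power commutes both with $T_c$ and with a Dehn twist about a curve disjoint from $c$; hence the centralizer of that power contains $\mathbb Z^2$, so $g$ is non-loxodromic in every acylindrical action of $G$, and the strategy applies. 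For a right-angled Artin group $G=A_\Gamma$ we may assume $\Gamma$ is not a join and has at least two vertices (otherwise $G$ is $\mathbb Z$ or a nontrivial direct product, and $\AHG$ has a largest element for trivial reasons); let $A_G$ be the structure obtained by coning off all maximal ``join'' subgroups $A_J$ ($J\subseteq\Gamma$ a join subgraph, maximal among those), which gives a cobounded, hyperbolic, acylindrical action via the extension-graph / contact-graph geometry of $A_\Gamma$ and the relevant combination theorems (Kim--Koberda, Genevois), and take $P_1,\dots,P_k$ to be the maximal join subgroups. Each such $A_J=A_{J_1}\times A_{J_2}$ is a nontrivial direct product, so, using torsion-freeness, every nontrivial element of $A_J$ has centralizer containing $\mathbb Z^2$, hence is non-loxodromic in every acylindrical action of $G$, and the strategy applies. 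Finally, for $\pi_1(M)$ with $M$ compact orientable with empty or toroidal boundary, one reduces via the prime and JSJ decompositions: if $\pi_1(M)$ is not acylindrically hyperbolic (e.g. $M$ Seifert fibered, or a graph manifold) then $\AHG=\{[G]\}$; if $M$ is closed hyperbolic then $\pi_1(M)$ is hyperbolic and the word metric gives the largest element; in all remaining cases $\pi_1(M)$ is hyperbolic relative to a finite collection of subgroups each isomorphic to $\mathbb Z^2$ or to the fundamental group of a graph manifold, none of which is acylindrically hyperbolic, so part (a) applies.

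The main obstacle is, first, verifying that the candidate spaces are acylindrical — for mapping class groups this is Bowditch's theorem, for right-angled Artin groups it rests on their hierarchically hyperbolic structure, and for $3$-manifolds it requires the relative hyperbolicity of ``mixed'' $3$-manifold groups together with the fact that graph manifold groups are not acylindrically hyperbolic — and, second, arranging in parts (b) and (c) that $A_G$ is represented by a generating set involving only \emph{finitely many} of the relevant subgroups, so that pointwise ellipticity of each of them (the genuinely new input, coming from the centralizer computations above together with the classification of acylindrical actions on hyperbolic spaces \cite{Osi16}) yields boundedness of the entire generating set in every element of $\AHG$. The remaining steps — discarding virtually cyclic peripherals, the bookkeeping of the $3$-manifold case analysis, and the combination theorems for the coned-off Cayley graphs — are routine.
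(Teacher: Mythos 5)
Your general mechanism --- realize the candidate largest structure by a generating set $X\cup P_1\cup\dots\cup P_k$ with $X$ finite and each $P_i$ neither virtually cyclic nor acylindrically hyperbolic, so that Theorem \ref{tricho} forces each $P_i$ to be elliptic in every $[Y]\in\AHG$ and hence $[Y]\preccurlyeq[X\cup P_1\cup\dots\cup P_k]$ --- is exactly the paper's Proposition \ref{maxact}/Corollary \ref{maxact-cor} argument, and your parts (a) and (b) essentially coincide with the paper's proofs (relative Cayley graph; curve complex). The genuine gap is in part (d): you assert that the peripheral subgroups furnished by Theorem \ref{manifoldshyprel} --- in particular the fundamental groups of the maximal graph manifold pieces --- are not acylindrically hyperbolic, ``so part (a) applies.'' This is false. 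A graph manifold with nontrivial JSJ decomposition that is not finitely covered by a torus bundle has acylindrically hyperbolic fundamental group: by Theorem \ref{manifolddicho} its action on the JSJ Bass--Serre tree is acylindrical and cocompact on an infinite tree, the group contains $\mathbb Z^2$ and so is not virtually cyclic, hence by Theorem \ref{tricho} the action is of general type --- this is precisely how \cite{MO} establishes acylindrical hyperbolicity of such groups. So part (a) does not apply to these peripherals, and your argument collapses for any $N$ whose decomposition contains such a piece; the parenthetical claim that graph manifolds are examples with $\AHG=\{[G]\}$ fails for the same reason. What is needed, and what the paper supplies, is the stronger statement that graph manifold groups are \emph{strongly} $\AH$-accessible --- either virtually polycyclic, or with largest structure realized by the JSJ tree, whose Seifert fibered vertex groups have infinite cyclic normal subgroups and are therefore elliptic in every acylindrical action --- combined with Theorem \ref{relhypmax}, which only requires the peripherals to be strongly $\AH$-accessible rather than non-acylindrically-hyperbolic.

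Two smaller points. In (b), your element-wise computation is off: if $g^n$ fixes the curve $c$ then $g^n$ commutes with $T_c$, but there is no reason for $g^n$ to commute with a Dehn twist about a curve disjoint from $c$, so the claim that the centralizer of $g^n$ contains $\mathbb Z^2$ is unjustified as written. The paper sidesteps this by arguing with the whole vertex stabilizer, which centralizes $T_c$ (hence has infinite center) and contains $\langle T_c,T_b\rangle\cong\mathbb Z^2$; your version can be rescued, but only via the additional fact that the centralizer of a loxodromic element of an acylindrical action is virtually cyclic. In (c), the step that carries all the weight --- that coning off the maximal join subgroups yields a hyperbolic graph with acylindrical $G$-action --- is asserted by appeal to unspecified combination theorems, whereas the paper cones off the star subgroups, i.e.\ uses the Kim--Koberda extension graph, for which both properties have precise references; granting your assertion the two structures necessarily coincide (each family of subgroups is elliptic for the other's action), but the assertion itself is the nontrivial content and is not proved, and you do not address disconnected $\Gamma$, which the paper handles via the free product decomposition and Theorem \ref{relhypmax}.
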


We would like to note that following an early draft of this paper, parts (b) and (c) of the above theorem were independently and subsequently proven in \cite{ABD}, which additionally proves the $\AH$-accessibility of certain other groups using different methods.  The special case of part (d) of the above theorem when the 3-manifold has no Nil or Sol in its prime decomposition is also proven in \cite{ABD}.

\paragraph{2.6. Organization of the paper.} In the next section we introduce several useful notions (such as equivalence and weak equivalence of group actions, Svarc-Milnor map, etc.), which will be used throughout the paper. Section 4 is devoted to the general classification and examples of hyperbolic structures; Theorems \ref{main0}, \ref{lineal}, and \ref{n-gt-intr} are proved there; we also prove Theorem  \ref{main1} modulo Theorem \ref{main3}, which is proved later. In Section 5 we recall the definition of a hyperbolically embedded subgroup and discuss induced hyperbolic structures and their applications; in particular we prove Theorems \ref{main2}, and \ref{main6}. Theorems \ref{main3}, \ref{main4} and other results about loxodromic equivalence  are proved in Section 6. In Section 7 we prove results about the $\H$-- and $\AH$--accessibility of groups and prove Theorems \ref{inacc} and \ref{main5}. Finally we discuss some open problems in Section 8.


\section{Preliminaries}


\subsection{Comparing group actions}\label{lattice}

We begin by recalling some standard terminology. Throughout this paper, all group actions on metric spaces are isometric by default. Our standard notation for an action of a group $G$ on a metric space $S$ is $G\curvearrowright S$. Given a point $s\in S$ or a subset $X\subseteq S$ and an element $g\in G$, we denote by $gs$ (respectively, $gX$) the image of $s$ (respectively $X$) under the action of $g$. Given a group $G$ acting on a space $S$ and some $s\in S$, we also denote by $Gs$ the $G$-orbit of $s$.

In order to avoid dealing with proper classes we fix a cardinal number $c\ge 2^{\aleph _0}$ and, henceforth, we assume that all metric spaces have cardinality at most $c$.

An action of a group $G$ on a metric space $S$ is said to be

\begin{enumerate}
\item[--] \emph{proper} is for every bounded subset $B\subseteq S$ the set $\{ g\in G\mid gB\cap B\ne \emptyset\}$ is finite;

\item[--] \emph{cobounded} if there exists a bounded subset $B\subseteq S$  such that $S=\bigcup_{g\in G} gB$;

\item[--] \emph{geometric} if it is proper and cobounded.
\end{enumerate}

Given a metric space $S$, we denote by $\d_S$ the distance function on $S$ unless another notation is introduced explicitly. A map $f\colon R\to S$ between two metric spaces $R$ and $S$ is a \emph{quasi-isometric embedding} if there is a constant $C$ such that for all $x,y\in R$ we have
\begin{equation}\label{def-qi}
\frac1C\d_R(x,y)-C\le \d_S(f(x),f(y))\le C\d_R(x,y)+C;
\end{equation}
if, in addition, $S$ is contained in the $C$--neighborhood of $f(R)$, $f$ is called a \emph{quasi-isometry}.  Two metric spaces $R$ and $S$ are \emph{quasi-isometric} if there is a quasi-isometry $R\to S$. It is well-known and easy to prove that quasi-isometry of metric spaces is an equivalence relation.

If a group $G$ acts on metric spaces $R$ and $S$, a map $f\colon R\to S$ is called \emph{coarsely $G$-equivariant} if for every $r\in R$, we have
\begin{equation}\label{def-cGe}
\sup_{g\in G} \d_S(f(gr), gf(r))<\infty.
\end{equation}

Finally we recall the definition of equivalent group actions introduced in \cite{AHO}. Two actions  $G\curvearrowright R$ and $G\curvearrowright S$ are \emph{equivalent}, denoted $G\curvearrowright R\sim G\curvearrowright S$, if there exists a coarsely $G$-equivariant quasi-isometry $R\to S$. It is easy to prove (see \cite{AHO}) that $\sim$ is indeed an equivalence relation.

\begin{defn}\label{def-poset}
Let $G$ be a group. We say that $G\curvearrowright R$ \emph{dominates} $G\curvearrowright S$ and write $G\curvearrowright S \preceq G\curvearrowright R$ if there exist $r\in R$, $s\in S$, and a constant $C$ such that
\begin{equation}\label{dSdR}
\d_S(s,gs)\le C\d_R(r, gr)+C
\end{equation}
for all $g\in G$.
\end{defn}

\begin{ex}\label{bo-ex}
Assume that the action $G \curvearrowright S$ has bounded orbits. Then  $G \curvearrowright S\preceq G\curvearrowright R$ for any other action of $G$ on a metric space $R$.
\end{ex}

Equivalently, we could define the relation $\preceq$ as follows.

\begin{lem}\label{EA}
$G\curvearrowright S \preceq G\curvearrowright R$ if and only if for any $r\in R$ and any $s\in S$ there exists a constant $C$ such that (\ref{dSdR}) holds for all $g\in G$.
\end{lem}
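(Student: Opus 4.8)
The plan is to prove the two implications separately, with the forward direction being essentially a translation between fixed and arbitrary basepoints. First I would observe that one implication is trivial: if for \emph{any} $r\in R$ and \emph{any} $s\in S$ there is a constant $C$ with $\d_S(s,gs)\le C\d_R(r,gr)+C$ for all $g\in G$, then in particular such $r$, $s$, and $C$ exist, which is exactly Definition \ref{def-poset}. So the content of the lemma is the converse: from the existence of one good triple $(r_0,s_0,C_0)$ satisfying (\ref{dSdR}), deduce that \emph{every} pair $(r,s)$ admits some (possibly larger) constant.

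For the converse, fix arbitrary $r\in R$ and $s\in S$, and let $r_0\in R$, $s_0\in S$, $C_0$ be the data provided by the hypothesis $G\curvearrowright S\preceq G\curvearrowright R$. The idea is to compare $\d_S(s,gs)$ with $\d_S(s_0,gs_0)$ and $\d_R(r,gr)$ with $\d_R(r_0,gr_0)$ using the triangle inequality together with the fact that $G$ acts by isometries. Concretely, since $g$ is an isometry of $R$,
\begin{equation*}
\d_R(r_0,gr_0)\le \d_R(r_0,r)+\d_R(r,gr)+\d_R(gr,gr_0)=\d_R(r,gr)+2\d_R(r,r_0),
\end{equation*}
and similarly, using that $g$ is an isometry of $S$,
\begin{equation*}
\d_S(s,gs)\le \d_S(s,s_0)+\d_S(s_0,gs_0)+\d_S(gs_0,gs)=\d_S(s_0,gs_0)+2\d_S(s,s_0).
\end{equation*}
Chaining these with the hypothesis $\d_S(s_0,gs_0)\le C_0\d_R(r_0,gr_0)+C_0$ gives
\begin{equation*}
\d_S(s,gs)\le C_0\bigl(\d_R(r,gr)+2\d_R(r,r_0)\bigr)+C_0+2\d_S(s,s_0),
\end{equation*}
so setting $C=\max\{C_0,\ 2C_0\d_R(r,r_0)+C_0+2\d_S(s,s_0)\}$ yields $\d_S(s,gs)\le C\d_R(r,gr)+C$ for all $g\in G$, as required. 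Note that $\d_R(r,r_0)$ and $\d_S(s,s_0)$ are finite constants because $r_0,s_0$ are fixed points of the fixed spaces $R,S$; this is where the argument uses nothing beyond the metric space axioms and isometricity of the action.

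I do not expect a genuine obstacle here: the statement is a standard ``basepoint independence'' observation, and the only thing to be careful about is bookkeeping the additive constants so that a single $C$ works uniformly in $g$ (in particular, absorbing the $+C_0$ and the basepoint-translation terms into the additive part rather than the multiplicative part). The one conceptual point worth stating explicitly is that isometricity of the $G$-action is essential: it is what makes $\d_R(gr,gr_0)=\d_R(r,r_0)$ and $\d_S(gs_0,gs)=\d_S(s_0,s)$, so that the ``error'' incurred by changing basepoints does not grow with $g$.
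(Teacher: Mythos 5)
Your proof is correct and follows the same route as the paper: the backward implication is immediate from Definition \ref{def-poset}, and the forward implication rests on the observation that $|\d_Z(x,gx)-\d_Z(y,gy)|\le 2\d_Z(x,y)$ for any isometric action, which is exactly the basepoint-change estimate you derive via the triangle inequality in both $R$ and $S$. You simply spell out the bookkeeping that the paper leaves as "obvious."
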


\begin{proof}
The backward implication is obvious. The forward implication follows immediately from the obvious observation that for any action of a group $G$ on a metric space $Z$ and any $x,y\in Z$, we have $|\d_Z(x,gx)-\d_Z(y,gy)|\le 2 \d_Z(x,y)$.
\end{proof}

Recall that we assume all metric spaces to have cardinality at most $c$.

\begin{cor}\label{preorder}
The relation $\preceq $ is a preorder on the set of all $G$-actions on metric spaces.
\end{cor}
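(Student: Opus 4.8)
The statement to prove is Corollary~\ref{preorder}: the relation $\preceq$ on the set of all $G$-actions on metric spaces (of cardinality at most $c$) is a preorder, i.e., it is reflexive and transitive.

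\medskip

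The plan is to verify the two defining properties of a preorder directly from Definition~\ref{def-poset}, using Lemma~\ref{EA} to make transitivity painless.

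First, for \textbf{reflexivity}: given an action $G\curvearrowright S$, I need $G\curvearrowright S\preceq G\curvearrowright S$. This is immediate by taking $r=s$ (same point in the same space) and the constant $C=1$ in inequality~(\ref{dSdR}): we trivially have $\d_S(s,gs)\le \d_S(s,gs)+1$ for all $g\in G$.

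Second, for \textbf{transitivity}: suppose $G\curvearrowright S\preceq G\curvearrowright R$ and $G\curvearrowright R\preceq G\curvearrowright T$; I want $G\curvearrowright S\preceq G\curvearrowright T$. Fix any basepoints $s\in S$, $r\in R$, $t\in T$. By Lemma~\ref{EA}, the first domination gives a constant $C_1$ with $\d_S(s,gs)\le C_1\d_R(r,gr)+C_1$ for all $g\in G$, and the second gives a constant $C_2$ with $\d_R(r,gr)\le C_2\d_T(t,gt)+C_2$ for all $g\in G$. Composing, $\d_S(s,gs)\le C_1(C_2\d_T(t,gt)+C_2)+C_1 = C_1C_2\,\d_T(t,gt) + (C_1C_2+C_1)$, so taking $C=\max\{C_1C_2,\, C_1C_2+C_1\}$ (or simply $C=C_1C_2+C_1$, which dominates both coefficients since all constants may be assumed $\ge 1$) we get $\d_S(s,gs)\le C\d_T(t,gt)+C$ for all $g\in G$, which is exactly~(\ref{dSdR}) witnessing $G\curvearrowright S\preceq G\curvearrowright T$.

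\medskip

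There is essentially no obstacle here — the only mild subtlety is that Definition~\ref{def-poset} only asserts the existence of \emph{some} basepoints realizing the inequality, whereas for the chained estimate one wants a \emph{common} intermediate basepoint $r\in R$; Lemma~\ref{EA} resolves this by upgrading "for some basepoints" to "for all basepoints", so the intermediate point $r$ can be chosen first and then used in both applications. The cardinality restriction (all spaces have cardinality at most $c$) is only needed so that "the set of all $G$-actions on metric spaces" is genuinely a set rather than a proper class, which is why it is recalled just before the statement; it plays no role in the verification of reflexivity or transitivity themselves.
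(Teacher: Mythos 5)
Your proof is correct and follows exactly the paper's route: the paper's own proof simply notes that $\preceq$ is obviously reflexive and is transitive by Lemma \ref{EA}, and your write-up just spells out those two verifications (including the key point that Lemma \ref{EA} lets you chain through a common intermediate basepoint). Nothing further is needed.
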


\begin{proof}
The relation $\preceq$ is obviously reflexive and is transitive by Lemma \ref{EA}.
\end{proof}

In general, $G \curvearrowright S\preceq G\curvearrowright R$ and $G \curvearrowright R\preceq G\curvearrowright S$ does not imply $G \curvearrowright S\sim G\curvearrowright R$.
However, Corollary \ref{preorder} allows us to introduce the following.

\begin{defn} \label{def-we}
We say that two actions of a group $G$ on metric spaces $R$ and $S$ are \emph{weakly equivalent } if $G\curvearrowright R\preceq G\curvearrowright S$ and $G\curvearrowright S \preceq G\curvearrowright R$.  We use the notation $\sim _w$ for weak equivalence of group actions.
\end{defn}

It is sometimes convenient to use the following alternative definition of weak equivalence.

\begin{lem}\label{we-alt}
Two actions $G\curvearrowright R$ and $G\curvearrowright S$ are weakly equivalent if and only if there exists a coarsely $G$--equivariant quasi-isometry from a $G$-orbit in $R$ (endowed with the metric induced from $R$) to a $G$-orbit in $S$ (endowed with the metric induced from $S$).
\end{lem}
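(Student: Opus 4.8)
The plan is to prove both implications of Lemma~\ref{we-alt} directly from the definitions, using Lemma~\ref{EA} as the main bridge between the orbit-metric formulation and the inequality~(\ref{dSdR}).

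First I would prove the backward implication. Suppose $f\colon Gr \to Gs'$ is a coarsely $G$-equivariant quasi-isometry between a $G$-orbit $Gr\subseteq R$ and a $G$-orbit $Gs'\subseteq S$, with quasi-isometry constant $C_1$ and coarse equivariance constant $C_2=\sup_{g\in G}\d_S(f(gr),gf(r))$. Set $s=f(r)\in Gs'$. For any $g\in G$, the point $f(gr)$ lies within $C_2$ of $gf(r)=gs$, so $\d_S(s,gs)$ differs from $\d_S(f(r),f(gr))$ by at most $2C_2$ (using the coarse equivariance at $r$, which is the trivial case $g=1$, and at $g$). Combining this with the quasi-isometry inequality $\d_S(f(r),f(gr))\le C_1\d_R(r,gr)+C_1$ gives an estimate of the form $\d_S(s,gs)\le C\d_R(r,gr)+C$, so $G\curvearrowright S\preceq G\curvearrowright R$. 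The reverse domination $G\curvearrowright R\preceq G\curvearrowright S$ is obtained symmetrically using a quasi-inverse of $f$ (which is again coarsely $G$-equivariant because $f$ is). Hence the two actions are weakly equivalent.

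For the forward implication, assume $G\curvearrowright R\sim_w G\curvearrowright S$. Fix $r\in R$ and $s\in S$; by Lemma~\ref{EA}, weak equivalence gives a constant $C$ with $\tfrac1C\d_R(r,gr)-C'\le \d_S(s,gs)\le C\d_R(r,gr)+C$ for all $g\in G$ — that is, $g\mapsto gr$ and $g\mapsto gs$ give ``quasi-isometric'' parametrizations of the two orbits by $G$. I would then define $f\colon Gr\to Gs$ by $f(gr)=gs$. The content to check is that this is well-defined up to bounded error and is a coarse map: if $g_1r=g_2r$, then $g_1^{-1}g_2$ fixes $r$, so $\d_R(r,(g_1^{-1}g_2)r)=0$, hence $\d_S(s,(g_1^{-1}g_2)s)\le C$, which means $\d_S(g_1s,g_2s)\le C$; thus $f$ is well-defined as a map into $S$ after choosing one preimage per orbit point, and any two choices differ by at most $C$. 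The same two-sided inequality then shows $f$ is a quasi-isometric embedding $Gr\to Gs$, and it is onto $Gs$ by construction, hence a quasi-isometry. Coarse $G$-equivariance is immediate: $f(hgr)=hgs=h\cdot f(gr)$ exactly (or within the well-definedness constant $C$), so $\sup_{h}\d_S(f(h\cdot gr),h f(gr))\le C<\infty$ for every $gr\in Gr$.

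The main obstacle, and the only genuinely delicate point, is the well-definedness of the map $g\mapsto gs$ as a function of the orbit point $gr$ rather than of the group element $g$: different group elements can send $r$ to the same point while sending $s$ to different points, and one must observe that the displacement inequality~(\ref{dSdR}) applied to $g_1^{-1}g_2\in \mathrm{Stab}_R(r)$ controls exactly this ambiguity and bounds it by the constant $C$. Once that is in hand, everything else is a routine rearrangement of the defining inequalities, and I would be careful to state the map as defined on the orbit $Gr$ with a fixed choice of preimages, noting that quasi-isometries are only required to be well-defined up to bounded error anyway.
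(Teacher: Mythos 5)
Your proof is correct, and for the forward implication it takes a genuinely different route from the paper. The paper deliberately avoids the direct map $gr\mapsto gs$ (its remark after the lemma notes that this map ``may not be well-defined'' for non-free actions) and instead introduces auxiliary metric spaces $(G,\d_G^R)$ and $(G,\d_G^S)$ on the group itself, obtaining the desired quasi-isometry as a composition $(Gr,\d_R)\sim(G,\d_G^R)\sim(G,\d_G^S)\sim(Gs,\d_S)$. You instead confront the well-definedness issue head-on: you pick one representative $g$ per orbit point and observe that applying the domination inequality (\ref{dSdR}) to elements of the stabilizer of $r$ bounds the resulting ambiguity in $gs$ by the constant $C$, after which the quasi-isometry estimates and coarse equivariance follow from Lemma \ref{EA} exactly as you describe. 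This is precisely the delicate point you flagged, and your resolution is sound; the trade-off is that your construction requires a choice of orbit representatives, whereas the paper's auxiliary-space argument is choice-free and slightly slicker, at the cost of an extra layer of spaces. Your backward implication is the same argument the paper has in mind (it dismisses it as immediate); the only cosmetic slip is that the error between $\d_S(s,gs)$ and $\d_S(f(r),f(gr))$ is at most $C_2$ rather than $2C_2$, since $s=f(r)$ exactly, which only improves your constant.
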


\begin{proof}
The backward implication is an immediate consequence of Definition \ref{def-we}. To prove the forward implication, we fix any $r\in R$, $s\in S$, and define $\d_G^R\colon G\times G\to \mathbb R$ by $$\d_{G}^R(g,h)=\left\{ \begin{array}{cc}
                                                                 \d_R (gr, hr)+1 & {\rm if}\; g\ne h,  \\
                                                                 0 & {\rm if}\; g=h
                                                               \end{array}\right.$$
and similarly $\d_G^S\colon G\times G\to \mathbb R$. It is easy to see that $\d_G^R$ and $\d_G^S$ are metrics on $G$. Weak equivalence of the actions $G\curvearrowright R$ and $G\curvearrowright S$ together with Lemma \ref{EA} imply that the identity map on $G$ gives rise to a quasi-isometry between metric spaces $(G, \d_G^R)$ and $(G, \d_G^S)$; it is also obvious that the orbit map $G\to Gr$ gives rise to a quasi-isometry $(G, \d_G^R)\to (Gr, \d_R)$ and similarly $(G, \d_G^S)$ is quasi-isometric to $(Gs, \d_S)$. Note that all quasi-isometries mentioned in the previous sentence are $G$-equivariant. Therefore, we have
$$
(Gr, \d_R)\sim (G, \d_G^R)\sim (G, \d_G^S)\sim (Gs,\d_S),
$$
which implies that there exists a coarsely $G$-equivariant quasi-isometry between $(Gr,\d_R)$ and $(Gs,\d_S)$.
\end{proof}

\begin{rem}
If the action $G\curvearrowright R$ is free, we can simplify the proof of the lemma by verifying that the map $gr\mapsto gs$ is a $G$-equivariant quasi-isometry from $(Gr, \d_R)$ to $(Gs, \d_S)$. If the action is not free, this map may not be well-defined. The auxiliary spaces used in the proof allow us to overcome this problem.
\end{rem}

\begin{lem}\label{e-we} Let $G\curvearrowright R$ and $G \curvearrowright S$ be two actions of a group $G$ on metric spaces.
\begin{enumerate}
\item[(a)] If $G\curvearrowright R\sim  G \curvearrowright S$, then $G\curvearrowright R\sim_w G \curvearrowright S$.
\item[(b)] Suppose that the actions are cobounded and $G\curvearrowright R\sim _w G \curvearrowright S$. Then $G\curvearrowright R\sim G \curvearrowright S$.
\end{enumerate}
\end{lem}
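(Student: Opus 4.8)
The plan is to handle the two parts separately, with part (a) being essentially immediate and part (b) requiring the real work. For part (a), suppose $f\colon R\to S$ is a coarsely $G$-equivariant quasi-isometry witnessing $G\curvearrowright R\sim G\curvearrowright S$. Fix $r\in R$ and set $s_0=f(r)$. Using the quasi-isometry inequality \eqref{def-qi} together with coarse $G$-equivariance \eqref{def-cGe}, one gets for every $g\in G$ that $\d_S(s_0,gs_0)$ is bounded above by a linear function of $\d_S(f(gr),f(r))\le C\d_R(gr,r)+C$, up to the additive error coming from \eqref{def-cGe}; hence $\d_S(s_0,gs_0)\le C'\d_R(r,gr)+C'$ for a suitable constant $C'$, which is exactly $G\curvearrowright S\preceq G\curvearrowright R$. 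The reverse domination follows symmetrically after passing to a quasi-inverse of $f$ (which is again coarsely $G$-equivariant, a standard fact). Therefore $G\curvearrowright R\sim_w G\curvearrowright S$.

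For part (b), the key point is the Svarc--Milnor-type principle: a cobounded action of $G$ on a metric space is, up to coarsely $G$-equivariant quasi-isometry, the same as $G$ equipped with a word metric with respect to an appropriate (possibly infinite) generating set, and in any case a cobounded action is coarsely $G$-equivariantly quasi-isometric to any of its $G$-orbits. So the first step is to invoke (or reprove in one line) the fact that if $G\curvearrowright R$ is cobounded with a bounded set $B$ with $R=\bigcup_{g}gB$, then for any $r\in R$ the orbit inclusion $Gr\hookrightarrow R$ is a coarsely $G$-equivariant quasi-isometry; the same for $S$ and any $s\in S$. Thus $G\curvearrowright R\sim (Gr,\d_R)$ and $G\curvearrowright S\sim (Gs,\d_S)$ as actions (viewing each orbit as a $G$-space with the restricted metric). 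The second step is to apply Lemma \ref{we-alt}: since $G\curvearrowright R\sim_w G\curvearrowright S$, there is a coarsely $G$-equivariant quasi-isometry $(Gr,\d_R)\to(Gs,\d_S)$, i.e., $(Gr,\d_R)\sim(Gs,\d_S)$ as actions. Chaining these three equivalences and using that $\sim$ is an equivalence relation (transitivity, proved in \cite{AHO}) yields $G\curvearrowright R\sim G\curvearrowright S$.

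The step I expect to require the most care is the coboundedness input in part (b): one must check that the orbit inclusion $Gr\hookrightarrow R$ is genuinely a quasi-isometry, not merely a quasi-isometric embedding. Surjectivity-up-to-bounded-error is where coboundedness enters — every point of $R$ lies within $\mathrm{diam}(B)$ of some $gx_0$ where $x_0\in B$, and then one compares $gx_0$ with $gr$ using $\d_R(gx_0,gr)=\d_R(x_0,r)\le \mathrm{diam}(B)+\d_R(r,x_0)$, which is uniformly bounded. The lower quasi-isometry bound on the orbit is automatic since the metric on $Gr$ is the restriction of $\d_R$. One should also note that the statement "$Gr\hookrightarrow R$ is coarsely $G$-equivariant" is trivial here because the inclusion is literally $G$-equivariant on the nose. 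Everything else is bookkeeping with the constants, so I would not spell those out.
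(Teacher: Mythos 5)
Your proposal is correct and follows essentially the same route as the paper: part (a) via the direct estimate $\d_S(f(r),gf(r))\le \d_S(f(r),f(gr))+C\le C\d_R(r,gr)+2C$, and part (b) by combining Lemma \ref{we-alt} with the observation that coboundedness makes the orbit inclusions $(Gr,\d_R)\to R$ and $(Gs,\d_S)\to S$ into $G$-equivariant quasi-isometries. The only cosmetic difference is that in (a) you pass to a quasi-inverse for the reverse domination, whereas one can also read it off directly from the lower quasi-isometry bound; both work.
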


\begin{proof}
(a) Let $f\colon R\to S$ be a coarsely $G$-equivariant quasi-isometry. Let $C$ be a constant such that both (\ref{def-qi}) and (\ref{def-cGe}) are satisfied. We fix any $r\in R$ and let $s=f(r)$. Then for every $g\in G$, we have
$$
\d_S(s,gs)=\d_S(f(r),gf(r))\le \d_S(f(r), f(gr))+C \le C\d_R(r, gr)+2C.
$$
Hence $G\curvearrowright S\preceq  G \curvearrowright R$ and similarly $G\curvearrowright R\preceq  G \curvearrowright S$.

(b) By Lemma \ref{we-alt}, there is a coarsely $G$-equivariant quasi-isometry from a $G$-orbit $Gr\subseteq R$ to a $G$-orbit $Gs\subseteq S$. Since the actions are cobounded, the inclusions $(Gr, \d_R)\to R$ and $(Gs, \d_S)\to S$ are $G$-equivariant quasi-isometries as well and the claim follows.
\end{proof}

\subsection{Cobounded actions and the Svarc-Milnor map} \label{Sec-SM}

Given a group $G$, let $\AcG$ denote the set of all equivalence classes of cobounded $G$-actions on geodesic metric spaces (of cardinality at most $c$).  We define a relation $\preccurlyeq$ on $\AcG$ by
$$
[G\curvearrowright R]\preccurlyeq [G\curvearrowright S] \;\; \Leftrightarrow \;\; G\curvearrowright R\preceq G\curvearrowright S.
$$

It follows from Corollary \ref{preorder} and Lemma \ref{e-we} that $\preccurlyeq $ is a (well-defined) order relation on $\AcG$. We will show that the poset $\AcG$ can be naturally identified with the set of equivalence classes of generating sets of $G$ introduced in Definition \ref{def-GG}. There is little originality (if any) in this result, which is essentially a reformulation of the well-known Svarc-Milnor Lemma. The real goal of this subsection is rather to introduce convenient notation and terminology for future use.

\begin{lem}\label{eCay}
Let $X$, $Y$ be generating sets of a group $G$. Then $G\curvearrowright \Gamma(G,X)\preceq G\curvearrowright \Gamma(G,Y)$ if and only if $X\preceq Y$. In particular, $G\curvearrowright \Gamma(G,X)\sim G\curvearrowright \Gamma(G,Y)$ if and only if $X\sim Y$.
\end{lem}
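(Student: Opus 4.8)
The plan is to prove both directions of the equivalence $G\curvearrowright \Gamma(G,X)\preceq G\curvearrowright \Gamma(G,Y)$ iff $X\preceq Y$ directly from the definitions, using the base points $1\in G\subseteq \Gamma(G,X)$ and $1\in G\subseteq \Gamma(G,Y)$, and the fundamental observation that in any Cayley graph $\d_X(1,g)=|g|_X$ and more generally $\d_X(g,h)=|g^{-1}h|_X$. The second (``in particular'') sentence is then immediate: applying the first statement both ways gives $G\curvearrowright \Gamma(G,X)\preceq G\curvearrowright \Gamma(G,Y)$ and $G\curvearrowright \Gamma(G,Y)\preceq G\curvearrowright \Gamma(G,X)$ iff $X\sim Y$, and since both actions are cobounded (indeed, $G$ acts transitively on the vertex set, so the orbit of $1$ is $1$-dense), weak equivalence upgrades to equivalence by Lemma \ref{e-we}(b); alternatively, one observes that $\preceq$ in both directions between Cayley graphs of the \emph{same} group with base point $1$ is literally weak equivalence, so no appeal to coboundedness is even needed beyond citing Lemma \ref{e-we}(b).

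For the forward direction of the main equivalence, suppose $G\curvearrowright\Gamma(G,X)\preceq G\curvearrowright\Gamma(G,Y)$. By Lemma \ref{EA}, we may take the base points to be $r=1\in\Gamma(G,Y)$ and $s=1\in\Gamma(G,X)$, so there is a constant $C$ with $\d_X(1,g\cdot 1)\le C\,\d_Y(1,g\cdot 1)+C$ for all $g\in G$, i.e. $|g|_X\le C|g|_Y+C$ for all $g$. In particular, for each $y\in Y$ we get $|y|_X\le C\cdot 1+C=2C$, so $\sup_{y\in Y}|y|_X<\infty$, which by Definition \ref{def-GG} is exactly $X\preceq Y$.

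For the converse, suppose $X\preceq Y$, so $D:=\sup_{y\in Y}|y|_X<\infty$. The key step is to show $|g|_X\le D|g|_Y$ for every $g\in G$: writing $g=y_1\cdots y_k$ as a geodesic word in $Y^{\pm 1}$ with $k=|g|_Y$, and noting $|y^{-1}|_X=|y|_X\le D$, the triangle inequality for $\d_X$ gives $|g|_X\le\sum_{i=1}^k|y_i|_X\le Dk=D|g|_Y$. Hence $\d_X(1,g\cdot 1)\le D\,\d_Y(1,g\cdot 1)+D$ for all $g$ (the additive $+D$ is only needed to match the form of \eqref{dSdR} and handles nothing extra here), witnessing $G\curvearrowright\Gamma(G,X)\preceq G\curvearrowright\Gamma(G,Y)$ by Definition \ref{def-poset}. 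There is no real obstacle here — the whole content is the elementary inequality $|g|_X\le D|g|_Y$ together with the bookkeeping identification of $\preceq$ on generating sets with $\preceq$ on the associated $G$-actions; the only point requiring a moment's care is invoking Lemma \ref{EA} to arrange convenient base points in the forward direction rather than unspecified ones, and invoking Lemma \ref{e-we}(b) (via coboundedness of Cayley-graph actions) to pass from the two-sided $\preceq$ to genuine equivalence $\sim$ in the last sentence.
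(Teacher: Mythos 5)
Your proposal is correct and follows essentially the same route as the paper: the forward direction uses Lemma \ref{EA} to place both base points at $1$ and evaluates on generators, and the converse is the triangle-inequality estimate $|g|_X\le D|g|_Y$ with $D=\sup_{y\in Y}|y|_X$, which the paper states more tersely. Your explicit appeal to Lemma \ref{e-we}(b) for the ``in particular'' clause is a reasonable way to make precise a step the paper leaves implicit.
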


\begin{proof}
Suppose that $G\curvearrowright \Gamma(G,X)\preceq G\curvearrowright \Gamma(G,Y)$. By Lemma \ref{EA} we can assume that (\ref{dSdR}) holds for the base points $s=1$ in $S=\Gamma(G,X)$ and $r=1$ in $R=\Gamma(G,Y)$. Thus for every $y\in Y$, we obtain
$$
|y|_X =\d_X(y,1)\le C\d_Y(y,1)+C=2C
$$
Thus $X\preceq Y$. Conversely, if $X\preceq Y$, then (\ref{dSdR}) holds with $C=\sup_{y\in Y}|y|_X$ for the choice of base points as above.
\end{proof}

\begin{lem}\label{lem-Lip}
Let $G$ be a group generated by a set $X$ and acting on a metric space $S$. Suppose that for some $s\in S$, we have $\sup _{x\in X} \d_S(s, xs)<\infty $. Then the orbit map $g\mapsto gs$ is a Lipschitz map from $(G, \d_X)$ to $S$. In particular, if $G$ is finitely generated, the orbit map is always Lipschitz.
\end{lem}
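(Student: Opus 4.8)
The plan is to estimate the displacement of an element letter by letter along a geodesic word, using only the triangle inequality and the fact that $G$ acts by isometries on $S$. Put $D=\sup_{x\in X}\d_S(s,xs)$, which is finite by hypothesis. Since each element of $G$ acts isometrically, $\d_S(s,x^{-1}s)=\d_S(xs,s)=\d_S(s,xs)$, so $\sup_{x\in X\cup X^{-1}}\d_S(s,xs)=D$ as well; this lets me work with the symmetrized generating set $X\cup X^{-1}$ without changing the constant, which is convenient because the word metric $\d_X$ is defined using $X\cup X^{-1}$.

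Next I would fix arbitrary $g,h\in G$ and write $g^{-1}h$ as a geodesic word $g^{-1}h=x_1x_2\cdots x_n$ with each $x_i\in X\cup X^{-1}$ and $n=\d_X(g,h)$. Using that left multiplication by $g$ is an isometry of $S$, then telescoping along the partial products $x_1\cdots x_i$ and applying invariance of the metric once more, I obtain
$$
\d_S(gs,hs)=\d_S(s,g^{-1}hs)=\d_S(s,x_1\cdots x_ns)\le \sum_{i=1}^{n}\d_S(x_1\cdots x_{i-1}s,\,x_1\cdots x_is)=\sum_{i=1}^{n}\d_S(s,x_is)\le nD=D\,\d_X(g,h).
$$
Hence the orbit map $g\mapsto gs$ is $D$-Lipschitz from $(G,\d_X)$ to $S$.

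Finally, for the last assertion: if $G$ is finitely generated, choose any finite generating set $X$; then $\{\d_S(s,xs)\mid x\in X\}$ is a finite set of nonnegative reals, hence bounded, so the hypothesis is automatically satisfied and the orbit map is Lipschitz. I do not expect any real obstacle in this argument; the only point deserving a word of care is the asymmetry of a general generating set $X$ versus the symmetric set $X\cup X^{-1}$ used in the word metric, which is dispatched by the isometry observation at the outset.
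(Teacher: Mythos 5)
Your proof is correct and follows essentially the same route as the paper's: write a geodesic word in $X^{\pm 1}$, telescope along the partial products, and use the isometric action to reduce each summand to $\d_S(s,x_is)\le D$. The only (minor) addition is that you make explicit the observation $\d_S(s,x^{-1}s)=\d_S(s,xs)$ justifying the use of $X\cup X^{-1}$, a point the paper leaves implicit.
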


\begin{proof}
Let $g\in G$. Suppose that $g=x_1x_2\ldots x_m$ for some $x_1, x_2, \ldots, x_m\in X^{\pm 1}$, where $m=\d_X(1,g)$.  Then we have
\begin{equation}\label{qi1}
\begin{array}{rcl}
\d_S(s, gs)& \le &  \d_S (s, x_1s) + \d_S (x_1s, x_1x_2s) +\cdots + \d_S(x_1\cdots x_{m-1}s , x_1\cdots x_ms)\\&&\\&\le & \d_S (s, x_1s) + \d_S (s, x_2s) +\cdots + \d_S(s , x_ms) \\ &&\\ &\le & \d_X(1,g) \sup _{x\in X} \d_S(s, xs).
\end{array}
\end{equation}
\end{proof}

The following is a variation of the well-known Svarc-Milnor Lemma. The proof is standard; we provide it for the convenience of the reader.

\begin{lem}\label{lemMS}
Let $G$ be a group acting coboundedly on a geodesic metric space $S$. Let $B\subseteq S$ be a bounded subset such that $\bigcup_{g\in G} gB=S$. Let $D={\rm diam} (B)$ and let $b$ be any point of $B$. Then the group $G$ is generated by the set
$$
X= \{ g\in G\mid \d_S(b, gb)\le 2D+1\}
$$
and the natural action of $G$ of its Cayley graph $\Gamma (G, X)$ is equivalent to $G\curvearrowright S$.
\end{lem}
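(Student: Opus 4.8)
The plan is to prove that $X=\{g\in G\mid \d_S(b,gb)\le 2D+1\}$ generates $G$ by the standard connectivity/chain argument, and then that $\Gamma(G,X)$ with its left-multiplication action is equivalent to $G\curvearrowright S$ by exhibiting an explicit coarsely $G$-equivariant quasi-isometry, namely the orbit map $g\mapsto gb$ composed with (a coarse inverse coming from) coboundedness.

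First I would show $G=\langle X\rangle$. Fix $g\in G$. Since $S$ is geodesic, pick a geodesic $\gamma$ from $b$ to $gb$ and choose points $b=p_0,p_1,\dots,p_k=gb$ along $\gamma$ with $\d_S(p_{i-1},p_i)\le 1$ for all $i$. Using coboundedness, for each $i$ choose $h_i\in G$ with $p_i\in h_iB$, taking $h_0=1$ and $h_k=g$ (this is possible because $b\in B$ forces $p_0\in 1\cdot B$ and $p_k\in g\cdot B$). Then $\d_S(h_{i-1}b,h_ib)\le \d_S(h_{i-1}b,p_{i-1})+\d_S(p_{i-1},p_i)+\d_S(p_i,h_ib)\le D+1+D=2D+1$, so $h_{i-1}^{-1}h_i\in X$. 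Hence $g=h_k=(h_0^{-1}h_1)(h_1^{-1}h_2)\cdots(h_{k-1}^{-1}h_k)$ is a product of elements of $X$, proving $G=\langle X\rangle$.

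Next I would establish the equivalence of actions. The orbit map $\varphi\colon \Gamma(G,X)\to S$, $g\mapsto gb$, is $G$-equivariant (exactly, hence coarsely $G$-equivariant), and by construction every generator $x\in X$ satisfies $\d_S(b,xb)\le 2D+1$, so by Lemma \ref{lem-Lip} the orbit map is Lipschitz: $\d_S(gb,hb)\le (2D+1)\,\d_X(g,h)$. For the reverse inequality, the chain argument above shows $\d_X(1,g)\le k$ where $k$ can be taken to be roughly $\lceil \d_S(b,gb)\rceil$; more carefully, subdividing a geodesic of length $\d_S(b,gb)$ into $\lceil \d_S(b,gb)\rceil$ pieces of length $\le 1$ gives $\d_X(1,g)\le \lceil \d_S(b,gb)\rceil\le \d_S(b,gb)+1$, and by left-invariance $\d_X(g,h)=\d_X(1,g^{-1}h)\le \d_S(b,g^{-1}hb)+1=\d_S(gb,hb)+1$. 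Combining the two estimates, $\varphi$ is a quasi-isometric embedding. Finally coboundedness of $G\curvearrowright S$ gives that $S$ lies in the $D$-neighborhood of the orbit $Gb=\varphi(G)$, so $\varphi$ is onto up to bounded error, hence a quasi-isometry. Therefore $\varphi$ is a coarsely (indeed strictly) $G$-equivariant quasi-isometry $\Gamma(G,X)\to S$, which means $G\curvearrowright\Gamma(G,X)\sim G\curvearrowright S$.

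I don't expect a genuine obstacle here: this is the classical Svarc--Milnor argument, and the only mildly delicate point is bookkeeping the additive constants so that the chain estimate yields both that $X$ generates and that the orbit map is a quasi-isometric embedding with uniform constants. The key observations that make everything uniform are that $B$ has diameter $D$ (so any two orbit points whose $B$-translates meet a common unit segment are within $2D+1$) and that one can always arrange the endpoints of the chain to be $h_0=1$ and $h_k=g$ since $b$ itself lies in $B$. Once these are in place, the Lipschitz bound from Lemma \ref{lem-Lip} and the elementary subdivision bound close the argument.
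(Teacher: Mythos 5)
Your proposal is correct and follows essentially the same route as the paper: the same chain/subdivision argument along a geodesic from $b$ to $gb$ (with $h_0=1$, $h_k=g$ and the triangle-inequality bound $2D+1$) to show $X$ generates and to get the upper bound $\d_X(1,g)\le \d_S(b,gb)+1$, combined with Lemma \ref{lem-Lip} for the Lipschitz direction and coboundedness for coarse surjectivity of the orbit map.
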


\begin{proof}
It suffices to show that the orbit map $\phi \colon g\mapsto gb$ is a $G$--equivariant quasi-isometry from the vertex set of $\Gamma (G, X)$ (which we identify with $G$) to $S$.

The $G$-equivariance of $\phi$ is obvious. Given $g\in G$, let $\gamma\colon [0,L]\to S $ be a geodesic between $b$ and $gb$ parameterized by length. Let $n=\lfloor L\rfloor$ and let
$$
s_0=\gamma(0)=b,\; s_1=\gamma(1),\; \ldots ,\; s_n=\gamma (n),\; s_{n+1}=\gamma (L)=gb.
$$
Since $\bigcup_{g\in G} gB=S$, for every $i=0, 1, \ldots, n+1$, there exists $g_i\in G$ such that $\d_S(s_i, g_ib)\le D$. Without loss of generality we can assume that $g_0=1$ and $g_{n+1}=g$. Let $x_i=g_{i-1}^{-1}g_i$ for $i=1, \ldots, n$. For every $i=1, \ldots, n$, we have
$$
\d_S( b, x_ib)= \d_S (g_{i-1}b, g_i{b})\le \d_S(g_{i-1}b, s_{i-1})+\d_S(s_{i-1}, s_i) + \d_S(s_i, g_ib) \le 2D+1.
$$
Thus $x_i\in X$. Clearly $g=x_1x_2\cdots x_{n+1}$ and hence $X$ generates $G$ and $\d_X(1,g)\le n+1\le \d_S(b, gb)+1$. It follows immediately from Lemma \ref{lem-Lip} and the definition of $X$ that $\phi $ is Lipschitz. Since the action of $G$ on $S$ is cobounded, $\phi$ is indeed a quasi-isometry.
\end{proof}

\begin{prop}\label{prop-SM}
The map $\GG\to \AcG$ defined by $[X]\mapsto [G \curvearrowright\Gamma (G,X)]$ for every $[X]\in \GG$ is well-defined and is an isomorphism of posets.
\end{prop}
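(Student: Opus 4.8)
The plan is to verify the three things the statement asserts: that the map $[X]\mapsto [G\curvearrowright\Gamma(G,X)]$ is well-defined, that it is injective and order-reflecting, and that it is surjective; together with the fact that it is order-preserving, these give the poset isomorphism. Almost everything is already packaged in the lemmas of this subsection, so the work is mostly bookkeeping.

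First I would check \emph{well-definedness and order-preservation simultaneously}. If $X\sim Y$ are generating sets of $G$, then by Lemma \ref{eCay} the actions $G\curvearrowright\Gamma(G,X)$ and $G\curvearrowright\Gamma(G,Y)$ are equivalent, so $[G\curvearrowright\Gamma(G,X)]=[G\curvearrowright\Gamma(G,Y)]$ in $\AcG$; hence the map depends only on $[X]$. Moreover Cayley graphs are geodesic and the action on a Cayley graph is cobounded (the single vertex $1$ has bounded orbit under the vertex group, and edges have length one), so the image genuinely lies in $\AcG$. For order-preservation, $[X]\preccurlyeq[Y]$ means $X\preceq Y$, which by Lemma \ref{eCay} is equivalent to $G\curvearrowright\Gamma(G,X)\preceq G\curvearrowright\Gamma(G,Y)$, i.e. $[G\curvearrowright\Gamma(G,X)]\preccurlyeq[G\curvearrowright\Gamma(G,Y)]$ in $\AcG$. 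The same equivalence, read backwards, shows the map is \emph{order-reflecting}, and in particular injective: if the images are equal then each dominates the other, so $X\preceq Y$ and $Y\preceq X$, whence $[X]=[Y]$.

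The only substantive point — and the one I expect to be the main obstacle, though it is not hard — is \emph{surjectivity}. Given a class $[G\curvearrowright S]\in\AcG$, the action is cobounded on a geodesic space, so Lemma \ref{lemMS} applies: there is a generating set $X=\{g\in G\mid \d_S(b,gb)\le 2D+1\}$ of $G$ such that $G\curvearrowright\Gamma(G,X)$ is equivalent to $G\curvearrowright S$. Thus $[X]\in\GG$ maps to $[G\curvearrowright S]$, proving surjectivity. (One should note the harmless cardinality bookkeeping: $\Gamma(G,X)$ has cardinality $|G|\le c$ since $S$ does, so it is a legitimate element of $\AcG$.) Combining: the map is a well-defined, order-preserving, order-reflecting bijection between posets, hence an isomorphism of posets, which is exactly the claim.

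Finally I would remark that order-reflecting plus order-preserving plus bijective is precisely the definition of a poset isomorphism — there is nothing further to check — and that the inverse map sends $[G\curvearrowright S]$ to the class of the generating set produced by Lemma \ref{lemMS}, which is independent of the choices of $B$ and $b$ by the same equivalence argument.
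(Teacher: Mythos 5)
Your proposal is correct and follows exactly the paper's route: the paper's proof is a two-line appeal to Lemma \ref{eCay} for well-definedness, order-preservation and injectivity, and to Lemma \ref{lemMS} for surjectivity. Your version simply spells out the same bookkeeping (including the order-reflecting observation needed to conclude a poset isomorphism) in more detail.
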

\begin{proof}
That the map is order-preserving and injective follows from Lemma \ref{eCay}. Surjectivity follows from Lemma \ref{lemMS}.
\end{proof}

\begin{defn}[Svarc-Milnor map]
Given a group $G$, we denote by $\sigma \colon \AcG\to \GG$ the inverse of the isomorphism described in Proposition \ref{prop-SM}. We call $\sigma $ the  \emph{Svarc-Milnor map}.
\end{defn}

It follows from Proposition \ref{prop-SM} that $\sigma $ can be alternatively defined as an isomorphism of posets $\AcG\to \GG$ such that for every cobounded action $G\curvearrowright S$, we have

\begin{equation}\label{SM-altdef}
G\curvearrowright \Gamma (G, X) \sim G\curvearrowright S
\end{equation}
for every $X\in \sigma ([G\curvearrowright S])$.

In particular, the Svarc-Milnor map associates hyperbolic (respectively, acylindrically hyperbolic) structures on a group $G$ to cobounded actions (respectively, cobounded acylindrical actions) of $G$ on hyperbolic spaces. Indeed this follows from (\ref{SM-altdef}), the well-known fact that hyperbolicity of a geodesic space is a quasi-isometry invariant, and the obvious fact that acylindricity of an action is preserved under the equivalence.

Sometimes, we can also extract hyperbolic structures from non-cobounded actions. The following result will be used several times in this paper.  A subset $T$ of a hyperbolic space $S$ is called \emph{quasi-convex} if there exists a constant $\rho \ge 0$ such that every geodesic in $S$ connecting two points of $T$ belongs to the closed $\rho$-neighborhood of $T$.

\begin{prop} \label{cobdd}
Let $G\curvearrowright S$ be an action (respectively, acylindrical action) of $G$ on a hyperbolic space such that $G$ has a quasi-convex orbit $Gs$ for some $s\in S$.  Then there exists $[X] \in \HG$ (respectively, $\AHG$) such that $G \acts S \sim_w G \acts \Ga(G, X)$.
\end{prop}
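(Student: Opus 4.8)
The plan is to reduce the non-cobounded action $G \acts S$ to a cobounded one by restricting attention to the quasi-convex orbit $Gs$, and then invoke the Svarc-Milnor machinery from Proposition~\ref{prop-SM}. First I would observe that a quasi-convex subset $T$ of a hyperbolic space $S$ is itself hyperbolic when endowed with the induced (subspace) metric: this is standard, since the ``coarse geodesics'' inside $T$ — images of geodesics of $S$ connecting points of $T$, which lie in the $\rho$-neighborhood of $T$ — are uniform quasi-geodesics of $T$, and one checks the thin-triangles condition for these. In particular $Gs$, with the metric induced from $S$, is a hyperbolic space, and $G$ acts on it isometrically with this action transitive, hence cobounded. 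I should be a little careful here because $Gs$ with the induced metric need not be a \emph{geodesic} space, which the Svarc-Milnor apparatus (Lemma~\ref{lemMS}, Proposition~\ref{prop-SM}) nominally requires; but the induced-metric orbit $Gs$ is quasi-isometric to a genuine geodesic hyperbolic space, for instance to the Cayley graph $\Gamma(G,X)$ for a suitable $X$, which is exactly what we are trying to produce. So the cleaner route is to bypass a literal application of Lemma~\ref{lemMS} and instead build $X$ by hand.

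Concretely, I would set
\[
X = \{\, g \in G \mid \d_S(s, gs) \le 2\rho + 1 \,\},
\]
mimicking the generating set in Lemma~\ref{lemMS} with the quasi-convexity constant $\rho$ playing the role of $D$, and show: (i) $X$ generates $G$, and (ii) the orbit map $g \mapsto gs$ is a $G$-equivariant quasi-isometry from $(G, \d_X)$ onto $(Gs, \d_S)$. For (i) and the quasi-isometry lower bound, given $g \in G$, take a geodesic $\gamma$ in $S$ from $s$ to $gs$; by quasi-convexity every point of $\gamma$ lies within $\rho$ of $Gs$, so sampling $\gamma$ at unit intervals $s_0 = s, s_1, \dots, s_n, s_{n+1} = gs$ and choosing $g_i \in G$ with $\d_S(s_i, g_i s) \le \rho$ (with $g_0 = 1$, $g_{n+1} = g$), the elements $x_i = g_{i-1}^{-1} g_i$ satisfy $\d_S(s, x_i s) \le 2\rho + 1$, so $x_i \in X$, $g = x_1 \cdots x_{n+1}$, and $\d_X(1,g) \le n+1 \le \d_S(s, gs) + 1$. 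The upper (Lipschitz) bound is immediate from Lemma~\ref{lem-Lip}, since $\sup_{x \in X} \d_S(s, xs) \le 2\rho+1 < \infty$ by construction. Thus $g \mapsto gs$ is a $G$-equivariant quasi-isometry $(G, \d_X) \to (Gs, \d_S)$, and since $(G,\d_X)$ is quasi-isometric to the geodesic space $\Gamma(G,X)$ we conclude $\Gamma(G,X)$ is hyperbolic, i.e. $[X] \in \HG$, and moreover this quasi-isometry witnesses $G \acts \Gamma(G,X) \sim_w G \acts S$ via Lemma~\ref{we-alt} (the orbit $Gs \subseteq S$ and the orbit of $1$ in $\Gamma(G,X)$ are $G$-equivariantly quasi-isometric).

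For the acylindrical case, the additional point is that acylindricity is inherited by the restricted action $G \acts Gs$: acylindricity is a condition on $G$ together with a choice of pairs of points that are far apart, and if $G \acts S$ is acylindrical then applying the definition to pairs $x, y \in Gs \subseteq S$ directly gives the required bound for $G \acts Gs$ (the constants $R(\e), N(\e)$ transfer verbatim). Since $G \acts Gs$ is then equivalent — indeed weakly equivalent, and cobounded, hence equivalent by Lemma~\ref{e-we}(b) after replacing $Gs$ by $\Gamma(G,X)$ — to $G \acts \Gamma(G,X)$, and acylindricity is preserved under equivalence of actions, the Cayley graph action is acylindrical, so $[X] \in \AHG$.

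The main obstacle I anticipate is the mild bookkeeping around the fact that $Gs$ with its induced metric is not a geodesic space, which is why the $\AcG$/$\sigma$ formalism cannot be applied verbatim to $G \acts S$ itself; the fix, as above, is simply to produce the Cayley graph $\Gamma(G,X)$ directly (it \emph{is} geodesic) and check the weak equivalence at the level of orbits using Lemma~\ref{we-alt}, never needing $G \acts S$ to be cobounded. Everything else is a routine re-run of the proof of Lemma~\ref{lemMS} with $\rho$ in place of $\mathrm{diam}(B)$.
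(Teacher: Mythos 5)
Your proposal is correct and follows essentially the same route as the paper: the paper builds an auxiliary graph on the vertex set $Gs$ with an edge between two orbit points at $S$-distance at most $2\rho+1$ and then applies the Svarc--Milnor map, which yields precisely your generating set $X=\{g\in G\mid \d_S(s,gs)\le 2\rho+1\}$, and it establishes the same two-sided comparison between $\d_X$ and $\d_S$ on the orbit. The only step worth tightening is the hyperbolicity of $\Gamma(G,X)$: as in the paper, one should say that the orbit map extends to a quasi-isometric embedding of the geodesic space $\Gamma(G,X)$ into the hyperbolic geodesic space $S$ and invoke \cite[Theorem III.H.1.9]{BH}, rather than routing the argument through the non-geodesic space $(Gs,\d_S)$ -- a point you already flag and which your construction handles.
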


\begin{proof} The idea of the proof is to first construct a cobounded action of $G$ on a graph $\Gamma$ quasi-isometric to the orbit $Gs$, and then apply the Svarc-Milnor map.  Letting $\rho$ be the quasi-convexity constant of the orbit $Gs$, we construct the graph $\Ga$ as follows:  vertices of $\Ga$ are elements of the orbit $Gs$, and there is an edge between two vertices $g_1s$ and $g_2s$ if $\d_S(g_1s,g_2s) \leq 2\rho +1$.  We consider $\Gamma$ to be a metric space with the combinatorial metric.  The action of $G$ on the vertices of $\Ga$ is induced by the action of $G$ on $Gs$, and then extended to edges. It is easy to check that
\begin{equation} \label{qiineq}
d_\Ga (u ,v) \leq d_S(u,v) \leq (2\rho +1) d_\Ga(u,v)
\end{equation}
for all $u,v \in Gs.$

Define a map $\phi\colon \Gamma \to S$ as follows:  for each vertex $u$ of $\Gamma$, let $\phi(u)=u$, and for each edge $e$ of $\Gamma$, let $\phi(e)$ be a geodesic in $S$ from $\phi(e_-)$ to $\phi(e_+)$.  Then (\ref{qiineq}) implies that $\phi$ is a quasi-isometric embedding, and so by \cite[Theorem III.H.1.9]{BH}, $\Gamma$ is hyperbolic.

The inequality (\ref{qiineq}) also implies that $G\acts \Gamma \sim _w G\acts S$. Taking $X\in \sigma ([G\acts \Gamma])$ and applying Proposition \ref{prop-SM} completes the proof.
\end{proof}


\section{Hyperbolic structures on groups: general classification and examples}


\subsection{Types of hyperbolic structures}

We begin by recalling some standard facts about groups acting on hyperbolic spaces. For details the reader is referred to \cite{Gro}.

In this paper we employ the definition of hyperbolicity via the Rips condition. That is, a metric space $S$ is called \emph{$\delta$-hyperbolic} if it is geodesic and for any geodesic triangle $\Delta $ in $S$, each side of $\Delta $ is contained in the union of the closed $\delta$-neighborhoods of the other two sides.

The \emph{Gromov product} of points $x,y$ with respect to a point $z$ in a metric space $(S,d)$ is defined by
$$
(x|y)_z=\frac12( \d(x,z) +\d (y,z) -\d (x,y)).
$$

Given a hyperbolic space $S$, by $\partial S$ we denote its Gromov boundary. In general, we do not assume that $S$ is proper. Thus the boundary is defined as the set of equivalence classes of sequences convergent at infinity. More precisely, a sequence $(x_n)$ of elements of $S$ converges at infinity if $(x_i|x_j)_s\to \infty $ as $i,j\to \infty$ (this definition is clearly independent of the choice of $s$). Two such sequences $(x_i)$ and $(y_i)$ are equivalent if $(x_i|y_j)_s\to \infty$ as $i,j\to \infty$. If $a$ is the equivalence class of $(x_i)$, we say that the sequence $x_i$ converges to $a$. This defines a natural topology on $S\cup \partial S$ with respect to which $S$ is dense in $S\cup \partial S$.

From now on, let $G$ denote a group acting (by isometries) on a hyperbolic space $S$. By $\Lambda (G)$ we denote the set of limit points of $G$ on $\partial S$. That is, $$\Lambda (G)=\partial S\cap \overline{Gs},$$ where $\overline{Gs}$ denotes the closure of a $G$-orbit in $S\cup \partial S$; it is easy to show that this definition is independent of the choice of $s\in S$. The action of $G$ is called \emph{elementary} if $|\Lambda (G)|\le 2$ and \emph{non-elementary} otherwise. The action of $G$ on $S$ naturally extends to a continuous action of $G$ on $\partial S$.

Recall that an element $g\in G$ is called
\begin{enumerate}
\item[--] \emph{elliptic} if $\langle g\rangle $ has bounded orbits;
\item[--] \emph{loxodromic} if the orbits of $\langle g\rangle $ are quasi-convex (equivalently, the translation number of $g$ is positive);
\item[--] \emph{parabolic} otherwise.
\end{enumerate}

Every loxodromic element $g\in G$ has exactly $2$ fixed points $g^{\pm\infty}$ on $\partial S$, where $g^{+\infty}$ (respectively, $g^{-\infty}$) is the limit of the sequence $(g^ns)_{n\in \mathbb N}$ (respectively, $(g^{-n}s)_{n\in \mathbb N}$) for any fixed $s\in S$. We clearly have $\Lambda (\langle g\rangle) =\{ g^{\pm \infty}\}$. Loxodromic elements $g,h\in G$ are called \emph{independent} if the sets $\{ g^{\pm \infty}\}$ and $\{ h^{\pm \infty}\}$ are disjoint.

Recall that a path $p$ in a hyperbolic space $S$ is called \emph{$(\lambda, c)$-quasi-geodesic} for some constants $\lambda\ge 1$, $c\ge 0$ if for every subpath $q$ of $p$ we have $$\d (q_-, q_+)\le \lambda \ell (q)+c,$$ where $q_-$, $q_+$ denote the starting and the ending points of $q$, respectively, and $\ell (q)$ denotes the length of $q$. We also say that a path is \emph{quasi-geodesic} if it is $(\lambda, c)$-quasi-geodesic for some constants $\lambda\ge 1$, $c\ge 0$; of course, this definition only makes sense for infinite paths.

Every loxodromic element $g\in G$ preserves a bi-infinite quasi-geodesic $l_g$ in $S$; adding $g^{\pm\infty}$ to $l_g$, we obtain a path in $S\cup \partial S$  that connects $g^{+\infty}$ to $g^{-\infty}$. Such a path is called a \emph{quasi-geodesic} \emph{axis} (or simply an axis) of $g$. Given any $s\in S$, we can always construct an axis of $g$ that contains $s$: take the bi-infinite sequence $\ldots, g^{-2}s, g^{-1}s, s, gs, g^2s, \ldots $ and connect consecutive points by geodesics in $S$.

Given a space $S$ with a metric $\d$, we denote by $\d^{Hau}$ the corresponding Hausdorff pseudo-metric on the set of subsets of $S$.  We record the following elementary (and well-known) observation which will be used in the proof of Theorem \ref{main00} below. A brief sketch of the proof is provided for convenience of the reader.

\begin{lem}\label{indlox}
Let $f,g\in G$ be two loxodromic elements of a group $G$ acting on a hyperbolic space $S$. Then $f^{+\infty}=g^{+\infty}$ if and only if for some (equivalently, any) $s\in S$, we have
\begin{equation}\label{dHaufg}
\d^{Hau} (\{ f^ns\mid n\in \mathbb N\},  \{ g^ns\mid n\in \mathbb N\})<\infty .
\end{equation}
\end{lem}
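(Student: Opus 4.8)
The plan is to prove both implications by unwinding the definition of the boundary point $g^{+\infty}$ in terms of the Gromov product and relating convergence at infinity to the Hausdorff distance of the two positive orbits. Throughout I fix an arbitrary basepoint $s\in S$ (the independence of the choice of $s$ is routine, since changing $s$ alters every Gromov product and every distance by a bounded amount, using $|\d_S(x,gx)-\d_S(y,gy)|\le 2\d_S(x,y)$ as in the proof of Lemma \ref{EA}).

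For the backward direction, suppose (\ref{dHaufg}) holds for some $s$, say with Hausdorff distance $\le M$. Then for each $n\in\mathbb N$ there is $m=m(n)$ with $\d_S(f^ns, g^ms)\le M$, and since $f$ (resp.\ $g$) is loxodromic the translation numbers $\tau(f),\tau(g)$ are positive and $\d_S(s,f^ns)\to\infty$, so $m(n)\to\infty$. Now $(f^ns)$ converges to $f^{+\infty}$ by definition, and $(g^{m(n)}s)$ is a subsequence of the sequence defining $g^{+\infty}$ up to a bounded perturbation of each term; since modifying a convergent-at-infinity sequence by bounded amounts does not change its limit point, $(g^{m(n)}s)$ converges to $g^{+\infty}$. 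But $(f^ns)$ and $(g^{m(n)}s)$ are at bounded Hausdorff distance term-by-term, hence equivalent as sequences converging at infinity, so they have the same limit: $f^{+\infty}=g^{+\infty}$.

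For the forward direction, assume $f^{+\infty}=g^{+\infty}=:\xi$. The key point is to use quasi-geodesic axes: pick axes $l_f$ and $l_g$ of $f$ and $g$ through $s$, which are bi-infinite $(\lambda,c)$-quasi-geodesics (for suitable constants) with the same forward endpoint $\xi$ and, in general, different backward endpoints. By stability of quasi-geodesics in the hyperbolic space $S$ (Morse lemma, e.g.\ \cite[Theorem III.H.1.7]{BH}), two quasi-geodesic rays in $S$ converging to the same boundary point $\xi$ have Hausdorff distance bounded by a constant depending only on $\delta,\lambda,c$; so the forward rays $\{f^ns\mid n\in\mathbb N\}$ and $\{g^ns\mid n\in\mathbb N\}$ — which lie within bounded Hausdorff distance of these two rays — are at finite Hausdorff distance from each other, giving (\ref{dHaufg}). (Strictly, $\{f^ns\}$ is a discrete net along $l_f^+$, but since consecutive points $f^ns, f^{n+1}s$ are at distance $\d_S(s,fs)$, this net is $\d_S(s,fs)$-dense in $l_f^+$, and similarly for $g$; this only changes the constant.) Finally, the phrase "for some (equivalently, any)" is handled by the basepoint-independence remark above.

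The main obstacle is making the forward direction fully rigorous without a proper-space assumption: one cannot appeal to convergence of actual geodesic rays to $\xi$, so one must work with the quasi-geodesic axes directly and invoke the version of the Morse lemma that says any two quasi-geodesic rays asymptotic to the same point of $\partial S$ stay within bounded Hausdorff distance — a statement that holds in arbitrary (not necessarily proper) hyperbolic geodesic spaces. Everything else is a routine translation between "convergence at infinity" and "bounded Hausdorff distance of orbits."
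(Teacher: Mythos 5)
Your proof is correct and follows essentially the same route as the paper: replace each positive orbit by the positive subray of a quasi-geodesic axis through $s$ and invoke the fact that two quasi-geodesic rays in a (not necessarily proper) hyperbolic space are at finite Hausdorff distance if and only if they converge to the same boundary point. The only cosmetic difference is that you handle the backward implication separately via equivalence of sequences converging at infinity, whereas the paper applies the ray criterion in both directions at once.
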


\begin{proof}
Let $l_g$, $l_f$ be quasi-geodesic axes of $f$ and $g$, respectively. Without loss of generality we can assume that $s\in l_g\cap l_f$. Let $l^+_g$, $l^+_f$ be subpaths of $l_g$ and $l_f$ starting at $s$ and going to $f^{+\infty}=g^{+\infty}$. Then $\d^{Hau} (\{ g^ns\mid n\in \mathbb N\},  l^+_g)<\infty $ and similarly for $f$. Thus (\ref{dHaufg}) is equivalent to $\d^{Hau} (l_f^+, l_g^+)<\infty $. Since the Hausdorff distance between two quasi-geodesic rays in $S$ is finite if and only if these rays converge to the same point of $\partial S$, we obtain the result.
\end{proof}

The following theorem summarizes the standard classification of groups acting on hyperbolic spaces due to Gromov \cite[Section 8.2]{Gro}  (see also \cite{H} for complete proofs in a more general context) and some results from \cite[Propositions 3.1 and 3.2]{CCMT}.

\begin{thm}\label{ClassHypAct}
Let $G$ be a group acting on a hyperbolic space $S$. Then exactly one of the following conditions holds.
\begin{enumerate}
\item[1)] $|\Lambda (G)|=0$. Equivalently,  $G$ has bounded orbits. In this case the action of $G$ is called \emph{elliptic}.

\item[2)] $|\Lambda (G)|=1$. Equivalently, $G$ has unbounded orbits and contains no loxodromic elements. In this case the action of $G$ is called \emph{parabolic}. A parabolic action cannot be cobounded and the set of points of $\partial S$ fixed by $G$ coincides with $\Lambda (G)$.

\item[3)] $|\Lambda (G)|=2$. Equivalently, $G$ contains a loxodromic element and any two loxodromic elements have the same limit points on $\partial S$. In this case the action of $G$ is called \emph{lineal}.

\item[4)] $|\Lambda (G)|=\infty$. Then $G$ always contains loxodromic elements. In turn, this case breaks into two subcases.
\begin{enumerate}
\item[(a)] $G$ fixes a point of $\partial S$. Equivalently, any two loxodromic elements of $G$ have a common limit point on the boundary. In this case the action of $G$ is called \emph{quasi-parabolic}. Orbits of quasi-parabolic actions are always quasi-convex.
\item[(b)] $G$ does not fix any point of $\partial S$. Equivalently, $G$ contains infinitely many independent loxodromic elements. In this case the action of $G$ is said to be of \emph{general type}.
\end{enumerate}
\end{enumerate}
\end{thm}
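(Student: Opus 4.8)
The plan is to deduce the classification by assembling standard facts from \cite[Section 8.2]{Gro}, \cite{H}, and \cite[Propositions 3.1 and 3.2]{CCMT}; I sketch the logical skeleton. Throughout one uses that $\Lambda(G)$ is a closed $G$-invariant subset of $\partial S$ and that every loxodromic $g\in G$ contributes its two fixed points $g^{\pm\infty}$ to $\Lambda(G)$. First I would pin down which values $|\Lambda(G)|$ can take and match them with the stated algebraic properties. The equivalence $|\Lambda(G)|=0\Longleftrightarrow G$ has bounded orbits is immediate: a bounded orbit has no limit points, and if $Gs$ is unbounded then any sequence $(g_ns)$ with $\d_S(s,g_ns)\to\infty$ has, by a diagonal argument using $\delta$-hyperbolicity, a subsequence converging at infinity. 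Next, Gromov's lemma (see \cite[Section 8.2]{Gro} or \cite{H}) gives that $G$ contains a loxodromic element whenever $|\Lambda(G)|\ge 2$: picking sequences converging to two distinct boundary points and composing suitable group elements produces an element moving a basepoint along an unbounded quasi-geodesic. Combined with the previous remark this yields $G$ has a loxodromic element $\Longleftrightarrow |\Lambda(G)|\ge 2$; in particular $|\Lambda(G)|=1\Longleftrightarrow G$ has unbounded orbits but no loxodromic element, in which case the unique point of $\Lambda(G)$ is fixed by $G$, being a $G$-invariant singleton.

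The heart of the argument is the step separating $|\Lambda(G)|=2$ from $|\Lambda(G)|=\infty$, which I would carry out by ping-pong. If $G$ contains two \emph{independent} loxodromic elements $g,h$, then for large $N$ the elements $g^N,h^N$ generate a free subgroup whose limit set is a Cantor set, so $|\Lambda(G)|=\infty$ and in fact $G$ has infinitely many independent loxodromics. If instead $G$ has a loxodromic element but no two loxodromics are independent, then using conjugation (each $fgf^{-1}$ is loxodromic with $\{(fgf^{-1})^{\pm\infty}\}=f\{g^{\pm\infty}\}$) together with the fact that a loxodromic element fixes exactly the two points $g^{\pm\infty}$ of $\partial S$, one shows that all loxodromic fixed points coincide with a single point $\xi\in\partial S$, which is then fixed by all of $G$. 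Assembling the cases shows $|\Lambda(G)|\in\{0,1,2,\infty\}$ with the conditions mutually exclusive: $|\Lambda(G)|=2$ is the lineal case (and all loxodromics share both limit points); $|\Lambda(G)|=\infty$ is case 4, which splits into the quasi-parabolic case (all loxodromics share a limit point $\Longleftrightarrow G$ fixes a point of $\partial S$) and the general type case (there exist independent loxodromics $\Longleftrightarrow G$ fixes no point of $\partial S$).

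It remains to record the finer facts. To see that a parabolic action cannot be cobounded, I would first reduce to a Cayley graph: by Lemma~\ref{lemMS} a cobounded action is equivalent to the action of $G$ on some $\Gamma(G,X)$, and equivalent actions have $G$-equivariantly homeomorphic boundaries, so it suffices to treat that case. If $G$ fixed a unique point $\xi\in\partial\Gamma(G,X)$, consider the Busemann quasimorphism $g\mapsto h_\xi(g\cdot 1)-h_\xi(1)$ attached to a horofunction $h_\xi$ centred at $\xi$: if it is unbounded, some $g$ has $\d(1,g^n)\ge|h_\xi(g^n)-h_\xi(1)|$ growing linearly in $\Gamma(G,X)$, hence $g$ is loxodromic — a contradiction; if it is bounded, the orbit $G\cdot 1$ stays within bounded distance of a horosphere about $\xi$, contradicting coboundedness, since a quasi-geodesic ray from $1$ to $\xi$ escapes every such horosphere. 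For the remaining two assertions — that in the parabolic case the $G$-fixed subset of $\partial S$ is exactly $\Lambda(G)$, and that orbits of quasi-parabolic actions are quasi-convex — I would appeal directly to \cite[Propositions 3.1 and 3.2]{CCMT}.

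The hard part will be the ping-pong analysis of the second paragraph: the implication $|\Lambda(G)|\ge 3\Rightarrow|\Lambda(G)|=\infty$ and the ensuing quasi-parabolic/general-type dichotomy are where all the genuine content lies, while everything else is either bookkeeping or a direct citation to the literature.
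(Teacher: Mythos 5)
The first thing to note is that the paper does not prove this theorem at all: it is stated as a summary of known results and attributed wholesale to Gromov \cite[Section 8.2]{Gro}, to \cite{H} for complete proofs, and to \cite[Propositions 3.1 and 3.2]{CCMT} for the assertions about parabolic and quasi-parabolic actions. So there is no in-paper argument to compare against; your sketch attempts more than the paper does, and its overall architecture (bounded/unbounded, existence of loxodromics when $|\Lambda(G)|\ge 2$, ping-pong for independence, Busemann quasimorphism for non-coboundedness of parabolic actions) is the standard one found in those references.

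That said, two steps in your sketch deserve flags. First, the sentence ``one shows that all loxodromic fixed points coincide with a single point $\xi\in\partial S$'' is literally false: every loxodromic element has two \emph{distinct} fixed points, so the set of loxodromic fixed points can never be a singleton. What you need there is the genuine trichotomy: if $G$ contains a loxodromic but no two independent ones, then either all loxodromics share \emph{both} fixed points (lineal, $|\Lambda(G)|=2$) or they all share exactly \emph{one} common point which is $G$-fixed while their second fixed points form an infinite $G$-orbit (quasi-parabolic, $|\Lambda(G)|=\infty$). Establishing this — including why the quasi-parabolic case forces $|\Lambda(G)|=\infty$ rather than some finite value $\ge 3$, and why a pair of non-independent loxodromics sharing only one endpoint cannot coexist with a third configuration without producing independence — is precisely the content of \cite[Proposition 3.1]{CCMT} and is not mere bookkeeping. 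Your closing summary states the correct dichotomy, but the argument you give for it does not deliver it. Second, the claim that an unbounded orbit yields a sequence converging at infinity ``by a diagonal argument using $\delta$-hyperbolicity'' is glossed too quickly: in a non-proper hyperbolic space an arbitrary unbounded set need not accumulate on $\partial S$ (all pairwise Gromov products can stay bounded), and the fact that a \emph{group orbit} must do so is a lemma that uses the group structure. Both points are covered by the references the paper itself cites, so if your intent is to cite rather than reprove, that is consistent with what the paper does — but then the middle paragraph should be replaced by a citation rather than by an incorrect derivation.
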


Parabolic and quasi-parabolic acylindrical actions do not exist. Moreover, we have the following \cite{Osi16}.

\begin{thm}\label{class}\label{tricho}
Let $G$ be a group acting acylindrically on a hyperbolic space. Then exactly one of the following three conditions holds.
\begin{enumerate}
\item[(a)] The action is elliptic.
\item[(b)] The action is lineal and $G$ is virtually cyclic.
\item[(c)] The action is of general type.
\end{enumerate}
\end{thm}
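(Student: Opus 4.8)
The plan is to start from the general classification in Theorem \ref{ClassHypAct}, which already tells us that an action on a hyperbolic space falls into exactly one of the five types: elliptic, parabolic, lineal, quasi-parabolic, or general type. Since Theorem \ref{ClassHypAct} records that parabolic actions cannot be cobounded but says nothing yet about acylindricity, the real work is to rule out the parabolic and quasi-parabolic cases for acylindrical actions, and then to show that in the lineal case acylindricity forces $G$ to be virtually cyclic. I expect the exclusion of the quasi-parabolic case to be the main obstacle, since ruling out parabolic actions is comparatively soft while the quasi-parabolic case requires a genuine ping-pong/escape-to-infinity argument exploiting acylindricity.

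First I would dispose of the parabolic case. If the action were parabolic, then by Theorem \ref{ClassHypAct}(2) $G$ has unbounded orbits but contains no loxodromic element, and $G$ fixes a unique point $\xi\in\partial S$. Fix a basepoint $s\in S$ and choose $g\in G$ with $\d_S(s,gs)$ arbitrarily large; set $x=s$ and $y=g^{-1}s$, so $\d_S(x,y)=\d_S(s,gs)$ is large. Since $\langle g\rangle$ has unbounded orbits but $g$ is not loxodromic, one can find a sequence $n_1<n_2<\cdots$ such that the points $g^{n_k}s$ do not escape at a linear rate; the standard fact that a non-loxodromic element of infinite order acting on a hyperbolic space has arbitrarily long ``near-returns'' lets one produce, for each $\e$, infinitely many $k$ with $\d_S(x,g^{n_k}x)\le\e$ and simultaneously (after also controlling the orbit of $y$, using that $G$ fixes $\xi$ and orbits accumulate only at $\xi$) $\d_S(y,g^{n_k}y)\le\e$, contradicting acylindricity. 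Alternatively, and more cleanly, one invokes the fact proved in \cite{Osi16} that an acylindrical action with unbounded orbits always contains a loxodromic element (the first step in establishing Theorem \ref{ClassHypAct}'s ``contains a loxodromic element'' dichotomy under acylindricity), which immediately contradicts the defining property of a parabolic action.

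Next I would handle the quasi-parabolic case, which is the heart of the matter. Suppose the action is quasi-parabolic; then by Theorem \ref{ClassHypAct}(4a) $G$ fixes a point $\xi\in\partial S$, the action contains a loxodromic element $g$ (with $g^{+\infty}=\xi$, say, after replacing $g$ by $g^{-1}$), and $|\Lambda(G)|=\infty$, so there is a loxodromic $h$ with $h^{+\infty}=\xi$ but $h^{-\infty}\neq g^{-\infty}$ — i.e.\ the ``horocyclic'' direction is genuinely present. The strategy is to build, from such $g$ and $h$, an infinite family of elements witnessing the failure of the acylindricity inequality: consider conjugates $h_n=g^{n}hg^{-n}$. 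These all fix $\xi=g^{+\infty}$, and their axes are pushed further and further toward $\xi$ by the $g$-action. Picking $x$ on a $g$-axis far out toward $\xi$ and $y$ far out in the opposite direction, one arranges that for all sufficiently large $n$ the element $h_n$ (or a suitable word in the $h_n$'s) moves both $x$ and $y$ by a bounded amount $\e$ while $\d_S(x,y)\to\infty$, so that the number of such elements is infinite — contradicting acylindricity. Making this precise is the delicate step: one must control Gromov products near a parabolic-type fixed point, using $\delta$-hyperbolicity to show that the axes of the $h_n$ accumulate on the horocyclic direction and hence that $h_n$ acts with small displacement on a growing region ``near $\xi$''. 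This is exactly the argument of \cite[Section 8]{Osi16}, and I would cite it rather than reproduce it in full; the essential point is that acylindricity is incompatible with the unbounded ``parabolic-like'' behavior forced by fixing a boundary point while having infinitely many limit points.

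Finally, in the lineal case: by Theorem \ref{ClassHypAct}(3), $G$ contains a loxodromic element $g$ and $\Lambda(G)=\{g^{+\infty},g^{-\infty}\}$, so every element of $G$ preserves this two-point set, giving a homomorphism $G\to\mathbb Z/2\mathbb Z$ whose kernel $G_0$ fixes both points. It suffices to show $G_0$ is cyclic. For $h\in G_0$, the Hausdorff distance between the orbits $\{g^n s\}$ and $\{h^n s\}$ is finite (both are quasi-geodesics converging to the same endpoint $g^{+\infty}$, by Lemma \ref{indlox} once we note that loxodromic elements of a lineal action share endpoints — and elliptic elements of $G_0$ must be trivial or at least have bounded-orbit controlled behavior). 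Then acylindricity bounds the number of elements moving two far-apart points on the common quasi-axis by a bounded amount; a standard argument (e.g.\ the ``close to $\Z$'' lemma in \cite{Osi16}, or directly: the translation-number homomorphism $\tau\colon G_0\to\R$ has discrete image because acylindricity forbids arbitrarily small positive translation numbers on a shared axis, and its kernel consists of elliptic elements which, again by acylindricity applied along the axis, form a finite — hence by the line structure trivial after passing to a subgroup — set) shows $G_0$ is virtually $\Z$, whence $G$ is virtually cyclic. The cases being mutually exclusive is immediate since the limit-set cardinalities $0$, $2$, $\infty$ are distinct.
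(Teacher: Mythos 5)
The paper does not actually prove this statement: Theorem \ref{tricho} is imported verbatim from \cite{Osi16} (it is Theorem 1.1 there), and the only ``proof'' in the text is the preceding sentence asserting that parabolic and quasi-parabolic acylindrical actions do not exist, again by citation. So there is nothing in the paper to compare your argument against; what you have written is an outline of Osin's proof in which the two genuinely hard steps are, in the end, also delegated to \cite{Osi16}. Your decomposition --- rule out parabolic, rule out quasi-parabolic, show that lineal plus acylindrical forces virtually cyclic, and get mutual exclusivity from the cardinality of $\Lambda(G)$ --- is the right one, and if the citations are meant to carry the load, the plan is consistent with what the paper itself does.

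However, the ``direct'' alternatives you sketch would not survive being written out, and you should not present them as if they were routine. In the parabolic case, the claimed standard fact that a non-loxodromic element admits arbitrarily many $\e$-near-returns at two prescribed far-apart points is not standard: a parabolic element has unbounded orbits with zero translation number, and nothing elementary produces infinitely many $n_k$ with $\d_S(x,g^{n_k}x)\le\e$ for a fixed small $\e$, let alone simultaneously at $y$. Ruling this out is precisely the hardest part of \cite{Osi16} (that an acylindrical action with unbounded orbits contains a loxodromic element), so the ``soft'' route does not exist. In the lineal case, the translation number $\tau$ is a homogeneous quasimorphism on the index-at-most-two subgroup $G_0$ fixing the two limit points, not a homomorphism, so ``discrete image of a homomorphism'' is not available as stated; the correct route, which you also gesture at, is that acylindricity makes the set of elements almost-fixing two far-apart points of the common quasi-axis finite, whence $G_0$ acts on the quasi-axis with finite kernel and discrete, hence cyclic, image, giving $G$ virtually cyclic. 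With the citations doing the work the proposal is acceptable; with the direct arguments doing the work it has real gaps.
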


In particular, being non-elementary is equivalent to being of general type for acylindrical actions.

\begin{lem}\label{we-type}
Let $G\curvearrowright R$ and $G\curvearrowright S$ be weakly equivalent actions of $G$ on hyperbolic spaces. Then $G\curvearrowright R$ and $G\curvearrowright S$ have the same type.
\end{lem}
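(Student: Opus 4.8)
The plan is to read off the type of each action from metric data carried by a single $G$-orbit. By Lemma~\ref{we-alt}, weak equivalence of $G\curvearrowright R$ and $G\curvearrowright S$ yields a coarsely $G$-equivariant quasi-isometry $\psi$ from a $G$-orbit $Gr\subseteq R$ (with the metric induced from $R$) onto a $G$-orbit $Gs\subseteq S$ (with the metric induced from $S$). I would then verify that the three invariants that distinguish the four types in Theorem~\ref{ClassHypAct} — boundedness of orbits, existence of loxodromic elements, and the pattern of coincidences among their boundary endpoints — are all preserved by $\psi$.

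First I would record the transport property of $\psi$. Since $\psi$ is coarsely $G$-equivariant there is $E\geq 0$ with $\d_S(\psi(gr),g\psi(r))\leq E$ for all $g\in G$; writing $\psi(r)=g_0 s$ and using that every $g$ acts isometrically gives $\d_S(\psi(gr),gs)\leq E+\d_S(g_0 s,s)$ for all $g\in G$. Hence, for each $h\in G$, the set $\psi(\{h^n r\mid n\in\mathbb N\})$ lies at finite Hausdorff distance (in $S$) from $\{h^n s\mid n\in\mathbb N\}$, and since $\psi$ is a quasi-isometry it distorts Hausdorff distances between subsets only by quasi-isometry constants, in particular preserving their finiteness in both directions. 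Moreover, unwinding Definition~\ref{def-we} via Lemma~\ref{EA} produces a constant $C$ with $\d_R(r,gr)\leq C\d_S(s,gs)+C$ and $\d_S(s,gs)\leq C\d_R(r,gr)+C$ for all $g\in G$; applying this to the powers of a fixed $g$ shows $\tau_R(g)=0\iff\tau_S(g)=0$. Consequently the two actions have exactly the same set of loxodromic elements, and $G$ has bounded orbits on $R$ iff it has bounded orbits on $S$. Together with Theorem~\ref{ClassHypAct} this already settles the elliptic case (bounded orbits) and the parabolic case (unbounded orbits with no loxodromic element): both are preserved.

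It remains to treat the cases where $G$ contains a loxodromic element, i.e. lineal, quasi-parabolic, and general type, where the type is governed entirely by which endpoints of loxodromic elements coincide. Here I would invoke Lemma~\ref{indlox}: for loxodromic $f,g\in G$ one has $f^{+\infty}=g^{+\infty}$ in $\partial R$ iff $\d^{Hau}(\{f^n r\mid n\in\mathbb N\},\{g^n r\mid n\in\mathbb N\})<\infty$, and similarly in $\partial S$; since the common set of loxodromic elements is closed under inversion, applying this criterion to all pairs drawn from $\{f,f^{-1}\}$ and $\{g,g^{-1}\}$ detects exactly which of the four points $f^{\pm\infty},g^{\pm\infty}$ coincide. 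By the transport property of $\psi$, each of the relevant orbits-of-a-power is carried to within finite Hausdorff distance of its counterpart in the other space, and finiteness of Hausdorff distance is preserved; hence these coincidence patterns agree for $R$ and $S$. Reading off from Theorem~\ref{ClassHypAct} — general type iff some pair of loxodromic elements is independent (no shared endpoint), lineal iff every pair of loxodromic elements has the same unordered pair of endpoints, and quasi-parabolic in the remaining subcase — we conclude that $G\curvearrowright R$ and $G\curvearrowright S$ have the same type.

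The step I expect to require the most care is the transport property: one must check that a coarsely $G$-equivariant quasi-isometry between orbits sends the discretized axis $\{f^n r\}$ into a bounded neighborhood of $\{f^n s\}$ \emph{itself}, not merely of some $G$-translate of it — this is precisely where coarse $G$-equivariance and the fact that $G$ acts by isometries are used jointly. The remaining ingredients (comparison of translation numbers and pushing finite Hausdorff distance through a quasi-isometry) are routine.
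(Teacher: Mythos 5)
Your proof is correct. It starts exactly where the paper's proof does, by invoking Lemma \ref{we-alt} to obtain a coarsely $G$-equivariant quasi-isometry between orbits, but the two arguments then diverge in how they transport the type across that map. The paper asserts (without detail) that the quasi-isometry induces a $G$-equivariant homeomorphism $\Lambda_R(G)\to\Lambda_S(G)$ and reads off the type from the cardinality of the limit set together with the existence of fixed points in it. You avoid constructing any boundary map: you compare translation numbers via the domination inequalities to match the elliptic and parabolic cases and to identify the loxodromic sets, and then use Lemma \ref{indlox} plus the finiteness-of-Hausdorff-distance transport to show the endpoint-coincidence pattern of loxodromic elements agrees in $R$ and $S$, which pins down lineal versus quasi-parabolic versus general type by Theorem \ref{ClassHypAct}. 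Your route is more elementary and fills in precisely the step the paper labels ``straightforward to check''; its only cost is that you must argue (as you do, correctly) that the trichotomy among the three loxodromic-containing types is detected by the unordered coincidence pattern of $f^{\pm\infty},g^{\pm\infty}$ alone, whereas the paper's limit-set formulation packages this into one invariant. Incidentally, your endpoint-coincidence argument is essentially the same device the paper itself uses later in the proof of Theorem \ref{main00}, so it is entirely in the spirit of the text.
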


\begin{proof}
By Lemma \ref{we-alt}, there exists a coarsely $G$-equivariant quasi-isometry from a $G$-orbit in $R$ to a $G$-orbit in $S$. It is straightforward to check that this quasi-isometry gives rise to a $G$-equivariant map (in fact, homeomorphism) $\Lambda _R(G)\to \Lambda _S(G)$, where $\Lambda _R(G)$ and $\Lambda _S(G)$ denote the limit sets of $G$ for the actions $G\curvearrowright R$ and $G \curvearrowright S$ respectively. It remains to notice that the type of action of $G$ on a hyperbolic space is uniquely determined by the cardinality of the corresponding limit set $\Lambda (G)$ and the existence of fixed points in $\Lambda (G)$; clearly these are invariant under $G$-equivariant maps.
\end{proof}

The following result will be used many times throughout the paper. It shows that whether an equivalence class $[X]\in \GG$ belongs to $\H (G)$ or $\AHG$ or is of a certain type is independent of the choice of a particular representative in $[X]$

\begin{prop}
Let $X$ and $Y$ be equivalent generating sets of a group $G$. Then the following hold.
\begin{enumerate}
\item[(a)] $\Gamma (G,X)$ is hyperbolic if and only if $\Gamma (G,Y)$ is.
\item[(b)] The action $G \curvearrowright \Gamma (G,X)$ is acylindrical if and only if $G \curvearrowright \Gamma (G,Y)$ is.
\item[(c)] The action $G \curvearrowright \Gamma (G,X)$ is elliptic (respectively lineal, quasi-parabolic, of general type) if and only if so is $G \curvearrowright \Gamma (G,Y)$.
\end{enumerate}
\end{prop}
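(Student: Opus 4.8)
The plan is to reduce all three statements to facts about equivalent (and weakly equivalent) group actions established in Sections~\ref{lattice} and~\ref{Sec-SM}. The starting point is that, since $X\sim Y$, Lemma~\ref{eCay} gives $G\curvearrowright \Gamma(G,X)\sim G\curvearrowright \Gamma(G,Y)$; that is, there is a coarsely $G$-equivariant quasi-isometry $f\colon \Gamma(G,X)\to \Gamma(G,Y)$, and both spaces are geodesic. Everything then follows by feeding this into the appropriate invariance result. For part (a): hyperbolicity in the sense of the Rips condition is a quasi-isometry invariant of geodesic metric spaces (e.g.\ \cite[Theorem III.H.1.9]{BH}); since $f$ is a quasi-isometry between geodesic spaces, $\Gamma(G,X)$ is hyperbolic if and only if $\Gamma(G,Y)$ is.

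For part (b) I would show directly that acylindricity of an action is preserved under equivalence --- this is the ``obvious fact'' already invoked after~(\ref{SM-altdef}). Fix $C$ so that $f$ satisfies both~(\ref{def-qi}) and~(\ref{def-cGe}), and let $\bar f\colon \Gamma(G,Y)\to\Gamma(G,X)$ be a coarse inverse of $f$, which is again a coarsely $G$-equivariant quasi-isometry, say with the analogous constant. Assume $G\curvearrowright\Gamma(G,X)$ is acylindrical and let $\e>0$ be given. Apply acylindricity on $\Gamma(G,X)$ with parameter $\e'=C\e+C'$, where $C'$ absorbs the quasi-isometry and equivariance defects of $\bar f$, obtaining $R'=R'(\e')$ and $N'=N'(\e')$. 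Then, for $u,v\in \Gamma(G,Y)$ with $\d_Y(u,v)$ large enough (depending only on $C$ and $R'$), if $g\in G$ satisfies $\d_Y(u,gu)\le \e$ and $\d_Y(v,gv)\le \e$, a short computation gives $\d_X(\bar f(u),g\bar f(u))\le \e'$ and similarly for $v$, while $\d_X(\bar f(u),\bar f(v))\ge R'$; hence there are at most $N'$ such $g$. This yields acylindricity of $G\curvearrowright\Gamma(G,Y)$, and the symmetric argument (using $f$ in place of $\bar f$) gives the converse.

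For part (c): by Lemma~\ref{e-we}(a) the equivalence $G\curvearrowright \Gamma(G,X)\sim G\curvearrowright \Gamma(G,Y)$ implies weak equivalence $G\curvearrowright \Gamma(G,X)\sim_w G\curvearrowright \Gamma(G,Y)$. By part~(a) we may assume both Cayley graphs are hyperbolic (otherwise the types in question are not defined and there is nothing to prove), and then Lemma~\ref{we-type} says that weakly equivalent actions on hyperbolic spaces have the same type; combined with the characterizations of the four types in Theorem~\ref{ClassHypAct} (and the fact, noted there, that ellipticity is just boundedness of orbits), this is exactly the assertion of~(c).

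I do not expect a genuine obstacle in this proposition --- every part dissolves into a previously established lemma. The only step requiring any attention is the additive-constant bookkeeping in part~(b), namely choosing $\e'$ and the threshold for $\d_Y(u,v)$ so that the pull-back of an ``$(\e,\e)$-almost-fixing'' element along $\bar f$ lands in the regime where acylindricity of $G\curvearrowright\Gamma(G,X)$ applies; this is routine but is the one place where a little care is needed.
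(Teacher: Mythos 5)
Your proposal is correct and follows essentially the same route as the paper: the paper's proof simply notes that $X\sim Y$ makes the identity map a coarsely $G$-equivariant quasi-isometry $\Gamma(G,X)\to\Gamma(G,Y)$, observes that this "easily implies" (a) and (b), and derives (c) from Lemmas \ref{eCay} and \ref{we-type}. You have merely written out the constant-chasing for (b) that the paper leaves implicit (and one could simplify it slightly by noting that the quasi-isometry is the identity on $G$, so no coarse inverse is needed), but the underlying argument is identical.
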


\begin{proof}
Since $X\sim Y$, the identity map $G\to G$ gives rise to a (coarsely $G$-equivariant) quasi-isometry of metric spaces $\Gamma (G,X)\to \Gamma (G,Y)$. This easily implies (a), (b). Part (c) follows from Lemma \ref{eCay} and Lemma \ref{we-type}.
\end{proof}

Thus we obtain the following classification of hyperbolic structures. Recall that the sets of elliptic, lineal, quasi-parabolic, and general type hyperbolic structures on $G$ are denoted by $\H_e(G)$, $\H_{\ell} (G)$, $\H_{qp} (G)$, and $\H_{gt}(G)$ respectively. We use analogous notation for acylindrically hyperbolic structures.

\begin{thm}\label{main00}
For every group $G$, the following holds.
\begin{enumerate}
\item [(a)] $$\H(G)=\H_e(G)\sqcup \H_{\ell} (G)\sqcup \H_{qp} (G)\sqcup \H_{gt}(G)$$ and the subsets $\H_e(G)\sqcup \H_{\ell} (G)$ and $\H_e(G)\sqcup \H_{\ell} (G)\sqcup \H_{qp} (G)$ are initial segments of $\H(G)$.
\item[(b)] Either $$\AHG=\AH_e(G)\sqcup \AH_\ell(G)$$ (if $G$ is virtually cyclic) or $$\AHG=\AH_e(G)\sqcup \AH_{gt}(G)$$ (if $G$ is acylindrically hyperbolic).
\end{enumerate}
\end{thm}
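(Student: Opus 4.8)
The partition in part (a) is immediate from Gromov's classification. For $[X]\in\H(G)$ the action $G\acts\Gamma(G,X)$ by left multiplication is transitive on vertices, in particular cobounded, so by Theorem~\ref{ClassHypAct} it has exactly one of the types elliptic, parabolic, lineal, quasi-parabolic, or general type; the parabolic possibility is ruled out because cobounded actions are never parabolic, and by the preceding proposition the type depends only on $[X]$. Hence $\H(G)=\H_e(G)\sqcup\H_\ell(G)\sqcup\H_{qp}(G)\sqcup\H_{gt}(G)$. For the initial-segment assertions the plan is to isolate a transfer principle and then argue by contraposition. Suppose $X\preceq Y$ are generating sets with $\Gamma(G,X)$ and $\Gamma(G,Y)$ hyperbolic, and set $C=\sup_{y\in Y}|y|_X<\infty$, so that $\d_X\le C\,\d_Y$ on $G\times G$. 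Then: \emph{(i)} any $g\in G$ loxodromic for $G\acts\Gamma(G,X)$ is loxodromic for $G\acts\Gamma(G,Y)$, because a positive translation number with respect to $\d_X$ forces one with respect to $\d_Y$; and \emph{(ii)} if $g,h$ are loxodromic for the $X$-action and $g^{+\infty}\ne h^{+\infty}$ in $\partial\Gamma(G,X)$, then $g^{+\infty}\ne h^{+\infty}$ in $\partial\Gamma(G,Y)$. For \emph{(ii)}, note that $\d_X\le C\,\d_Y$ gives $\d_X^{Hau}(A,B)\le C\,\d_Y^{Hau}(A,B)$ for all $A,B\subseteq G$, so if the Hausdorff distance between $\{g^n\}_{n\in\mathbb N}$ and $\{h^n\}_{n\in\mathbb N}$ were finite in $\Gamma(G,Y)$ it would be finite in $\Gamma(G,X)$; applying Lemma~\ref{indlox} to both actions with basepoint $1$ (using \emph{(i)} to know $g,h$ are loxodromic for the $Y$-action as well) yields exactly the contrapositive of \emph{(ii)}.

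Now let $[X]\preccurlyeq[Y]$ in $\H(G)$, so $X\preceq Y$. If $[X]\in\H_{qp}(G)\sqcup\H_{gt}(G)$, then $G\acts\Gamma(G,X)$ has loxodromic elements but is not lineal, so by Theorem~\ref{ClassHypAct} there are loxodromics $g,h$ for the $X$-action with $\{g^{+\infty},g^{-\infty}\}\ne\{h^{+\infty},h^{-\infty}\}$ in $\partial\Gamma(G,X)$; replacing $g$ by $g^{-1}$ if necessary we may assume $g^{+\infty}\notin\{h^{+\infty},h^{-\infty}\}$, and applying \emph{(i)} and \emph{(ii)} to the pairs $(g,h)$ and $(g,h^{-1})$ shows that $g,h$ are loxodromic for the $Y$-action with $g^{+\infty}\notin\{h^{+\infty},h^{-\infty}\}$ in $\partial\Gamma(G,Y)$; hence the $Y$-action is neither elliptic nor lineal and $[Y]\in\H_{qp}(G)\sqcup\H_{gt}(G)$. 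This proves $\H_e(G)\sqcup\H_\ell(G)$ is an initial segment. If moreover $[X]\in\H_{gt}(G)$, then $G\acts\Gamma(G,X)$ has two independent loxodromics $g,h$, and applying \emph{(i)} and \emph{(ii)} to the four pairs $(g^{\pm1},h^{\pm1})$ shows $g,h$ are independent loxodromics for the $Y$-action; since general type is the only type admitting independent loxodromics (Theorem~\ref{ClassHypAct}), we get $[Y]\in\H_{gt}(G)$, proving $\H_e(G)\sqcup\H_\ell(G)\sqcup\H_{qp}(G)$ is an initial segment as well.

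For part (b), any $[X]\in\AHG$ yields a cobounded acylindrical action on a hyperbolic space, which by Theorem~\ref{tricho} is elliptic, or lineal with $G$ virtually cyclic, or of general type; thus $\AH_{qp}(G)=\emptyset$ and $\AHG=\AH_e(G)\sqcup\AH_\ell(G)\sqcup\AH_{gt}(G)$. The summands $\AH_\ell(G)$ and $\AH_{gt}(G)$ cannot both be nonempty: a general-type acylindrical action on a hyperbolic space is non-elementary, so $\AH_{gt}(G)\ne\emptyset$ makes $G$ acylindrically hyperbolic and hence (Theorem~\ref{tricho}) not virtually cyclic, forcing $\AH_\ell(G)=\emptyset$. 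Therefore if $G$ is virtually cyclic then $\AH_{gt}(G)=\emptyset$ and $\AHG=\AH_e(G)\sqcup\AH_\ell(G)$, whereas if $G$ is acylindrically hyperbolic then $\AH_\ell(G)=\emptyset$ and $\AHG=\AH_e(G)\sqcup\AH_{gt}(G)$; if $G$ is neither, both nontrivial summands vanish and the two formulas coincide. I expect the only genuine work to be the transfer principle \emph{(i)}--\emph{(ii)} of part (a) together with the endpoint bookkeeping that turns it into the two initial-segment statements; part (b) is a direct consequence of the acylindrical trichotomy and the definition of an acylindrically hyperbolic group. One may also record the converse, that $\AH_{gt}(G)\ne\emptyset$ precisely when $G$ is acylindrically hyperbolic, using the standard fact \cite{Osi16} that such a group admits a cobounded general-type acylindrical action on a hyperbolic space, although only the decomposition above is needed for the statement.
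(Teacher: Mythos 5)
Your proposal is correct and follows essentially the same route as the paper: the decomposition comes from Gromov's classification together with the fact that cobounded actions cannot be parabolic, the initial-segment claims are proved by transferring loxodromicity via translation numbers and transferring (in)dependence of limit points via the Hausdorff-distance criterion of Lemma~\ref{indlox}, and part (b) is read off from the acylindrical trichotomy. The only difference is presentational — you spell out the endpoint bookkeeping for the $\H_e(G)\sqcup\H_\ell(G)$ case that the paper dismisses as ``analogous.''
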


\begin{proof}
The first claim in part (a) follows immediately from the fact that parabolic actions cannot be cobounded, see Theorem \ref{ClassHypAct}.

Let us prove that $\H_e(G)\sqcup \H_{\ell} (G)\sqcup \H_{qp} (G)$ is an initial segment of $\H(G)$. Arguing by contradiction, assume that there exists $[X]\in \H _{gt}(G)$ and $[Y]\in \H_e(G)\sqcup \H_{\ell} (G)\sqcup \H_{qp} (G)$ such that $X\preceq Y$. By Theorem \ref{ClassHypAct}, there are two independent loxodromic elements $f,g\in G$ with respect to the hyperbolic structure $[X]$. Since $f^{\pm \infty}$ and $g^{\pm \infty}$ are disjoint,
we have
\begin{equation}\label{fgindep}
\d^{Hau} (\{ f^{\pm n}\mid n\in \mathbb N\},  \{ g^{\pm n}\mid n\in \mathbb N\})=\infty
\end{equation}
in $\Gamma (G,X)$ for any fixed choice of the signs in the exponents by Lemma \ref{indlox}. Since $X\preceq Y$, $f$ and $g$ are also loxodromic with respect to $[Y]$ (this is obvious if one uses the definition of loxodromic elements based on translation numbers) and (\ref{fgindep}) also holds in $\Gamma (G,Y)$. In turn, this implies that $f$ and $g$ are independent loxodromic elements with respect to the action of $G$ on $\Gamma (G,Y)$. By Theorem \ref{ClassHypAct}, this implies that $[Y]\in \H_{gt}(G)$.  The proof of the claim that $\H_e(G)\sqcup \H_{\ell} (G)$ is an initial segment of $\H(G)$ is analogous using the fact that lineal actions can be characterized by the property that any two loxodromic elements have the same limit points, see Theorem \ref{ClassHypAct}.

Finally, we note that part (b) follows immediately from Theorem \ref{tricho}.
\end{proof}

We now turn to the proof of Theorem \ref{main1}. The first step is the following elementary lemma.

\begin{lem}\label{vc}
Let $G$ be a virtually cyclic group. Let $X$ denote a finite generating set of $G$. Then $\AHG=\{  [G], [X]\}$.
\end{lem}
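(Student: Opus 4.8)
The plan is to start with easy reductions. If $G$ is finite, every generating set is finite and hence equivalent to $G$ itself, so $\GG=\{[G]\}$, $[X]=[G]$, and the statement is vacuous; thus assume $G$ is infinite virtually cyclic and fix a finite-index subgroup $T=\langle t\rangle\cong\mathbb Z$. Since $X$ is finite, $\Gamma(G,X)$ is quasi-isometric to a line (in particular hyperbolic), and the action $G\curvearrowright\Gamma(G,X)$ is proper, hence acylindrical; so $[X]\in\AH_\ell(G)$. Recall also that the finite generating sets of a finitely generated group form the largest element of $\GG$, so $Z\preceq X$ for \emph{every} generating set $Z$ of $G$; this will give one of the two needed inequalities for free. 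By Theorem \ref{main00}(b) we have $\AHG=\AH_e(G)\sqcup\AH_\ell(G)$, and since $\H_e(G)=\{[G]\}$ and the trivial action is acylindrical, $\AH_e(G)=\{[G]\}$. Hence it suffices to show that every $[Y]\in\AH_\ell(G)$ satisfies $[Y]=[X]$, i.e. $X\preceq Y$.

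The key step is to show that $t$ acts loxodromically on $\Gamma(G,Y)$. Write $A=G\curvearrowright\Gamma(G,Y)$. Being lineal, $A$ has unbounded orbits, and since $T$ has finite index in $G$, a $G$-orbit is a finite union of $T$-orbits of finitely many points, so the orbits of $T$ are unbounded as well. The restriction of $A$ to $T$ is acylindrical (the constants $R(\e),N(\e)$ for $G$ work verbatim for $T$), so by Theorem \ref{tricho}, applied to $T\cong\mathbb Z$, this restricted action is elliptic, lineal, or of general type. It is not elliptic because its orbits are unbounded, and it is not of general type because $\mathbb Z$ contains no free subgroup of rank $2$ (cf.\ Theorem \ref{ClassHypAct}). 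Hence the action of $T$ on $\Gamma(G,Y)$ is lineal, so $t$ is loxodromic and $\tau_A(t)>0$.

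Now comes the length estimate. Since $m\mapsto|t^m|_Y$ is subadditive, $\tau_A(t)=\inf_{m\ge 1}|t^m|_Y/m$, and therefore $|t^m|_Y\ge|m|\,\tau_A(t)$ for all $m\in\mathbb Z$. Choose a finite set $U$ of representatives for the cosets $T\backslash G$, and put $\Delta_X=\max_{u\in U}|u|_X$ and $\Delta_Y=\max_{u\in U}|u|_Y$, both finite. Given $y\in Y$, write $y=t^{m}u$ with $u\in U$; then
\[
|m|\,\tau_A(t)\ \le\ |t^{m}|_Y\ =\ |yu^{-1}|_Y\ \le\ |y|_Y+|u|_Y\ \le\ 1+\Delta_Y,
\]
so $|m|\le(1+\Delta_Y)/\tau_A(t)$, and consequently
\[
|y|_X\ =\ |t^{m}u|_X\ \le\ |m|\cdot|t|_X+\Delta_X\ \le\ \frac{(1+\Delta_Y)\,|t|_X}{\tau_A(t)}+\Delta_X .
\]
The right-hand side is independent of $y$, so $\sup_{y\in Y}|y|_X<\infty$, i.e. $X\preceq Y$. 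Together with $Y\preceq X$ from the first paragraph this yields $X\sim Y$, hence $[Y]=[X]$, completing the proof.

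The main obstacle is the second step, showing that $t$ stays loxodromic with respect to the a priori unknown structure $[Y]$: this is precisely where acylindricity is essential. Without it, $[Y]$ could collapse to something far smaller than expected (compare Theorem \ref{n-gt-intr}, where general-type structures collapse all the way to the trivial one), and then $\tau_A(t)$ could vanish, destroying the estimate above.
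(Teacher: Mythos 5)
Your proof is correct and follows essentially the same route as the paper's: both invoke the trichotomy for acylindrical actions (Theorem \ref{tricho}), rule out the general-type case via the absence of non-cyclic free subgroups, dispose of the elliptic case trivially, and in the lineal case exploit that the infinite cyclic subgroup generated by (a power of) a loxodromic element has finite index in $G$. The only difference is the endgame: the paper concludes that the whole action on $\Gamma(G,Y)$ is proper and hence that $Y$ is finite, whereas you reach the same conclusion by an explicit translation-length estimate showing $\sup_{y\in Y}|y|_X<\infty$; both yield $[Y]=[X]$.
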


\begin{proof}
Let $[X]\in \AHG$. We apply Theorem \ref{class} to the action of $G$ on $\Gamma (G,X)$. It is well known that a virtually cyclic group cannot satisfy (c) (for instance, one can use the standard ping-pong argument to show that (c) implies the existence of non-cyclic free subgroups in $G$); thus we only have to consider cases (a) and (b). If $G$ has bounded orbits, we have $\sup_{g\in G}|g|_X<\infty$ and hence $X\sim G$. If $G$ contains a loxodromic element $g$, then the action of $\langle g\rangle $ on $\Gamma (G,X)$ is proper by the definition of a loxodromic element. Since $G$ is virtually cyclic, every infinite cyclic subgroup has finite index in $G$. In particular, $|G:\langle g\rangle|<\infty $ and the action of $G$ on $\Gamma (G,X)$ is also proper. This means that $X$ is finite.
\end{proof}

\begin{proof}[Proof of Theorem \ref{main1}]
If the group $G$ is virtually cyclic, then case (b) is realized by Lemma \ref{vc}. If $G$ is not virtually cyclic and not acylindrically hyperbolic, then all acylindrical actions of $G$ on hyperbolic spaces have bounded orbits by Theorem \ref{tricho}. This implies that the only acylindrically hyperbolic structure on $G$ is the trivial one, i.e., case (a) is realized. It remains to consider the case when $G$ is acylindrically hyperbolic. In this case we have (c), which follows from Theorem \ref{main3}. The latter theorem will be proved in Section 6.1 (without using Theorem \ref{main1}, of course).
\end{proof}

\subsection{Sufficient conditions for extremality}

In this section we provide two sufficient conditions for extremality of hyperbolic structures, namely Proposition \ref{minact} and Proposition \ref{maxact}, stated below. They will be later used in several places, including Sections 4.3, 4.5, and 7.2.

Recall that given a metric space $S$ with a metric $\d_S$, by $\d^{Hau}_S$ we denote the corresponding Hausdorff distance on the set of non-empty subsets of $S$.
Let $G$ be a group acting on a metric space $S$.

\begin{defn}\label{eCT}
We say that the \emph{action of $G$ on (unordered) pairs of equidistant points in $S$ is coarsely transitive} if there exists $\e\ge 0$ such that for every $x,y,s,t\in S$ satisfying $\d_S(x,y)=\d_S(s,t)$ there is $g\in G$ such that $$\d_S^{Hau}(\{ gx, gy\}, \{ s,t\})\le \e.$$ We use the term \emph{$\e$-coarsely transitive} whenever we want to stress that the definition is satisfied with a particular constant $\e$.
\end{defn}

It is not difficult to show that the property of being coarsely transitive on pairs of equidistant points is invariant under the equivalence of actions on geodesic spaces; we will not use this fact in our paper and so we leave the proof as an exercise.

In what follows, by a \emph{minimal hyperbolic structure} of a group $G$ we mean a minimal element in $\H (G)\setminus \H_e(G)$.

\begin{prop}\label{minact}
Let $G$ be a group acting coboundedly and non-elliptically on a hyperbolic space $S$. If the action of $G$ on pairs of equidistant points in $S$ is coarsely transitive, then $\sigma ([G\curvearrowright S])$ is a minimal hyperbolic structure on $G$.
\end{prop}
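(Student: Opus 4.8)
The plan is to check the definition of a minimal hyperbolic structure directly. Fix a bounded set $B$ with $\bigcup_{g\in G}gB=S$, put $D={\rm diam}(B)$, fix $b\in B$, and set $X=\{g\in G\mid \d_S(b,gb)\le 2D+1\}$. By Lemma~\ref{lemMS}, $X$ generates $G$, we have $\sigma([G\curvearrowright S])=[X]$, and the orbit map $g\mapsto gb$ is a $G$-equivariant quasi-isometry $\Gamma(G,X)\to S$; moreover the proof of that lemma gives $|g|_X\le\d_S(b,gb)+1$ for all $g\in G$. Since the action is non-elliptic, $\Gamma(G,X)$ is unbounded, so $[X]\neq[G]$. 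Now let $[Y]\in\H(G)$ be non-trivial with $[Y]\preccurlyeq[X]$; I must show $[X]\preccurlyeq[Y]$, which by antisymmetry of the order then gives $[X]=[Y]$ and hence minimality. By Lemma~\ref{lem-Lip} it is enough to prove that $\sup_{y\in Y}\d_S(b,yb)<\infty$: this makes the orbit map $(G,\d_Y)\to S$ Lipschitz, whence $G\curvearrowright S\preceq G\curvearrowright\Gamma(G,Y)$, hence $G\curvearrowright\Gamma(G,X)\preceq G\curvearrowright\Gamma(G,Y)$, hence $[X]\preccurlyeq[Y]$ by Lemma~\ref{eCay}.

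Next I extract two facts from $[Y]\preccurlyeq[X]$. By Definition~\ref{def-GG} there is a constant $N$ with $\d_Y\le N\,\d_X$ on $G$, i.e. $|g|_Y\le N|g|_X$ for all $g$; combined with $|g|_X\le\d_S(b,gb)+1$ this yields the displacement bound: $\d_S(b,gb)\le r$ implies $|g|_Y\le N(r+1)$. On the other hand $\d_X\ge\d_Y/N$, so comparing translation numbers shows that any element loxodromic for $G\curvearrowright\Gamma(G,Y)$ is loxodromic for $G\curvearrowright\Gamma(G,X)$, hence loxodromic on $S$. Since $[Y]$ is non-trivial, $\Gamma(G,Y)$ is an unbounded hyperbolic space carrying a cobounded $G$-action, so by Theorem~\ref{ClassHypAct} that action is lineal, quasi-parabolic, or of general type, and in each case it has a loxodromic element $t$. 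Fix such a $t$; then $t$ is loxodromic on $S$, so $\d_S(b,t^kb)\to\infty$ as $k\to+\infty$ while $|\d_S(b,t^{k+1}b)-\d_S(b,t^kb)|\le\Delta:=\d_S(b,tb)$ for all $k$.

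The heart of the matter is a contradiction obtained by assuming $\sup_{y\in Y}\d_S(b,yb)=\infty$. Choose $y_n\in Y$ with $L_n:=\d_S(b,y_nb)\to\infty$. For each $n$ let $k_n$ be the largest integer $k\ge 0$ with $\d_S(b,t^kb)\le L_n$ (finite, since $\d_S(b,t^kb)\to\infty$); then $\d_S(b,t^{k_n+1}b)\in(L_n,L_n+\Delta]$ and $k_n\to\infty$, and on a geodesic from $b$ to $t^{k_n+1}b$ there is a point $q_n$ with $\d_S(b,q_n)=L_n$, so $\d_S(q_n,t^{k_n+1}b)\le\Delta$. For all large $n$, $\{b,y_nb\}$ and $\{b,q_n\}$ are equidistant pairs of common diameter $L_n>2\e$, so $\e$-coarse transitivity (Definition~\ref{eCT}) provides $f_n\in G$ with $\d^{Hau}_S(\{f_nb,f_ny_nb\},\{b,q_n\})\le\e$; since both pairs have diameter $>2\e$, either (i) $\d_S(f_nb,b)\le\e$ and $\d_S(f_ny_nb,q_n)\le\e$, or (ii) $\d_S(f_nb,q_n)\le\e$ and $\d_S(f_ny_nb,b)\le\e$. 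In case (i) the elements $f_n$ and $t^{-(k_n+1)}f_ny_n$ have $\d_S$-displacement at most $\e+\Delta$, so $|f_n|_Y,\,|t^{-(k_n+1)}f_ny_n|_Y\le N(\e+\Delta+1)$ by the displacement bound, and since $t^{k_n+1}=(f_ny_n)\big(t^{-(k_n+1)}f_ny_n\big)^{-1}$,
\[
|t^{k_n+1}|_Y\;\le\;|f_n|_Y+|y_n|_Y+|t^{-(k_n+1)}f_ny_n|_Y\;\le\;2N(\e+\Delta+1)+1.
\]
Case (ii) is symmetric (bound $|f_ny_n|_Y$ and $|f_n^{-1}t^{k_n+1}|_Y$ and use $t^{k_n+1}=f_n\cdot f_n^{-1}t^{k_n+1}$). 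Either way $\{|t^{k_n+1}|_Y\}_n$ is bounded, contradicting that $t$ is loxodromic for $G\curvearrowright\Gamma(G,Y)$ while $k_n\to\infty$. Hence $\sup_{y\in Y}\d_S(b,yb)<\infty$, as required.

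The step I expect to be the main obstacle is reconciling the \emph{exact} equality of distances demanded in Definition~\ref{eCT} with the fact that the orbit $Gb$ realizes only a discrete set of displacement values; replacing the orbit point $t^{k_n+1}b$ by the geodesic point $q_n$ lying at the precise distance $L_n$ from $b$ handles this at the cost of the harmless additive error $\Delta$. A secondary point to arrange carefully is the existence of a single element loxodromic for \emph{both} structures: this is exactly what $\d_X\ge\d_Y/N$, coming from $[Y]\preccurlyeq[X]$, provides, since without it the loxodromic element furnished by $\Gamma(G,Y)$ might fail to be loxodromic (or even non-elliptic) on $S$.
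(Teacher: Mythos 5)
Your argument is correct, and it reaches the conclusion by a genuinely different route from the paper. Both proofs share the same skeleton: from $[Y]\preccurlyeq[X]$ and the Svarc--Milnor comparison $|g|_X\le \d_S(b,gb)+1$ one gets the displacement bound $|g|_Y\le N(\d_S(b,gb)+1)$, and the contradiction is extracted by playing a loxodromic element of the smaller structure $[Y]$ against a sequence of generators $y_n\in Y$ whose $S$-displacements blow up. The difference is in how coarse transitivity is deployed. The paper fixes $g\in\L([Y])$, subdivides the $S$-geodesic from $s$ to $g^ns$ into roughly $\d_S(s,g^ns)/\ell_i$ segments of length exactly $\ell_i=\d_S(s,y_is)$, transports each segment onto $\{s,y_is\}$ (plus an auxiliary application to the degenerate pairs $(z_j,z_j)$ and $(s,s)$ to anchor group elements near the subdivision points), and sums to conclude $\inf_n|g^n|_Y/n=0$. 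You instead match each long generator $y_n$ with a \emph{single} power $t^{k_n+1}$ of a fixed $Y$-loxodromic $t$ whose $S$-displacement agrees with $L_n$ up to the additive constant $\Delta=\d_S(b,tb)$, apply coarse transitivity exactly once per $n$, and read off that $|t^{k_n+1}|_Y$ is bounded while $k_n\to\infty$. Your device of interpolating the point $q_n$ on the geodesic to achieve the exact equidistance required by Definition~\ref{eCT} (at the cost of the harmless error $\Delta$) replaces the paper's trick of choosing the subdivision points to be at exact distance $\ell_i$ by construction. Your route does require the extra (easy) observation that $t$ is loxodromic on $S$, which you correctly extract from $\d_Y\le N\d_X$; the paper's subdivision argument avoids this but pays for it with a longer bookkeeping computation and a double limit $i\to\infty$, $n/\ell_i\to\infty$. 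Both arguments use non-ellipticity only through the existence of a loxodromic element for the non-trivial structure $[Y]$, consistent with the paper's remark that the conclusion fails for general elements of $\GG\setminus\{[G]\}$.
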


\begin{proof}
Fix any $s\in S$. Let $[X]= \sigma([G\curvearrowright S])$. Suppose that for some non-trivial $[Y]\in \H (G)$, we have $[Y]\preccurlyeq [X]$, i.e., $Y\preceq X$. Combining Lemma \ref{eCay} and (\ref{SM-altdef}), we obtain
$$
G\curvearrowright\Gamma (G,Y)\preceq G\curvearrowright \Gamma (G,X)\sim G\curvearrowright S.
$$
Thus there exists a constant $C$ such that for every $f\in G$ we have $\d_Y(1, f) \le C\d _S(s,fs) +C$. This easily implies
\begin{equation}\label{minact1}
\d_Y^{Hau} (A,B)\le C\d_S^{Hau}(As, Bs)+C
\end{equation}
for any $A,B\subseteq G$.

Suppose that $Y\not\sim X$. Then there exists a sequence $(y_i)_{i\in \mathbb N}\subseteq Y$ such that $|y_i|_X\to \infty $ as $i\to \infty$. Since $G\curvearrowright \Gamma (G,X)\sim G\curvearrowright S$, we have $$\ell_i=\d_S(s, y_is)\to \infty $$ as $i\to \infty$.

Let $g\in \L([Y])$. We fix any $i\in \mathbb N$ and any $n\in \mathbb N$. Let $p$ denote a geodesic in $S$ such that $p_-=s$ and $p_+=g^ns$. Let $z_0=s, z_1, \ldots, z_{k+1}=g^ns$ be a sequence of consecutive vertices on $p$ such that
\begin{equation}\label{dzj}
\d_S(z_{j-1}, z_j)=\ell_i\; {\rm for}\; 1\le j\le k,\;\; {\rm and}\;\;  \d_S(z_{k}, z_{k+1}) < \ell_i.
\end{equation}

Since the action of $G$ on pairs of equidistant points in $S$ is $\e$-coarsely transitive for some $\e$, there exist $f_0=1, f_1, \ldots , f_{k+1}=g^n$ such that
\begin{equation}\label{zfs}
\d_S(z_j, f_js)\le \e
\end{equation}
for all $1\le j\le k+1$ (we apply Definition \ref{eCT} to the pairs $(z_j,z_j)$ and $(s,s)$ here). By (\ref{dzj}) there also exists $a_j\in G$ such that
$$
\d_S^{Hau}(\{a_jz_{j-1},a_jz_j\}, \{ s, y_is\}) \le \e
$$
for all $1\le j\le k$. Combining this with (\ref{minact1}) and (\ref{zfs}), for all $1\le j\le k$, we obtain
$$
\begin{array}{rcl}
\d_Y^{Hau}(\{a_jf_{j-1}, a_jf_j\}, \{1, y_i\}, ) &\le  &C\d_S^{Hau}(\{ a_jf_{j-1}s, a_jf_js\}, \{ s, y_is\}) +C\\ && \\ &\le & C(\d_S^{Hau}(\{ a_jz_{j-1}, a_jz_j\}, \{ s, y_is\})+\e) +C \\ && \\ &\le & 2C\e +C.
\end{array}
$$
Therefore,
$$
\d_Y(f_{j-1}, f_j)= \d_Y(a_jf_{j-1}, a_jf_j) \le \d_Y(1, y_i) + 4C\e +2C\le 4C\e +2C+1
$$
for all $1\le j\le k$. Note also that
$$
\d_Y(f_k, f_{k+1})\le C\d_S(f_ks, f_{k+1}s)+C \le C (\d_S(z_k, z_{k+1})+2\e )+C <C(\ell_i+2\e+1).
$$
Since $f_0=1$ and $f_{k+1}=g^n$, applying the triangle inequality we obtain
$$
\begin{array}{rcl}
\frac{|g^n|_Y }{n}& \le & \frac{1}{n}\sum\limits_{j=1}^{k+1} \d_Y (f_{j-1}, f_j)\\&&\\&\le&  \frac{1}{n}\left((4C\e +2C+1)\frac{\d_S(s, g^ns)}{\ell_i} +C(\ell_i+2\e+1)  \right)\\&&\\&\le& \frac{\d_S(s, gs)}{\ell_i}(4C\e +2C+1) +\frac{C(\ell_i+2\e+1)}{n}.
\end{array}
$$
Letting $i\to \infty $ and $n/\ell_i\to \infty$, we obtain that $\inf_n\frac{|g^n|_Y}{n}=0$, which contradicts the assumption that $g\in \L([Y])$.
\end{proof}

\begin{rem}
The use of a loxodromic element in the proof of the proposition is essential. In particular, we cannot conclude that under the assumptions of the theorem that $\sigma ([G\curvearrowright S])$ is a minimal element in $\GG\setminus \{ [G]\}$. Indeed it is easy to see that the standard translation action of $\mathbb Z$ on $\mathbb R$ is coarsely transitive on pairs of equidistant points  but the corresponding hyperbolic structure $[X]$, where $X$ is any finite generating set of $\mathbb Z$, is not minimal in $\GG\setminus \{ [G]\}$ (for example, taking the generating set $Y=\{ 2^n\mid n\in \mathbb N\}$ we get a strictly smaller non-trivial element $[Y]\in \GG$).
\end{rem}

\begin{ex} It is well-known and easy to prove that the standard action of $PSL(2, \mathbb R)$ on $\mathbb H^2$ is transitive on pairs of equidistant points. Hence the action of every dense subgroup of $PSL(2, \mathbb R)$ is coarsely transitive on pairs of equidistant points. This yields examples of minimal hyperbolic structures on dense subgroups of $PSL(2, \mathbb R)$, e.g., on $PSL(2, \mathbb Q)$ or $F_2$ (see \cite{Ghy,Bre} for examples of dense free subgroups of $PSL(2, \mathbb R)$).
\end{ex}

It is clear that every lineal action satisfies the assumptions of Proposition \ref{minact} and hence we obtain the following.

\begin{cor}\label{linmin}
For every group $G$, every element of $\H_\ell (G)$ is minimal.
\end{cor}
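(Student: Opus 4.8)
The plan is to verify that a lineal structure satisfies the hypotheses of Proposition \ref{minact} and then invoke it. Fix $A=[X]\in\H_\ell (G)$ and consider the cobounded action $G\curvearrowright S:=\Gamma (G,X)$ on the hyperbolic space $S$. Since $A$ is lineal we have $|\Lambda (G)|=2$, so this action is non-elliptic, and $\sigma ([G\curvearrowright S])=A$ by Proposition \ref{prop-SM}. Hence it suffices to prove that $G\curvearrowright S$ is coarsely transitive on pairs of equidistant points, and the substance of this is that $S$ is, up to a coarse isometry, just the real line, with $G$ acting on it (coarsely) by isometries.

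To set this up I would fix a loxodromic element $g$ (one exists by Theorem \ref{ClassHypAct}), a base point $s_0$, and note that $\Lambda (G)=\{g^{+\infty},g^{-\infty}\}$, which moreover equals $\partial S$ since the action is cobounded. The first key step is that there is a constant $D$ with $S\subseteq N_D(\langle g\rangle s_0)$, where $N_D(\cdot)$ denotes the closed $D$-neighbourhood: the orbit $\langle g\rangle s_0$, interpolated by geodesics, is a $(\mu,\nu)$-quasi-geodesic line with endpoints $g^{\pm\infty}$; for any $h\in G$ the translate $h\langle g\rangle s_0$ is again a $(\mu,\nu)$-quasi-geodesic line, and its endpoints $hg^{\pm\infty}$ lie in $\Lambda (G)=\{g^{\pm\infty}\}$, so by stability of quasi-geodesics it stays within a uniform Hausdorff distance of $\langle g\rangle s_0$; since $hs_0$ lies on $h\langle g\rangle s_0$ and the action is cobounded, this bound propagates to all of $S$. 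The second key step is the standard estimate $\d_S(s_0,g^n s_0)=n\tau (g)+O(1)$, so that $g^n s_0\mapsto n\tau (g)$ is a coarse isometry from $\langle g\rangle s_0$ onto $\tau (g)\mathbb{Z}\subseteq\mathbb{R}$, where $\tau(g)>0$ is the translation number of $g$.

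Putting these together I would define $b\colon S\to\mathbb{R}$ by $b(x)=n(x)\tau (g)$, where $g^{n(x)}s_0$ is any orbit point $D$-close to $x$; this is well defined up to a uniform error and is a coarse isometry onto $\mathbb{R}$. Examining how the $g$-orbit is permuted by an element $h$, and using that $h$ permutes the two-point set $\partial S$, one checks that $b(hx)=\epsilon_h\, b(x)+\beta_h+O(1)$ with uniform implicit constants, where $\epsilon_\bullet\colon G\to\{\pm 1\}$ is the homomorphism recording the action on $\partial S$ and $\beta_h=b(hs_0)+O(1)$; thus $G$ acts on $\mathbb{R}$ coarsely by isometries and coboundedly. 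In particular $\{\beta_h:h\in\ker\epsilon\}$ is a coarsely dense subset of $\mathbb{R}$: it coincides up to bounded error with $b((\ker\epsilon)s_0)$, and $\ker\epsilon$ has index at most $2$ in $G$.

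The coarse transitivity is then immediate from a short computation in $\mathbb{R}$: if $\d_S(x,y)=\d_S(s,t)=d$ then $|b(x)-b(y)|=d+O(1)=|b(s)-b(t)|$; after relabelling within each pair so that $b(x)\le b(y)$ and $b(s)\le b(t)$, one chooses $h\in\ker\epsilon$ whose translation amount $\beta_h$ lies within a bounded distance of $b(s)-b(x)$, and then $\{b(hx),b(hy)\}$ is boundedly close to $\{b(s),b(t)\}$; since $b$ is a coarse isometry this forces $\d^{Hau}_S(\{hx,hy\},\{s,t\})$ to be bounded by a constant independent of $x,y,s,t$, which is exactly the $\e$-coarse transitivity needed. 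The main obstacle in the plan is the first key step — that $S$ coarsely coincides with a single loxodromic orbit — together with the linear-growth estimate for $\d_S(s_0,g^n s_0)$; both are consequences of the stability of quasi-geodesics in $\delta$-hyperbolic spaces, but some care is required since $S=\Gamma (G,X)$ need not be proper, so one should work with finite quasi-geodesic segments rather than passing to limiting bi-infinite geodesics.
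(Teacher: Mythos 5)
Your proof is correct and follows the same route as the paper: the paper's entire argument is the one-line observation that every lineal action satisfies the hypotheses of Proposition \ref{minact}, and your write-up simply supplies the (correct) verification of coarse transitivity on pairs of equidistant points that the paper leaves implicit. The details check out, including the points you flag about non-properness and the uniform Hausdorff-distance bound between translates of the quasi-axis.
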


In the next result, we use the order on group actions introduced in Definition  \ref{def-we}. Here and in what follows, we always think of connected graphs as metric spaces with respect to the combinatorial metric.

\begin{prop}\label{maxact} Let $G$ be a group acting cocompactly on a connected graph $\Delta$ and let $\mathcal A$ be a set of actions of $G$ on metric spaces. Suppose that for every vertex $v \in V(\Delta)$ and every action $G \acts S \in \mathcal A$, the induced action of the stabilizer $Stab_G(v)$ on $S$ has bounded orbits. Then $A\preceq G \acts \Delta$ for all $A\in \mathcal A$.
\end{prop}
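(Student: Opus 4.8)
The plan is to prove, for each action $A = (G \acts S)$ in $\mathcal A$, that $G \acts S \preceq G \acts \Delta$ directly from Definition~\ref{def-poset}; by Lemma~\ref{EA} it is enough to exhibit one vertex $v \in V(\Delta)$, one point $s \in S$, and a constant $C$ such that $\d_S(s, gs) \le C\,\d_\Delta(v, gv) + C$ for all $g \in G$. Since $G \acts \Delta$ is cocompact, there are only finitely many $G$-orbits of vertices and of edges, so I would fix orbit representatives $v_1, \dots, v_k$ for the vertices (and set $v := v_1$) and $e_1, \dots, e_m$ for the edges, where $e_l$ has endpoints $\widehat u_l, \widehat u_l'$. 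For each $i$, the hypothesis says that $Stab_G(v_i)$ has bounded orbits on $S$, so for a chosen point $s_i \in S$ the quantity $D_i := \sup_{h \in Stab_G(v_i)} \d_S(s_i, hs_i)$ is finite; put $D := \max_i D_i$ and $s := s_1$.

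The heart of the argument is a coarse orbit map $\psi \colon V(\Delta) \to S$. For every vertex $u$, choose an index $c(u) \in \{1, \dots, k\}$ and an element $\gamma_u \in G$ with $u = \gamma_u v_{c(u)}$ (so $c$ records the vertex orbit of $u$ and $\gamma_u$ is a chosen coset representative for it; take $\gamma_{v_i} = 1$), and set $\psi(u) := \gamma_u s_{c(u)}$. First I would check two estimates. \emph{(i) Adjacent vertices have close $\psi$-images.} If $u, u'$ span an edge, then (after possibly swapping the endpoints of the relevant representative) there is $g \in G$ with $gu = \widehat u_l$ and $gu' = \widehat u_l'$ for some $l$; since $g$ permutes the vertex orbits, $c(u) = c(\widehat u_l)$, and $g\gamma_u$ and $\gamma_{\widehat u_l}$ differ by an element of $Stab_G(v_{c(u)})$, and likewise for $u'$. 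Applying the isometry $g$, the triangle inequality, and the definition of the $D_i$ then yields $\d_S(\psi(u), \psi(u')) \le N$, where $N := 2D + \max_l \d_S(\psi(\widehat u_l), \psi(\widehat u_l'))$ is finite. \emph{(ii) $\psi$ is coarsely $G$-equivariant.} For all $g \in G$ and $u \in V(\Delta)$, the elements $\gamma_{gu}$ and $g\gamma_u$ both send $v_{c(u)}$ to $gu$, hence differ by an element of $Stab_G(v_{c(u)})$, so $\d_S(\psi(gu), g\psi(u)) \le D$.

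Granting (i) and (ii), the conclusion is quick. Fix $g \in G$ and a combinatorial geodesic $v_1 = u_0, u_1, \dots, u_n = gv_1$ in $\Delta$, where $n = \d_\Delta(v_1, gv_1)$. Summing (i) along the $n$ edges gives $\d_S(\psi(u_0), \psi(u_n)) \le Nn$, while, since $u_n = gu_0$, estimate (ii) gives $\d_S(\psi(u_n), gs) = \d_S(\psi(gu_0), g\psi(u_0)) \le D$; combining these with the triangle inequality and $\psi(u_0) = s$ yields
\[
\d_S(s, gs) \;\le\; \d_S(\psi(u_0), \psi(u_n)) + \d_S(\psi(u_n), gs) \;\le\; Nn + D .
\]
Thus the required inequality holds with $C := N + D$, so $G \acts S \preceq G \acts \Delta$; as $A \in \mathcal A$ was arbitrary, the proposition follows.

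I expect the only genuinely delicate step to be the bookkeeping in~(i): one must carefully match the coset representatives $\gamma_u$ with the point stabilizers $Stab_G(v_i)$ when an edge is transported to its orbit representative. This is also exactly where the hypothesis is used — without boundedness of the stabilizer orbits one cannot control the error introduced by the (non‑canonical) choices of the $\gamma_u$, which is precisely why $\psi$ has to be assembled from the finitely many orbit representatives rather than from any global section.
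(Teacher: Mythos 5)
Your proof is correct. Both estimates check out: in (i), if $g$ carries the edge $\{u,u'\}$ to the representative $e_l$, then $\gamma_{\widehat u_l}^{-1}g\gamma_u$ stabilizes $v_{c(u)}$, so $g\psi(u)$ lies within $D$ of $\psi(\widehat u_l)$ (and likewise for $u'$), giving $\d_S(\psi(u),\psi(u'))\le N$; in (ii), $\gamma_{gu}^{-1}g\gamma_u\in Stab_G(v_{c(u)})$ gives the coarse equivariance bound $D$. Summing along a combinatorial geodesic and applying (ii) at the endpoint then yields exactly the inequality required by Definition~\ref{def-poset} with $r=v_1$, $s=s_1$, $C=N+D$.

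Your route differs from the paper's in how the comparison is organized, though the two arguments rest on the same two facts (finitely many orbits of vertices and edges; bounded stabilizer orbits in $S$). The paper interposes an auxiliary action: it assembles a generating set $Y=X\cup H_1\cup\cdots\cup H_n$ from the vertex stabilizers $H_i$ together with a finite set $X$ controlling which group elements move the representative set $V$ back into itself, proves $\d_Y(1,g)\le 2\d_\Delta(v_1,gv_1)+2$ by lifting a geodesic edge-path, and then invokes Lemma~\ref{lem-Lip} to get $G\acts S\preceq G\acts (G,\d_Y)\preceq G\acts\Delta$. You instead build a coarsely $G$-equivariant, coarsely Lipschitz map $V(\Delta)\to S$ directly from orbit representatives and push the geodesic forward, never mentioning a word metric. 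The paper's factorization buys a slightly stronger structural byproduct — an explicit generating set $Y$ with $(G,\d_Y)$ quasi-isometric to (an orbit in) $\Delta$, in the spirit of the Svarc--Milnor machinery already set up in Section~3 — whereas your version is self-contained and makes the role of the stabilizer hypothesis visible exactly where you say it is: in controlling the non-canonical choices of coset representatives $\gamma_u$. Either proof is complete; the bookkeeping step you flagged as delicate is handled correctly.
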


\begin{proof}
Since the action $G \acts \Delta$ is cocompact, there are finitely many orbits of edges. Let $E=\{ e_1, \ldots, e_k\} $ be a set of representatives of these orbits and let $V=\{ v_1, \ldots, v_n\}$ be the set of vertices incident to edges from $E$. Let $H_i=Stab_G(v_i)$ for $i=1, \ldots, n$. Since $V$ is finite, there exists a finite set $X \subset G$ such that if $gV\cap V\ne\emptyset$ for some $g\in G$, then $g\in xH_i$ for some $x\in X$ and $i\in \{ 1, \ldots, n\}$. In particular, if we set $Y=X\cup H_1\cup \ldots\cup H_n$, then $gV\cap V\ne\emptyset$ implies $|g|_Y\le 2$.

We first show that $G$ is generated by the set $Y$. Let $g\in G$ and let $p$ be a geodesic in $\Delta $ from $v_1$ to $gv_1$, with $p=f_1\ldots f_m$, where $f_1, \ldots, f_m$ are edges of $\Delta$. For every $j\in \{ 1, \ldots, m\}$, there exists $g_j\in G$ such that $f_j\in g_jE$. In this notation, $g_jV\cap g_{j-1}V$ contains the common vertex of $f_{j-1}$ and $f_j$. Hence $g_{j-1}^{-1}g_jV \cap V\ne \emptyset$. By the choice of $X$, we have $g_{j-1}^{-1}g_j \in \langle Y\rangle $ and, moreover, $|g_{j-1}^{-1}g_j|_Y \le 2$ for all $j=2, \ldots , m$. Note also that $v_1\in V\cap g_1V$ and hence $g_1\in \langle Y\rangle $ and $|g_1|_Y\le 2$. Similarly, $gv_1\in g_mV\cap gV$, hence $g_m^{-1}gV\cap V\ne \emptyset$ and we have $g_m^{-1}g\in \langle Y\rangle $ and $|g_m^{-1}g|_Y\le 2$. Taking all these together, we obtain
$$
g=g_1 (g_1^{-1}g_2) \cdots (g_{m-1}^{-1}g_m) (g_m^{-1}g)\in \langle Y\rangle
$$
and
\begin{equation}\label{dy1g}
\d_Y(1,g)=|g|_Y\le |g_1|_Y +\sum\limits_{j=2}^m |g_{j-1}^{-1}g_j|_Y+ |g_m^{-1}g|_Y \le 2(m+1) = 2 \d_\Delta (v_1, gv_1)+2.
\end{equation}

Thus $G=\langle Y\rangle$ and moreover (\ref{dy1g}) implies that
\begin{equation}\label{GSGGY}
G \curvearrowright (G, \d_Y) \preceq G\curvearrowright \Delta.
\end{equation}

Let $A=G\curvearrowright S\in \mathcal A$. We fix any $s\in S$. Since $X$ is finite and the orbit of each $H_i$ in $S$ is bounded, there is a uniform bound on $\d_S(s, xs)$ for all $x\in Y$. By Lemma \ref{lem-Lip}, the orbit map $(G, \d_Y)\to Gs$ is Lipschitz. Hence $G\curvearrowright S \preceq G \curvearrowright (G, \d_Y)$. Combining this with (\ref{GSGGY}), we obtain $G\curvearrowright S \preceq G\curvearrowright \Delta$.
\end{proof}

As an immediate corollary of Proposition \ref{maxact} and Proposition \ref{prop-SM}, we obtain the following.

\begin{cor}\label{maxact-cor}
Let $G$ be a group acting cocompactly on a connected graph $\Delta$ and let $\mathcal F\subseteq \GG$ be any subset. Suppose that for every $v\in V(\Delta)$ and every $[X]\in \mathcal F$, the stabilizer $Stab_G(v)$ has bounded diameter with respect to $\d_X$. Then $\sigma([G\curvearrowright \Delta])$ is an upper bound for $\mathcal F$ in $\GG$.
\end{cor}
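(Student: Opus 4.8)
The plan is to deduce the statement directly from Proposition \ref{maxact} together with the Svarc--Milnor isomorphism of Proposition \ref{prop-SM}; this really is an immediate corollary, so the work is just in checking the hypotheses.

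First I would set $\mathcal A = \{ G\curvearrowright \Gamma(G,X)\mid [X]\in\mathcal F\}$ and verify that this family satisfies the hypothesis of Proposition \ref{maxact}. Fix a vertex $v\in V(\Delta)$ and a structure $[X]\in\mathcal F$, and identify the vertex set of $\Gamma(G,X)$ with $G$ endowed with $\d_X$. Then the orbit of the identity vertex under $Stab_G(v)$ is exactly the subset $Stab_G(v)\subseteq G$, whose $\d_X$-diameter is finite by hypothesis. Since for an isometric action boundedness of one orbit forces boundedness of every orbit (by the triangle inequality, $\mathrm{diam}(Hx)\le 2\d_X(x,x_0)+\mathrm{diam}(Hx_0)$), the induced action of $Stab_G(v)$ on $\Gamma(G,X)$ has bounded orbits, which is precisely what Proposition \ref{maxact} requires.

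Next, Proposition \ref{maxact} gives $G\curvearrowright \Gamma(G,X)\preceq G\curvearrowright \Delta$ for every $[X]\in\mathcal F$, i.e.\ $[G\curvearrowright\Gamma(G,X)]\preccurlyeq [G\curvearrowright \Delta]$ in $\AcG$. Note that $G\curvearrowright\Delta$ does lie in $\AcG$: a cocompact action is cobounded, and a connected graph with the combinatorial metric is geodesic. Finally I would apply the order-preserving inverse isomorphism $\sigma\colon \AcG\to\GG$ from Proposition \ref{prop-SM}; since $\sigma([G\curvearrowright\Gamma(G,X)])=[X]$, applying $\sigma$ to the inequality above yields $[X]\preccurlyeq \sigma([G\curvearrowright\Delta])$ for all $[X]\in\mathcal F$. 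Hence $\sigma([G\curvearrowright\Delta])$ is an upper bound for $\mathcal F$ in $\GG$, as claimed.

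There is no genuine obstacle here; the only steps needing a sentence of justification are the identification of ``$Stab_G(v)$ has bounded $\d_X$-diameter'' with ``the induced action on $\Gamma(G,X)$ has bounded orbits'', and the remark that $G\curvearrowright\Delta$ is a legitimate element of $\AcG$ so that $\sigma$ applies to it. Everything else is a direct invocation of the two cited results.
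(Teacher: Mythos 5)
Your proof is correct and follows exactly the route the paper intends: the paper states this corollary as an "immediate" consequence of Proposition \ref{maxact} and Proposition \ref{prop-SM} without writing out details, and your argument supplies precisely those details (translating the bounded-diameter hypothesis into bounded orbits of the stabilizers on the Cayley graphs, invoking Proposition \ref{maxact}, and transporting the resulting inequalities through the order isomorphism $\sigma$). Nothing is missing.
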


\subsection{Lineal hyperbolic structures and pseudocharacters}

Let $G$ be a group. Recall that a map $q\colon G\to \mathbb R$ is a \emph{quasi-character} (or \emph{quasi-morphism}) if there exists a constant $D$ such that $$|q(gh)-q(g)-q(h)|\le D$$ for all $g,h\in G$; one says that $q$ has \emph{defect at most $D$}. If, in addition, the restriction of $q$ to every cyclic subgroup of $G$ is a homomorphism, $q$ is called a \emph{pseudocharacter} (or \emph{homogeneous quasi-morphism}). Every quasi-character $q\colon G\to \mathbb R$ gives rise to a pseudocharacter $p\colon G\to \mathbb R$ defined by
$$
p(g)=\lim_{n\to \infty} \frac{q(g^n)}n
$$
(the limit always exists); $p$ is called the \emph{homogenization of $q$.} It is straightforward to check that
\begin{equation}\label{pg-qg}
 |p(g) -q(g)|\le D
\end{equation}
for all $g\in G$ if $q$ has defect at most $D$.

Recall that a lineal action of a group $G$ on a hyperbolic space $S$ is \emph{orientable} if no element of $G$ permutes the two limit points of $G$ on $\partial S$.

Clearly the property of being orientable is invariant under the equivalence of actions and thus we can speak of orientable lineal hyperbolic structures on a given group $G$. We denote the set of such structures by $\H^+_\ell(G)$.

\begin{lem}\label{pc-to-ls}
Let $p\colon G\to \mathbb R$ be a non-zero pseudocharacter. Let $C$ be any constant such that the defect of $p$ is at most $C/2$ and there exists a value of $p$ in the interval $(0, C/2)$. Let $$X=X_{p, C}=\{ g\in G \mid |p(g)|< C\}.$$ Then $X$ is a generating set of $G$ and the map $p\colon (G, \d_X)\to \mathbb R$ is a quasi-isometry. In particular, $[X]\in \H_\ell^+(G)$ and $\L([X]) =\{ g\in G\mid p(g)\ne 0\}$.
\end{lem}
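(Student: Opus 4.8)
\emph{Strategy.} Fix an element $x_0\in G$ with $p(x_0)=\delta$ for some $\delta\in(0,C/2)$; such an $x_0$ exists by the hypothesis on $C$. The plan is to show that the map $p$ itself, up to a bounded additive error, \emph{is} the required quasi-isometry $(G,\d_X)\to\mathbb R$, and then read off all the assertions from this single fact. Two consequences of the definitions will be used throughout: since $p$ is a pseudocharacter, $p(g^n)=n\,p(g)$ for all $g\in G$ and $n\in\mathbb Z$ (in particular $p(g^{-1})=-p(g)$, so $X=X^{-1}$), and since the defect of $p$ is at most $C/2$, $|p(ab)-p(a)-p(b)|\le C/2$ for all $a,b\in G$.

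\emph{$X$ generates $G$, with $|g|_X\le |p(g)|/\delta+2$.} Let $g\in G$ and put $t=p(g)$; assume $t\neq 0$ (otherwise $g\in X$ already). Using $p(x_0^{\mp k})=\mp k\delta$ together with the defect bound gives $|p(x_0^{\mp k}g)-(\mp k\delta+t)|\le C/2$ for every $k\in\mathbb N$. Taking $k=\lceil|t|/\delta\rceil$ and the sign opposite to the sign of $t$, the number $\mp k\delta+t$ lies in $(-\delta,\delta)$, hence $|p(x_0^{\mp k}g)|<\delta+C/2<C$, i.e.\ $x_0^{\mp k}g\in X$. Since also $x_0\in X$ (as $\delta<C/2<C$ and $X=X^{-1}$), the identity $g=x_0^{\pm k}\cdot(x_0^{\mp k}g)$ writes $g$ as a product of $k+1\le |p(g)|/\delta+2$ elements of $X$. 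Thus $X$ generates $G$ and the stated bound holds.

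\emph{$p$ is a quasi-isometry and $[X]\in\H_\ell(G)$.} For the coarse-Lipschitz estimate, pick a shortest $X$-word $g^{-1}h=x_1\cdots x_n$ with $n=\d_X(g,h)$; iterating the defect bound gives $|p(g^{-1}h)-\sum_i p(x_i)|\le (n-1)C/2$, hence $|p(g^{-1}h)|<3nC/2$, and combining with $|p(g^{-1}h)-(p(h)-p(g))|\le C/2$ yields $|p(g)-p(h)|\le \tfrac{3C}{2}\,\d_X(g,h)$. For the reverse estimate, apply the previous paragraph to $g^{-1}h$ and bound $|p(g^{-1}h)|\le |p(g)-p(h)|+C/2$, obtaining $\d_X(g,h)\le \tfrac1\delta|p(g)-p(h)|+\bigl(\tfrac{C}{2\delta}+2\bigr)$. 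Coarse surjectivity of $p$ is immediate since $\{p(x_0^n):n\in\mathbb Z\}=\delta\mathbb Z$ is $\delta$-dense in $\mathbb R$. So $p\colon(G,\d_X)\to\mathbb R$ is a quasi-isometry; as $\Gamma(G,X)$ is quasi-isometric to its vertex set $(G,\d_X)$, it is quasi-isometric to $\mathbb R$, hence hyperbolic with $|\partial\Gamma(G,X)|=2$, so $[X]\in\H_\ell(G)$.

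\emph{Orientability, the loxodromic set, and the main difficulty.} The defect bound says exactly that $p$ is coarsely $G$-equivariant from the left-multiplication action $G\acts(G,\d_X)$ to the translation action $g\colon r\mapsto r+p(g)$ of $G$ on $\mathbb R$; transporting through the quasi-isometry $p$, the induced action of $G$ on $\partial\Gamma(G,X)\cong\{\pm\infty\}$ agrees with the action of a group of translations on the two ends of $\mathbb R$, which is trivial, so $G$ fixes $\partial\Gamma(G,X)$ pointwise and $[X]\in\H_\ell^+(G)$. Finally, applying the two estimates of the previous paragraph to the points $1$ and $g^n$ and using $p(g^n)=n\,p(g)$ gives $\tfrac{2|p(g)|}{3C}\le\tau_{[X]}(g)\le\tfrac{|p(g)|}{\delta}$, so the translation number of $g$ is positive if and only if $p(g)\neq 0$; hence $\L([X])=\{g\in G\mid p(g)\neq 0\}$. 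The one point that requires care is the constant-chasing in the generating-set step: both hypotheses on $C$ must be used at precisely the same place — the slack $C/2$ between $\delta$ and $C$ is exactly what absorbs the defect error when pushing $x_0^{\mp k}g$ back into $X$ — and this is what makes the proof that $X$ generates $G$ work. Everything else is a routine quasi-morphism computation.
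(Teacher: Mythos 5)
Your proof is correct. The core of it --- showing that $X$ generates $G$ with $|g|_X\le |p(g)|/\delta+2$ by multiplying by a power of a fixed element $x_0$ with $p(x_0)=\delta\in(0,C/2)$, and then extracting the quasi-isometry from this bound together with the iterated defect estimate $|p(g)|<\tfrac{3C}{2}|g|_X$ --- is exactly the paper's argument. Where you genuinely diverge is orientability: the paper argues by contradiction, using that a boundary-swapping element $a$ would force $\d_X(a^{-1}h^na,h^n)\to\infty$ while $p([h^n,a])$ stays bounded (by conjugation-invariance of pseudocharacters), whereas you argue directly that $p$ coarsely intertwines left multiplication on $(G,\d_X)$ with the translation $r\mapsto r+p(g)$ on $\mathbb R$, which fixes both ends, so each $L_g$ induces the identity on $\partial\Gamma(G,X)$. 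Both are valid; your version is arguably cleaner in that it never invokes loxodromic elements, though one should note that $g\mapsto(r\mapsto r+p(g))$ is only a quasi-action (associativity holds up to the defect) --- this is harmless since you only use that, for each fixed $g$, the map $p\circ L_g$ is at bounded distance from a single translation. Your explicit two-sided bound $\tfrac{2|p(g)|}{3C}\le\tau_{[X]}(g)\le\tfrac{|p(g)|}{\delta}$ also supplies the detail behind the paper's unargued assertion that $\L([X])=\{g\mid p(g)\ne 0\}$. The only blemish is a dropped additive constant in the coarse-Lipschitz estimate (the correct bound is $|p(g)-p(h)|\le\tfrac{3C}{2}\,\d_X(g,h)+\tfrac{C}{2}$), which is immaterial for a quasi-isometry.
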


\begin{proof}
Fix any $x\in X$ such that $p(x)\in (0, C/2)$. Given any $g\in G$, let $n= \lfloor p(g)/p(x)\rfloor$. Then $|p(g)-np(x)|<p(x)<C/2$ and hence $|p(gx^{-n})|\le |p(g) - np(x)| +C/2 < C$. Therefore $gx^{-n}\in X$. Thus $g\in \langle X\rangle $ and we have
\begin{equation}\label{|g|1}
|g|_X\le |p(g)/p(x)| +2.
\end{equation}
Note that we also have $p(g)< 1.5C|g|_X$. Combining this inequality with (\ref{|g|1}) we obtain that $p\colon (G, \d_X)\to \mathbb R$ is a quasi-isometry. In particular, this implies that $[X]\in \H_\ell (G)$ and $\L([X]) =\{ g\in G\mid p(g)\ne 0\}$.

It remains to prove that $[X]$ is orientable. Arguing by contradiction, suppose that some element $a\in G$ permutes the boundary points of $S=\Gamma (G,X)$. Then for every $h\in \L([X])$, the sequences $(a^{-1}h^na)$ and $(h^n)$ must converge to different points of $\partial S$ as $n\to \infty$. In particular, $|[h^{n}, a]|_X=\d_X (a^{-1}h^na, h^n) \to \infty $ which contradicts the obvious fact that the values $p([h^n,a])$ are uniformly bounded.
\end{proof}

Lemma \ref{pc-to-ls} can be used to construct somewhat surprising group actions on quasi-lines. Namely we say that an action of a group $G$ on a hyperbolic space is \emph{purely loxodromic} if every element of $G$ of infinite order acts loxodromically.

\begin{cor}\label{plox}
Every hyperbolic group without infinite dihedral subgroups admits a purely loxodromic action on a quasi-line.
\end{cor}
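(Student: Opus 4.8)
The plan is to deduce the statement from Lemma \ref{pc-to-ls} by producing, for a non-elementary hyperbolic group $G$ without infinite dihedral subgroups, a pseudocharacter $p\colon G\to\mathbb R$ that is non-zero on every element of infinite order. First I dispose of the elementary cases. If $G$ is finite it has no element of infinite order, so its trivial action on $\mathbb R$ is (vacuously) purely loxodromic. If $G$ is infinite virtually cyclic, then for any finite generating set $X$ the Cayley graph $\Gamma(G,X)$ is a quasi-line, and since every element of infinite order generates a finite-index cyclic subgroup (hence has coarsely dense orbit), the action $G\curvearrowright\Gamma(G,X)$ is purely loxodromic; note that Lemma \ref{pc-to-ls} is of no help here, as a virtually cyclic group need not admit a non-zero pseudocharacter, so this direct argument is genuinely needed. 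So assume $G$ is non-elementary hyperbolic with no infinite dihedral subgroup. Once $p$ as above is found, choose $C$ larger than twice the defect of $p$ and twice some positive value of $p$, so that Lemma \ref{pc-to-ls} applies: then $[X_{p,C}]\in\H_\ell^+(G)$, so $\Gamma(G,X_{p,C})$ is a quasi-line, and $\L([X_{p,C}])=\{g\in G\mid p(g)\neq0\}$, which equals the set of all elements of infinite order (the inclusion ``$\subseteq$'' because a homogeneous quasimorphism vanishes on torsion). Hence $G\curvearrowright\Gamma(G,X_{p,C})$ is a purely loxodromic action on a quasi-line.

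To build $p$, write $p=\psi+w$ where $\psi\colon G\to\mathbb R$ is a homomorphism and $w$ is a homogeneous quasimorphism. Choose $\psi$ as the composition $G\twoheadrightarrow G^{ab}/\mathrm{tors}\cong\mathbb Z^{b_1}\hookrightarrow\mathbb R^{b_1}\xrightarrow{\ \ell\ }\mathbb R$ with $\ell$ having $\mathbb Q$-linearly independent coefficients; then $\psi(g)\neq0$ exactly when $g$ has non-torsion image in $G^{ab}$, i.e. $g\notin[G,G]$ rationally. Fix group elements $g_1,\dots,g_{b_1}$ projecting to a basis of $G^{ab}/\mathrm{tors}$, and let $W$ be the subspace of homogeneous quasimorphisms vanishing on each $g_i$; this is a vector-space complement of $H^1(G;\mathbb R)$ in the space $Q(G)$ of homogeneous quasimorphisms, and with the defect norm $W$ is isometric to the Banach space $\widehat Q(G)=Q(G)/H^1(G;\mathbb R)$, which is infinite-dimensional since $G$ is non-elementary hyperbolic. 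For $g\in G$ put $U_g=\{w\in W\mid w(g)=-\psi(g)\}$; I want a $w$ avoiding $\bigcup_g U_g$, the union being over elements $g$ of infinite order (and $p(g)=\psi(g)+w(g)\neq0$ is precisely $w\notin U_g$).

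Two facts make the Baire category theorem applicable. First, $w\mapsto w(g)$ is a bounded linear functional on $W$ for every $g$: writing $g=h\,g_1^{m_1}\cdots g_{b_1}^{m_{b_1}}$ with $h\in[G,G]$ rationally (so $\mathrm{scl}_G(h)<\infty$), one estimates $|w(g)|=|w(g^n)|/n$ using homogeneity, conjugation-invariance of $w$, that $w$ vanishes on powers of the $g_i$, and the Bavard inequality $|w(h)|\le2\,\mathrm{scl}_G(h)\,D(w)$, obtaining $|w(g)|\le(2\,\mathrm{scl}_G(h)+b_1+1)D(w)$; hence every $U_g$ is closed. Second, each $U_g$ with $g$ of infinite order is a proper subset of $W$: if $\psi(g)\neq0$ then $U_g$ is empty or a proper affine hyperplane; if $\psi(g)=0$ then $U_g=\{w\mid w(g)=0\}$, a proper hyperplane provided some homogeneous quasimorphism is non-zero on $g$, i.e. $\mathrm{scl}_G(g)>0$. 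This positivity $\mathrm{scl}_G(g)>0$ for every infinite-order $g$ is where the hypothesis is used: the absence of infinite dihedral subgroups forces that no element of infinite order has a positive power conjugate to a power of its inverse, and then the gap theorem of Calegari and Fujiwara for stable commutator length in hyperbolic groups gives $\mathrm{scl}_G(g)>0$. Thus each $U_g$ (for $g$ of infinite order) is closed and nowhere dense; there are only countably many of them, since $U_g$ depends only on the conjugacy class of $g$ and $U_{g^n}=U_g$; by Baire the residual set $W\setminus\bigcup_gU_g$ is non-empty, and any $w$ in it yields the required pseudocharacter $p=\psi+w$.

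I expect the main obstacle to be the second fact — establishing $\mathrm{scl}_G(g)>0$ for all $g$ of infinite order, and in particular the structural claim that a hyperbolic group without infinite dihedral subgroups has no ``inversible'' infinite-order element. This needs a careful analysis of the maximal elementary subgroup $E(g)$ of a loxodromic $g$: an inversion would make $E(g)$ non-orientable, and one must rule this out using that two distinct involutions either lie in a common finite subgroup or generate an infinite dihedral subgroup, together with finiteness of the radical and of conjugacy classes of torsion elements. The boundedness in the first fact is a secondary technical point, but an essential one: $q\mapsto q(g)$ is genuinely unbounded on all of $Q(G)$ when $g$ is not rationally trivial in $G^{ab}$, so one really must work inside the complement $W$ rather than all of $Q(G)$.
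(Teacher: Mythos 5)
Your overall strategy coincides with the paper's: manufacture a single pseudocharacter that is non-zero on every infinite-order element by a Baire category argument and then feed it to Lemma \ref{pc-to-ls}. The implementation differs. The paper runs Baire in the \emph{compact} space (Tychonoff, pointwise convergence) of pseudocharacters of defect at most $1$ bounded by $1$ on a finite generating set, and gets density of each set $\{q\mid q(g)\neq 0\}$ from the convex perturbation $(1-\alpha)p+\alpha q_g$; you run Baire in a Banach-space complement of $H^1(G;\mathbb R)$ inside the homogeneous quasimorphisms with the defect norm, which forces you to split off the homomorphism part and to prove boundedness of evaluation functionals via Bavard duality. Your functional-analytic setup is correct but much heavier than the paper's compactness trick. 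On the other hand, your separate treatment of the elementary case is a genuine improvement: as you observe, an infinite virtually cyclic group such as $\mathbb Z\rtimes_{-1}(\mathbb Z/4)$ admits no non-zero pseudocharacter, so Lemma \ref{pc-to-ls} really cannot be used there, and the paper is silent on this point.

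The genuine gap is the step you yourself flag and postpone: that the absence of infinite dihedral subgroups forces $\mathrm{scl}_G(g)>0$ (equivalently, the existence of a homogeneous quasimorphism non-vanishing on $g$) for every infinite-order $g$. This is precisely where the paper appeals to \cite{EF} and \cite{HO}, so you have located the crux correctly; but the structural claim you propose to prove --- that in such a group no infinite-order element has a positive power conjugate to a power of its inverse --- is false, so your plan for closing the gap cannot succeed. Consider $G=\bigl(\mathbb Z\rtimes_{-1}(\mathbb Z/4)\bigr)\ast_{\mathbb Z/4}(\mathbb Z/8)$, where $\mathbb Z/4$ is $\langle s\rangle$ in the first factor and $\langle u^2\rangle$ in the second. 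This group is virtually free, hence hyperbolic; a direct check shows its unique involution is $s^2=u^4$, which is central, so $G$ contains no copy of $D_\infty$; yet $sts^{-1}=t^{-1}$ for the infinite-order element $t$, whence $q(t)=q(t^{-1})=-q(t)$ for every homogeneous quasimorphism $q$. Thus no pseudocharacter is non-zero on $t$, and neither your Baire argument nor the paper's can produce one. In fact the conclusion itself fails for this $G$: in any action on a quasi-line in which $t$ is loxodromic, $s$ must swap the two boundary points, so the orientation character $G\to\mathbb Z/2$ is non-trivial on $s$, while $s=u^2$ is a square and hence lies in the kernel of every homomorphism to $\mathbb Z/2$. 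Both arguments (and the statement) do go through if the hypothesis is strengthened to: no positive power of an infinite-order element is conjugate to a power of its inverse.
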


\begin{proof}
Let $G$ be a hyperbolic group without infinite dihedral subgroups, $X$ a finite generating set of $G$. Let $Q$ be the set of all pseudocharacters on $G$ of defect at most $1$ satisfying $|q(x)|\le 1$ for all $x\in X$ endowed with the topology of pointwise convergence. For every $q\in Q$ and every $g\in G$, we have $|q(g)|\le 2|g|_X-1$. Thus $Q$ can be naturally identified with a subset of the set $P=\prod_{g\in G} [-2|g|_X+1, 2|g|_X-1]$. Since $Q$ is closed in $P$ and $P$ is compact by the Tychonoff theorem, $Q$ is compact.

Since $G$ contains no copies of $D_\infty$, for every infinite order element $g\in G$, there exists a pseudocharacter $q_g\colon G\to \mathbb R$ such that $q_g(g)\ne 0$; this result can be extracted from \cite{EF} and is also a particular case of \cite[Example 1.6]{HO}. Rescaling $q_g$ if necessary, we can assume that $q_g\in Q$. Obviously for every $p\in Q$, $(1-\alpha) p+\alpha q_g\in Q$ converges to $p$ in $Q$ as  $\alpha \to 0$. Therefore, the set $Q_g=\{ q\in Q\mid q(g)\ne 0\}$ is dense in $Q$. Note also that $Q_g$ is (obviously) open. By the Baire category theorem, the intersection of $Q_g$ for all infinite order elements $g\in G$ is non-empty and every pseudocharacter from this intersection gives a required action by Lemma \ref{pc-to-ls}.
\end{proof}

\begin{rem}
It is not difficult to show that no non-elementary hyperbolic group $G$ containing a copy of $D_\infty$ has a purely loxodromic action on a quasi-line. We leave the proof as an exercise for the reader. (Hint: show that an involution $a$ of $D_\infty$ must permute the limit points of $G$ for every lineal action of $G$ on a hyperbolic space; if the action is purely loxodromic, this implies that $ata=t^{-1}$ for every element of infinite order in $G$, which contradicts the assumption that $G$ is hyperbolic and non-elementary.)
\end{rem}

To every action of a group on a hyperbolic space fixing a point on the boundary, one can associate the so-called \emph{Busemann pseudocharacter}. We briefly recall the construction and necessary properties here and refer to \cite[Sec. 7.5.D]{Gro} and \cite[Sec. 4.1]{Man} for more details.

\begin{defn}\label{Bpc}
Let $G$ be a group acting on a hyperbolic space $S$ and fixing a point $\xi\in \partial S$.
Fix any $s\in S$ and let ${\bf x}=(x_i)$ be any sequence of points of $S$ converging to $\xi$. Then  the function $q_{\bf x}\colon G\to \mathbb R$ defined by
$$
q_{\bf x}(g)=\limsup\limits_{n\to \infty}(\d_S (gs, x_n)-\d_S(s, x_n))
$$
is a quasi-character. Its homogenization $p_{\bf x}$ is called the \emph{Busemann pseudocharacter}. It is known that for any two sequences ${\bf x}=(x_i)$ and ${\bf y}=(y_i)$ converging to $\xi$, we have $\sup _{g\in G} |q_{\bf x}(g)-q_{\bf y}(g)|<\infty $  \cite[Lemma 4.6]{Man}; in particular, this implies that $p_{\bf x}=p_{\bf y}$ and thus we can drop the subscript in $p_{\bf x}$. It is straightforward to verify that $g\in G$ acts loxodromically on $S$ if and only if $p(g)\ne 0$; in particular, $p$ is non-zero whenever $G \curvearrowright S$ is orientable lineal or quasi-parabolic.
\end{defn}

\begin{rem}\label{rempq}
By \cite[Lemma 4.5]{Man}, $|q_{\bf x} (g) - (\d_S (gs, x_n)-\d_S(s, x_n))|$ is uniformly bounded when $g$ ranges in $G$ and hence so is $|p (g) - (\d_S (gs, x_n)-\d_S(s, x_n))|$ (see (\ref{pg-qg})).
\end{rem}

\begin{lem}\label{product}
Let $G=A\times B$ for some groups $A$, $B$. Suppose that $G$ acts coboundedly on a hyperbolic space. Then the action is lineal or the induced action of at least one of the subgroups $A$,$B$ is elliptic.
\end{lem}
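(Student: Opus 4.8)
The plan is to use the classification of cobounded actions from Theorem~\ref{ClassHypAct} together with the Busemann pseudocharacter machinery developed above. Suppose $G = A \times B$ acts coboundedly on a hyperbolic space $S$, and that neither $A$ nor $B$ acts elliptically; I want to conclude the action is lineal. Since a cobounded action cannot be parabolic (Theorem~\ref{ClassHypAct}, case 2), the action of $G$ is either lineal, quasi-parabolic, or of general type; I will rule out the last two cases.

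First I would dispose of the general type case. If $G \curvearrowright S$ is of general type, then $G$ contains two independent loxodromic elements, and more strongly one can find a loxodromic element in, say, the conjugate or product structure that interacts with the direct product decomposition. The cleanest route: in a general type action, the subgroup generated by a high power of one independent loxodromic element and a high power of another is free of rank $2$ with quasi-convex orbits (standard ping-pong, as used in Lemma~\ref{vc} and elsewhere). The key point is that since $A$ and $B$ commute, if $a \in A$ acts loxodromically then $b \in B$ must fix the pair $\{a^{+\infty}, a^{-\infty}\}$ for every $b \in B$ (because $b$ conjugates $a$ to itself, hence permutes its fixed point set); likewise if $b \in B$ is loxodromic then $A$ preserves $\{b^{\pm\infty}\}$. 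In a general type action this kind of global constraint forces the relevant limit sets to be too small. I would argue: pick a loxodromic $g \in G$ (exists in cases 3 and 4a,4b); write $g = ab$ with $a\in A$, $b\in B$; then $a$ and $b$ commute with $g$, so each permutes $\{g^{\pm\infty}\}$. This already shows $A$ and $B$ each preserve a two-point set on $\partial S$. In the general type case this is only possible if $A$ and $B$ are elliptic or lineal with that specific axis — and then a dimension/rank count on limit sets shows $\Lambda(G)$ cannot be infinite unless one factor is elliptic. The honest version of this step uses that in a general type action no finite $G$-invariant set of boundary points exists, combined with the observation that $\Lambda(A)$ and $\Lambda(B)$ are each invariant under the whole group $G$ (since $A, B \trianglelefteq G$), forcing $|\Lambda(A)|, |\Lambda(B)| \le 2$ — and if both are $\le 2$ while being $G$-invariant, one checks $|\Lambda(G)| \le 2$ as well, contradiction. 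So at least one of $\Lambda(A), \Lambda(B)$ is empty or the action restricted to it is elliptic; a bit more care (an elliptic-or-lineal dichotomy for the factors) pins down which.

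Next, the quasi-parabolic case: here $G$ fixes a unique $\xi \in \partial S$, and the associated Busemann pseudocharacter $p\colon G \to \mathbb R$ is nonzero (Definition~\ref{Bpc}). Restricting, $p|_A$ and $p|_B$ are pseudocharacters on $A$ and $B$. The normal subgroups $A$ and $B$ both fix $\xi$ (as it is the unique $G$-fixed boundary point, and $A,B$ normal implies they preserve the $G$-fixed set). Since neither $A$ nor $B$ is elliptic, I claim both contain loxodromic elements or at least that $p$ is nonzero on each — here I would use that a quasi-parabolic action has quasi-convex orbits (Theorem~\ref{ClassHypAct}) so the action of a non-elliptic normal subgroup is non-elliptic on $S$, hence (being fixed-point-on-boundary) is either lineal or quasi-parabolic, in either case $p$ restricted to it is nonzero. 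Now take $a \in A$ with $p(a) \ne 0$ and $b \in B$ with $p(b) \ne 0$; since $p$ is a homogeneous quasi-morphism and $a, b$ commute, $p(a^m b^n) = p(a^m) + p(b^n) = m\,p(a) + n\,p(b)$ up to bounded error — actually exactly, since on the abelian subgroup $\langle a, b\rangle \cong \mathbb Z^2$ a pseudocharacter is a genuine homomorphism. But in a quasi-parabolic action the element $a$ (loxodromic, fixing $\xi$) and a conjugate/commuting loxodromic $b$ fixing the same $\xi$ must have their other fixed points related; and crucially, since $A, B$ are normal and independent-ish, one derives that $\langle a, b \rangle$ would have to be of general type or contain a "too large" configuration. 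The sharper contradiction: in a quasi-parabolic action, any two loxodromic elements share the fixed point $\xi$, so $a^{-\infty}$ and $b^{-\infty}$ are both $\ne \xi$; since $b$ commutes with $a$, $b$ fixes $a^{+\infty} = \xi$ and must permute $a$'s fixed points, hence fixes $a^{-\infty}$ too; so $b$ fixes three boundary points if $a^{-\infty} \ne b^{-\infty}$, forcing $a^{-\infty} = b^{-\infty}$, i.e. $\Lambda(\langle a,b\rangle)$ is finite, whence $\langle a, b\rangle$ acts with bounded-diameter "width" — but then the $\mathbb Z^2$ subgroup acts on a quasi-line, impossible since $\mathbb Z^2$ is not virtually cyclic and the action of $\langle a, b \rangle$ would be lineal (Theorem~\ref{tricho}-flavored reasoning, or directly: a lineal action of $\mathbb Z^2$ on a hyperbolic space has one of $\langle a\rangle, \langle b\rangle$ elliptic because the translation-number map to $\mathbb R$ has a rank-one image... wait, that's not a contradiction by itself). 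The genuinely clean contradiction for quasi-parabolic: $p|_{\langle a,b\rangle}\colon \mathbb Z^2 \to \mathbb R$ is a homomorphism with $p(a), p(b) \ne 0$, so its image is a rank $\le 2$ dense-or-cyclic subgroup of $\mathbb R$; meanwhile $\langle a, b \rangle$ has a quasi-convex orbit in $S$ (quasi-parabolic orbits are quasi-convex), so $\langle a, b \rangle$ acts on a hyperbolic space, fixing $\xi$, non-elliptically — this forces $\langle a, b \rangle$ to have a nontrivial free or nonabelian structure unless it is lineal, but $\mathbb Z^2$ is abelian, so the induced action of $\mathbb Z^2$ is lineal, and then exactly one of $\langle a \rangle, \langle b \rangle$ is elliptic in that action — contradicting $p(a), p(b) \ne 0$.

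The main obstacle, I expect, is making the "normal subgroup has a $G$-invariant limit set" and "$\mathbb Z^2$ cannot act lineally with both generators loxodromic" steps fully rigorous without circularity — in particular handling the case distinction (lineal vs. quasi-parabolic for the restricted actions of $A$ and $B$) cleanly. The cleanest writeup probably treats the quasi-parabolic and general type cases uniformly via the Busemann pseudocharacter when a boundary point is fixed, and via the ping-pong/free subgroup obstruction otherwise; the real content is the elementary lemma that a pseudocharacter restricted to $\mathbb Z^2$ is a homomorphism, combined with the fact that $\mathbb Z^2$ has no non-elementary or quasi-parabolic cobounded action on a hyperbolic space (Theorem~\ref{tricho}, as $\mathbb Z^2$ is not virtually cyclic, so any such action is elliptic — and then lineality of $G \curvearrowright S$ drops out, since the remaining possibility after excluding general type and quasi-parabolic is lineal).
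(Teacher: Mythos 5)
Your proposal has the right raw ingredients (commuting elements fix each other's endpoints at infinity; a loxodromic element has exactly two boundary fixed points), but it contains a step that is simply false and a non-sequitur, and the case division it is built on is unnecessary. The fatal step is the ``genuinely clean contradiction'' ending your quasi-parabolic case: you claim that in a lineal action of $\mathbb Z^2=\langle a,b\rangle$ exactly one of $\langle a\rangle$, $\langle b\rangle$ is elliptic, contradicting $p(a),p(b)\neq 0$. This is false: $\mathbb Z^2$ acts on $\mathbb R$ by $(m,n)\mapsto m+n\sqrt2$ with every nontrivial element loxodromic, and Example~\ref{ex-Zn} of the paper is built on precisely such actions. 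So no contradiction is reached there. In the general type case, the assertion that ``$A$ and $B$ each preserve a two-point set'' does not follow from what precedes it: only the specific components $a,b$ of the chosen loxodromic $g=ab$ commute with $g$, not arbitrary elements of $A$ and $B$; and the bound $|\Lambda(A)|,|\Lambda(B)|\le 2$ is asserted rather than derived. (The $G$-invariance of $\Lambda(A)$ for $A$ normal is correct and could be pushed to a proof, but you do not carry that out.)

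The paper's proof avoids all of this and treats every case at once. Since $a$ and $b$ commute, $(ab)^n=a^nb^n$ gives $\tau(ab)\le\tau(a)+\tau(b)$ for the translation numbers, so a loxodromic $g=ab\in G$ (which exists because a cobounded non-elliptic action is not parabolic) forces one factor, say $A$, to contain a loxodromic element $a$. Every element of $B$ centralizes $a$ and hence fixes $a^{+\infty}$ and $a^{-\infty}$ individually; since a parabolic action fixes a unique boundary point, $B$ is not parabolic, and being non-elliptic it contains a loxodromic $b$ whose two fixed points must then be $\{a^{\pm\infty}\}$. Now $A$ centralizes $b$ and so also fixes $a^{\pm\infty}$, whence all of $G=AB$ fixes these two points; every loxodromic element of $G$ therefore has limit set $\{a^{\pm\infty}\}$, and the action is lineal by Theorem~\ref{ClassHypAct}. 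No Busemann pseudocharacters and no separate treatment of the quasi-parabolic and general type cases are needed.
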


\begin{proof}
Suppose that both $A$ and $B$ act non-elliptically. Since the action of $G$ is cobounded it cannot be parabolic. Hence there exists a loxodromic element in $G$, which implies that at least one of $A$, $B$ contains a loxodromic element. Let $a\in A$ be loxodromic. Since $B$ commutes with $\langle a\rangle $ it must fix $a^+$ and $a^-$. In particular, the action of $B$ cannot be parabolic and hence $B$ also contains a loxodromic element $b$, whose limit points are necessarily $a^\pm $. Now using the fact that $A$ commutes with $\langle b\rangle$ we conclude that both $A$ and $B$ fix $a^+$ and $a^-$. Therefore, so does $G$.
\end{proof}

Recall that a subgroup $H$ of a group $G$ is called \emph{commensurated} if $H\cap gHg^{-1}$ has finite index in both $H$ and $gHg^{-1}$ for all $g\in G$.

\begin{lem}\label{pqp}
Let $G$ be a group acting on a hyperbolic space $S$ and let $H$ be a commensurated subgroup of $G$. Suppose that the action of $H$ is parabolic. Then the action of $G$ is parabolic or quasi-parabolic. In particular, if the action of $G$ is cobounded, then it is quasi-parabolic.
\end{lem}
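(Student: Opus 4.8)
The plan is to first prove the key structural fact --- that $G$ fixes the boundary point $\xi\in\partial S$ determined by $H$ --- and then read off the conclusion from the classification of actions on hyperbolic spaces (Theorem~\ref{ClassHypAct}).

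Since $H\curvearrowright S$ is parabolic, Theorem~\ref{ClassHypAct} gives $\Lambda(H)=\{\xi\}$ for a unique $\xi\in\partial S$, and $\xi$ is the unique point of $\partial S$ fixed by $H$. To show $G$ fixes $\xi$, I would fix $g\in G$ and set $K=H\cap gHg^{-1}$, which has finite index in both $H$ and $gHg^{-1}$ by commensuration. As a finite-index subgroup of the unbounded group $H$, the group $K$ has unbounded orbits ($Hs$ is a finite union of $K$-orbits, hence some $K$-orbit is unbounded), and it contains no loxodromic element since $K\le H$; thus $K\curvearrowright S$ is parabolic and $\Lambda(K)$ is a single point. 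Because $K\le H$ we get $\Lambda(K)\subseteq\Lambda(H)=\{\xi\}$, so $\Lambda(K)=\{\xi\}$; because $K\le gHg^{-1}$ and $g$ acts as a homeomorphism of $S\cup\partial S$, we have $\Lambda(gHg^{-1})=g\Lambda(H)=\{g\xi\}$, and the same argument gives $\Lambda(K)=\{g\xi\}$. Hence $g\xi=\xi$, and since $g$ was arbitrary, $G$ fixes $\xi$.

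Now $G\curvearrowright S$ has unbounded orbits (it contains $H$), so it is not elliptic, and it fixes $\xi\in\partial S$, so by Theorem~\ref{ClassHypAct} it is not of general type. It remains to exclude the lineal case. Suppose $G\curvearrowright S$ were lineal. Since $\xi\in\Lambda(H)\subseteq\Lambda(G)$, we have $\Lambda(G)=\{\xi,\eta\}$, and as $G$ fixes $\xi$ it fixes $\eta$ as well, so the action is orientable lineal. Pick a loxodromic $f\in G$; in the lineal case its quasi-geodesic axis $l$ joins $\xi$ to $\eta$, and for every $a\in G$ the translate $al$ is a quasi-geodesic line (with the same constants) joining $a\xi=\xi$ to $a\eta=\eta$, so by stability of quasi-geodesics in hyperbolic spaces \cite{BH} the sets $al$ and $l$ are within Hausdorff distance $R$ for a constant $R$ independent of $a$; taking $s\in l$ yields $\d_S(as,l)\le R$ for all $a\in G$, so the orbit $Gs$, and in particular $Hs$, lies in the $R$-neighborhood of $l$. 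Parametrizing $l$ by arc length and letting $t(z)$ record the parameter of a nearest point of $l$ to $z$, a direct estimate (using Remark~\ref{rempq}, inequality (\ref{pg-qg}), and the fact that a geodesic from $z$ to $l(t(z))$ followed by a subray of $l$ is uniformly quasi-geodesic) shows that the Busemann pseudocharacter $p$ at $\xi$ satisfies $|p(a)+(t(as)-t(s))|\le C$ for all $a\in G$. Since $H$ is parabolic it contains no loxodromic element, so $p(h)=0$ for every $h\in H$ by the characterization of loxodromic elements in Definition~\ref{Bpc}; hence $|t(hs)-t(s)|\le C$ for all $h\in H$, which forces $Hs$ into a bounded region of $l$, contradicting that $H\curvearrowright S$ is parabolic. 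Therefore $G\curvearrowright S$ is parabolic or quasi-parabolic. Finally, if the action of $G$ is cobounded it cannot be parabolic by Theorem~\ref{ClassHypAct}, so it is quasi-parabolic.

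I expect the commensuration argument for the invariance of $\xi$, together with the elimination of the elliptic and general-type cases, to be essentially immediate from Theorem~\ref{ClassHypAct}. The main obstacle is the exclusion of a lineal action: the conceptual input --- that an orientable lineal action has coarsely one-dimensional orbits on which the Busemann pseudocharacter behaves like a linear coordinate --- is standard, but making the comparison between $p$ and the axis parameter $t$ precise, including checking that $t$ is coarsely well defined and that nearest-point projection to the quasi-geodesic $l$ behaves as expected in a possibly non-proper hyperbolic space, is the part requiring care.
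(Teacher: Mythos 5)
Your commensuration argument---passing to $K=H\cap gHg^{-1}$, observing that $K$ is again parabolic with $\Lambda(K)$ equal to both $\{\xi\}$ and $\{g\xi\}$, hence $g\xi=\xi$ for all $g$---is exactly the paper's proof. The difference is what happens afterwards: the paper simply writes ``Thus $g\xi=\xi$ for all $g\in G$, i.e., the action of $G$ is parabolic or quasi-parabolic,'' leaving implicit both the exclusion of the elliptic case (clear, since $H\le G$ has unbounded orbits) and of the lineal case. You are right that a fixed boundary point does not by itself rule out a lineal action (an orientable lineal action fixes both limit points), and the bulk of your proposal supplies this step: conjugates of the axis of a loxodromic element are uniformly Hausdorff-close, so the whole orbit lies in a bounded neighborhood of a quasi-geodesic line, and the Busemann pseudocharacter at $\xi$ then forces any subgroup without loxodromic elements to have bounded orbits. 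This is correct in outline and amounts to the standard fact (cf.\ \cite{CCMT}, or \cite[Corollary 3.6]{Man06}, which the paper invokes elsewhere for single elements) that a group whose orbits are coarsely contained in a quasi-line cannot act parabolically. So your proof is a correct and more complete version of the paper's.

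One technical caveat in your lineal-exclusion step: the inequality $|p(a)+(t(as)-t(s))|\le C$ cannot hold with $t$ the arc-length parameter of the quasi-geodesic $l$, because $\d_S(l(t_1),l(t_2))$ only agrees with $|t_1-t_2|$ up to the multiplicative constant $\lambda$ of $l$, and the limit $\limsup_n\bigl(\d_S(as,x_n)-\d_S(s,x_n)\bigr)$ is then controlled only up to terms of size $(1-\lambda^{-1})|T_n|$. The fix is to compare $p(a)$ with the \emph{signed $S$-distance} $\pm\d_S\bigl(l(t(as)),l(t(s))\bigr)$ rather than with $t(as)-t(s)$, using coarse additivity of $\d_S$ along $l$ (for $t_1<t_2<t_3$ one has $\d_S(l(t_1),l(t_3))\ge \d_S(l(t_1),l(t_2))+\d_S(l(t_2),l(t_3))-C'$, since $l(t_2)$ lies within the Morse constant of a geodesic $[l(t_1),l(t_3)]$). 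This yields $|p(a)|\ge \d_S(s,as)-C''$ for all $a\in G$, so $p\equiv 0$ on $H$ bounds $Hs$ directly; your conclusion is unaffected.
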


\begin{proof}
Let $\xi $ be the limit point of $H$ on $\partial S$. Let $g\in G$ and $I=H\cap gHg^{-1}$. Since $I$ has finite index in $H$ its action on $S$ is also parabolic with $\Lambda (I)= \Lambda (H)=\{ \xi\}$. Applying the same argument to $gHg^{-1}$ we conclude that $\Lambda (I)=\Lambda (gHg^{-1})=\{g\xi\}$. Thus $g\xi=\xi $ for all $g\in G$, i.e., the action of $G$ is parabolic or quasi-parabolic.
\end{proof}

The following theorem completely describes possible types of $\H_\ell (G)$ and $\H_\ell^+ (G)$.

\begin{thm}\label{thm-lineal}
\begin{enumerate}
\item[(a)] For every group $G$, $\H_\ell (G)$ (and hence $\H_\ell ^+(G)$) is an antichain.
\item[(b)] For every group $G$, the cardinality of $\H _\ell^+(G)$ is $0$, $1$, or at least continuum.
\item[(c)] For every cardinal number $\kappa $ there exists a group $G_\kappa$ generated by a subset of cardinality at most  $\kappa$ such that $$|\H_\ell (G_\kappa)|=\kappa\;\; {\rm and}\;\; \H^+_\ell(G_\kappa)=\H_{qp}(G_\kappa)=\H_{gt}(G_\kappa)=\emptyset.$$ In addition, if $\kappa \in \mathbb N$, then $G_\kappa $ is finitely generated.
\end{enumerate}
\end{thm}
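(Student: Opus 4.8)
Part (a) is immediate from what precedes it. By Corollary \ref{linmin} every element of $\H_\ell(G)$ is minimal in $\H(G)\setminus\H_e(G)$, and a lineal structure is never elliptic (its boundary has two points, so it is $\neq[G]$). Hence if $A,B\in\H_\ell(G)$ with $A\preccurlyeq B$, minimality of $B$ forces $A=B$; so $\H_\ell(G)$, and a fortiori the subset $\H_\ell^+(G)$, is an antichain.

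For part (b) the plan is to identify $\H_\ell^+(G)$ with the projectivization $\mathbb{P}(Q(G))$ of the real vector space $Q(G)$ of pseudocharacters $G\to\mathbb R$. Lemma \ref{pc-to-ls} sends a nonzero $p\in Q(G)$ to a structure $[X_{p,C}]\in\H_\ell^+(G)$; since $X_{\lambda p,C}=X_{p,C/|\lambda|}$ and different admissible constants $C$ give equivalent generating sets, this descends to a well-defined map $\mathbb{P}(Q(G))\to\H_\ell^+(G)$. Conversely, an orientable lineal action has a nonzero Busemann pseudocharacter (Definition \ref{Bpc}), giving a map $\H_\ell^+(G)\to\mathbb{P}(Q(G))$. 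Mutual inverseness comes down to two points. First, for a cobounded orientable lineal action $G\acts S$ with Busemann pseudocharacter $p$, the orbit map from the vertex set of $\Gamma(G,X_{p,C})$ to $Gs\subseteq S$ is a $G$-equivariant quasi-isometry: one coordinatizes the quasi-line $S$ by the Busemann function, which is a quasi-isometry $S\to\mathbb R$ and, by Remark \ref{rempq}, agrees with $p$ on the orbit up to a bounded error, while Lemma \ref{pc-to-ls} gives that $p\colon(G,\d_{X_{p,C}})\to\mathbb R$ is a quasi-isometry; coboundedness then promotes this to a quasi-isometry onto $S$, so $\sigma([G\acts S])=[X_{p,C}]$. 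Second, if $[X_{p,C}]=[X_{p',C'}]$, then comparing translation numbers — which by Lemma \ref{pc-to-ls} satisfy $\tau(g)\asymp|p(g)|$ and $\tau(g)\asymp|p'(g)|$ in the respective metrics, and are comparable up to a multiplicative constant on equivalent actions — yields $|p(g)|\asymp|p'(g)|$ for all $g$, and I claim \emph{comparable pseudocharacters are proportional}. Indeed, if $p\not\propto p'$, choose $g,h$ with $(p(g),p'(g))$ and $(p(h),p'(h))$ linearly independent (all four entries are then nonzero, since $p(x)=0\Leftrightarrow p'(x)=0$ by comparability), and pick integers $a_n,b_n\to\infty$ with $a_np(g)+b_np(h)\to 0$; then $p(g^{a_n}h^{b_n})$ stays bounded (quasimorphism estimate) while $a_np'(g)+b_np'(h)\to\infty$, because the linear functionals $(x,y)\mapsto xp(g)+yp(h)$ and $(x,y)\mapsto xp'(g)+yp'(h)$ have distinct kernels; so $|p'(g^{a_n}h^{b_n})|\to\infty$, contradicting $|p|\asymp|p'|$. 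Granting the identification $\H_\ell^+(G)\cong\mathbb{P}(Q(G))$, part (b) follows: $\mathbb{P}(Q(G))$ is empty, a single point, or contains a copy of $\mathbb{RP}^1$ (hence has cardinality at least $2^{\aleph_0}$) according as $\dim Q(G)$ is $0$, $1$, or $\geq 2$.

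For part (c) I would take $G_\kappa=\bigoplus_{i\in I}D_\infty$, the restricted direct product of $\kappa$ copies of the infinite dihedral group (so $D_\infty^n$ when $\kappa=n\in\mathbb N$, which is finitely generated, and in all cases generated by a set of cardinality $2\kappa\leq\kappa$ when $\kappa$ is infinite). Every finitely generated subgroup of $G_\kappa$ lies in a finite subproduct $D_\infty^m$, which is virtually abelian, so $G_\kappa$ has no nonabelian free subgroup and thus $\H_{gt}(G_\kappa)=\emptyset$. Also $Q(G_\kappa)=0$: a pseudocharacter restricts to a pseudocharacter on each factor $D_\infty$, which vanishes (since a pseudocharacter is conjugation-invariant and some element of $D_\infty$ inverts a translation, forcing it to vanish on translations, and it vanishes on reflections by homogeneity), and since every element of $G_\kappa$ is a product of commuting elements from finitely many factors, homogeneity then forces it to vanish everywhere. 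By part (b), $\H_\ell^+(G_\kappa)=\emptyset$; and a quasi-parabolic action also has a nonzero Busemann pseudocharacter (Definition \ref{Bpc}), so $\H_{qp}(G_\kappa)=\emptyset$. Hence $\H(G_\kappa)=\H_e(G_\kappa)\sqcup\H_\ell(G_\kappa)$ with every lineal structure non-orientable, and it remains to count these. A non-orientable lineal structure has an orientation-preserving subgroup of index two, i.e.\ $\ker\phi$ for an epimorphism $\phi\colon G_\kappa\to\mathbb Z/2\mathbb Z$, and restricts to an orientable lineal structure on $\ker\phi$ whose Busemann pseudocharacter is a nonzero element of $Q(\ker\phi)$; conversely, a standard averaging argument for the index-two extension shows that $\ker\phi$ together with this restricted structure recovers the structure on $G_\kappa$. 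A case analysis on the support of $\phi$ — repeatedly using that a pseudocharacter on $\ker\phi$ must vanish on any element that some element of $\ker\phi$ conjugates to its inverse — shows $Q(\ker\phi)\neq 0$ exactly when $\phi$ is supported on a single coordinate $i$ and there equals the epimorphism $D_\infty\to\mathbb Z/2\mathbb Z$ whose kernel is the infinite cyclic subgroup; in that case $Q(\ker\phi)=\mathbb R$ and gives a unique structure $A_i$, the one remembering the $i$-th translation coordinate. Since the generator $t_i$ of the $i$-th cyclic factor is loxodromic for $A_i$ but elliptic for $A_j$ with $j\neq i$ (so the sets $\L(A_i)$ are pairwise distinct), the $A_i$ are pairwise distinct and, by the above, exhaust $\H_\ell(G_\kappa)$; therefore $|\H_\ell(G_\kappa)|=\kappa$.

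The main obstacles I anticipate are, in part (b), the claim that comparable pseudocharacters are proportional together with the verification that a structure is reconstructed from its Busemann pseudocharacter; and, in part (c), the complete bookkeeping of $Q(\ker\phi)$ over all epimorphisms $\phi\colon G_\kappa\to\mathbb Z/2\mathbb Z$ — in particular ruling out any contribution from epimorphisms whose support has size at least two, or is infinite — and the averaging argument showing the restricted structure on the index-two subgroup determines the structure on $G_\kappa$.
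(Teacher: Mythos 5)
Your parts (a) and (b) follow the paper's own route. Part (a) is exactly the paper's proof (Corollary \ref{linmin}). For (b) you package the argument as a bijection $\H_\ell^+(G)\cong\mathbb{P}(Q(G))$, which is a slightly stronger statement than the trichotomy, but the three ingredients are the same ones the paper uses in its three cases on $\dim\mathcal P(G)$: Lemma \ref{pc-to-ls} in one direction, the Busemann pseudocharacter (with Remark \ref{rempq} and the fact that a horofunction on a quasi-line is a quasi-isometry to $\mathbb R$) in the other, and your ``comparable pseudocharacters are proportional'' claim, whose proof is precisely the paper's Case 3 computation with the elements $z_i=x^{m_i}y^{n_i}$. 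So (b) is correct and essentially equivalent in content.

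Part (c) is where you genuinely diverge. You use the same group $G_\kappa=\bigoplus_{i}D_\infty$, but where the paper argues directly --- for any $[X]\in\H_\ell(G_\kappa)$, the relations $a_it_i=t_i^{-1}a_i$ and $a_it_j=t_ja_i$ force exactly one $t_i$ to be loxodromic, and then the complementary factor $C_i$ is normal, non-parabolic by Lemma \ref{pqp}, hence elliptic, so $X\sim\{t_i,a_i\}\cup C_i$ --- you classify non-orientable lineal structures by the pair (orientation character $\phi\colon G_\kappa\to\mathbb Z/2$, induced orientable structure on $\ker\phi$) and then compute $Q(\ker\phi)$ for every $\phi$. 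Both routes work, and yours has the virtue of making part (b) do the counting; but it is substantially longer, since $\operatorname{Hom}(G_\kappa,\mathbb Z/2)$ has cardinality $2^{2\kappa}$ and each character must be examined. The bookkeeping you flag is completable: whenever $|\operatorname{supp}\phi|\ge 2$, for any coordinate $k$ one can pick $h_j$ in another supported coordinate $j$ with $\phi_j(h_j)=\phi_k(a_k)$, so that $a_kh_j\in\ker\phi$ conjugates $t_k^2$ to its inverse; together with the single-coordinate analysis this kills $Q(\ker\phi)$ except for the $\kappa$ characters $\phi_i$ with kernel $\langle t_i\rangle\times\prod_{k\ne i}D_\infty^{(k)}$. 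Also, the ``averaging argument'' you worry about is unnecessary: if the restrictions of $[X]$ and $[Y]$ to the index-two subgroup $H$ are equivalent, then $\d_X(1,\cdot)$ and $\d_Y(1,\cdot)$ are affinely comparable on $H$, and writing $g=hg_0$ for a fixed coset representative $g_0$ extends the comparison to all of $G$, so $[X]=[Y]$ directly. Your observations that $\H_{gt}=\emptyset$ (no free subgroups) and $\H_{qp}=\emptyset$ (quasi-parabolic actions have non-zero Busemann pseudocharacter, but $Q(G_\kappa)=0$) are valid alternatives to the paper's appeal to Lemma \ref{product} and transfinite induction.
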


\begin{proof}
(a) This follows immediately from Corollary \ref{linmin}.

(b) We denote by $\mathcal P(G)$ the linear vector space of pseudocharacters on $G$. We consider three cases.

{\it Case 1.} ${\rm dim\,} \mathcal P (G)=0$. In this case $\H_\ell ^+(G)=\emptyset $ as otherwise there would exist a non-zero Busemann quasi-character on $G$.

{\it Case 2.} ${\rm dim\,} \mathcal P (G)=1$. Let $[X]$, $[Y]$ be any elements of $\H ^+_\ell (G)$ and let $p$, $q$ be Busemann pseudocharacters associated to the actions of $G$ on the corresponding Cayley graphs. Taking $s=1$ in the definition of both pseudocharacters, we conclude that $| p(x)| \le 1$ for all $x\in X$. Since $p$ and $q$ are linearly dependent, $q$ is uniformly bounded on $X$. It follows that $|\d_Y (x, x_n)-\d_Y(1, x_n)|$ is uniformly bounded on $X$, where $(x_n)\subseteq G$ is the fixed sequence used to construct $q$. Since $\Gamma (G,Y)$ is a quasi-line, it follows that $\d_Y (x,1)$ is uniformly bounded, i.e., $Y\preceq X$. Similarly we prove that $X\preceq Y$. Thus $[X]=[Y]$.

{\it Case 3.} ${\rm dim\,} \mathcal P (G)\ge 2$. Let $p, q$ be two linearly independent pseudocharacters of defect at most $D$ on $G$. Let $x$, $y$ be elements of $G$ such that the vectors $u=(p(x), p(y))$ and $v=(q(x), q(y))$ are linearly independent. Rescaling $p$ and $q$ if necessary, we can assume that $\|u\|=\|v\|=1$. For $\lambda \in [0,1]$, let $$r_\lambda = \lambda p + (1-\lambda)q.$$ Clearly $r_\lambda $ is a pseudocharacter of defect at most $D$.

Given real numbers $a,b, c$, we write $a\approx _c b$ if $|a-b|\le c$. It is straightforward to check that for any $m,n\in \mathbb Z$, we have
\begin{equation}\label{wxmyn}
r_\lambda(x^my^n) \approx _D  m(\lambda p(x)+(1-\lambda)q(x)) + n(\lambda p(y)+(1-\lambda)q(y)) = {\bf proj}_{w_\lambda}\left(
                                                \begin{matrix}
                                                  m \\
                                                  n \\
                                                \end{matrix}
                                              \right),
\end{equation}
where $w_\lambda = \lambda u+(1-\lambda)v$. Since $u$ and $v$ are linearly independent, $w_\lambda$ and $w_\mu$ are not collinear whenever $\lambda \ne \mu$. Together with (\ref{wxmyn}) this implies that for every $\lambda \ne \mu$, there is a sequence $(z_i)$ of elements of $G$ of the form $z_i=x^{m_i}y^{n_i}$ such that $|r_\lambda (z_i)|$ is bounded by $D+1$,  while $r_\mu (z_i) \to \infty $ as $i\to \infty$. Applying Lemma \ref{pc-to-ls} to pseudocharacters $r_\lambda$, $\lambda \in [0,1]$, and $C> D+1$ we obtain a continuum of pairwise non-equivalent lineal hyperbolic structures on $G$.

(c) Let $D_\infty^\kappa$ denote the direct sum of $\kappa$ copies of the infinite dihedral group. We enumerate copies of $D_\infty $ using indices from an index set $I$ of cardinality $\kappa$. Let $t_i, a_i$, $i\in I$, be generators of copies of $D_\infty$, where $|t_i|=\infty$, $|a_i|=2$.  Let $[X]\in \H_\ell (D_\infty^\kappa)$ and let $S=\Gamma (G,X)$. Then each $t_i$ is either elliptic or loxodromic with respect to the action on $S$ (this dichotomy holds for elements of any group acting on a quasi-line, see for example \cite[Corollary 3.6 ]{Man06}). Suppose that $t_i$, $t_j$ are loxodromic for some $i\ne j$. Then the relation $a_it_i=t_i^{-1}a_i$ forces $a_i$ to permute the boundary points of $S$ while $a_it_j=t_ja_i$ implies that $a_i$ fixes $\partial S$ pointwise. This contradiction shows that exactly one element $t_i$ is loxodromic. From now on, we fix this index $i$.

Let $C_i\le D_\infty^\kappa$ be the direct sum of all copies of $D_\infty$ except the $i$th copy. Then $C_i$ contains no loxodromic elements. Since $C_i\lhd D_\infty^\kappa$, by Lemma \ref{pqp} the action of $C_i$ on $S$ cannot be parabolic, hence it is elliptic. Thus $X\sim \{t_i, a_i\}\cup C_i$. It is clear that such structures are not equivalent for different indices $i$. Thus $\H_\ell (D_\infty ^\kappa)= \kappa$.

Finally we notice that every action of $D_\infty ^\kappa$ on a hyperbolic space must be lineal or elliptic by Lemma \ref{product} (and transfinite induction). Thus we get a complete description of $\H(D_\infty ^\kappa)$: it is obtained from the antichain of cardinality $\kappa$ by adding one element, which is smaller than every element of the antichain.
\end{proof}

\begin{ex} \label{ex-Zn} Let $G=\mathbb Z \times \mathbb Z$. Then $\H (G)=\H _\ell (G)\cup \H_e(G)$ by Lemma \ref{product}; notice also that $\H_\ell^+(G)=\H_\ell (G)$ for every abelian group. Clearly $G$ has at least $2$ non-equivalent lineal hyperbolic structures. Therefore, $\H_\ell (\mathbb Z \times \mathbb Z)$ is an antichain of cardinality continuum. The poset $\H (\mathbb Z \times \mathbb Z)$ is obtained from this antichain by adding one element (corresponding to the elliptic structure), which is smaller than every element of the antichain. The same argument works for $G=\mathbb Z^n$ for every $n \in \mathbb N, n \geq 2$. For $n=1$, $\H(G)$ is the chain of length $2$ (this follows from the proof of Lemma \ref{vc}).
\end{ex}

\subsection{Examples of quasi-parabolic structures}

The main goal of this section is to write down a couple of useful observations about quasi-parabolic structures and discuss examples of quasi-parabolic structures on $\mathbb Z\, {\rm wr}\, \mathbb Z$.

\begin{lem}\label{qpstr}
Let $G\curvearrowright S$ be a quasi-parabolic action of a group $G$ on a hyperbolic space. Then there exists $[X]\in \H _{qp}(G)$ on $G$ such that $G\curvearrowright S\sim _w G\curvearrowright \Gamma (G, X)$.
\end{lem}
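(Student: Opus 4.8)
The statement should follow almost immediately by combining Proposition \ref{cobdd} with the classification in Theorem \ref{ClassHypAct}. The one structural input that makes everything work is the fact, recorded in Theorem \ref{ClassHypAct}(4a), that \emph{orbits of quasi-parabolic actions are always quasi-convex}. Once we have this, the machinery built in Section \ref{Sec-SM} does the rest.

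\textbf{Step 1.} Fix any $s \in S$. Since $G \curvearrowright S$ is quasi-parabolic, Theorem \ref{ClassHypAct}(4a) guarantees that the orbit $Gs$ is quasi-convex in $S$. Hence the action $G \curvearrowright S$ satisfies the hypotheses of Proposition \ref{cobdd}, and we obtain a hyperbolic structure $[X] \in \H(G)$ together with a weak equivalence $G \curvearrowright S \sim_w G \curvearrowright \Gamma(G,X)$.

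\textbf{Step 2.} It remains to check that $[X]$ is in fact quasi-parabolic. This is exactly where Lemma \ref{we-type} comes in: weakly equivalent actions of $G$ on hyperbolic spaces have the same type. Since $G \curvearrowright S$ is quasi-parabolic by hypothesis, so is $G \curvearrowright \Gamma(G,X)$, i.e., $[X] \in \H_{qp}(G)$. This completes the argument.

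\textbf{Main obstacle.} There is essentially no obstacle: every nontrivial ingredient has already been established earlier in the paper. The only point deserving care is to note that Proposition \ref{cobdd} produces a \emph{weak} equivalence rather than an equivalence of actions (the orbit $Gs$ need not be cobounded in $S$), but weak equivalence is precisely what the statement of the lemma asks for, and it is also enough to transport the type via Lemma \ref{we-type}. So the ``proof'' is really just the observation that quasi-parabolic orbits are quasi-convex, plugged into the Svarc--Milnor-type construction of Proposition \ref{cobdd}.
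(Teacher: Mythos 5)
Your proof is correct and follows exactly the paper's argument: the paper's own proof is the one-line observation that quasi-parabolic actions have quasi-convex orbits (Theorem \ref{ClassHypAct}, part 4(a)) combined with Proposition \ref{cobdd}. Your additional appeal to Lemma \ref{we-type} to verify that the resulting structure is quasi-parabolic is a detail the paper leaves implicit, and it is the right justification.
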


\begin{proof}
This is a combination of the fact that quasi-parabolic actions have quasi-convex orbits (see part 4 (a) of Theorem \ref{ClassHypAct}) and Proposition \ref{cobdd}.
\end{proof}

\begin{ex}\label{BS}
Let us consider the Baumslag-Solitar group $$G=\langle a,b\mid bab^{-1}=a^2\rangle. $$ It is well-known and easy to prove that $G$ is isomorphic to the subgroup of $SL_2(\mathbb R)$ generated by $$b=\left(
                                                 \begin{array}{cc}
                                                   \sqrt{2} & 0 \\
                                                   0 & 1/\sqrt{2} \\
                                                 \end{array}
                                               \right)$$ and $$a=\left(
                                                                   \begin{array}{cc}
                                                                     1 & 1 \\
                                                                     0 & 1 \\
                                                                   \end{array}
                                                                 \right).$$
Thus we obtain an action of $G$ on $\mathbb H^2$ that factors through the action of $SL_2(\mathbb R)$. It is easy to see that this action is quasi-parabolic. Indeed, $G$ contains loxodromic elements (e.g., $b$) and parabolic elements (e.g., $a$), hence the action cannot be elementary by Theorem \ref{ClassHypAct}. The action cannot be of general type either since $G$ is solvable while every group admitting a general type action on a hyperbolic space contains a non-cyclic free subgroup by the standard ping-pong argument, see \cite{Gro}. Hence the action is quasi-parabolic.

Although the action of $G$ on $\mathbb H^2$ is not cobounded, Lemma \ref{qpstr} provides us with a quasi-parabolic structure on $G$. Yet another element of $\H_{qp}(G)$ can be obtained using the action of $G$ on the Bass-Serre tree associated to the HNN-extension structure; in this case the action is cobounded so we do not need Lemma \ref{qpstr} and can simply use the Svarc-Milnor map. It is not hard to show that these two quasi-parabolic structures on $G$ are not equivalent. Indeed there are no non-trivial elliptic elements with respect to the first structure, while all elements from $[G, G]$ are elliptic with respect to the second structure.
\end{ex}

The next observation implies that a quasi-parabolic structure is never minimal. This should be compared to Theorem \ref{lineal}, which states that a lineal structure is always minimal, and Theorem \ref{n-gt-intr}, which implies that a general type structure \emph{can} be minimal.

\begin{cor}\label{qp-to-l}
For any $A\in \H _{qp}(G)$, there exists $B\in \H _{\ell}^+(G)$ such that $B\preccurlyeq A$.  In particular, if $\H _{qp}(G)\ne \emptyset$, then $\H^+_\ell (G)\ne \emptyset$.
\end{cor}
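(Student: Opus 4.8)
The plan is to take for $B$ the orientable lineal structure that Lemma~\ref{pc-to-ls} attaches to the \emph{Busemann pseudocharacter} of the quasi-parabolic action underlying $A$.

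First I would fix a generating set $X$ with $A=[X]$ and put $S=\Gamma(G,X)$. Since $A\in\H_{qp}(G)$, the action $G\acts S$ fixes a point $\xi\in\partial S$, so by Definition~\ref{Bpc} there is an associated Busemann pseudocharacter $p\colon G\to\mathbb R$, and it is non-zero precisely because the action is quasi-parabolic. I would set it up with basepoint $s=1$, the identity vertex of $\Gamma(G,X)$: then the defining quasi-character is $q(g)=\limsup_{n}\bigl(\d_X(g,x_n)-\d_X(1,x_n)\bigr)$ for a suitable sequence $(x_n)\to\xi$, and $|p(g)-q(g)|\le D$ for all $g\in G$ by \eqref{pg-qg}, where $D$ is the defect of $q$. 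For $x\in X$ every term of the $\limsup$ lies in $[-|x|_X,|x|_X]\subseteq[-1,1]$, so $|q(x)|\le 1$ and hence
$$
|p(x)|\le 1+D\qquad\text{for every }x\in X .
$$
Thus $p$ is a non-zero pseudocharacter which is \emph{uniformly bounded on the generating set} $X$.

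Next I would apply Lemma~\ref{pc-to-ls} to $p$: since $p\ne 0$ it takes some positive value $v$, and taking $C$ large (say $C=2D'+2v+1$, with $D'$ the defect of $p$) makes both hypotheses of the lemma hold. Writing $Y=X_{p,C}=\{g\in G\mid |p(g)|<C\}$, the lemma yields $B:=[Y]\in\H^+_\ell(G)$ together with an element $x_0\in G$ with $p(x_0)\in(0,C/2)$ such that $|g|_Y\le |p(g)/p(x_0)|+2$ for all $g\in G$. Combining this with the previous display, for every $x\in X$ we get
$$
|x|_Y\ \le\ \frac{|p(x)|}{p(x_0)}+2\ \le\ \frac{1+D}{p(x_0)}+2 ,
$$
a bound independent of $x$. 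Hence $\sup_{x\in X}|x|_Y<\infty$, i.e. $Y\preceq X$, i.e. $B\preccurlyeq A$, which proves the first assertion. The ``in particular'' is then immediate (equivalently, it follows at once from Lemma~\ref{pc-to-ls} applied to the non-zero Busemann pseudocharacter of any quasi-parabolic action).

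The only delicate point is the uniform bound $\sup_{x\in X}|p(x)|<\infty$. An arbitrary pseudocharacter of $G$ may well be unbounded on the (typically infinite) set $X$, and then the domination argument collapses; what saves us is that the Busemann pseudocharacter of $G\acts\Gamma(G,X)$, when computed from the basepoint $1$, automatically has values of absolute value at most $1+D$ on $X$. Once that is in hand, the displayed estimate from Lemma~\ref{pc-to-ls} simply converts ``bounded $p$-values on $X$'' into ``bounded $\d_Y$-length on $X$'', which is exactly the domination $B\preccurlyeq A$; everything else is routine.
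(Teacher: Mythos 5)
Your proposal is correct and follows exactly the paper's route: take the Busemann pseudocharacter $p$ of the quasi-parabolic action, let $B$ be the orientable lineal structure from Lemma \ref{pc-to-ls}, and verify $B\preccurlyeq A$. The paper leaves the domination step as ``easily follows''; your verification that $p$ is uniformly bounded on $X$ (via $|\d_X(x,x_n)-\d_X(1,x_n)|\le |x|_X\le 1$) combined with the estimate $|g|_Y\le |p(g)/p(x_0)|+2$ is precisely the intended argument.
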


\begin{proof}
Let $p$ be the Busemann pseudocharacter associated to the action of $A$ (see Definition \ref{Bpc}), and let $B$ be the orientable lineal structure corresponding to $p$ (see Lemma \ref{pc-to-ls}). It easily follows from the definition of the Busemann pseudocharacter and Lemma \ref{pc-to-ls} that $B\preccurlyeq A$.
\end{proof}

Recall that $\mathcal D$ denotes the set of natural numbers ordered according to divisibility: $m\preccurlyeq n$ if $m\mid n$. The main result of this section is the following proposition. Its proof essentially uses the Bass-Serre theory of groups acting on trees; for details we refer to \cite{Serre}.

\begin{prop}\label{ZwrZ}
Let $G=\mathbb Z\, {\rm wr}\, \mathbb Z$.
\begin{enumerate}
\item[(a)] $\H_{qp}(G)$ contains an isomorphic copy of $\mathcal D$.
\item[(b)] $\H_{qp}(G)$ contains an antichain of cardinality continuum.
\end{enumerate}
\end{prop}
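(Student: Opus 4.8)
The plan is to realize the required quasi-parabolic structures as Svarc--Milnor images of actions of $G=\mathbb Z\,{\rm wr}\,\mathbb Z$ on Bass--Serre trees of ascending HNN decompositions of $G$. Write $G=B\rtimes\langle t\rangle$ with $B=\bigoplus_{i\in\mathbb Z}\mathbb Z$, identify $B$ with the additive group $\mathbb Z[t,t^{-1}]$ on which $t$ acts by multiplication, and let $a_i=t^iat^{-i}$ (so $a_i\leftrightarrow t^i$) generate $B$. For a sequence $\bar n=(n_1,n_2,\dots)$ of positive integers with $n_1\mid n_2\mid\cdots$, not identically $1$, set $$K_{\bar n}=\mathbb Z[t]+\sum_{j\ge1}n_j\,t^{-j}\mathbb Z\ \le\ B.$$ This is a $\mathbb Z[t]$-submodule generating $B$ as a $\mathbb Z[t,t^{-1}]$-module, and conjugation by $t$ restricts to an injective endomorphism $\phi$ of $K_{\bar n}$ with $\phi(K_{\bar n})\subsetneq K_{\bar n}$; one checks, using $\bigcup_{k\ge0}t^{-k}K_{\bar n}t^k=B$ together with Britton's Lemma, that $G$ is exactly the ascending HNN extension $\langle K_{\bar n},t\mid tkt^{-1}=\phi(k)\rangle$ (not merely a quotient of it). Let $T_{\bar n}$ be its Bass--Serre tree (hyperbolic, being a tree). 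The $G$-action on $T_{\bar n}$ is cobounded (cocompact); $t$ acts loxodromically; every vertex has degree $1+[K_{\bar n}:\phi(K_{\bar n})]\ge3$, so $|\partial T_{\bar n}|=\infty$; and $G$ fixes the end $\xi=\lim_k t^{-k}v_0$ determined by the ascending structure. Hence the action is quasi-parabolic by Theorem~\ref{ClassHypAct}, and $A_{\bar n}:=\sigma([G\acts T_{\bar n}])\in\H_{qp}(G)$.

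Everything then rests on one distance computation: writing $v_0$ for the base vertex (the coset $K_{\bar n}$), the element $a_{-j}^{\,c}\in B$ is elliptic, and locating its fixed subtree gives $$\d_{T_{\bar n}}\bigl(v_0,\ a_{-j}^{\,c}v_0\bigr)=2\max\bigl(0,\ j-\nu_{\bar n}(c)\bigr),\qquad \nu_{\bar n}(c):=\max\{\,l\ge0:\ n_l\mid c\,\}\quad(n_0:=1).$$ This yields two facts. First, if $n_j\mid m_j$ for all $j$ then $K_{\bar m}\subseteq K_{\bar n}$, so the induced $G$-equivariant simplicial map $T_{\bar m}\to T_{\bar n}$ is $1$-Lipschitz and sends base vertex to base vertex; comparing $G$-orbits gives $G\acts T_{\bar n}\preceq G\acts T_{\bar m}$, i.e.\ $A_{\bar n}\preccurlyeq A_{\bar m}$. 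Second, the elements $g_j=a_{-j}^{\,m_j}$ satisfy $\d_{T_{\bar m}}(v_0,g_jv_0)=0$ and $\d_{T_{\bar n}}(v_0,g_jv_0)=2\max(0,j-\nu_{\bar n}(m_j))$, so whenever $\nu_{\bar n}(m_j)$ stays bounded along a subsequence we get $A_{\bar n}\not\preccurlyeq A_{\bar m}$.

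For part (a) I would use the constant sequences: for $n\ge2$ let $A_n$ correspond to $K_n=\mathbb Z[t]+n\,t^{-1}\mathbb Z[t^{-1}]$, so $\nu_n(c)$ is $\infty$ if $n\mid c$ and $0$ otherwise. The two facts above then give $A_n\preccurlyeq A_m\iff n\mid m$ (when $n\nmid m$, take $g_j=a_{-j}^{\,m}$, which is trivial in $A_m$ but has $\d_{T_n}(v_0,g_jv_0)=2j$). Thus $n\mapsto A_n$ is an order embedding of $(\{n\ge2\},\mid)$ into $\H_{qp}(G)$, and since $\mathcal D=(\mathbb N,\mid)$ order-embeds into $(\{n\ge2\},\mid)$ (e.g.\ $k\mapsto 2k$), $\H_{qp}(G)$ contains an isomorphic copy of $\mathcal D$. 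For part (b) I would encode subsets: fix disjoint sequences of primes $q_1<q_2<\cdots$ and $r_1<r_2<\cdots$, and for $Z\subseteq\mathbb N$ let $\bar n^Z$ be given by $n^Z_j=\prod_{i\le j}s^Z_i$ with $s^Z_i=q_i$ if $i\in Z$ and $s^Z_i=r_i$ otherwise. For $Z\ne Z'$ one checks $\nu_{\bar n^{Z'}}(n^Z_j)=\min(Z\triangle Z')-1$ for all large $j$ — a constant, since the prime $s^{Z'}_{\min(Z\triangle Z')}$ never occurs in $\bar n^Z$ — so $g_j=a_{-j}^{\,n^Z_j}$ is trivial in $A_{\bar n^Z}$ while escaping in $A_{\bar n^{Z'}}$; hence $A_{\bar n^Z}\not\preccurlyeq A_{\bar n^{Z'}}$, and by symmetry $\{A_{\bar n^Z}:Z\subseteq\mathbb N\}$ is an antichain of cardinality $2^{\aleph_0}$ in $\H_{qp}(G)$.

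I expect most of the work to be bookkeeping rather than conceptual: verifying the ascending-HNN identification $G=\langle K_{\bar n},t\mid\cdots\rangle$, setting up $T_{\bar n}$ with the correct $G$-fixed end, and proving the displayed distance formula (where the convention for the ascending direction must be tracked carefully). The one genuinely delicate point is part (b): one must arrange the family so that \emph{both} possible dominations are obstructed for \emph{every} pair $Z\ne Z'$, and this is exactly what the ``two disjoint prime sequences'' device — and the resulting boundedness in $j$ of $\nu_{\bar n^{Z'}}(n^Z_j)$ — is designed to guarantee.
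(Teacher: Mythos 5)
Your construction is correct, and it is worth separating the two parts when comparing with the paper. For part (a) you and the paper end up with literally the same trees: the paper takes the Bass--Serre tree of the ascending HNN decomposition of the quotient $G_n=\mathbb Z_n\,{\rm wr}\,\mathbb Z$ over its ``positive part'' $B_n$ and pulls the action back to $G$, and the $G$-stabilizer of the base vertex there is exactly the preimage of $B_n$, which is your $K_n=\mathbb Z[t]+n\,t^{-1}\mathbb Z[t^{-1}]$; so your intrinsic description (realizing $G$ itself as an ascending HNN extension over $K_n$) is the same action in different packaging. Your injectivity argument via explicit elements $a_{-j}^{\,m}$ replaces the paper's argument that the normal subgroup $\ker(G\to G_m)$ acts with unbounded orbits on $T_n$, but both hinge on the same phenomenon. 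For part (b) the routes genuinely diverge: the paper uses the one-parameter family of representations $\phi_\xi\colon G\to SL_2(\mathbb R)$ acting on $\mathbb H^2$ and separates $\xi\ne\eta$ by elements $a_1^{\alpha_n}a_0^{\beta_n}$ with $\alpha_n\xi+\beta_n$ bounded but $\alpha_n\eta+\beta_n$ unbounded, which requires the reduction from non-cobounded quasi-parabolic actions to hyperbolic structures (Lemma \ref{qpstr}/Proposition \ref{cobdd}); your prime-encoded divisibility sequences stay entirely within cobounded actions on simplicial Bass--Serre trees, so no coboundedness repair is needed, at the cost of having to verify the ascending-HNN identification and the displacement formula $\d_{T_{\bar n}}(v_0,a_{-j}^{\,c}v_0)=2\max(0,j-\nu_{\bar n}(c))$ (which is correct: $a_{-j}^{\,c}$ fixes the end $\xi$, so its fixed tree meets the ray $[v_0,\xi)$ at its closest point to $v_0$, namely $t^{-k}v_0$ for $k=\max(0,j-\nu_{\bar n}(c))$, and $\d(v,bv)=2\d(v,\mathrm{Fix}(b))$ for elliptic tree isometries). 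Your device of two disjoint prime sequences correctly forces $\nu_{\bar n^{Z'}}(n^Z_j)$ to stabilize at $\min(Z\triangle Z')-1$, so both dominations fail for every pair $Z\ne Z'$; note only that the computation you display (elements trivial in $A_{\bar n^Z}$ but escaping in $A_{\bar n^{Z'}}$) directly rules out $A_{\bar n^{Z'}}\preccurlyeq A_{\bar n^Z}$ rather than the inequality you name, but since the construction is symmetric in $Z$ and $Z'$ this is only a labeling slip. As a small bonus, your version of (b) exhibits the continuum antichain by actions on trees, whereas the paper's lives on $\mathbb H^2$.
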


\begin{proof}
(a) It is well-known and easy to prove that the group $G$ has the presentation
$$
G=\langle s, \{ a_i\}_{i\in \mathbb Z} \mid sa_is^{-1}=a_{i+1},\; [a_i, a_j]=1,\;  i,j\in \mathbb Z\rangle .
$$
For every $n\in \mathbb N$, there is a natural homomorphism $G\to G_n=\mathbb Z_n\, {\rm wr}\, \mathbb Z$, where
$$
G_n=\langle t, \{ b_i\}_{i\in \mathbb Z} \mid tb_it^{-1}=b_{i+1},\; [b_i, b_j]=b_i^n=1,\;  i,j\in \mathbb Z\rangle ,
$$
sending $s$ to $t$ and $a_i$ to $b_i$ for all $i$. The group $G_n$ can be thought of as an ascending HNN-extension of $B_n=\langle b_0, b_1, \ldots\rangle $ associated to the endomorphism sending $b_{i-1}$ to $b_i$ for all $i\in \mathbb N$. Let $T_n$ be the associated Bass-Serre tree. Since the HNN-extension is ascending, $G_n$ fixes an end of $T_n$ and hence the action $G_n \curvearrowright T_n$ is quasi-parabolic. Therefore so is the action of $G$ on $T_n$ that factors through $G_n\curvearrowright T_n$.

Let $[X_n]=\sigma ([G\curvearrowright T_n])$. We have $[X_n]\in \H _{qp} (G)$ by (\ref{SM-altdef}) and Lemma \ref{we-type}. We claim that the map $f\colon n\mapsto [X_n]$ defines an order preserving embedding $\mathcal D\to \H_{qp} (G)$.

To prove that $f$ is order preserving, it suffices to show that for every $m,n \in \mathbb N$ such that $n\mid m$, there is a $G$-equivariant Lipschitz map $\lambda $ from the vertex set $V(T_m)=G_m/B_m$ to the vertex set $V(T_n)=G_n/B_n$. Indeed then $G\curvearrowright T_n\preceq G\curvearrowright T_m$ by Definition \ref{def-poset} and passing to equivalence classes of actions and using Lemma \ref{e-we} and Proposition \ref{prop-SM} we obtain the required inequality. We define $\lambda$ in the obvious way by sending a vertex $gB_m$ of $T_m$ to the vertex $\e(g)B_n$, where $\e$ is the natural homomorphism $G_m\to G_n$; of course, it only exists if $n \mid m$. It follows immediately from the construction of the trees $T_m$ and $T_n$ that if two vertices $u$ and $v$ of $T_m$ are connected by an edge in $T_m$, then $\lambda (u)$ and $\lambda (v)$ are connected by an edge in $T_n$. Thus $\lambda $ is Lipschitz. This finishes the proof of the fact that $f$ is order preserving.

It remains to show that $f$ is injective. Let $m<n$ be two natural numbers and let $M= Ker (G\to G_m)$. If the action of $M$ on $T_n$ has bounded orbits, then $M$ must fix a vertex of $T_n$ (this is well-known for every group acting elliptically on a tree); since $M\lhd G$ and the action of $G$ on vertices of $T_n$ is transitive, this implies that $M$ fixes all vertices of $T_n$. In particular, $M\le B_n$, which is obviously not the case. This contradiction shows that $M$ has unbounded orbits on $T_n$. On the other hand, $M$ acts trivially on $T_m$. This easily implies that $f(m)=\sigma ([G\curvearrowright T_m])\not\sim \sigma ([G\curvearrowright T_n])=f(n)$. Thus, $f$ is injective.

(b) Note that for every $\xi> 0$, the mappings
$$ s\mapsto \left(
\begin{array}{cc}
\sqrt{\xi} & 0 \\
0 & 1/\sqrt{\xi} \\
\end{array}
\right)$$
and
$$a_i\to \left(
\begin{array}{cc}
1 & \xi ^i\\
0 & 1 \\
\end{array}
\right)$$ extend to a homomorphism $\phi_\xi\colon G\to SL_2(\mathbb R)$ (in fact, this is an embedding if and only if $\xi$ is transcendental, but we will not use this). Arguing as in Example \ref{BS}, it is easy to verify that the action $A_\xi$ of $G$ that factors through the standard action of $SL_2(\mathbb R)$ on $\mathbb H^2$ is quasi-parabolic whenever $\xi \ne  1$. Applying Lemma  \ref{qpstr}, we obtain a quasi-parabolic structure $[X_\xi] \in \H_{qp}(G)$ such that $G\curvearrowright \Gamma (G, X_\xi)\sim_w A_\xi$.

We identify $\mathbb H^2$ with the upper half-plane model. To distinguish between the actions of $G$ we use the notation $\phi_\xi(g)z$ to denote the image of $z\in \mathbb H^2$ under the action of $g\in G$ with respect to $A_\xi$. Let $$g_n=a_1^{\alpha _n}a_0^{\beta_n}$$ for some $\alpha_n, \beta_n\in \mathbb Z$. Then
$$
\phi_\xi (g_n)= \left(
\begin{array}{cc}
1 & \alpha_n\xi +\beta_n\\
0 & 1 \\
\end{array}
\right)
$$
and therefore we have
\begin{equation}\label{phi1}
\phi_\xi (g_n)z= z+\alpha_n\xi +\beta_n.
\end{equation}

Let $\xi\ne \eta$ be two positive numbers not equal to $1$. We want to show that $[X_\xi]$ and $[X_\eta]$ are incomparable. Arguing by contradiction, assume that $X_\eta\preceq X_\xi$. By Proposition \ref{prop-SM}, we have $A_\eta \preceq A_\xi$, i.e., there exists a constant $C$ such that
\begin{equation}\label{phi2}
\d_{\mathbb H^2}(z, \phi_\eta(g)z)\le C \d_{\mathbb H^2}(z, \phi_\xi(g)z) +C
\end{equation}
for all $g\in G$.

Let us choose sequences of integers $(\alpha_n)$, $(\beta_n)$ such that $\lim_{n\to \infty} \alpha _n= \infty$ and
\begin{equation}\label{abn1}
|\alpha _n \xi + \beta _n|\le 1
\end{equation}
Since $\xi\ne \eta $, it follows that
\begin{equation}\label{abn2}
\lim_{n\to \infty} |\alpha _n \eta + \beta _n|=\infty.
\end{equation}
Using (\ref{phi1}) and (\ref{abn1}), we obtain that $\d_{\mathbb H^2} (\phi_\xi(g_n)z,z)= \d_{\mathbb H^2} (z+\alpha_n\xi +\beta_n, z)$ is uniformly bounded for all $n\in \mathbb N$. Replacing $\xi$ with $\eta$ and arguing in the same way, we obtain that $\d_{\mathbb H^2} (\phi_\eta(g_n)z, z)\to \infty $ as $n\to \infty$ by (\ref{abn2}). Clearly this contradicts (\ref{phi2}). This contradiction shows that $A_\xi$ and $A_\eta$ are incomparable for any positive $\eta\ne \xi$ and thus $\{ [A_\xi] \mid \xi\in (0,1)\cup (1, +\infty)\}$ is an antichain in $\H _{qp}(G)$ of cardinality continuum.
\end{proof}

\subsection{Groups with finitely many hyperbolic structures of general type}

Our main goal here is to prove the following.

\begin{thm}\label{n-gt}
For every $n\in \mathbb N$, there exists a finitely generated group $G_n$ such that $\H_\ell(G_n)=\H_{qp}(G_n)=\emptyset$ and $\H_{gt}(G_n)$ is an antichain of cardinality $n$. In particular, we have $|\H(G_n)|=n+1$ and the poset $\H (G_n)$ has the structure described in Fig. \ref{hasse}.
\end{thm}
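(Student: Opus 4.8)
The plan is to reduce the theorem, via Theorem~\ref{main00}, to an existence statement, and then to build $G_n$ by a small-cancellation construction. I claim it suffices to produce a finitely generated group $G_n$ satisfying: (1)~$G_n$ carries no nonzero pseudocharacter; (2)~$G_n$ has no subgroup of index $2$; and (3)~$G_n$ has exactly $n$ non-trivial hyperbolic structures, no two of which are comparable. Indeed, under (1) every orientable lineal or quasi-parabolic action of $G_n$ would give a nonzero Busemann pseudocharacter (Definition~\ref{Bpc}), so $\H_\ell^+(G_n)=\H_{qp}(G_n)=\emptyset$ (also using Corollary~\ref{qp-to-l}); and a non-orientable lineal structure would force an element of $G_n$ to interchange the two endpoints of a quasi-line, hence would produce a subgroup of index $2$, which (2) forbids. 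Therefore $\H(G_n)=\H_e(G_n)\sqcup\H_{gt}(G_n)$ with $\H_e(G_n)=\{[G_n]\}$, so that \emph{every} non-trivial hyperbolic structure on $G_n$ is automatically of general type; together with (3) this says that $\H_{gt}(G_n)$ is an antichain of cardinality $n$, i.e.\ $\H(G_n)$ is the poset depicted in Figure~\ref{hasse}. Two consequences are worth recording in advance: by Theorem~\ref{main1}, $G_n$ cannot be acylindrically hyperbolic (otherwise $\H(G_n)$ would have at least $2^{\aleph_0}$ elements), so in particular $G_n$ is not a non-elementary hyperbolic group and the structure of a finite generating set does not lie in $\H(G_n)$; and $G_n$ can admit no non-elementary hyperbolic quotient $Q$, because collapsing the kernel to a point gives an injective order-preserving map $\H(Q)\hookrightarrow\H(G_n)$, while $\H(Q)$ is infinite (again by Theorem~\ref{main1}, since $Q$ is acylindrically hyperbolic).

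For the construction, I would realize $G_n$ as a quotient of a hyperbolic (or free) group by infinitely many ``long and generic'' relators, following the small-cancellation and Dehn-filling technology of Ol'shanskii and Osin, and I would build into the construction $n$ cobounded general-type actions $G_n\curvearrowright\mathcal Y_1,\dots,G_n\curvearrowright\mathcal Y_n$ on hyperbolic graphs; the relators are to be chosen so that, on the one hand, none of the $n$ prescribed actions is destroyed, and on the other hand, no \emph{new} cobounded action of $G_n$ on a hyperbolic space is created. Setting $A_i:=\sigma([G_n\curvearrowright\mathcal Y_i])\in\H_{gt}(G_n)$, one arranges the $\mathcal Y_i$ so that the $A_i$ are pairwise incomparable --- for instance by ensuring that each $\mathcal Y_i$ carries a loxodromic element of $G_n$ that acts elliptically on every $\mathcal Y_j$ with $j\ne i$, which by Lemma~\ref{eCay} rules out $A_i\preccurlyeq A_j$ --- and one simultaneously arranges, by quotienting out obstructions along the way, that $G_n$ is perfect, has no subgroup of index $2$, carries no nonzero pseudocharacter, and is not acylindrically hyperbolic. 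Each of these requirements has to be met without destroying the others, which is the usual (and delicate) bookkeeping of such constructions.

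The heart of the proof, and the main obstacle, is the rigidity statement that the construction must guarantee: every non-trivial hyperbolic structure on $G_n$ equals one of $A_1,\dots,A_n$. By the first paragraph this amounts to showing that every cobounded general-type action $G_n\curvearrowright S$ on a hyperbolic space is equivalent to one of the $n$ prescribed actions. One natural route: given such an $S$, choose independent loxodromic elements and their quasi-convex orbits, and then use the small-cancellation geometry of the defining relators --- crucially, the absence of acylindricity, which is exactly what prevents the collapsing trick of Theorem~\ref{n-gt-intr} from producing strictly smaller structures --- to show that $G_n\curvearrowright S$ and $G_n\curvearrowright\mathcal Y_i$ have the same loxodromic elements and the same translation behaviour for some $i$, i.e.\ are coarsely isospectral; then Theorem~\ref{main4} upgrades this to an equivalence, so $\sigma([G_n\curvearrowright S])=A_i$. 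Carrying this out requires pinning down the relators of $G_n$ precisely enough that this chain of implications actually closes --- strong enough to annihilate every extra hyperbolic structure, yet weak enough that the $n$ target structures, their incomparability, and the auxiliary properties (1)--(2) all survive. This is where essentially all the difficulty of the theorem lies; the poset-theoretic packaging in the first paragraph is routine by comparison.
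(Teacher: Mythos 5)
Your opening reduction is sound: no nonzero pseudocharacters kills $\H_\ell^+$ and $\H_{qp}$ (via the Busemann pseudocharacter and Corollary \ref{qp-to-l}), no index-$2$ subgroups kills non-orientable lineal structures, and then every non-trivial structure is of general type. But the proof stops there. The entire content of the theorem --- producing a finitely generated group whose \emph{only} non-trivial cobounded actions on hyperbolic spaces are $n$ prescribed ones --- is deferred to an unspecified small-cancellation/Dehn-filling construction, and you yourself flag the rigidity statement (``no new cobounded action is created'') as the main obstacle without offering any mechanism for establishing it. Controlling \emph{all} cobounded actions of a group on hyperbolic spaces is not something the usual small-cancellation bookkeeping gives you: those techniques control quotients, subgroups and specific actions, not the totality of $\H(G)$. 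Worse, generic infinitely presented small-cancellation quotients of free groups tend to be acylindrically hyperbolic, which by Theorem \ref{main1} would force $|\H(G_n)|\ge 2^{\aleph_0}$; you note this must be avoided but give no way to avoid it while keeping $n$ general-type structures alive. So this is a plan, not a proof, and the missing step is exactly the theorem.

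The paper's mechanism is quite different and worth contrasting. It takes $G$ to be a finitely generated group of automorphisms of a regular tree $T$ (\`a la Le Boudec) that is dense in $Aut(T)$ and whose vertex stabilizers are \emph{commensurated torsion} subgroups. Density makes the action on pairs of equidistant points coarsely transitive, so $\sigma([G\curvearrowright T])$ is \emph{minimal} by Proposition \ref{minact}; torsion rules out non-elliptic behaviour of the stabilizers except possibly parabolic, and commensurability rules out parabolic by Lemma \ref{pqp}, so the stabilizers act elliptically in every $[X]\in\H_\ell(G)\cup\H_{gt}(G)$ and Corollary \ref{maxact-cor} makes $\sigma([G\curvearrowright T])$ the \emph{largest} such structure. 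Being both largest and minimal, it is the unique non-trivial structure. The group $G_n$ is then simply $G^n$, and Lemma \ref{product} forces every cobounded action to factor through a single coordinate, yielding the antichain of cardinality $n$. It is the sandwich ``minimal $=$ maximal'' together with the algebraic constraints on vertex stabilizers --- not any coarse-isospectrality or small-cancellation argument --- that pins down the full poset $\H(G)$.
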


\begin{proof}
Let $T$ be an $n$-regular tree with $n\ge 3$ and let $G$ be a finitely generated group of automorphisms of $T$ such that the following conditions hold:

\begin{enumerate}
\item[(a)] $G$ is dense in $Aut(T)$ with respect to the topology of pointwise convergence;
\item[(b)] for every vertex $v$ of $T$, $Stab_G(v)$ is a commensurated torsion subgroup of $G$.
\end{enumerate}

There do exist examples of groups satisfying these properties. For instance, for any simply transitive subgroup $F\le S_n$, the group $G=G(F)\le Aut(T)$ considered in \cite{lB} is finitely generated by \cite[Corollary 3.8]{lB} and satisfies (a) by \cite[Proposition 3.5]{lB}. In addition, for every $v\in V(T)$, the subgroup $Stab_G(v)$ is locally finite (see the last two paragraphs of Section 3.1 in \cite{lB}). Since $T$ is locally finite, $Stab_G(v) \cap gStab_G(v)g^{-1}= Stab_G(v) \cap Stab_G(gv)$ has finite index in both $Stab_G(v)$ and $Stab_G(gv)$ for all $g\in G$. Thus (b) is also satisfied.

Since $T$ is regular, every map between two pairs of equidistant vertices of $T$ extends to an automorphism of $T$. By (a) the action of $G$ on pairs of equidistant vertices of $T$ is coarsely transitive (in particular, the action of $G$ on $T$ is cocompact) and therefore  $A=\sigma ([G\curvearrowright T])$ is a minimal hyperbolic structure on $G$ by Proposition \ref{minact}. Note that (a) also implies that $A\in \H_{gt}(G)$. Indeed let $a,b,c$ be any triple of vertices of $T$ such that $\d_T(a,b)=\d_T(b,c)=\d_T (a,c)/2$. By (a) there is $g\in G$ such that $ga=b$ and $gb=c$. Then $g$ acts loxodromically on $T$ and preserves a geodesic in $T$ passing through $a,b,c$. Obviously this construction yields independent loxodromic elements of $G$.

For any $[X]\in \H_\ell (G)\cup \H_{gt}(G)$, the induced action of any vertex stabilizer $Stab_G(v)$ on $\Gamma (G,X)$ cannot be lineal, quasi-parabolic, or of general type as $Stab_G(v)$ has no elements of infinite order. Neither can it be parabolic by (b) and Lemma \ref{pqp}. Thus the induced action of $Stab_G(v)$ on $\Gamma (G,X)$ is elliptic for every vertex $v$ and we can apply Corollary \ref{maxact-cor}, which implies that $A$ is the largest element of $\H_\ell (G)\cup \H_{gt}(G)$. Being largest and minimal, $A$ must be the only element of $\H_\ell (G)\cup \H_{gt}(G)$. In particular, $G$ has no lineal structures and hence it has no quasi-parabolic structures by Corollary \ref{qp-to-l}. Thus $A$ is the only non-trivial element of $\HG$.

Let $G_n=G^n$. If $G$ acts coboundedly on a hyperbolic space and the induced action of two distinct multiples in $G^n$ is non-elliptic, then the action of $G_n$ must be lineal by Lemma \ref{product}, and we get a contradiction again since $G$ has no lineal hyperbolic structures. Thus the only hyperbolic structures on $G^n$ are those for which the corresponding actions factor through the actions of one of the multiples. Clearly the equivalence classes of these actions form an antichain of cardinality $n$.
\end{proof}


\section{Induced hyperbolic structures}


\subsection{Strongly hyperbolically embedded subgroups}

In this section, we introduce the notion of a strongly hyperbolically embedded collection of subgroups.  This is a strengthening of the notion of a hyperbolically embedded collection of subgroups introduced in \cite{DGO}. Our main result is Proposition \ref{lem:Hacyl}, which provides a rich source of examples; it can be thought of as a generalization of the fact that the action of a relatively hyperbolic group on the corresponding relative Cayley graph is acylindrical (see \cite[Proposition 5.2]{Osi16}). In Section 5.3 we will show that the induced structure map behaves especially well for strongly hyperbolically embedded subgroups; this result will have numerous applications in Sections 5.4 and 6.1.

We begin by recalling basic definitions and results from \cite{DGO}.

Suppose that we have a group $G$, a collection of subgroups $\Hl$ of $G$, and a subset $X\subseteq G$ such that $X$ together with the union of all $H_i$ generate $G$. Let
\begin{equation}\label{calH}
\mathcal H=H_1\sqcup H_2\sqcup \ldots \sqcup H_n.
\end{equation}
We think of $X$ and $\mathcal H$ as abstract sets and consider the alphabet
\begin{equation}\label{calA}
\mathcal A= X\sqcup \mathcal H
\end{equation}
together with the map $\mathcal A\to G$ induced by the obvious maps $X\to G$ and $H_i \to G$. By abuse of notation, we do not distinguish between subsets $X$ and $H_i$ of $G$ and their preimages in $\mathcal A$. Note, however, the map $\mathcal A\to G$ is not necessarily injective. Indeed if $X$ and a subgroup $H_i$ (respectively, subgroups $H_i $ and $H_j$ for some $i\ne j$) intersect in $G$, then every element of $H_i \cap X\subseteq G$ (respectively, $H_i \cap H_j$) will have at least two preimages in $\mathcal A$: one in $X$ and another in $H_i$ (respectively, one in $H_i $ and one in $H_j$) since we use disjoint unions in (\ref{calH}) and (\ref{calA}).

In these settings, we consider the Cayley graphs $\Gamma(G,X\sqcup \mathcal H)$ and $\Gamma (H_i, H_i)$, and we naturally think of the latter as subgraphs of the former. For every $i\in \{ 1, \ldots, n\}$, we introduce a \textit{relative metric} $\dl_i \colon H_i \times H_i \to [0, +\infty]$ as follows: we say that a path $p$ in $\Gamma(G,X\sqcup\mathcal H)$ is \emph{admissible} if it contains no edges of $\Gamma (H_i, H_i)$. Then $\dl_i (h,k)$ is defined to be the length of a shortest admissible path in $\Gamma(G,X\sqcup\mathcal H) $ that connects $h$ to $k$. If no such a path exists, we set $\dl_i (h,k)=\infty $. Clearly $\dl_i $ satisfies the triangle inequality, where addition is extended to $[0, +\infty]$ in the natural way.

It is convenient to extend the relative metric $\dl _i$ to the whole group $G$ by assuming
$$
\dl_i (f,g)\colon =\left\{\begin{array}{ll}\dl_i (f^{-1}g,1),& {\rm if}\;  f^{-1}g\in H_i \\ \dl_i (f,g)=\infty , &{\rm  otherwise.}\end{array}\right.
$$
If the collection $\Hl$ consists of a single subgroup $H$, we use the notation $\widehat \d$ instead of $\dl_i$.

\begin{defn}\label{defhe}
A collection of subgroups $\Hl$ of $G$ is \emph{hyperbolically embedded  in $G$ with respect to a subset $X\subseteq G$}, denoted $\Hl \h (G,X)$, if the following conditions hold.
\begin{enumerate}
\item[(a)] The group $G$ is generated by $X$ together with the union of all $H_i$ and the Cayley graph $\Gamma(G,X\sqcup\mathcal H) $ is hyperbolic.
\item[(b)] For every $i$, the metric space $(H_i,\dl_i)$ is proper, i.e., every ball (of finite radius) in $H_i$ with respect to the metric $\dl_i$ contains finitely many elements.
\end{enumerate}
If, in addition, the action of $G$ on $\Gamma(G,X\sqcup\mathcal H) $ is acylindrical, we say that $\Hi$ is \emph{strongly hyperbolically embedded in $G$ with respect to $X$.}

Finally, we say  that the collection of subgroups $\Hl$ is \emph{hyperbolically embedded} in $G$ and write $\Hl\h G$ if $\Hl\h (G,X)$ for some $X\subseteq G$.
\end{defn}

\begin{rem}
Unlike the notion of a hyperbolically embedded subgroup, the notion of a strongly hyperbolically embedded subgroup depends on the choice of a generating set. In general, $\Hi\h (G,X)$ does not imply that $\Hi$ is strongly hyperbolically embedded in $G$ with respect to $X$, but does imply that $\Hi$ is strongly hyperbolically embedded in $G$ with respect to some other relative generating set $Y$ containing $X$, see \cite[Theorem 5.4]{Osi16} for details.
\end{rem}

For any group $G$ we have $G\h G$. Indeed we can take $X=\emptyset $. Then the Cayley graph $\Gamma(G, X\sqcup H)$ has diameter $1$ and the corresponding relative metric satisfies $\widehat d(h_1, h_2)=\infty $ whenever $h_1\ne h_2$. Further, if $H$ is a finite subgroup of a group $G$, then $H\h G$. Indeed $H\h (G,X)$ for $X=G$.

Since hyperbolically embedded subgroups and the metric $\dl$ introduced above play a crucial role in this paper, we consider two additional examples borrowed from \cite{DGO}.

\begin{figure}
\centering
\begin{minipage}{.49\textwidth}
  \centering
  \def\svgscale{0.325}
  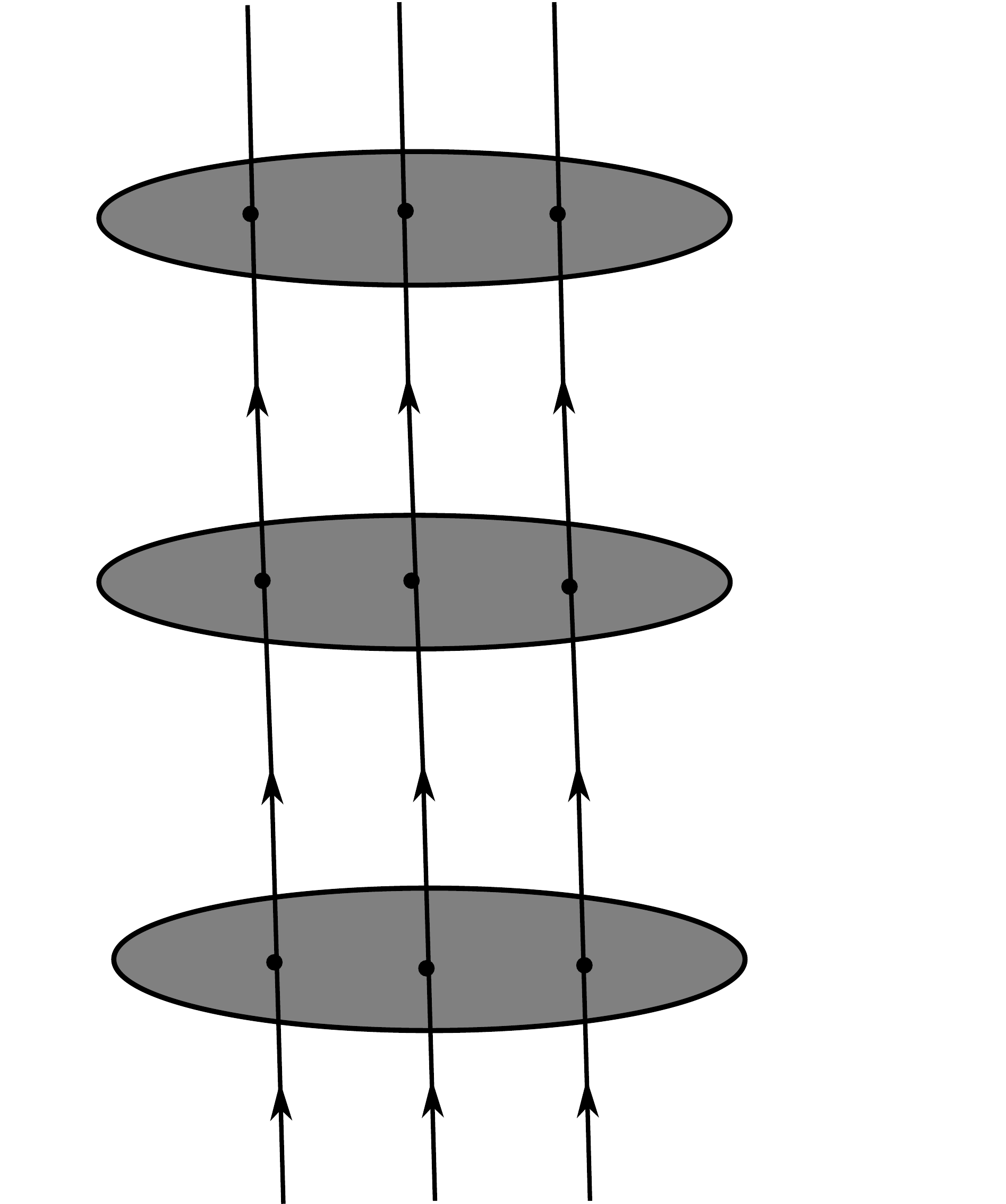
\caption{$H \times \mathbb{Z}$}
\label{1a}
\end{minipage}%
\begin{minipage}{.49\textwidth}
  \centering
  \def\svgscale{0.325}
  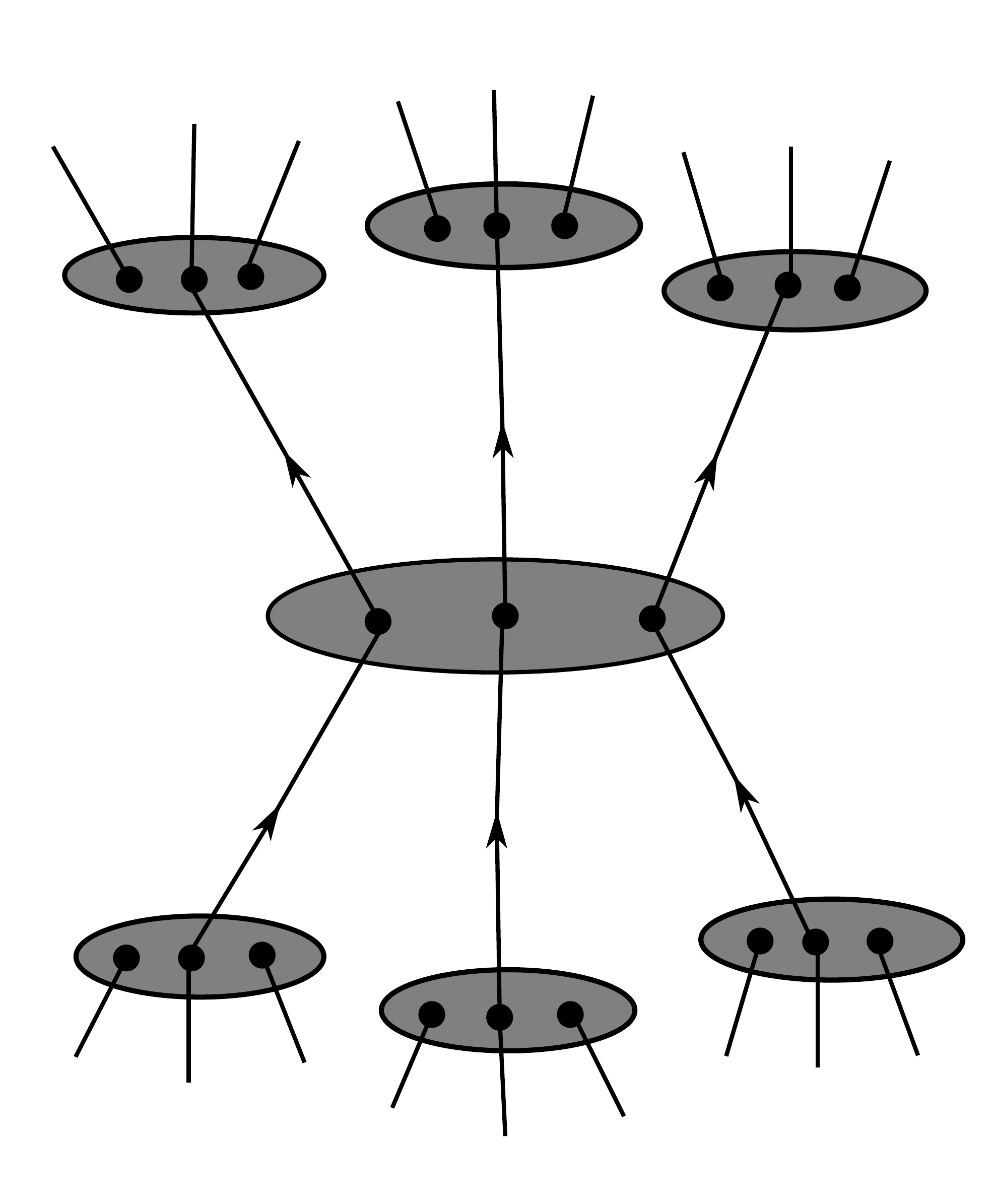
 \caption{$H * \mathbb{Z}$}
\label{1b}
\end{minipage}
\end{figure}

\begin{ex}
\begin{enumerate}
\item[(a)]
Let $G=H\times \mathbb{Z}$, $X=\{ x\} $, where $x$ is a generator of $\mathbb Z$. Then $\Gamma (G, X\sqcup H)$ is quasi-isometric to a line and hence it is hyperbolic. However the corresponding relative metric satisfies $\widehat\d(h_1, h_2)\le 3$ for every $h_1, h_2\in H$ (See Fig. \ref{1a}). Indeed let $\Gamma _H$ denote the Cayley graph $\Gamma (H, H ) $. In the shifted copy $x\Gamma _H$ of $\Gamma _H$ there is an edge (labeled by $h_1^{-1}h_2\in H$) connecting $h_1x$ to $h_2x$, so there is an admissible path of length $3$ connecting $h_1$ to $h_2$. Thus if $H$ is infinite, then $H\not\h (G,X)$.

\item[(b)]  Let $G=H\ast \mathbb Z$, $X=\{ x\} $, where $x$ is a generator of $\mathbb Z$. In this case $\Gamma (G, X\sqcup H)$ is quasi-isometric to a tree (see Fig. \ref{1b}) and $\widehat\d(h_1, h_2)=\infty $ unless $h_1=h_2$. Thus $H\h (G,X)$. In fact, $H$ is strongly hyperbolically embedded in $G$ in this case.
\end{enumerate}
\end{ex}

The following result proved in \cite{DGO} relates the notions of hyperbolically embedded collections of subgroups and relatively hyperbolic groups. (Readers unfamiliar with relative hyperbolicity can take this result as the definition of relatively hyperbolic groups.)

\begin{thm}\label{rhhe}
Let $G$ be a group, $\Hl$ a collection of subgroups of $G$. Then $\Hl\h(G,X)$ for a finite $X\subseteq G$ if and only if $G$ is hyperbolic relative to $\Hl$.
\end{thm}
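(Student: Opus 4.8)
The plan is to read Theorem~\ref{rhhe} as a dictionary between Definition~\ref{defhe} and one of the standard geometric definitions of relative hyperbolicity, and to prove it via Bowditch's characterization: $G$ is hyperbolic relative to $\Hl$ if and only if $G$ acts on a connected, fine, hyperbolic graph with finitely many orbits of edges and finite edge stabilizers, in which the $H_i$ form a complete set of representatives of the conjugacy classes of infinite vertex stabilizers. The graph to use is the \emph{coned-off Cayley graph} $\widehat\Gamma$ associated with a finite relative generating set $X$: its vertices are the elements of $G$ together with one ``cone vertex'' $v(gH_i)$ for each coset $gH_i$ ($1\le i\le n$, $g\in G$), and its edges are the edges of $\Gamma(G,X)$ together with a ``cone edge'' from $v(gH_i)$ to each element of $gH_i$. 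Since $G=\langle X\cup\mathcal H\rangle$, the graph $\widehat\Gamma$ is connected, and $G$ acts on it freely on the $G$-vertices, with the stabilizer of $v(gH_i)$ equal to $gH_ig^{-1}$.

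The proof then consists of matching the hypotheses of Bowditch's characterization, applied to $\widehat\Gamma$, against conditions~(a) and~(b) of Definition~\ref{defhe}; for the direction ``relatively hyperbolic $\Rightarrow$ $\Hl\h(G,X)$ for some finite $X$'' one also uses the standard fact that any graph witnessing relative hyperbolicity of $(G,\Hl)$ is $G$-equivariantly quasi-isometric to $\widehat\Gamma$ for a suitable finite $X$. Finitely many edge orbits is immediate, since $X$ is finite and the cone edges form exactly $n$ orbits; edge stabilizers are trivial because $G$ acts freely on the $G$-vertices; and the infinite vertex stabilizers are precisely the conjugates of the $H_i$. Hyperbolicity of $\widehat\Gamma$ is equivalent to condition~(a): $\widehat\Gamma$ is $G$-equivariantly quasi-isometric to $\Gamma(G,X\sqcup\mathcal H)$ (each coset $gH_i$ has diameter $1$ in the latter and $2$ in the former, through its cone vertex), so one is hyperbolic iff the other is. The only substantial point is that \emph{fineness of $\widehat\Gamma$ is a reformulation of condition~(b)}. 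Indeed, circuits of bounded length through the cone edge $e=[1,v]$, where $v=v(1\cdot H_i)$, correspond — after passing between $\widehat\Gamma$ and $\Gamma(G,X\sqcup\mathcal H)$ by replacing cone-edge pairs with $H_i$-edges and back, which distorts lengths only by a bounded factor — to \emph{admissible} paths of bounded length from some $h\in H_i$ to $1$ in the sense of the definition of $\dl_i$; hence they correspond to the elements of a bounded $\dl_i$-ball in $H_i$. Invoking the routine reduction that in a coned-off graph fineness need only be verified at the cone edges, the remaining edges coming from the locally finite graph $\Gamma(G,X)$, we conclude that $\widehat\Gamma$ is fine exactly when every $\dl_i$-ball in $H_i$ is finite, that is, exactly when~(b) holds.

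I expect the main obstacle to be precisely the two ``standard'' inputs used above: that hyperbolicity passes between $G$-equivariantly quasi-isometric graphs (elementary), and that fineness of the coned-off graph can be checked at the cone edges alone (a genuine, if routine, lemma about fine graphs). Behind Bowditch's characterization itself lies a substantial amount of relative-hyperbolicity machinery, so a proof avoiding it would instead run through Osin's description of relatively hyperbolic groups — a finite relative presentation with linear relative Dehn function, plus the Bounded Coset Penetration property and the accompanying quasigeodesic and geodesic-polygon estimates in $\Gamma(G,X\sqcup\mathcal H)$ — and there the hard step is to pass between ``every $\dl_i$-ball is finite'' and ``the relative Dehn function is linear and BCP holds''. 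For this reason the theorem is, in practice, simply quoted from \cite{DGO} and the foundational papers it rests on, which is the course taken here.
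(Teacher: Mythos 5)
The paper gives no proof of this theorem: it is introduced with the sentence ``The following result proved in \cite{DGO} relates the notions\ldots'' and is simply quoted, so your closing decision to take it from \cite{DGO} is exactly the course the paper takes. Your accompanying sketch --- the coned-off Cayley graph being $G$-equivariantly quasi-isometric to $\Gamma(G,X\sqcup\mathcal H)$ so that hyperbolicity matches condition (a) of Definition \ref{defhe}, and fineness at the cone edges matching local finiteness of $(H_i,\dl_i)$, i.e.\ condition (b), with circuits through a cone edge corresponding to admissible paths up to bounded length distortion --- is a correct outline of how the equivalence is actually established in the literature, modulo the two standard inputs you explicitly flag.
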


We will make use of several technical notions first introduced in \cite{Osi06,ESBG} for relatively hyperbolic groups and then generalized in the context of hyperbolically embedded subgroups in \cite{DGO}.

\begin{defn}\label{comp}
Let $q$ be a path in the Cayley graph $\Gamma(G,X\sqcup\mathcal H) $. A (non-trivial) subpath $p$ of $q$ is called an \emph{$H_i$-subpath} if the label of $p$ is a word in the alphabet $H_i$. An $H_i$-subpath $p$ of $q$ is an {\it $H_i$-component} if $p$ is not contained in a longer $H_i$-subpath of $q$; if $q$ is a loop, we require in addition that $p$ is not contained in any longer $H_i$-subpath of a cyclic shift of $q$.

Two $H_i$-components $p_1, p_2$ of a path $q$ in $\Gamma(G,X\sqcup\mathcal H) $ are called {\it connected} if there exists a
path $c$ in $\Gamma(G,X\sqcup\mathcal H) $ that connects some vertex of $p_1$ to some vertex of $p_2$, and the label of $c$ is a word consisting only of letters from $H_i$. In algebraic terms this means that all vertices of $p_1$ and $p_2$ belong to the same left coset of $H_i$. Note also that we can always assume that $c$ is an edge as every element of $H_i$ is included in the set of generators. A component of a path $p$ is called \emph{isolated} in $p$ if it is not connected to any other component of $p$.
\end{defn}

The following result is a simplified version of \cite[Proposition 4.13]{DGO}. Given a path $p$ in a metric space, we denote by $p_-$ (respectively $p_+$) its initial (respectively, terminal) point.

\begin{lem}\label{C}
Suppose that $\Hl\h (G,X)$. Then there exists a constant $C$ such that for any $m$-gon $p$ with geodesic sides in $\Gamma(G,X\sqcup\mathcal H)$ and any isolated $H_i$-component $a$ of $p$, we have $\dl_i (a_-, a_+)\le Cm$.
\end{lem}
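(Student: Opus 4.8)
The statement is the one-component specialization of \cite[Proposition 4.13]{DGO} (there proved for an arbitrary family of isolated components of a geodesic $n$-gon), and the plan is to run that argument in the present simplified setting. First I would reduce to the case that $a$ is a single edge of $p$: since every element of $H_i$ is itself a generator, two consecutive $\Gamma(H_i,H_i)$-edges fuse into one, so an $H_i$-component contained in a geodesic side of $p$ is automatically a single edge (the residual case of a two-edge component straddling a vertex of $p$ is disposed of the same way, at the cost of a harmless additive constant). Write $o=a_-$ and $o'=a_+$, so $o'=oh$ for some $h\in H_i$, and let $q$ denote the complement of $a$ in $p$; it is a path from $o'$ to $o$ obtained by concatenating the side of $p$ through $a$ with $a$ deleted (two geodesic subpaths) together with the remaining $m-1$ sides, hence a concatenation of at most $m+1$ geodesic subpaths of $\Gamma(G,X\sqcup\mathcal H)$.

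The one place the hypothesis enters is the following structural remark: since $a$ is isolated, no vertex of any $H_i$-component of $p$ other than $a$ itself lies in the coset $oH_i$ --- an $H_i$-component all of whose vertices lay in $oH_i$ would be connected to $a$ by the very definition of ``connected''. Consequently no edge of $q$ lies inside $oH_i$, so $q^{-1}$ is already an admissible path from $o$ to $o'$, and in particular $\dl_i(o,o')\le\ell(q)<\infty$. Thus the real content is the \emph{uniform} estimate $\dl_i(o,o')\le Cm$, with $C$ depending only on the hyperbolicity constant $\delta$ of $\Gamma(G,X\sqcup\mathcal H)$ and not on the (a priori unbounded) length of $q$.

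For the uniform bound I would argue with $\delta$-hyperbolicity, either by induction on $m$ with thin geodesic triangles as the base case, or directly: let $r$ be a shortest admissible path from $o$ to $o'$, assume $\ell(r)>Cm$, subdivide $r$ into maximal $\Gamma(G,X\sqcup\mathcal H)$-geodesic pieces (minimality of $r$ forces these pieces not to be too short, so there are few of them relative to $\ell(r)$), apply the thin-polygon estimate to the $\bigl(m+O(1)\bigr)$-gon obtained from $p$ by replacing $a$ with $r$, and conclude that a long $r$ must fellow-travel the geodesic sides of $p$ so closely that, since any geodesic meets $oH_i$ in at most one edge and $a$ is isolated, one can excise a subpath of $r$ to produce a strictly shorter admissible path --- a contradiction. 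I expect the genuine obstacle to be exactly this last step: the geodesic straightening used to compress $q$ (equivalently, to bound $r$) can re-enter the forbidden coset $oH_i$, and one has to use the isolatedness of $a$ \emph{globally}, via the careful geodesic-polygon bookkeeping of \cite{DGO}, to keep those re-entries under control.
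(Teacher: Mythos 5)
The paper offers no proof of this lemma at all: it is stated as ``a simplified version of \cite[Proposition 4.13]{DGO}'' and the reader is simply referred to that source. Your proposal identifies exactly the same source, and since the one genuinely hard step in your sketch --- the uniform bound $\dl_i(a_-,a_+)\le Cm$, as opposed to the mere finiteness of $\dl_i(a_-,a_+)$ which you do establish from isolatedness --- is explicitly deferred to the geodesic-polygon bookkeeping of \cite{DGO}, you are taking essentially the same route as the paper.
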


The next lemma is an immediate corollary of a particular case of \cite[Proposition 4.11(b)]{DGO}.  Note that we state it for a finite collection of subgroups, which allows us to find a uniform constant $B$.

\begin{lem} \label{lem:Z}  If $\Hi\hookrightarrow_h(G,X)$, then there exist a constant $B$ and finite subsets $Z_i\subset H_i$ such that $\d_{Z_i}(f,g)\le B\widehat\d_i (f,g)$ for all $i\in\{ 1, \ldots, n\}$ and all $f,g\in H_i$.
\end{lem}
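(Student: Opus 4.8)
This is the case of a finite collection of \cite[Proposition 4.11(b)]{DGO}; the role of the finiteness is only to collapse the family of constants produced there into a single $B$. The plan is to reduce to one peripheral subgroup, choose the right finite set inside it, and then invoke the hyperbolic geometry of $\Gamma(G,X\sqcup\mathcal H)$ to get the bound.

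First I would reduce. Since left multiplication by an element $f\in G$ is an automorphism of $\Gamma(G,X\sqcup\mathcal H)$ that carries admissible paths to admissible paths, it preserves each relative metric $\dl_i$, and it permutes $H_i$ whenever $f\in H_i$. So for each fixed $i$ it suffices to produce a finite subset $Z_i\subseteq H_i$ and a constant $B_i$ with $|g|_{Z_i}\le B_i\,\dl_i(1,g)$ for all $g\in H_i$ (the inequality being vacuous when $\dl_i(1,g)=\infty$); the lemma then follows with $B=\max_i B_i$, there being nothing to prove for $f,g$ in distinct peripheral subgroups.

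Next I would fix a hyperbolicity constant $\delta$ of $\Gamma(G,X\sqcup\mathcal H)$ and the constant $C$ of Lemma \ref{C}, choose a constant $N=N(\delta,C)$ large enough for the estimate below, and set $Z_i=\{h\in H_i \mid \dl_i(1,h)\le N\}$; this is finite by condition (b) of Definition \ref{defhe}. The substance of the proof is then to show that for every $g\in H_i$ with $n:=\dl_i(1,g)<\infty$ there is a sequence $1=g_0,g_1,\dots,g_m=g$ of elements of $H_i$ with $m\le B_i n$ and $\dl_i(g_{k-1},g_k)\le N$ for all $k$. Granting this, $g=(g_0^{-1}g_1)(g_1^{-1}g_2)\cdots(g_{m-1}^{-1}g_m)$ writes $g$ as a product of $m$ elements of $Z_i$, so $|g|_{Z_i}\le m\le B_i n$.

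The hard part is building this interpolating sequence, and this is where the hyperbolic geometry of $\Gamma(G,X\sqcup\mathcal H)$ is essential: a shortest admissible path from $1$ to $g$ need not pass through $H_i$ at any interior vertex, so the points $g_k$ cannot be read off from it directly. Instead one forms an auxiliary path from $1$ to $g$ that does use $H_i$-edges, cuts it into geodesic pieces to obtain a geodesic polygon with $O(n)$ sides, and uses the combinatorics of isolated $\mathcal H$-components --- Lemma \ref{C} being the representative tool --- to bound, in the metric $\dl_i$, the jumps between consecutive cosets of $H_i$ visited along the way. This is exactly the argument of \cite[Proposition 4.11(b)]{DGO}; since the reduction and the assembly of the product are purely formal, in practice one just quotes that result.
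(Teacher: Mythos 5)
Your proposal matches the paper exactly: the paper offers no independent proof of this lemma, deriving it as an immediate corollary of \cite[Proposition 4.11(b)]{DGO} and using finiteness of the collection only to replace the family of constants produced there by a single $B=\max_i B_i$, which is precisely your reduction. The supplementary sketch of the internal DGO argument is not needed for this purpose (and note, in passing, that left translation preserves $\dl_i$-admissibility only when the translating element lies in $H_i$ itself, which is all your reduction actually uses).
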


Combining these lemmas we obtain the following corollary, which will be used in later sections. Note that we use word metrics on $G$ associated to arbitrary (not necessarily generating) subsets: given $Y\subseteq G$ and $g,h\in G$, we define $\d_Y(g,h)$ to be $|g^{-1}h|_Y$ if $g^{-1}h\in \langle Y\rangle$ and set $\d_Y(g,h)=\infty$ otherwise.

\begin{cor} \label{Yibound}
Let $\Hi\hookrightarrow_h(G,X)$ and for $i=1,\dots, n$, let $Y_i$ be a subset of $G$ such that $H_i$ is a subgroup of $\langle Y_i\rangle$. Then there exists a constant $D$ such that for any $m$-gon $p$ with geodesic sides in $\Gamma(G,X\sqcup\mathcal H)$ and any isolated $H_i$-component $a$ of $p$, we have
$$
\d_{Y_i} (a_-, a_+)\le Dm.
$$
\end{cor}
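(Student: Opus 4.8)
The plan is to chain together the three preceding lemmas, converting the bound on the relative metric $\dl_i$ coming from isolation into a bound on the $Y_i$-word metric. Fix the constant $C$ supplied by Lemma \ref{C}, the constant $B$ and the finite subsets $Z_i\subseteq H_i$ supplied by Lemma \ref{lem:Z}. The only genuinely new ingredient needed is the observation that, since $Z_i$ is finite and $Z_i\subseteq H_i\le \langle Y_i\rangle$ by hypothesis, every element of $Z_i$ is a product of finitely many elements of $Y_i^{\pm 1}$; hence the quantity
$$
M=\max_{1\le i\le n}\ \max_{z\in Z_i} |z|_{Y_i}
$$
is a well-defined finite constant (this is where finiteness of the collection $\Hl$ and finiteness of each $Z_i$ are used). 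I then claim $D=MBC$ works.

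The argument runs as follows. Let $p$ be an $m$-gon with geodesic sides in $\Gamma(G,X\sqcup\mathcal H)$ and let $a$ be an isolated $H_i$-component of $p$, so that $a_-^{-1}a_+\in H_i$. By Lemma \ref{C} we have $\dl_i(a_-,a_+)\le Cm$. Applying Lemma \ref{lem:Z} to the pair $a_-,a_+\in H_i$ (after the usual translation identifying $\dl_i(a_-,a_+)$ with $\dl_i(a_-^{-1}a_+,1)$) gives $\d_{Z_i}(a_-,a_+)\le B\,\dl_i(a_-,a_+)\le BCm$. This means $a_-^{-1}a_+$ can be written as a product of at most $BCm$ elements of $Z_i^{\pm 1}$. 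Replacing each such factor by a $Y_i$-word of length at most $M$, we obtain $|a_-^{-1}a_+|_{Y_i}\le M\cdot BCm$, i.e. $\d_{Y_i}(a_-,a_+)\le Dm$ with $D=MBC$, as desired.

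There is no real obstacle here; the statement is essentially a bookkeeping corollary, and the only point requiring a moment's care is to make sure the constant $M$ is uniform over $i$ — which is immediate from the finiteness of the collection $\Hi$ and of each $Z_i$, and from the hypothesis $H_i\le\langle Y_i\rangle$ (without which some $z\in Z_i$ might have infinite $Y_i$-length). One should also note, for cleanliness, that the extension of $\dl_i$ and of $\d_{Y_i}$ to all of $G$ is by the same ``$\infty$ off the relevant coset'' convention, so the inequalities above make sense verbatim and there is nothing to check when $a_-^{-1}a_+\notin H_i$ (which does not occur for an $H_i$-component anyway).
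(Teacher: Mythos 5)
Your proof is correct and follows exactly the paper's own argument: chain Lemma \ref{C} with Lemma \ref{lem:Z}, then convert $Z_i$-length to $Y_i$-length via the uniform constant $M=\max_i\max_{z\in Z_i}|z|_{Y_i}$, yielding $D=BCM$. The paper states this in two sentences; your write-up just spells out the same steps in more detail.
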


\begin{proof}
For each $i$, let $Z_i$ be the finite subset of $H_i$ provided by Lemma \ref{lem:Z} and let $M=\max_{i=1, \ldots, n}\max_{z\in Z_i}|z|_{Y_i}<\infty $. It suffices to take $D=BCM$, where the $B$ and $C$ are constants provided by Lemma \ref{C} and Lemma \ref{lem:Z}, respectively.
\end{proof}

The following proposition gives a sufficient condition for a hyperbolically embedded subgroup to be strongly hyperbolically embedded.

\begin{prop} \label{lem:Hacyl} Suppose a group $G$ is generated by a subset $X$ and $H\hookrightarrow_h(G,X)$.  If the action of $G$ on $\Gamma(G,X)$ is acylindrical, then $H$ is strongly hyperbolically embedded in $G$ with respect to $X$.
\end{prop}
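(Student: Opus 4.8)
The hypothesis already provides $H\h(G,X)$, so by Definition~\ref{defhe} what remains is to show that the action of $G$ on $\Gamma:=\Gamma(G,X\sqcup H)$ is acylindrical; recall that $\Gamma$ is $\delta$-hyperbolic for some $\delta$, since $H\h(G,X)$. Write $\d$ for the metric of $\Gamma$. The key structural observation is that $\Gamma(G,X)$ is a spanning subgraph of $\Gamma$, so $\d(g,h)\le\d_X(g,h)$ for all $g,h\in G$. The plan is therefore to reduce acylindricity of $G\curvearrowright\Gamma$ to the (assumed) acylindricity of $G\curvearrowright\Gamma(G,X)$, whose constants for a parameter $\lambda\ge0$ I will denote $R_X(\lambda)$, $N_X(\lambda)$.

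Fix $\e\ge0$. Suppose $f,g\in G$ with $\d(f,g)$ large and let $a\in G$ satisfy $\d(f,af)\le\e$ and $\d(g,ag)\le\e$. Fix a geodesic $p=[f,g]$ in $\Gamma$ and consider the geodesic quadrilateral with sides $[f,af]$, $a\cdot p=[af,ag]$, $[ag,g]$, $[g,f]$, two of whose sides have length $\le\e$. A routine application of $\delta$-thinness (push a vertex $v\in p$ across the quadrilateral to some $aw$ with $w\in p$, then compare $v$ and $w$ along the geodesic $p$) yields a constant $\e'=\e'(\e,\delta)$ such that every vertex $v$ on $p$ with $\d(f,v)$ and $\d(g,v)$ both exceeding $\e+3\delta$ — call such $v$ \emph{deep} — satisfies $\d(v,av)\le\e'$, and this bound is \emph{uniform} in $a$.

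The heart of the argument is to upgrade this displacement bound from $\d$ to $\d_X$: I claim there is a constant $\e''$, depending only on $\e,\delta$ and the data of the embedding $H\h(G,X)$, with $\d_X(v,av)\le\e''$ for every deep vertex $v$ on $p$. To see this, fix a $\Gamma$-geodesic $q$ from $v$ to $av$ of length $\le\e'$ and estimate $\d_X(v,av)$ by summing $\d_X$ over the at most $\e'$ edges of $q$: an $X$-edge contributes $1$, while an $H$-edge $[w,wh]$ contributes $|h|_X$, so it suffices to bound $|h|_X$ uniformly for each $H$-edge of $q$. Since $v$ is deep, $w$ and $wh$ lie within $\e'$ of $p$; combining $q$ with short geodesics joining $w,wh$ to nearby vertices of $p$, with the intervening subpath of $p$, with the short side $[f,af]$, and with the relevant subpath of $a\cdot p$, one builds a geodesic polygon in $\Gamma$ with a uniformly bounded number of sides. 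Because $p$, $a\cdot p$ and $q$ are geodesics of $\Gamma$, each of them meets the left coset $wH$ in at most one edge; replacing the finitely many sub-arcs of this polygon that run inside $wH$ by single $H$-edges (legitimate edges of $\Gamma$) merges all occurrences of $wH$ into a single \emph{isolated} $H$-component of a geodesic polygon with still-boundedly-many sides. Corollary~\ref{Yibound} applied with $Y_i=X$ (through Lemma~\ref{C} and Lemma~\ref{lem:Z}) then bounds the $\d_X$-size of this component, and hence $|h|_X$, by a constant depending only on the bound on the number of sides. I expect this step — setting up the auxiliary polygon so that the relevant $H$-component is isolated after the coset surgery, keeping the number of sides bounded, and using deepness of $v$ to rule out connections to the two short sides of the quadrilateral — to be the main obstacle; everything else is formal bookkeeping with the machinery of \cite{DGO} recalled above.

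Granting the claim, the proposition follows quickly. Put $\e''$ as above, $R:=R_X(\e'')+2(\e+3\delta)+2$ and $N:=N_X(\e'')$. Assume $\d(f,g)\ge R$. Then $p$ has length $\ge R$, so its deep portion has length $\ge R-2(\e+3\delta)\ge R_X(\e'')+2$, and we may choose two deep vertices $v_1,v_2$ on $p$ with $\d(v_1,v_2)\ge R_X(\e'')$; crucially $v_1,v_2$ depend only on $f,g$ and the chosen geodesic $p$, not on $a$. For any $a$ with $\d(f,af)\le\e$ and $\d(g,ag)\le\e$, the previous two steps give $\d_X(v_i,av_i)\le\e''$ for $i=1,2$, while $\d_X(v_1,v_2)\ge\d(v_1,v_2)\ge R_X(\e'')$. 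Acylindricity of $G\curvearrowright\Gamma(G,X)$ then bounds the number of such $a$ by $N_X(\e'')=N$. Hence $G\curvearrowright\Gamma(G,X\sqcup H)$ is acylindrical, so $H$ is strongly hyperbolically embedded in $G$ with respect to $X$.
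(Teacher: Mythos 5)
Your overall strategy---bound the $\d_X$-displacement of two deep vertices of the geodesic $[f,g]$ and then invoke acylindricity of $G\curvearrowright\Gamma(G,X)$---is the same as the paper's, and your final counting step is correct \emph{granting} the intermediate claim. The gap is exactly the step you flag as the main obstacle: the uniform bound $\d_X(v,av)\le\e''$ for \emph{every} deep vertex $v$ is not delivered by the coset surgery. Consider the configuration in which the $H$-edge $[w,wh]$ of $q$ lies in the same coset $wH$ as an $H$-component $u$ of $p$ itself (or of $a\cdot p$). Merging the occurrences of $wH$ into a single $H$-component of a new polygon and applying Corollary~\ref{Yibound} bounds the $\d_X$-distance between the \emph{endpoints of the merged component}---a quantity like $\d_X(u_-,wh)$---not $|h|_X=\d_X(w,wh)$. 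Extracting $|h|_X$ from this by the triangle inequality requires a bound on $\d_X(u_-,u_+)$, i.e., on the $X$-length of an $H$-component of the geodesic $p$, and no such bound is available a priori.

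This is precisely the dichotomy on which the paper's proof splits. When every $H$-component $u$ of the deep part of $[f,g]$ satisfies $\d_X(u_-,u_+)\le 4D$ (the paper's Case 1), your displacement bound does go through: the paper obtains $\d_X(b_-,b_+)\le 20D$ for each $H$-component $b$ of $[x,ax]$, and hence $\d_X(x,ax)\le 20DE$, by a case analysis that uses the bound $\d_X(u_-,u_+)\le 4D$ in an essential way. When some component $c$ of the deep part has $\d_X(c_-,c_+)>4D$ (Case 2), the paper does \emph{not} bound any $X$-displacement at deep points; instead it shows $c$ can only connect to an $H$-component of $[a,ag]$, uses Lemma~\ref{C} to bound the \emph{relative} length $\widehat\d(e_-,e_+)$ of the isolated connecting edge, and then invokes the local finiteness of $(H,\widehat\d)$ from Definition~\ref{defhe}(b) to count the admissible $a$ directly. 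The resulting bound $9K^2B$ depends on the scale $K$ and not merely on $\e$, which is a further sign that no two-point $\d_X$-displacement argument is operating there. Your proposal never uses the properness of $(H,\widehat\d)$ in this direct way, and without that second mechanism the argument does not close. To repair the proof you would need to add the case distinction and the counting argument for long components, at which point you essentially recover the paper's proof.
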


Before proving Proposition \ref{lem:Hacyl}, we need the following lemma.

\begin{lem} \label{defineE}
Let $S$ be a $\delta$--hyperbolic space and $G$ a group acting by isometries on $S$.  For every $\e\geq 0$ and every pair of points $x,y\in S$, there exists a constant $E$ depending only on $\delta$ and $\e$ such that the following condition is satisfied: whenever $g\in G$ satisfies $\max\{\d_S(x,gx),\d_S(y,gy)\}\leq \e$, any point $z$ on a geodesic from $x$ to $y$ satisfies $$\d_S(z,gz)\leq E.$$
\end{lem}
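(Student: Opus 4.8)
The plan is to use the thin-quadrilateral property of $\delta$-hyperbolic spaces, together with a comparison of arc-length parameters. Fix a geodesic $\gamma\colon [0,L]\to S$ from $x=\gamma(0)$ to $y=\gamma(L)$, where $L=\d_S(x,y)$, and let $z=\gamma(t)$ be an arbitrary point on it. Suppose $g\in G$ satisfies $\d_S(x,gx)\le\e$ and $\d_S(y,gy)\le\e$. Since $g$ is an isometry, $g\gamma$ is a geodesic from $gx$ to $gy$, and together with $\gamma$ and geodesics $[gx,x]$ and $[y,gy]$ it forms a geodesic quadrilateral with vertices $x,y,gy,gx$ (in cyclic order).

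First I would recall, with a one-line proof (cut along a diagonal into two triangles and apply the Rips condition twice), that in a $\delta$-hyperbolic space each side of a geodesic quadrilateral lies in the closed $2\delta$-neighbourhood of the union of the other three sides. Applying this to the side $\gamma$ through $z$, the point $z$ lies within $2\delta$ of a point $p$ on $g\gamma\cup[gx,x]\cup[y,gy]$. If $p\in g\gamma$ we are done directly; if $p\in[gx,x]$ then $\d_S(p,gx)\le\e$ and $gx=g\gamma(0)$, so $z$ is within $2\delta+\e$ of $g\gamma$; the case $p\in[y,gy]$ is symmetric. In all cases there is $s\in[0,L]$ with $\d_S(z,g\gamma(s))\le 2\delta+\e$.

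Next I would control the parameter $s$. From $\d_S(x,\gamma(t))=t$, $\d_S(gx,g\gamma(s))=s$, $\d_S(x,gx)\le\e$, and $\d_S(\gamma(t),g\gamma(s))\le 2\delta+\e$, two applications of the triangle inequality give $|t-s|\le 2\delta+2\e$. Since $\gamma$ is a geodesic and $g$ an isometry, $\d_S(gz,g\gamma(s))=\d_S(\gamma(t),\gamma(s))=|t-s|\le 2\delta+2\e$, whence
$$
\d_S(z,gz)\le \d_S(z,g\gamma(s))+\d_S(g\gamma(s),gz)\le (2\delta+\e)+(2\delta+2\e)=4\delta+3\e.
$$
So $E=4\delta+3\e$ works, and it depends only on $\delta$ and $\e$, as required (degenerate cases such as $x=y$ or $\e=0$ are covered automatically).

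There is essentially no serious obstacle here; the only point needing care is to make sure the bound on $s$, and hence on $\d_S(z,gz)$, is genuinely independent of $L=\d_S(x,y)$ — which is precisely why one passes through the quadrilateral and the arc-length parametrization rather than attempting to propagate the estimate along $\gamma$ point by point.
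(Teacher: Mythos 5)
Your proof is correct and follows essentially the same route as the paper's: both use thinness of the geodesic quadrilateral on $x,y,gy,gx$ to place $z$ within $2\delta+\e$ of a point of $g\gamma$, and then exploit the isometry and the geodesic parametrization to bound the remaining distance to $gz$, arriving at the same constant $E=4\delta+3\e$. Your handling of the case where the nearby point falls on a short side of the quadrilateral, and your use of arc-length parameters in place of the paper's ``without loss of generality $t$ lies between $gx$ and $gz$,'' are only cosmetic differences.
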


\begin{proof}
Let us fix $\e\geq 0$, and let $x,y$ be any two points in $S$ and $\gamma$ a geodesic in $S$ from $x$ to $y$.  Suppose there exists a $g\in G$ such that $\d_S(x,gx)\leq \e$ and $\d_S(y,gy)\leq \e$.   Let $z\in S$ be any point that lies on $\gamma$, and let $\delta$ be the hyperbolicity constant of $S$.  Then there is a point $t$ on $g\gamma$ such that $\d_S(z,t)\leq 2\delta+\e$.  Without loss of generality, assume $t$ lies on the subpath of $g\gamma$ between $gx$ and $gz$.  Then $\d_S(gx,t)\geq \d_S(x,z)-2\delta-2\e$.  Since $\d_S(gx,gz)=\d_S(x,z)$, it follows that $\d_S(t,gz) = \d_S(gx,gz)-\d_S(gx,t)\leq 2\e+2\delta$.  Thus by the triangle inequality, $\d_S(z,gz)\leq 3\e+4\delta$.  Setting $E=3\e+4\delta$ completes the proof. \end{proof}

\begin{rem} \label{acylwithequal}
It is shown in \cite[Lemma 2.4]{Osi16} that the action of a group $G$ on a hyperbolic space $S$ is acylindrical if and only if for every $\e>0$ there exist $R,N>0$ such that for every two points $x,z$ satisfying $\d(x,z)=R$, $$\#\{g\in G\mid \max\{\d(x,gx),\d(z,gz)\}\leq \e\}\leq N.$$
\end{rem}

\begin{proof}[Proof of Proposition \ref{lem:Hacyl}]
By Definition \ref{defhe}(a), there exists $\delta\geq 0$ such that $\Gamma(G,X\sqcup H)$ is $\delta$--hyperbolic.  For any $\eta\geq 0$, let $R(\eta)$ and $N(\eta)$ be the constants of acylindricity associated to the action of $G$ on $\Gamma(G,X)$.

Our goal is to prove that the action of $G$ on $\Gamma(G,X\sqcup H)$ is acylindrical.   Let us fix $\varepsilon>0$. Let $D$ be the constant provided by  Corollary \ref{Yibound} with $n=1$ and $Y_1=X$, and let $E$  be the constant provided by Lemma \ref{defineE} applied to $\Gamma(G,X\sqcup H)$.  Fix a natural number $K$ such that $$K>\max\{R(20DE),\varepsilon+1,2E+1\}.$$

 Let  $g\in G$ be such that $$\d_{X\cup H}(1,g)= 3K,$$  and suppose an element $a$ in $G$ satisfies
 \begin{equation} \label{epsilonbound} \d_{X\cup H}(1,a)\leq \varepsilon \quad \textrm{ and } \quad \d_{X\cup H}(g,ag)\leq \varepsilon.
 \end{equation}  By Remark \ref{acylwithequal}, it suffices to give a uniform bound on the number of such $a\in G$.

For any $x,y\in G$, $[x,y]$ will always denote a geodesic in $\Gamma(G,X\sqcup H)$ connecting $x$ to $y$.  Whenever we use this notation, the choice of particular geodesic will be irrelevant.  Choose two points $x$ and $y$ on $ [1,g]$ such that $$\d_{X\cup H}(1,x)=K \quad \textrm{ and } \quad \d_{X\cup H}(y,g)=K.$$  It follows from Lemma \ref{defineE} that
\begin{equation}\label{E}\d_{X\cup H}(x,ax)\leq E \quad \textrm{ and } \quad \d_{X\cup H}(y,ay)\leq E.
\end{equation}  There are two cases to consider: either all the $H$--components of $[x,y]$ are sufficiently short in $\Gamma(G,X)$, or there exists an $H$--component of $[x,y]$ that is long.  In the first case, we will bound the distances from $x$ and $y$ to $ax$ and $ay$ in $\Ga(G,X)$, respectively, and use the acylindricity of the action of $G$ on $\Gamma(G,X)$ to bound the number of $a\in G$ satisfying (\ref{epsilonbound}).  In the second case we will use the local finiteness of the metric space $(H,\widehat \d)$ to bound the number of such $a$.

 {\bf Case 1.} Suppose all $H$--components $u$ of $[x,y]$ satisfy
 \begin{equation} \label{Hbound}
 \d_X(u_-,u_+)\leq 4D,\end{equation} and let $b$ be an $H$--component of $[x,ax]$.    There are four possibilities.
	
	a) If $b$ is connected to an $H$--component $v$ of $[y,ay]$, then there is an edge $e$ labeled by an element of $H$ connecting $b_-$ to $v_-$ (see Figure  \ref{5_3_1a}).

\begin{figure}
\centering
\begin{minipage}{.49\textwidth}
  \centering
  \def\svgscale{0.43}
  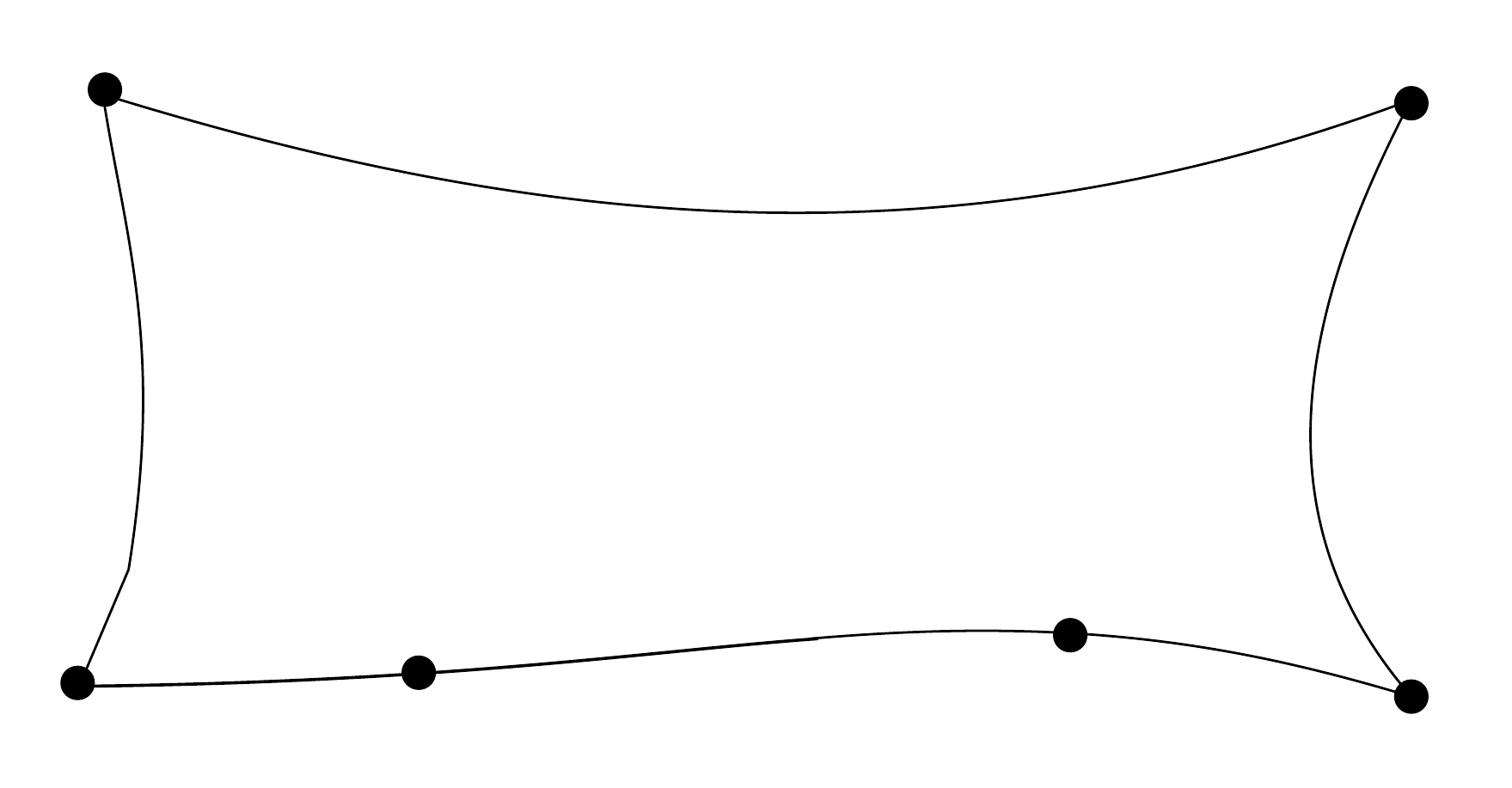
\caption{Case 1 (a)}
\label{5_3_1a}
\end{minipage}%
\begin{minipage}{.49\textwidth}
  \centering
  \def\svgscale{0.44}
 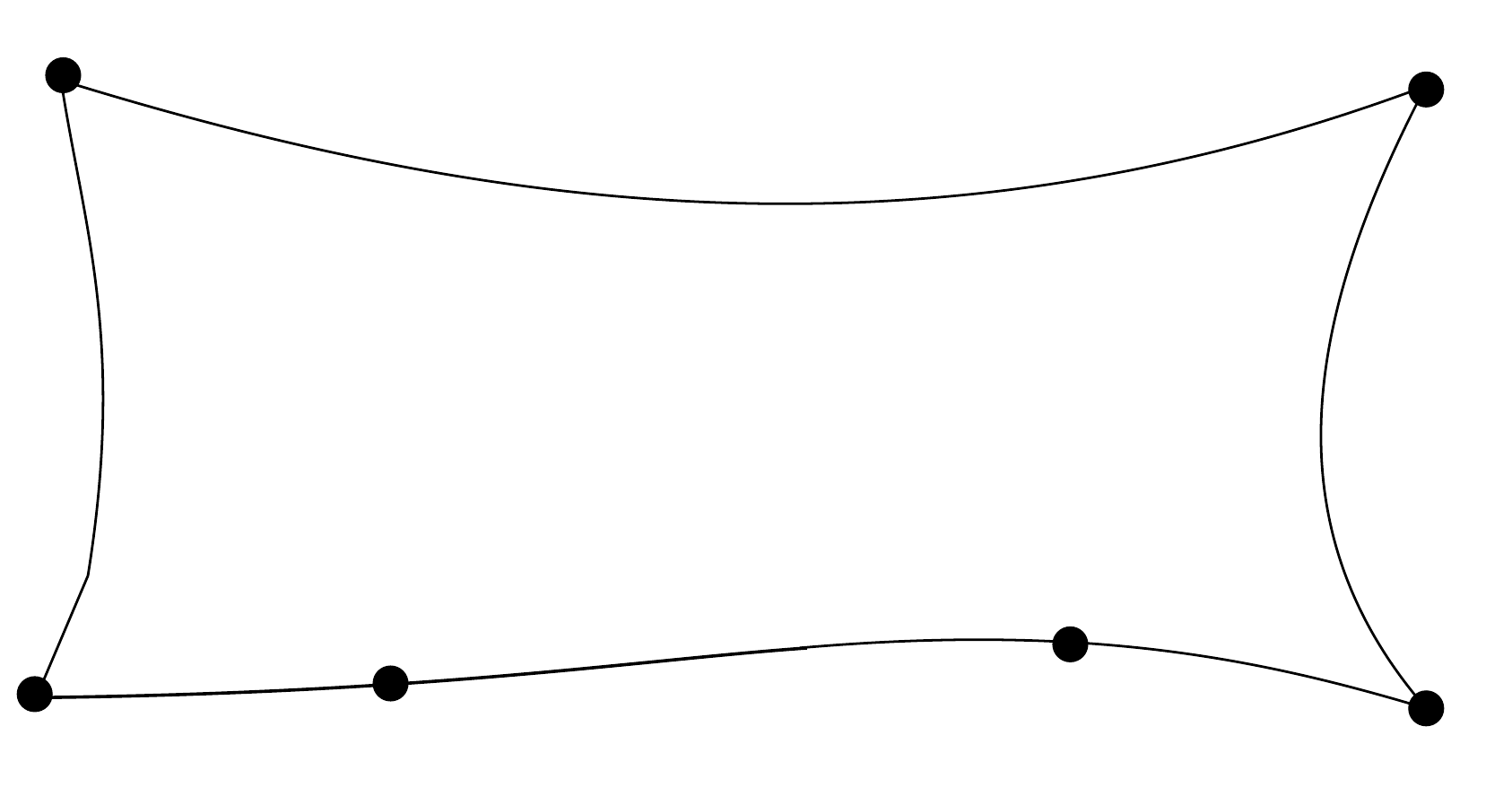
\caption{Case 1 (b)}
\label{5_3_1c}
\end{minipage}%

\end{figure}

By the triangle inequality,
	\begin{align*}
	\d_{X\cup H}(x,y) & \leq \d_{X\cup H}(x,b_-)+\d_{X\cup H}(b_-,v_-)+\d_{X\cup H}(v_-,y) \\
	& \leq \d_{X\cup H}(x,ax)+\d_{X\cup H}(y,ay)+1 \\
	&\leq 2E+1.
	\end{align*}
However, as $\d_{X\cup H}(x,y) = K>2E+1$, we reach a contradiction.

	b) If $b$ is connected only to an $H$--component $u$ of $[x,y]$, then let $e_1,e_2$ be the edges labeled by elements of $H$ connecting $b_-$ to $u_-$ and $b_+$ to $u_+$, respectively (see Figure  \ref{5_3_1c}).

The edges $e_1$ and $e_2$ are isolated in $e_1\cup [u_-,x]\cup [x,b_-]$ and $e_2\cup[u_+,y]\cup [y,ay]\cup[ay,ax]\cup[ax,b_+]$, respectively, so by Corollary \ref{Yibound}, $ \d_X((e_i)_-,(e_i)_+)\leq 5D$ for $i=1,2$.  By (\ref{Hbound}) and the triangle inequality, \begin{equation}\label{c}\d_X(b_-,b_+)\leq 14D.\end{equation}

Without loss of generality, we may assume that $a[1,g]=[a,ag]$.  Then any $H$--component of $[ax,ay]$ is the image under $a$ of an $H$--component of $[x,y]$.  Thus by an analogous argument, we get the same bound if $b$ is connected only to an $H$--component of $[ax,ay]$.
	
	c) If $b$ is connected to an $H$--component $u$ of $[x,y]$ and an $H$--component $v$ of $[ax,ay]$, then let $e_1,e_3,e_4$ be the edges labeled by elements of $H$ connecting $u_-$ to $b_-$, $b_+$ to $v_-$, and $v_+$ to $u_+$, respectively (see Figure  \ref{5_3_1b}).

By the reasoning in a), $b$ cannot also connect to an $H$-component of $[y,ay]$.  Thus, the edges $e_1,e_3,e_4$ are isolated in $e_1\cup [u_-,x]\cup [x,b_-]$, $e_3\cup [v_-,ax]\cup[ax,b_+]$, and $e_4\cup[u_+,y]\cup[y,ay]\cup[ay,v_+]$, respectively.  By Corollary \ref{Yibound}, $\d_X((e_i)_-,(e_i)_+)\leq 4D$ for $i=1,3,4$, and by (\ref{Hbound}), $\d_X(u_-,u_+)\leq 4D$.  As above, we may assume that any $H$--component of $[ax,ay]$ is the image under $a$ of an $H$--component of $[x,y]$, and it follows from (\ref{Hbound}) that $\d_X(v_-,v_+)\leq 4D$.  By the triangle inequality, \begin{equation}\label{b}\d_X(b_-,b_+)\leq 20D.\end{equation}

	d) If $b$ is isolated in $[x,y]\cup [y,ay]\cup[ay,ax]\cup [ax,x]$, then by Corollary \ref{Yibound}, \begin{equation}\label{d}\d_X(b_-,b_+)\leq 4D.\end{equation}

	Equations (\ref{b}), (\ref{c}), and (\ref{d}) show that for any $H$--component $b$ of $[x,ax]$, we have  $\d_X(b_-,b_+)\leq 20D.$  Combining this with (\ref{E}) yields that $$\d_X(x,ax)\leq 20DE.$$  By a symmetric argument, $$\d_X(y,ay)\leq 20DE.$$  Since $$\d_{X}(x,y)\geq \d_{X\cup H}(x,y)=K>R(20DE),$$ the acylindricity of the action of $G$ on $\Gamma(G,X)$ allows us to conclude that there are at most $N(20DE)$ elements $a\in G$ satisfying (\ref{epsilonbound}).

 {\bf Case 2.} Suppose there exists an $H$--component $c$ of $[x,y]$ with $\d_X(c_-,c_+) > 4D$.  Then by Corollary \ref{Yibound}, $c$ cannot be isolated in the quadrilateral $[1,g]\cup [g,ag]\cup[ag,a]\cup [a,1]$.  However, $c$ cannot connect to $[1,a]$ or $[g,ag]$.  Indeed, if $c$ connects to an $H$--component $b$ of $[1,a]$, then there is an edge labeled by an element of $H$ connecting $b_-$ to $c_-$ (see Figure  \ref{5_3_2not}), and so $$\d_{X\cup H}(1,c_-)\leq \d_{X\cup H}(1,b_-)+\d_{X\cup H}(b_-,c_-)\leq \varepsilon+1.$$

\begin{figure}
\centering
\begin{minipage}{.49\textwidth}
  \centering
  \def\svgscale{0.43}
  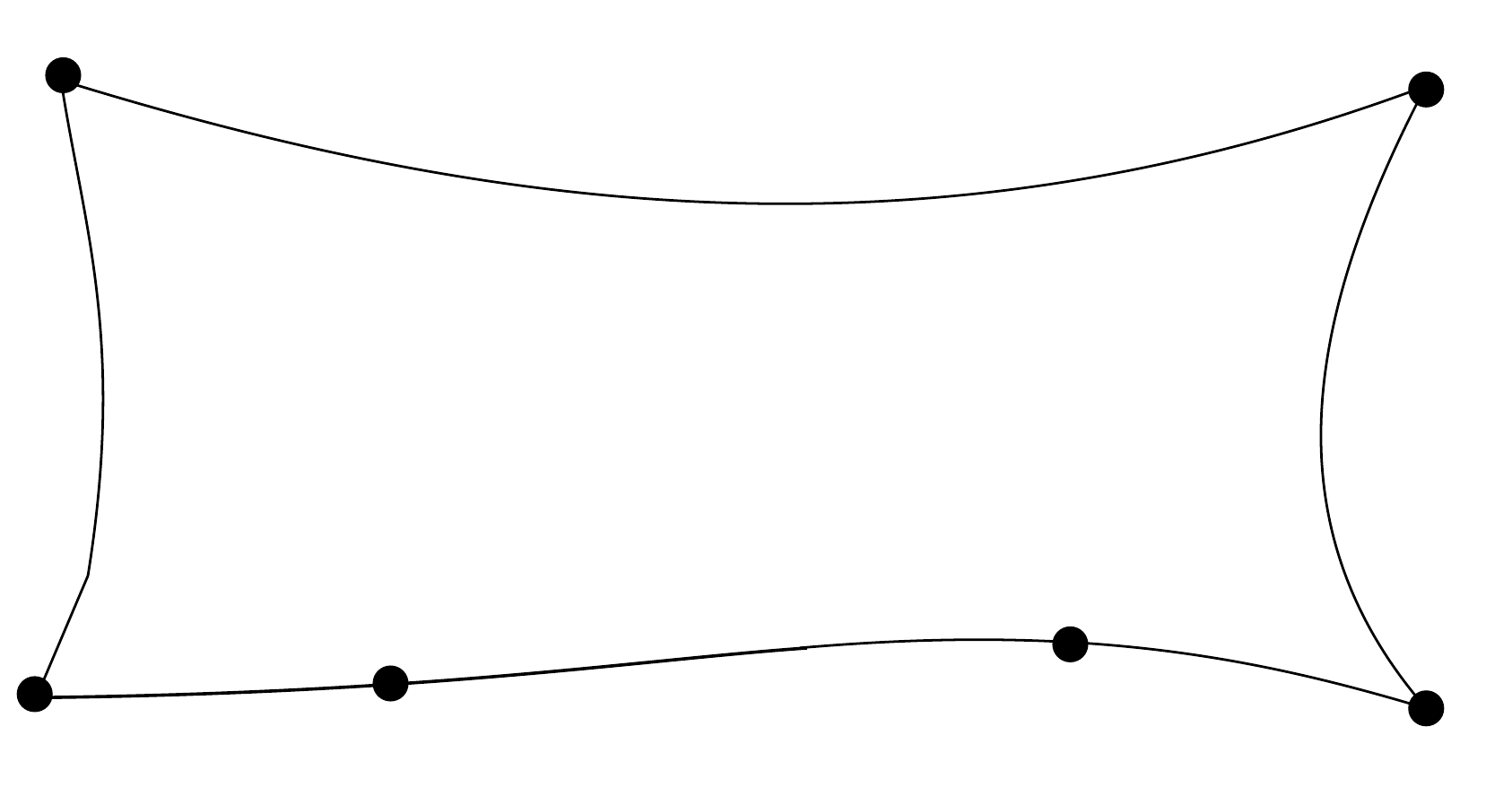
\caption{Case 1 (c)}
\label{5_3_1b}
\end{minipage}%
\begin{minipage}{.49\textwidth}
  \centering
  \def\svgscale{0.44}
 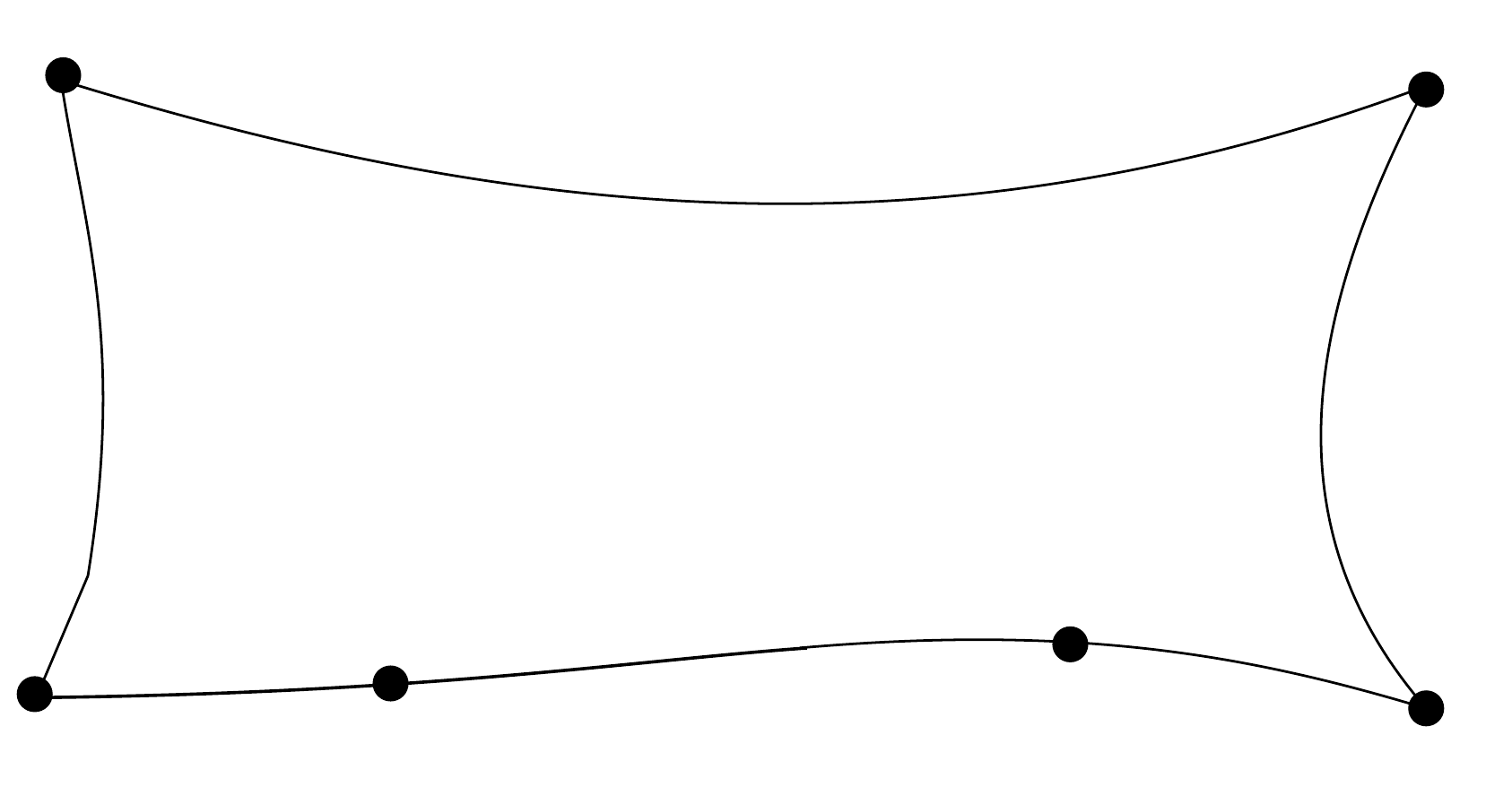
\caption{The component $c$ cannot be connected to $[1,a]$ or $[g, ag]$.}
\label{5_3_2not}
\end{minipage}
\end{figure}

As $$\d_{X\cup H}(1,c_-)\geq \d_{X\cup H}(1,x)=K>\varepsilon +1,$$ we reach a contradiction.  A similar contradiction will be reached if $c$ connects to an $H$--component of $[g,ag]$.  Therefore $c$ must connect only to an $H$--component $c'$ of $[a,ag]$.  Let $e$ be an edge labeled by an element of $H$ connecting $c_-$ to $c'_-$ (see Figure  \ref{5_3_2}).

The edge $e$ must be isolated in $[1,c_-]\cup  e \cup [c_-',a]\cup [a,1]$, so by Lemma \ref{C} there is a constant $C$ such that $\widehat \d(e_-,e_+)\leq 4C$.  Let $u=[1,c_-]$ and $v=[c_-',a]$.  Then the label of $uev$ and $a$ represent the same element of $G$.  There are at most $3K$ choices for $u$ and $v$, as they are subpaths of $[1,g]$ and $[a,ag]$, respectively, both of which have length $3K$.  Let $B$ be the number of elements in a ball of radius $4C$ in the metric space $(H,\dl)$. The number of choices for $e$ is bounded by $B$, and by Definition \ref{defhe}(b), $B<\infty$.    Therefore, there are at most $9K^2B$ choices for $a$.

In either case, there are at most $\max\{N(20DE),9K^2B\}$ elements $a\in G$ satisfying (\ref{epsilonbound}), which completes the proof. \end{proof}

\begin{figure}
\centering
\def\svgscale{0.4}
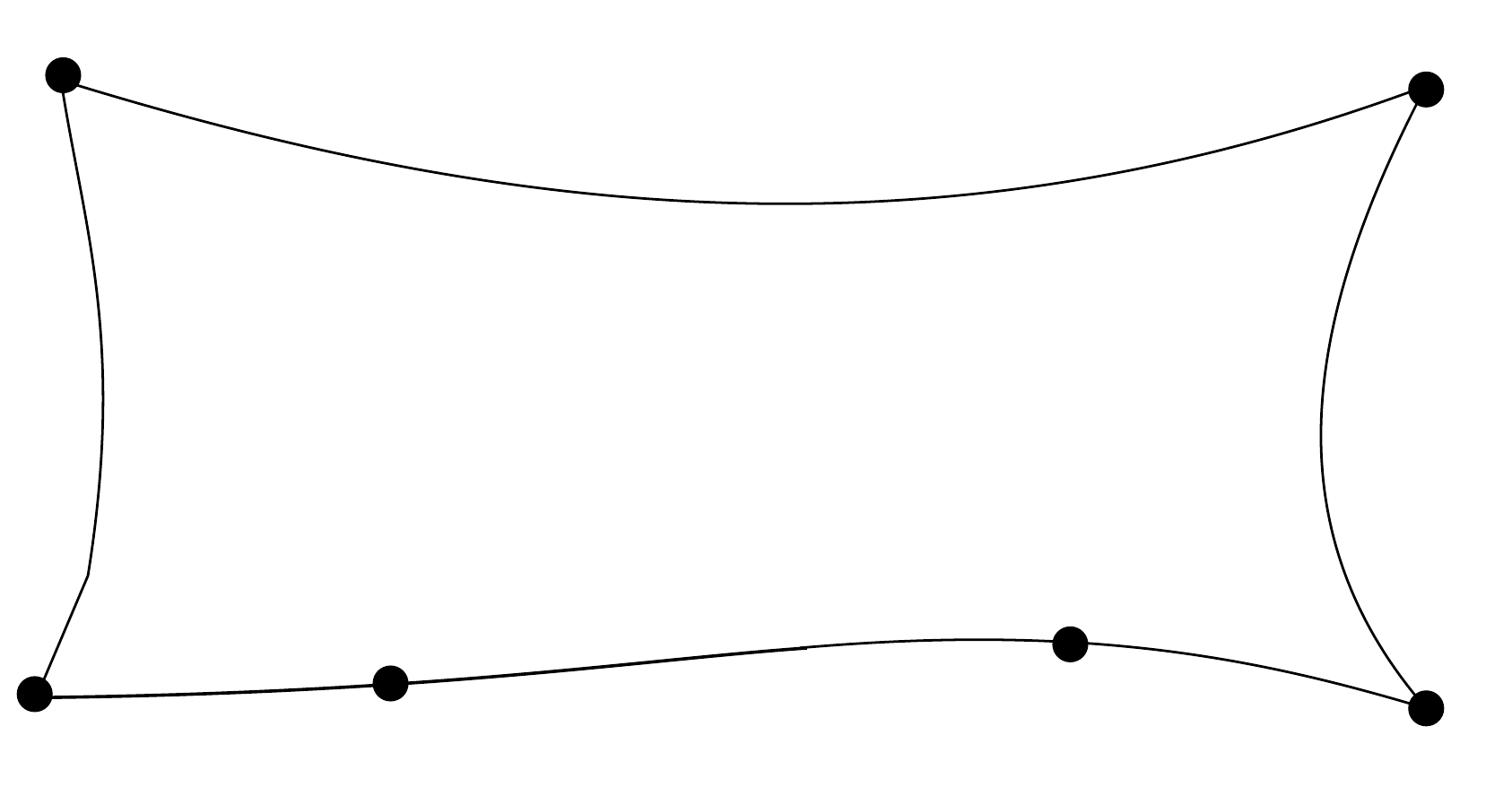
\caption{Case 2 }
\label{5_3_2}
\end{figure}

Note that we do not require $\Gamma (G,X)$ to be hyperbolic in Proposition \ref{lem:Hacyl}. In particular, we recover a well-known fact that the action of a relatively hyperbolic group on the relative Cayley graph is acylindrical, see \cite[Proposition 5.2]{Osi16}.

\begin{ex}\label{rhshe}
Let $G$ be a finitely generated group hyperbolic relative to a finite collection of subgroups $\Hl$. Then $\Hl$ is strongly hyperbolically embedded into $G$ with respect to any finite generating set $X$ of $G$. Indeed this follows from Theorem \ref{rhhe} and Proposition \ref{lem:Hacyl} since the action of $G$ on $\Gamma (G,X)$ in this case is obviously acylindrical.
\end{ex}

Our proof of Theorems \ref{main1} and \ref{main3} makes use of strongly hyperbolically embedded subgroups of the form $F_2\times K(G)$ in every acylindrically hyperbolic group.  More precisely, we will need the following.

\begin{prop} \label{strongheF2}
For every acylindrically hyperbolic group $G$ and every non-elementary $[Y]\in \AHG$, there is a subgroup $H$ of $G$ isomorphic to $F_2\times K(G)$ which is strongly hyperbolically embedded in $G$ with respect to $Y$.  Moreover, the action of $H$ on $\Gamma(G,Y)$ is purely loxodromic.
\end{prop}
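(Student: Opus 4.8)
The plan is to build the desired subgroup by ping-pong with loxodromic elements of the structure $[Y]$, make it hyperbolically embedded with respect to $Y$ using the machinery of \cite{DGO}, upgrade this to ``strongly hyperbolically embedded'' via Proposition~\ref{lem:Hacyl}, and finally read off pure loxodromicity from the construction. Since $[Y]\in\AHG$ is non-elementary, the action $G\curvearrowright\Gamma(G,Y)$ is acylindrical and of general type by Theorem~\ref{tricho}; in particular every loxodromic element is WPD and there exist infinitely many pairwise independent loxodromic elements. I would begin by fixing two independent loxodromics $f,g$. As $K(G)$ is finite and normal, the conjugation action of each of $\langle f\rangle$, $\langle g\rangle$ on $K(G)$ factors through a finite group, so after replacing $f$ and $g$ by suitable powers we may assume that $[f,K(G)]=[g,K(G)]=1$.

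Next I would pass to sufficiently high powers $a=f^{N}$, $b=g^{N}$ and apply the standard ping-pong argument for independent loxodromic elements acting on a hyperbolic space: for $N$ large enough, $F:=\langle a,b\rangle$ is free of rank $2$, the orbit map $F\to\Gamma(G,Y)$ is a quasi-isometric embedding, and consequently every nontrivial element of $F$ acts loxodromically on $\Gamma(G,Y)$. Since $F$ is free, $F\cap K(G)=1$, and since $a,b$ centralize $K(G)$, the subgroup $H:=\langle a,b\rangle\cdot K(G)$ equals $F\times K(G)\cong F_{2}\times K(G)$. To see that $H\h(G,Y)$, I would invoke \cite[Theorem~2.24]{DGO} together with the accompanying construction in \cite[Section~6]{DGO}: applied to the acylindrical action $G\curvearrowright\Gamma(G,Y)$, it shows that the elementary closures $E_{G}(a)$, $E_{G}(b)$ are virtually cyclic, that $\{E_{G}(a),E_{G}(b)\}\h(G,Y)$, and that $K(G)\le E_{G}(a)\cap E_{G}(b)$ (a power of a loxodromic element together with $K(G)$ generates an orientation-preserving virtually cyclic subgroup, which must lie in the corresponding elementary closure). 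As $H$ is generated by $\langle a\rangle\times K(G)\le E_{G}(a)$ and $\langle b\rangle\times K(G)\le E_{G}(b)$, whose intersection is exactly $K(G)$ by the independence of $a$ and $b$, the inheritance and amalgamation statements for hyperbolically embedded families in \cite[Section~4]{DGO} give $H\h(G,Y)$. Equivalently, this is the mild refinement of \cite[Theorem~2.24]{DGO} asserting that a hyperbolically embedded copy of $F_{2}\times K(G)$ can be produced with respect to the generating set of any prescribed non-elementary acylindrical structure.

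Granting $H\h(G,Y)$, Proposition~\ref{lem:Hacyl} applies because $G\curvearrowright\Gamma(G,Y)$ is acylindrical, and yields that $H$ is strongly hyperbolically embedded in $G$ with respect to $Y$. Finally, for pure loxodromicity: $K(G)$ is finite, so it acts on $\Gamma(G,Y)$ with bounded orbits, and any infinite-order element of $H$ has the form $h=wk$ with $1\ne w\in F$ and $k\in K(G)$. Since $w$ and $k$ commute, $h^{n}=w^{n}k^{n}$, so $|h^{n}|_{Y}$ and $|w^{n}|_{Y}$ differ by at most $\max_{x\in K(G)}|x|_{Y}$; hence $\tau_{\Gamma(G,Y)}(h)=\tau_{\Gamma(G,Y)}(w)>0$ because $w$ is loxodromic by construction, so $h$ is loxodromic. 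This establishes that the action of $H$ on $\Gamma(G,Y)$ is purely loxodromic.

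The main obstacle is the second paragraph: arranging that the hyperbolically embedded copy of $F_{2}\times K(G)$ is hyperbolically embedded with respect to the \emph{given} generating set $Y$ (rather than with respect to some larger relative generating set), while simultaneously keeping the free part quasi-isometrically embedded in $\Gamma(G,Y)$ so that pure loxodromicity comes for free. Once this is in place, the ``strongly'' clause is an immediate appeal to Proposition~\ref{lem:Hacyl}, and the loxodromicity claim is a one-line translation-length estimate.
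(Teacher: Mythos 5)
Your overall architecture (build $F_2\times K(G)$ from loxodromics of $[Y]$, get it hyperbolically embedded with respect to $Y$, then apply Proposition~\ref{lem:Hacyl} and a translation-length computation) matches the paper, and your first, third and fourth paragraphs are fine. But the second paragraph, which you yourself identify as the main obstacle, contains a genuine gap, and moreover the specific construction you propose cannot work. If $a=f^N$ and $b=g^N$ are high powers of two independent loxodromics, then $E_G(a)=E_G(f)\supseteq\langle f\rangle$, and generically $f\notin H=\langle a,b\rangle\cdot K(G)$ while $fHf^{-1}\cap H\supseteq\langle f^N\rangle$ is infinite. Since every hyperbolically embedded subgroup is almost malnormal \cite[Prop.~4.33]{DGO}, this $H$ is simply \emph{not} hyperbolically embedded in $G$, no matter how large $N$ is. Relatedly, there is no ``amalgamation'' statement in \cite[Section~4]{DGO} of the form you invoke: knowing $\{E_G(a),E_G(b)\}\h(G,Y)$ does not let you conclude that the subgroup generated by chosen subgroups of $E_G(a)$ and $E_G(b)$ is hyperbolically embedded. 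Quasi-isometric embeddedness of the orbit (which ping-pong does give you) is far from sufficient; the missing ingredient is \emph{geometric separation}.

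The paper's proof repairs exactly this point. It starts from \emph{six} independent loxodromics $g_1,\dots,g_6$ chosen via \cite[Lemma~6.18]{DGO} so that each elementary closure is minimal, $E(g_i)\simeq\langle g_i\rangle\times K(G)$, and sets $a=g_1^ng_2^ng_3^n$, $b=g_4^ng_5^ng_6^n$. Taking products of powers of \emph{distinct} special loxodromics is what makes $H=\langle a,b,K(G)\rangle$ quasi-convex \emph{and geometrically separated} in $\Gamma(G,Y\sqcup\mathcal E)$ (this is the content of the proof of \cite[Theorem~6.14]{DGO}); Hull's criterion \cite[Theorem~3.16]{Hull} then gives $H\h(G,Y\cup\mathcal E)$, and a transitivity argument (\cite[Theorem~3.9]{AMS} together with Lemma~\ref{FheY}, using that each $E(g_i)$ is hyperbolic) removes $\mathcal E$ to get $H\h(G,Y)$. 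Pure loxodromicity is then read off from the properness of the $H$-action established in \cite[Theorem~6.14]{DGO}, which amounts to the same estimate you give. So your outline is salvageable, but only after replacing your choice of $a,b$ and supplying the geometric-separation argument that your ``amalgamation'' step was standing in for.
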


We begin with a lemma.

\begin{lem} \label{FheY}
Let $H_1,\dots, H_n,F$ be subgroups of $G$ and $Y\subset G$ a relative generating set of $G$ with respect to $F$ such that $\{H_1,\dots,H_n,F\}\h (G,Y)$.  If $H_i$ is hyperbolic for $i=1,\dots, n$, then $F\h(G,Y)$.
\end{lem}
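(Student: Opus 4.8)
Throughout I use the elementary fact, also recorded in \cite{DGO}, that for a collection of subgroups $\{K_1,\dots,K_m\}$ of a group $G$ the property $\{K_1,\dots,K_m\}\h(G,Z)$ depends only on the $G$-equivariant bi-Lipschitz class of the Cayley graph $\Gamma(G,Z\sqcup(K_1\sqcup\cdots\sqcup K_m))$: a $G$-equivariant bi-Lipschitz map between two such Cayley graphs carrying the subgraphs $\Gamma(K_i,K_i)$ to one another transports hyperbolicity and distorts each relative metric $\widehat{\d}_i$ only up to bounded multiplicative and additive error, so that properness of the spaces $(K_i,\widehat{\d}_i)$ is preserved as well. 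With this in hand, the plan is to replace, via transitivity of hyperbolic embeddings, the hyperbolic subgroups $H_i$ by finite generating sets, and then to absorb those finitely many generators.

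First, fix a finite generating set $S_i$ of each $H_i$. Since $H_i$ is hyperbolic, $\Gamma(H_i,S_i)$ is hyperbolic, and hence the trivial subgroup is hyperbolically embedded in $H_i$ with respect to $S_i$: the graph $\Gamma(H_i,S_i\sqcup\{1\})$ differs from $\Gamma(H_i,S_i)$ only by loops and so is hyperbolic, and the relative metric on the one-element set $\{1\}$ is vacuously proper. Now apply the transitivity of hyperbolic embeddings \cite{DGO} — which allows one to replace a member $K_j$ of a hyperbolically embedded collection $\{K_1,\dots,K_m\}\h(G,Z)$ by any collection hyperbolically embedded in $(K_j,T)$, at the expense of adjoining $T$ to the relative generating set — successively for $i=1,\dots,n$ to the hypothesis $\{H_1,\dots,H_n,F\}\h(G,Y)$. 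This replaces each $H_i$ by a trivial subgroup and yields $\{1,\dots,1,F\}\h(G,\,Y\cup S_1\cup\cdots\cup S_n)$. Discarding the trivial members, which contribute only loops to all of the relevant Cayley graphs, gives $F\h(G,\,Y\cup S_1\cup\cdots\cup S_n)$.

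Finally, each $S_i$ is finite and, since $Y$ is a relative generating set of $G$ with respect to $F$, we have $S_i\subseteq H_i\subseteq G=\langle Y\cup F\rangle$, so every letter of $S_i$ has finite $(Y\cup F)$-length. Consequently the identity map on $G$ is a $G$-equivariant bi-Lipschitz equivalence $\Gamma(G,\,(Y\cup S_1\cup\cdots\cup S_n)\sqcup F)\to\Gamma(G,\,Y\sqcup F)$. By the fact quoted in the first paragraph, which transports both the hyperbolicity of the Cayley graph and the properness of $(F,\widehat{\d}_F)$, we conclude $F\h(G,Y)$; the remaining part of condition (a) of Definition \ref{defhe} for $F\h(G,Y)$, namely that $Y\cup F$ generate $G$, is precisely the standing hypothesis on $Y$.

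The step I expect to be the main obstacle is the appeal to transitivity. If one prefers a self-contained argument, the special case actually needed — replacing a \emph{hyperbolic} hyperbolically embedded subgroup by a finite generating set of it — can be proved directly: one compares a geodesic triangle in $\Gamma(G,\,(Y\cup S_1\cup\cdots\cup S_n)\sqcup F)$ with its image in the ``coned-off'' graph $\Gamma(G,\,Y\sqcup H_1\sqcup\cdots\sqcup H_n\sqcup F)$, which is $\delta$-hyperbolic by hypothesis, and uses Lemma \ref{C} and Corollary \ref{Yibound} (with the $Y_i$ taken to be the $S_i$) to bound how the sides travel through cosets of the $H_i$, combining this with the hyperbolicity of each $\Gamma(H_i,S_i)$ to show that geodesic triangles in the finer graph are uniformly thin. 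This is a combination-theorem argument in the spirit of the proof of Proposition \ref{lem:Hacyl}.
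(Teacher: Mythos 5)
Your proof is correct, and its skeleton is the same as the paper's: replace each hyperbolic $H_i$ by a finite generating set $S_i$, then remove the finitely many new letters from the relative generating set using \cite[Corollary 4.27]{DGO}. The genuine difference lies in how the intermediate claim $F\h(G,\,Y\cup S_1\cup\dots\cup S_n)$ is obtained. You get it by citing the transitivity of hyperbolic embeddings from \cite{DGO}, applied with $\{1\}\h(H_i,S_i)$ (which is exactly where hyperbolicity of the $H_i$ enters) and $F\h(F,\emptyset)$ to leave $F$ untouched, and then discarding the trivial members. The paper instead verifies the two conditions of Definition \ref{defhe} directly for the set $Y\cup\mathcal X$ (where $\mathcal X$ is the union of the finite generating sets): hyperbolicity of $\Gamma(G,Y\sqcup\mathcal X\sqcup F)$ comes from the induced-structure theorem (Theorem \ref{AHO}), and properness of the relative metric on $F$ from the observation that an admissible path in $\Gamma(G,Y\sqcup\mathcal X\sqcup F)$ remains admissible in $\Gamma(G,Y\sqcup\mathcal H\sqcup F)$, so the new relative metric dominates the old one, whose balls are finite by hypothesis. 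Your route is shorter but rests on a heavier black box; the paper's is self-contained modulo results it has already quoted. Two minor points to tighten: your opening claim that a $G$-equivariant bi-Lipschitz equivalence of coned-off Cayley graphs distorts all relative metrics only boundedly is stronger than needed and not immediate, since admissibility is a combinatorial rather than metric condition --- for the step you actually take (shrinking the relative generating set by finitely many elements) only the easy inequality that the relative metric computed in $\Gamma(G,(Y\cup S_1\cup\dots\cup S_n)\sqcup F)$ is bounded above by the one computed in $\Gamma(G,Y\sqcup F)$ is required, and this is precisely the content of \cite[Corollary 4.27]{DGO}; and the harmlessness of discarding the trivial members (loops labelled by $1$ affect neither hyperbolicity nor any relative metric) deserves a sentence of justification.
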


\begin{proof}
Let $X_1,\dots, X_n$ be finite generating sets for $H_1,\dots, H_n$, respectively, and let $\mathcal H=\sqcup_{i=1}^n H_i$ and $\mathcal X=\sqcup_{j=1}^n X_i$.  Since  $|\mathcal X|<\infty$, it suffices to show that $F\h (G,Y\sqcup\mathcal X)$ by \cite[Corollary 4.27]{DGO}.  As $[X_i]\in\mathcal H(H_i)$ and $[F]\in\mathcal H(F)$, by Theorem \ref{AHO}, $[Y\cup X_1\cup \cdots \cup X_n\cup F]\in\mathcal H(G)$.  Thus the first part of Definition \ref{defhe} is satisfied.

Let $\widehat \d_1$ and $\widehat \d_2$ be the relative metrics on $F$ defined by taking admissible paths in $\Gamma(G,Y\sqcup\mathcal H\sqcup F)$ and $\Gamma(G,Y\sqcup \mathcal X\sqcup F)$, respectively, as in Definition \ref{defhe}.

It remains to show the second condition of Definition \ref{defhe} holds, that is, that $(F,\widehat \d_2)$ is a proper metric space.  We naturally think of $\Gamma(F,F)$ as a subgraph of $\Gamma(G,Y\sqcup \mathcal X\sqcup F)$ and fix $f\in F$.  Consider a ball $B$ in $F$ centered at $f$ of  radius $R<\infty$ in the metric $\widehat \d_2$. Then for any $f'\in B$,  $f$ and $f'$ are connected in $\Gamma(G,Y\sqcup \mathcal X\sqcup F)$ by an admissible path $p$ of  length at most $R$, i.e., a path that does not contain any edge of $\Gamma(F,F)$.  Since $\Gamma(G,Y\sqcup \mathcal X\sqcup F)$ is itself a subgraph of $\Gamma(G,Y\sqcup\mathcal H\sqcup F)$, we can consider $p$ as a path in $\Gamma(G,Y\sqcup\mathcal H\sqcup F)$.  It is clear that $p$ is an admissible path in $\Gamma(G,Y\sqcup\mathcal H\sqcup F)$, as well, and so $\widehat \d_1(f,f')< R$.  Since $F$ is part of a hyperbolically embedded collection of subgroups, any ball in $F$ with respect to the metric $\widehat \d_1$ contains finitely many elements.  Therefore $B$ contains finitely many elements, completing the proof.
\end{proof}

Let $G$ be a group acting on a metric space $S$.  A subgroup $H\leq G$ is called \emph{geometrically separated} if for all $\varepsilon\geq 0$ and all $s\in S$, there exists $R\geq 0$ such that the following holds.  If for some $g\in G$, $\operatorname{diam}(Hs\cap(gHs)^{+\varepsilon})\geq R$ in $S$, then $g\in H$.  Here, $(gHs)^{+\varepsilon}$ denotes the closed $\varepsilon$--neighborhood of $gHs$.

\begin{proof}[Proof of Proposition \ref{strongheF2}]
By \cite[Lemma 6.18]{DGO}, there exist independent loxodromic elements $g_1,\dots, g_6\in \mathcal L([Y])$ such that $E(g_i)\simeq \langle g_i\rangle \times K(G)$.  By \cite[Corollary 3.17]{Hull}, $$\{E(g_1),\dots, E(g_6)\}\h(G,Y).$$

 Let
 \begin{equation} \label{defH}
 H=\langle a,b,  K(G)\rangle,
 \end{equation}
  where $a=g_1^ng_2^ng_3^n$ and $b=g_4^ng_5^ng_6^n$ for sufficiently large $n$, and let $\mathcal E=E(g_1)\setminus\{1\}\sqcup \cdots \sqcup E(g_6)\setminus\{1\}$.  It is shown in the proof of \cite[Theorem 6.14]{DGO} that $\langle a,b\rangle$ is isomorphic to $F_2$, that $H\simeq \langle a,b\rangle \times K(G)$, and that $H$ is quasi-convex and geometrically separated in $\Gamma(G, Y\sqcup\mathcal E)$.  By \cite[Theorem 3.16]{Hull}, it follows that $H\h(G,Y\cup \mathcal E)$. Therefore by \cite[Remark 3.4]{AMS} and \cite[Theorem 3.9]{AMS},  $$\{E(g_1),\dots,E(g_6),H\}\h(G,Y).$$  Since each $E(g_i)$ is virtually cyclic, and so hyperbolic, and $Y$ is a generating set of $G$, Lemma \ref{FheY} implies that $$H\h(G,Y).$$
The action of $H$ on $\Gamma(G,Y)$ is acylindrical as $[Y]\in\AHG$, and therefore $H$ is strongly hyperbolically embedded in $(G,Y)$ by Proposition \ref{lem:Hacyl}.

It is shown in the proof of \cite[Theorem 6.14]{DGO} that $H$ acts properly on $\Gamma(G,Y\sqcup\mathcal E)$.  Thus $H$ acts properly on $\Gamma(G,Y)$, as well, and so the action is purely loxodromic, completing the proof.
\end{proof}

\subsection{Acylindricity of induced structures}\label{Sec:AIA}

We begin by recalling some useful results from \cite{AHO}, which play the central role in the proof of Theorems \ref{main1}, \ref{main6}, and \ref{main3}. Let $H_1,\dots, H_n$ be subgroups of a group $G$ and let $X$ be a relative generating set for $G$ with respect to $H_1,\dots,H_n$.  Let
$$\iota_X\colon \mathcal G(H_1)\times\cdots\times\mathcal G(H_n)\to\mathcal G(G)$$
be the map defined by
\begin{equation}\label{iotadef}
\iota_X([Y_1],\dots, [Y_n])= \left[X\cup\left(\bigcup_{i=1}^nY_i\right)\right].
\end{equation}

This map can be thought of as the analogue of the induced action map defined in \cite{AHO} for equivalence classes of group actions on geodesic metric spaces. In the theorem below, we restate some of the results of \cite{AHO} using terminology of this paper.

\begin{thm}\label{AHO}
Let $G$ be a group, let $H_1,\dots, H_n$ be subgroups of $G$, and let $X$ be a relative generating set for $G$ with respect to $H_1,\dots,H_n$.
Then the map $\iota_X$ defined by (\ref{iotadef}) is well-defined and order preserving. If, in addition, $\Hi\h(G,X)$, then the following hold.
\begin{enumerate}
\item[(a)] $\iota_X$ sends $\mathcal H(H_1)\times\cdots\times \mathcal H(H_n)$ to $\mathcal H(G)$.
\item[(b)] Let $([Y_1],\dots, [Y_n])\in \mathcal G(H_1)\times\cdots\times\mathcal G(H_n)$ and let $[Z]=\iota_X([Y_1],\dots, [Y_n])$. Then for every $i=1, \ldots, n$, we have
    \begin{equation}\label{extension}
    H_i\curvearrowright \Gamma (G, Z)\sim_w H_i\curvearrowright\Gamma (H_i, Y_i).
    \end{equation}
    In particular, $\iota _X$ is injective.
\end{enumerate}
\end{thm}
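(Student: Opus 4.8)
The plan is to establish the three assertions in turn, reducing everything to word-length estimates together with the equivalence-class machinery of Section \ref{lattice}; only part (a) needs genuinely new geometric input. Put $Z=X\cup\bigl(\bigcup_{j=1}^n Y_j\bigr)$, which generates $G$ since $X$ and the $H_j=\langle Y_j\rangle$ do, so $[Z]\in\GG$. Well-definedness and monotonicity of $\iota_X$ are elementary: for $h\in H_i$ any $Y_i$-word representing $h$ is also a $Z$-word, so $|h|_Z\le|h|_{Y_i}$, while $|x|_Z=1$ for $x\in X$; hence if $Y_i\preceq Y_i'$ for all $i$ (so $\sup_{y\in Y_i}|y|_{Y_i'}<\infty$), then $\sup_{z\in Z}|z|_{Z'}\le\max\{1,\max_i\sup_{y\in Y_i}|y|_{Y_i'}\}<\infty$ where $Z'=X\cup\bigcup_j Y_j'$, i.e.\ $Z\preceq Z'$. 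Taking $Y_i'=Y_i$ shows $[Z]$ depends only on the classes $[Y_i]$, and the same inequality shows $\iota_X$ is order preserving. None of this uses hyperbolic embeddedness.

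For part (a) I would first observe that $\Gamma(G,Z)$ is the graph obtained from the relative Cayley graph $\Gamma(G,X\sqcup\mathcal H)$ by ``blowing up'' cosets: the vertex sets are canonically identified, but each left coset $gH_i$ — which spans a diameter-one clique in $\Gamma(G,X\sqcup\mathcal H)$ — is replaced by a copy of $\Gamma(H_i,Y_i)$. The inputs to a combination argument are then: (i) $\Gamma(G,X\sqcup\mathcal H)$ is hyperbolic, since $\Hi\h(G,X)$; (ii) each replacement piece $\Gamma(H_i,Y_i)$ is hyperbolic (the hypothesis $[Y_i]\in\mathcal H(H_i)$), and, by the cosetwise version of the estimate established in part (b) below, the metric that $\Gamma(G,Z)$ induces on $gH_i$ is bi-Lipschitz to the $Y_i$-word metric; (iii) the pieces are ``isolated'', i.e.\ pairwise intersections of cosets have uniformly bounded diameter in $\Gamma(G,X\sqcup\mathcal H)$, which is exactly properness of the relative metrics $\widehat{\d}_i$. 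A combination theorem for hyperbolic spaces then yields that $\Gamma(G,Z)$ is hyperbolic. This last step is the technical heart of \cite{AHO}, and the hard part of a self-contained treatment would be supplying it.

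For part (b) the point is the two-sided estimate
$$\tfrac1C\,\d_{Y_i}(f,g)-C\;\le\;\d_Z(f,g)\;\le\;\d_{Y_i}(f,g)\qquad\text{for all }f,g\in H_i.$$
The right inequality is immediate from $Y_i\subseteq Z$. For the left one, fix $f,g\in H_i$, take a $\Gamma(G,Z)$-geodesic $q$ from $f$ to $g$ of length $m=\d_Z(f,g)$, and list in order the vertices $f=v_0,v_1,\dots,v_r=g$ of $q$ lying in $H_i$. For each $a$ the subpath $q_a$ of $q$ from $v_a$ to $v_{a+1}$ has no interior vertex in $H_i$; replace every edge of $q_a$ whose label lies in some $Y_j$ by the corresponding single edge of $\Gamma(H_j,H_j)\subseteq\Gamma(G,X\sqcup\mathcal H)$, keep the $X$-labelled edges, and close the resulting path up with the single edge $e_a$ of $\Gamma(H_i,H_i)$ from $v_{a+1}$ to $v_a$. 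This is an $(\ell(q_a)+1)$-gon with geodesic sides in $\Gamma(G,X\sqcup\mathcal H)$, and the absence of interior $H_i$-vertices in $q_a$ forces the edges adjacent to $e_a$ to lie outside $\Gamma(H_i,H_i)$ (the degenerate case $\ell(q_a)=1$, where one just forms a bigon, is checked by hand), so $e_a$ is an isolated $H_i$-component. Corollary \ref{Yibound}, applied with the single subgroup $H_i$ and the set $Y_i$, gives $\d_{Y_i}(v_a,v_{a+1})\le D(\ell(q_a)+1)$; summing over $a$ and using $\sum_a\ell(q_a)=m$ and $r\le m$ yields $\d_{Y_i}(f,g)\le 2Dm$. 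Thus the identity map of $H_i$ is an $H_i$-equivariant bi-Lipschitz map from the orbit $H_i\cdot 1\subseteq\Gamma(G,Z)$ onto the vertex set of $\Gamma(H_i,Y_i)$, so $H_i\curvearrowright\Gamma(G,Z)\sim_w H_i\curvearrowright\Gamma(H_i,Y_i)$ by Lemma \ref{we-alt}. Injectivity is then formal: if $\iota_X([Y_1],\dots,[Y_n])=\iota_X([Y_1'],\dots,[Y_n'])$ then $[Z]=[Z']$ in $\GG$, hence $G\curvearrowright\Gamma(G,Z)\sim G\curvearrowright\Gamma(G,Z')$ by Lemma \ref{eCay}; restricting to $H_i$ and using Lemma \ref{e-we}(a) gives $H_i\curvearrowright\Gamma(G,Z)\sim_w H_i\curvearrowright\Gamma(G,Z')$, so by part (b), $H_i\curvearrowright\Gamma(H_i,Y_i)\sim_w H_i\curvearrowright\Gamma(H_i,Y_i')$; since these actions are cobounded, Lemma \ref{e-we}(b) upgrades this to $\sim$, and Lemma \ref{eCay} (for the group $H_i$) then gives $Y_i\sim Y_i'$, i.e.\ $[Y_i]=[Y_i']$.

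In short, the well-definedness, monotonicity, and injectivity reduce to bookkeeping with the equivalence lemmas of Section \ref{lattice}, and the weak equivalence in (b) is exactly what the isolated-components bound of Corollary \ref{Yibound} is designed to give; the one place where substantial geometry is hidden is the hyperbolicity assertion (a), which rests on a combination theorem in the spirit of \cite{DGO} and is the content of \cite{AHO}.
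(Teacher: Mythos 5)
Your argument is correct where it is complete, but it takes a genuinely different route from the paper for the one part that is not just bookkeeping. The paper's proof of (a) and (b) is essentially a citation: it identifies $\iota_X$ with the induced action map of \cite{AHO} via the isomorphism of Proposition \ref{prop-SM} and then invokes \cite[Theorems 3.26(b) and 4.9(a)]{AHO}, extracting the two-sided orbit estimate in (b) from the statement that $G\curvearrowright\Gamma(G,Z)$ is an \emph{extension} of $H_i\curvearrowright\Gamma(H_i,Y_i)$. You instead prove (b) directly inside the present paper: closing up each maximal $H_i$-free subsegment of a $\d_Z$-geodesic by a single $H_i$-edge produces a polygon in $\Gamma(G,X\sqcup\mathcal H)$ in which that edge is an isolated $H_i$-component (your coset argument for why it cannot connect to an interior component is sound, since a connection would force an interior vertex of $q_a$ into $H_i$, and the $\ell(q_a)=1$ bigon is indeed the only degenerate case), so Corollary \ref{Yibound} gives $\d_{Y_i}(f,g)\le 2D\,\d_Z(f,g)$; together with $Y_i\subseteq Z$ and Lemma \ref{we-alt} this yields (\ref{extension}) with an explicit bi-Lipschitz constant and no appeal to \cite{AHO}. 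That is a real gain in self-containedness, and the injectivity deduction you then run is identical to the paper's. The trade-off is that for (a) you, like the paper, still defer to the combination theorem of \cite{AHO}: your blow-up description is the right picture, but the hyperbolicity of $\Gamma(G,Z)$ is not actually proved by either route, so on that point the two proofs coincide in substance. One cosmetic slip worth fixing: in the monotonicity paragraph your parenthetical unpacking of $Y_i\preceq Y_i'$ reverses the paper's convention ($X\preceq Y$ means $\sup_{y\in Y}|y|_X<\infty$); the implication between sup-conditions you actually establish is the correct one once the roles of the primed and unprimed sets are swapped, so the mathematics is unaffected.
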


\begin{proof}
The first claim of the theorem is obvious, so we only need to explain how (a) and (b) follow from results proved in \cite{AHO}. In our situation, the induced action map studied in \cite{AHO} is given by  $$([H_1\curvearrowright\Gamma(H_1,Y_1)],\cdots,[H_n\curvearrowright\Gamma(H_n,Y_n)])\mapsto [G\curvearrowright\Gamma(G,X\cup (\cup_{i=1}^n Y_i))],$$ see \cite[Theorem 3.26(b)]{AHO}. Using the isomorphism of posets $\mathcal A_{cb}(G)$ and $\mathcal G(G)$ defined in Proposition \ref{prop-SM}, it is easy to see that the corresponding map on equivalence classes of generating sets (defined by the obvious commutative diagram) is exactly $\iota_X$. Therefore, $\iota_X$ maps hyperbolic structures to hyperbolic structures by \cite[Theorem 4.9(a)]{AHO}.

Part (b) is also an immediate consequence of \cite[Theorem 4.9(a)]{AHO}. Indeed, in our notation \cite[Theorem 4.9(a)]{AHO} states that $G \curvearrowright\Gamma (G, Z)$ is an extension of $H_i\curvearrowright \Gamma (H_i, Y_i)$, which means that there exists a coarsely $H_i$-equivariant quasi-isometric embedding $f\colon \Gamma (H_i, Y_i)\to \Gamma (G,Z)$. Assume that $f$ satisfies the definitions of a quasi-isometric embedding and a coarsely $H_i$-equivariant map with the constant $C$. Then for every $h\in H_i$, we have
$$
\d_{Y_i} (1,h)\le C(\d_Z (f(1), f(h)) +C) \le C\d_Z (f(1), hf(1)) +2C^2
$$
Thus $H_i\curvearrowright \Gamma (H_i, Y_i)\preceq H_i\curvearrowright\Gamma (G,Z)$. The opposite inequality is obvious since we can assume that $Y_i\subseteq Z$ without loss of generality, see (\ref{iotadef}). Thus we have (\ref{extension}).

In particular, if $\iota_X ([X_1],\dots, [X_n])=\iota _X([Y_1],\dots, [Y_n])$, then the actions $H_i\curvearrowright \Gamma (H_i, X_i)$ and $H_i\curvearrowright \Gamma (H_i, Y_i)$ are weakly equivalent for all $i$. By Lemma \ref{e-we} (b), these actions are equivalent and consequently $[X_i]=[Y_i]$ by Proposition \ref{prop-SM}. Thus $\iota _X$ is injective.
\end{proof}

The main result of this section shows that $\iota_X$ preserves acylindricity whenever $\Hi$ is strongly hyperbolically embedded in $G$ with respect to $X$ (see Definition \ref{defhe}).

In order to simplify constants involved in the proof of the main theorem of this section, it is convenient to accept the following.
\begin{conv}
All constants are assumed to be positive integers.
\end{conv}\label{posint}
In other words, we use the word ``constant" as a synonym of ``positive integer" throughout the rest of this section. It will be obvious in each case that this assumption can be made without loss of generality.

\begin{thm}\label{genmain2}
Suppose that a collection of subgroups $\{H_1,\dots, H_n\}$ is strongly hyperbolically embedded in a group $G$ with respect to a relative generating set $X$. Then for every $A\in \mathcal{AH}(H_1)\times\dots\times\mathcal{AH}(H_n)$, we have $\iota_X (A)\in \AHG$.
\end{thm}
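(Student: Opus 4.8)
The plan is to show that $[Z]:=\iota_X([Y_1],\dots,[Y_n])=[X\cup\bigcup_{i=1}^nY_i]$, which lies in $\H(G)$ by Theorem \ref{AHO}(a), in fact lies in $\AHG$, i.e.\ that $G\curvearrowright\Gamma(G,Z)$ is acylindrical. Three facts will be used repeatedly. Since $X$ and each $Y_i\subseteq H_i$ are (images of) generators in $\Gamma(G,X\sqcup\mathcal{H})$, we have $\d_{X\sqcup\mathcal{H}}(u,v)\le\d_Z(u,v)$ for all $u,v\in G$. By hypothesis $\{H_1,\dots,H_n\}$ is \emph{strongly} hyperbolically embedded with respect to $X$, so $G\curvearrowright\Gamma(G,X\sqcup\mathcal{H})$ is acylindrical; fix its constants $R_{\mathrm{rel}}(\cdot),N_{\mathrm{rel}}(\cdot)$ and a hyperbolicity constant $\delta$. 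Finally each $H_i\curvearrowright\Gamma(H_i,Y_i)$ is acylindrical (and automatically cobounded, being a Cayley graph); fix its constants $R_i(\cdot),N_i(\cdot)$, and note that by the weak equivalence $H_i\curvearrowright\Gamma(G,Z)\sim_w H_i\curvearrowright\Gamma(H_i,Y_i)$ of Theorem \ref{AHO}(b) and (\ref{extension}) there is $\kappa\ge1$, uniform in $i$, such that $\d_Z$ and $\d_{Y_i}$ restricted to any coset $gH_i$ are $(\kappa,\kappa)$--quasi-isometric (in particular $|h|_{Y_i}\ge|h|_Z$ for $h\in H_i$).

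Fix $\varepsilon$. After a left translation it suffices to bound, for $y\in G$ with $\d_Z(1,y)\ge R$ ($R$ to be chosen), the number of $g$ with $|g|_Z\le\varepsilon$ and $\d_Z(y,gy)\le\varepsilon$; any such $g$ also satisfies $|g|_{X\sqcup\mathcal{H}}\le\varepsilon$ and $\d_{X\sqcup\mathcal{H}}(y,gy)\le\varepsilon$. If $|y|_{X\sqcup\mathcal{H}}\ge R_{\mathrm{rel}}(\varepsilon)$, acylindricity of $G\curvearrowright\Gamma(G,X\sqcup\mathcal{H})$ immediately bounds this number by $N_{\mathrm{rel}}(\varepsilon)$. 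Otherwise fix a geodesic $p=(1=v_0,\dots,v_m=y)$ in $\Gamma(G,X\sqcup\mathcal{H})$ with $m<R_{\mathrm{rel}}(\varepsilon)$; recalling that a geodesic traverses each coset $gH_i$ in at most one edge, and that every $X$--edge contributes at most $1$ to $\sum_j\d_Z(v_{j-1},v_j)\ge\d_Z(1,y)\ge R$, we find an $H_{i_0}$--edge $e=[v,w]$ of $p$ with $h_0:=v^{-1}w\in H_{i_0}$ and $|h_0|_{Y_{i_0}}\ge\d_Z(v,w)\ge\rho(R)$, where $\rho(R)\to\infty$ as $R\to\infty$.

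The heart of the matter is the claim that, for $R$ large enough (depending only on $\varepsilon$), every admissible $g$ stabilizes the coset $vH_{i_0}$ and satisfies $\d_Z(v,gv),\d_Z(w,gw)\le E$ for some $E=E(\varepsilon,\delta)$. Granting this, put $g':=v^{-1}gv\in H_{i_0}$; then $g\mapsto g'$ is injective (as $v$ depends only on $y$), $g'$ moves the points $1$ and $h_0$ of $\Gamma(H_{i_0},Y_{i_0})$ by at most $\kappa(E+\kappa)$, and $\d_{Y_{i_0}}(1,h_0)=|h_0|_{Y_{i_0}}\ge\rho(R)$ may be taken larger than $R_i(\kappa(E+\kappa))$ for every $i$; acylindricity of $H_{i_0}\curvearrowright\Gamma(H_{i_0},Y_{i_0})$ then bounds the number of such $g'$, hence of $g$, by $\max_iN_i(\kappa(E+\kappa))$. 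Choosing $R$ accordingly and $N:=\max\{N_{\mathrm{rel}}(\varepsilon),\max_iN_i(\kappa(E+\kappa))\}$ completes the argument.

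To establish the claim I would run the polygon/isolated--component analysis, mirroring the proof of Proposition \ref{lem:Hacyl}. By $\delta$--hyperbolicity and Lemma \ref{defineE}, $g$ moves every vertex of $p$ a bounded $\d_{X\sqcup\mathcal{H}}$--distance and $g\cdot p$ synchronously fellow--travels $p$; in the geodesic quadrilateral $Q=[1,g]\cup g\cdot p\cup[gy,y]\cup p^{-1}$ (two of whose sides have length at most $\varepsilon$) the long edge $e$ cannot be isolated, since an isolated $H_{i_0}$--component $a$ of an $m'$--gon satisfies $\d_{Y_{i_0}}(a_-,a_+)\le Dm'$ by Corollary \ref{Yibound}, contradicting the length of $e$ once $R$ is large; nor can $e$ be connected to the short sides $[1,g]$ or $[gy,y]$, as that would put $v$ or $w$ within $\varepsilon+1$ of $1$ or $y$ in $\d_{X\sqcup\mathcal{H}}$. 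Hence $e$ is connected to an $H_{i_0}$--component of $g\cdot p$, which — again because $|h_0|$ is large — must be $g\cdot e$ itself, so $g$ stabilizes $vH_{i_0}$; the displacement bound $\d_Z(v,gv),\d_Z(w,gw)\le E$ then follows by carrying the same synchronization into $\Gamma(G,Z)$, where the broken path following $p$ with a $Y_i$--geodesic across each $H_i$--edge is a uniform quasigeodesic (by \cite{AHO}, or by Corollary \ref{Yibound}) fellow--traveled by its $g$--translate. The main obstacle is precisely this simultaneous control of coset and displacement in the degenerate case where $m$ is small — for instance $y\in H_{i_0}$, where $p$ is a single edge — which forces one to invoke the bounded coarse intersection of distinct cosets of $H_{i_0}$ (again a consequence of Lemma \ref{C} and Corollary \ref{Yibound}); this plays here the role that local finiteness of $(H,\dl)$ plays in Proposition \ref{lem:Hacyl}.
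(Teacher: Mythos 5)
Your overall strategy is the paper's: compare $\Gamma(G,Z)$ with $\Gamma(G,X\sqcup\H)$, split according to whether the distance in the relative Cayley graph is already large (in which case acylindricity of $G\curvearrowright\Gamma(G,X\sqcup\H)$ finishes), and otherwise locate an $H_{i_0}$-edge $e$ of the relative geodesic $p$ that is long in $\d_{Y_{i_0}}$ and feed it into the acylindricity of $H_{i_0}\curvearrowright\Gamma(H_{i_0},Y_{i_0})$. The skeleton and the first branch are fine. The gap is in what you call the heart of the matter: the claim that every admissible $g$ stabilizes the coset $vH_{i_0}$ is not established by the argument you sketch, and it is false in general.

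Two specific steps fail. First, you rule out $e$ connecting to the short sides $[1,g]$, $[y,gy]$ on the grounds that this would place $v$ or $w$ within $\e+1$ of $1$ or $y$; but unlike in Proposition \ref{lem:Hacyl}, where the segment $[x,y]$ is deliberately truncated at distance $K$ from the endpoints precisely to make that argument work, here $e$ can sit at the very beginning or end of $p$ (e.g.\ $y=h_0\in H_{i_0}$ and $p=e$ is a single edge), so no contradiction results. The paper instead shows (Lemma \ref{lem:boundb} together with Cases 2(a),(b)) that components of the short sides are \emph{short in $\d_{Y_i}$}, so that $e$ cannot connect \emph{only} to them --- it may well connect to them in addition to something on $g\cdot p$. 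Second, and more seriously, the component of $g\cdot p$ to which $e$ connects need not be $g\cdot e$: it can be $g\cdot e'$ for a different (possibly short) $H_{i_0}$-component $e'$ of $p$ lying in a different coset, in which case $g$ does not stabilize $vH_{i_0}$ and acylindricity of $H_{i_0}\curvearrowright\Gamma(H_{i_0},Y_{i_0})$ gives you nothing; your justification ``because $|h_0|$ is large'' does not exclude this. This is exactly the configuration of Case 2(c) in the paper's proof, where the count of admissible elements comes from a different mechanism: the connecting $H_{i_0}$-edge is isolated in a controlled polygon, so by Lemma \ref{C} its label lies in a ball of bounded radius in $(H_{i_0},\dl_{i_0})$, and local finiteness of that metric (Definition \ref{defhe}(b)) together with the boundedly many choices of the flanking subpaths bounds the number of $g$'s. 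Your closing sentence gestures at this, but mislocates it as a degeneration ``when $m$ is small'' (in your second branch $m<R_{\mathrm{rel}}(\e)$ always) and leaves the actual dichotomy --- does $e$ connect to $g\cdot e$ or to the translate of a different component? --- unaddressed. Supplying that case analysis is the bulk of the paper's proof and cannot be elided.
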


\begin{proof}
Let $A=([Y_1], \ldots, [Y_n])$, and let $\mathcal Y=\sqcup_{i=1}^n Y_i$.  Since each $\Gamma(H_i,Y_i)$ is hyperbolic, it follows from  Theorem \ref{AHO} that $\Gamma(G,X\sqcup\mathcal Y)$ is hyperbolic, as well. By assumption the action of $G$ on $\Gamma(G,X\sqcup \mathcal H)$ and the action of each $H_i$ on $\Gamma(H_i,Y_i)$ is acylindrical.  For any constant $\eta$, let $R(\eta)$ and $N(\eta)$ denote the corresponding constants such that the definition of acylindricity is satisfied for each of these (finitely many) actions.

Our goal is to show that the action of $G$ on $\Gamma(G,X\sqcup\mathcal Y)$ is acylindrical.   Let us fix any constant $\varepsilon$ and let $D$ be the constant from Corollary \ref{Yibound}.
We fix a constant $M$ satisfying
\begin{equation}\label{M}
M>\max\{R(10\e D),18\e D\}.
\end{equation}

 Let $g$ be an element in $G$ such that
\begin{equation}\label{d1g}
 \d_{X\cup \mathcal Y}(1,g)=R(\e)M,
\end{equation}
and suppose $a\in G$ satisfies
\begin{equation}\label{thinquad}
\d_{X\cup \mathcal Y}(1,a)\leq \varepsilon \quad \textrm{and} \quad \d_{X\cup \mathcal Y}(g,ag)\leq \varepsilon.
\end{equation}    By Remark \ref{acylwithequal}, it suffices to give a uniform bound on the number of such $a\in G$.

The vertex sets of $\Gamma(G,X\sqcup \mathcal H)$ and $\Gamma(G,X\sqcup\mathcal Y)$ coincide, and we naturally consider the latter as a subgraph of the former.  As $\mathcal Y\subset\mathcal H$, it follows from (\ref{thinquad}) that $\d_{X\cup \mathcal H}(1,a)\leq \varepsilon$ and $\d_{X\cup \mathcal H}(g,ag)\leq \varepsilon$.  Recall that for any $x,y\in G$, $[x,y]$ always denotes a geodesic in $\Gamma(G,X\sqcup\mathcal H)$ connecting $x$ to $y$, and the choice of a particular geodesic does not matter.  For any path $q$ in $\Gamma(G,X\sqcup\mathcal H)$, let $\ell(q)$ denote its length.

We begin by bounding the length of $H_i$--components of $[1,a]$ and $[g,ag]$ in the $\d_{Y_i}$--metric for each $i$.

\begin{lem} \label{lem:boundb}
If $b$ is an $H_i$--component of $[1,a]$ or $[g,ag]$ for some $i=1,\dots, n$, then $\d_{Y_i}(b_-,b_+)\leq 4 D\e$.
\end{lem}

\begin{proof}
After possibly replacing $g$ by $g^{-1}$, we may assume that $b$ is an $H_i$--component of $[1,a]$.   Let $p$ be a geodesic in $\Gamma(G,X\sqcup\mathcal Y)$ connecting $1$ to $a$.  By (\ref{thinquad}),  $\ell(p)\leq \e$.

The three segments, $[a,b_+]$,  $b$, and  $[b_-,1]$, along with the at most $\varepsilon$ edges of $p$ form an $n$--gon with $n\leq \varepsilon+3$.   If $b$ is isolated in this $n$--gon, then by Corollary \ref{Yibound}, $$\d_{Y_i}(b_-,b_+)\leq(\varepsilon+3)D\leq 4 D\e.$$  		
		
If $b$ is not isolated, then it is connected to at least one and at most $\varepsilon$ $H_i$--components of $p$ (see Figure \ref{figforlemma}). Note that $b$ cannot be connected to an $H_i$--component of $[1,b_-]$ or $[b_+,a]$ because $[1,a]$ is a geodesic.

\begin{figure}
\centering
\def\svgscale{0.75}
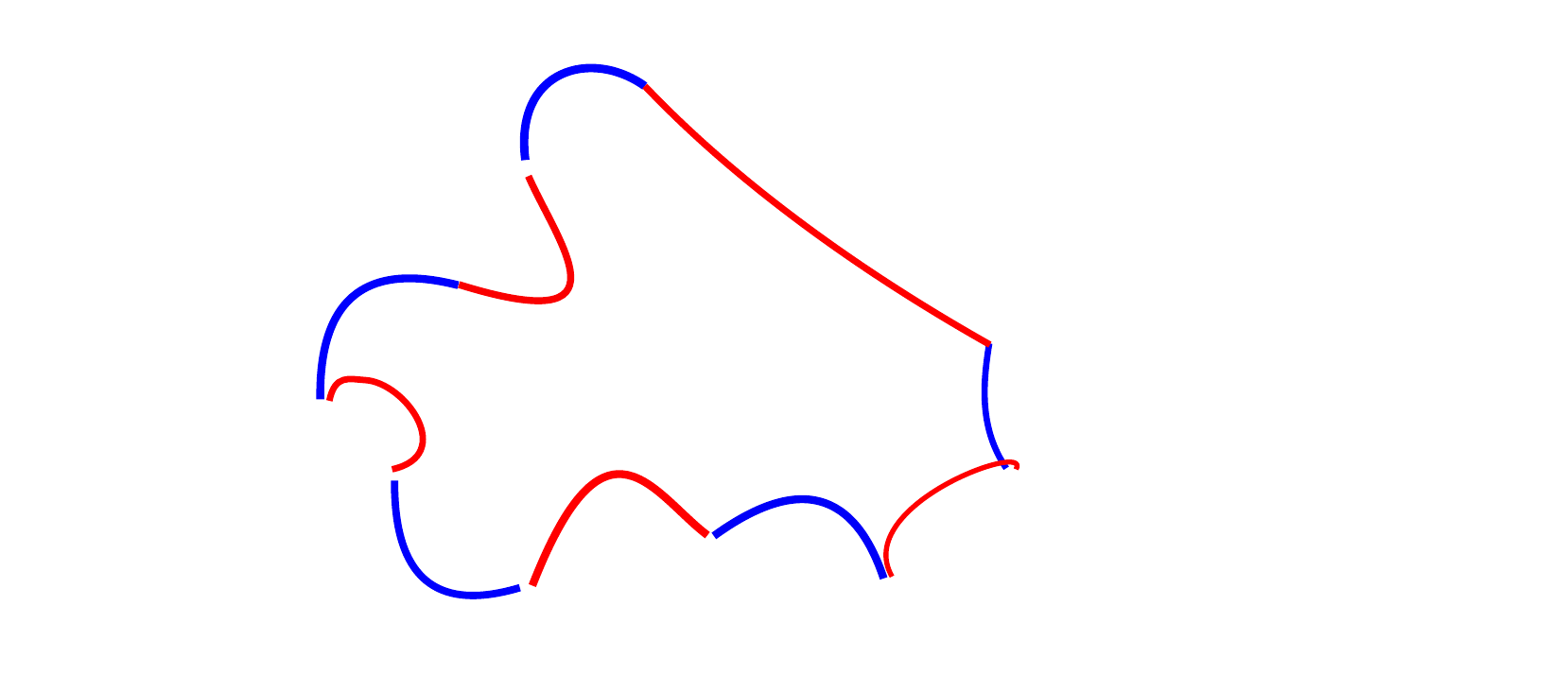
\caption{The blue segments denote $H_i$--components. The red segments denote single edges labeled by elements of $H_i$.}
\label{figforlemma}
\end{figure}

Let $d_1,\dots,d_k$ be the $H_i$-components of $p$ to which $b$ connects, labeled in the order they appear in $p$.  Obviously $k\leq \e$.  Let $e_1, e_{k+1}$ be edges labeled by elements of $H_i$ connecting $b_-$ to $(d_1)_-$ and $b_+$ to $(d_k)_+$, respectively, and let $e_j$ be edges labeled by elements of $H_i$ connecting $(d_{j-1})_+$ to $(d_{j})_-$ for $2\leq j\leq k$.  Then each $e_j$ is isolated in an $m_j$--gon whose sides are $e_j$ and edges of $p$ or $[1,a]$ such that $m_1+\dots +m_{k+1}\leq \ell(p)+\ell([1,a])\leq 2\e$.  Corollary \ref{Yibound} gives that $$\sum_{j=1}^{k+1}\d_{Y_i}((e_j)_-,(e_j)_+)\leq D\cdot\sum_{j=1}^{k+1} m_j \leq 2\e D.$$  Since $\sum_{j=1}^k\d_{Y_i}((d_j)_-,(d_j)_+)\leq \e$, the triangle inequality then implies that
$$
\d_{Y_i}(b_-,b_+)\leq \e(2D+1)< 4 \e D,
$$
completing the proof of the lemma.
\end{proof}

There are now two cases to consider.

 {\bf Case 1.}  Suppose for all $i=1,\dots,n$ and all $H_i$--components $c$ of $[1,g]$ we have $\d_{Y_i}(c_-,c_+)\leq M$.  Using (\ref{d1g}) we obtain $$\d_{X\cup \mathcal H}(1,g)\geq  \frac{\d_{X\cup\mathcal Y}(1,g)}{M}\ge \frac{R(\e)M}{M}\geq R(\e).$$  Since $\mathcal Y\subseteq \mathcal H$, we can use the acylindricity of the action of $G$ on $\Gamma(G,X\sqcup \mathcal H)$ to conclude that there are at most $N(\e)$ elements $a\in G$ satisfying (\ref{thinquad}).

 {\bf Case 2.}  Suppose for some $i$ there exists an $H_i$--component $c$ of $[1,g]$ such that \begin{equation}\label{clong} \d_{Y_i}(c_-,c_+)>M.\end{equation}  Since $M>4D$, $c$ cannot be isolated in $[1,a]\cup[a,ag]\cup[ag,g]\cup[g,1]$.  Then there are four possibilities.

	a)  If $c$ connects only to an $H_i$--component $b$ of $[1,a]$ (see Figure \ref{casea}), let $e_1$ and $e_2$ be the edges labeled by elements of $H_i$ connecting $c_-$ to $b_-$ and $c_+$ to $b_+$, respectively.  Then $e_1$ and $e_2$ are isolated in $e_1\cup [b_-,1]\cup[1,c_-]$ and $e_2\cup [b_+,a]\cup[a,ag]\cup[ag,g]\cup[g,c_+]$, respectively, and so by Corollary \ref{Yibound}, $\d_{Y_i}((e_j)_-,(e_j)_+)\leq 5D$ for $j=1,2$.  By Lemma \ref{lem:boundb}, $\d_{Y_i}(b_-,b_+)\leq 4 \e D$.    Applying the triangle inequality and (\ref{M}) yields $$\d_{Y_i}(c_-,c_+)\leq (10+4\e)D\leq M,$$ which contradicts our assumption on $c$.
	
If $c$ connects only to an $H_i$--component of $[g,ag]$, we reach the same contradiction by a symmetric argument.
\begin{figure}
\centering
\begin{minipage}{.49\textwidth}
  \centering
  \def\svgscale{0.43}
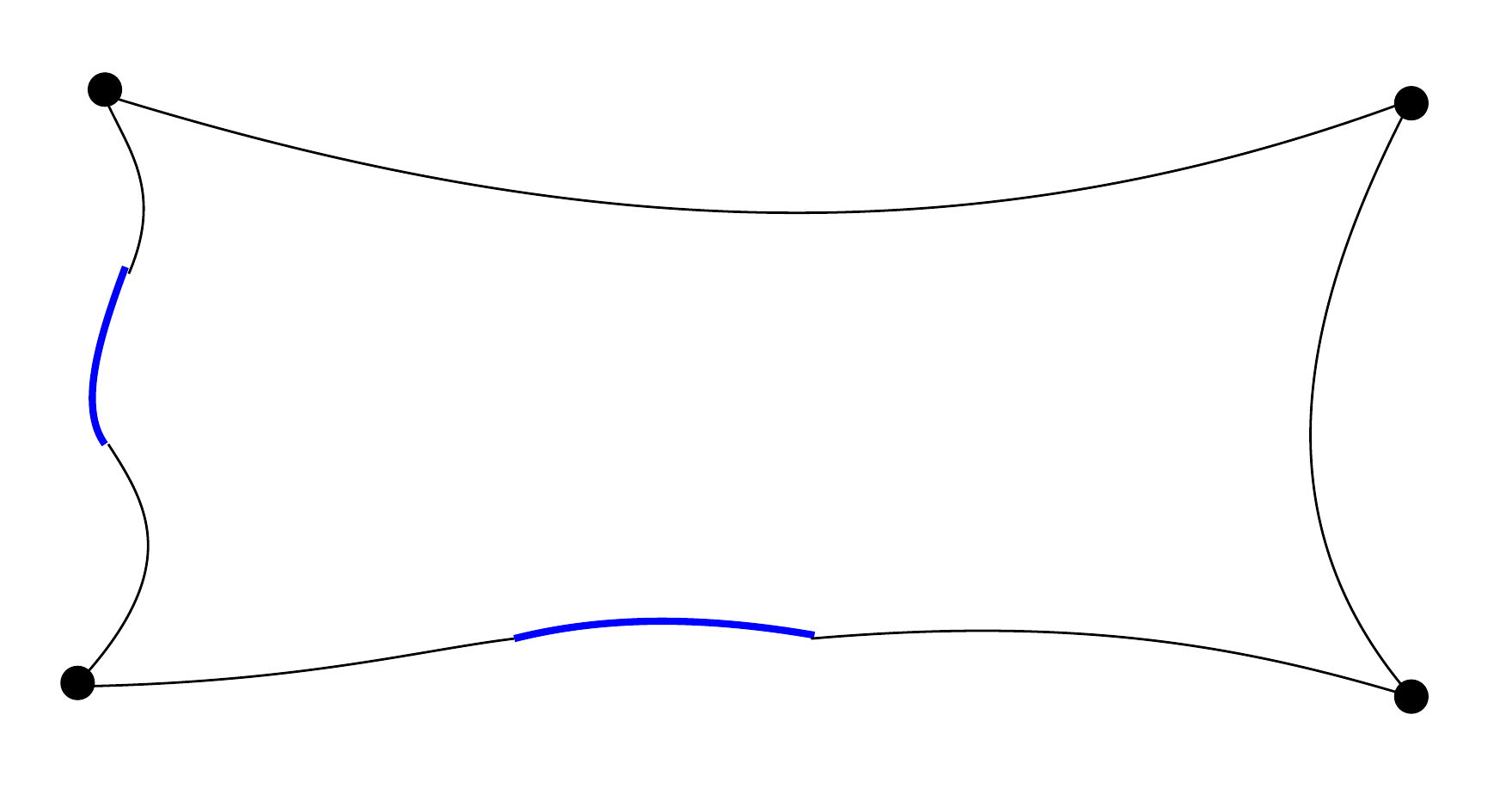
\caption{Case 2a}
\label{casea}
\end{minipage}%
\begin{minipage}{.49\textwidth}
  \centering
  \def\svgscale{0.43}
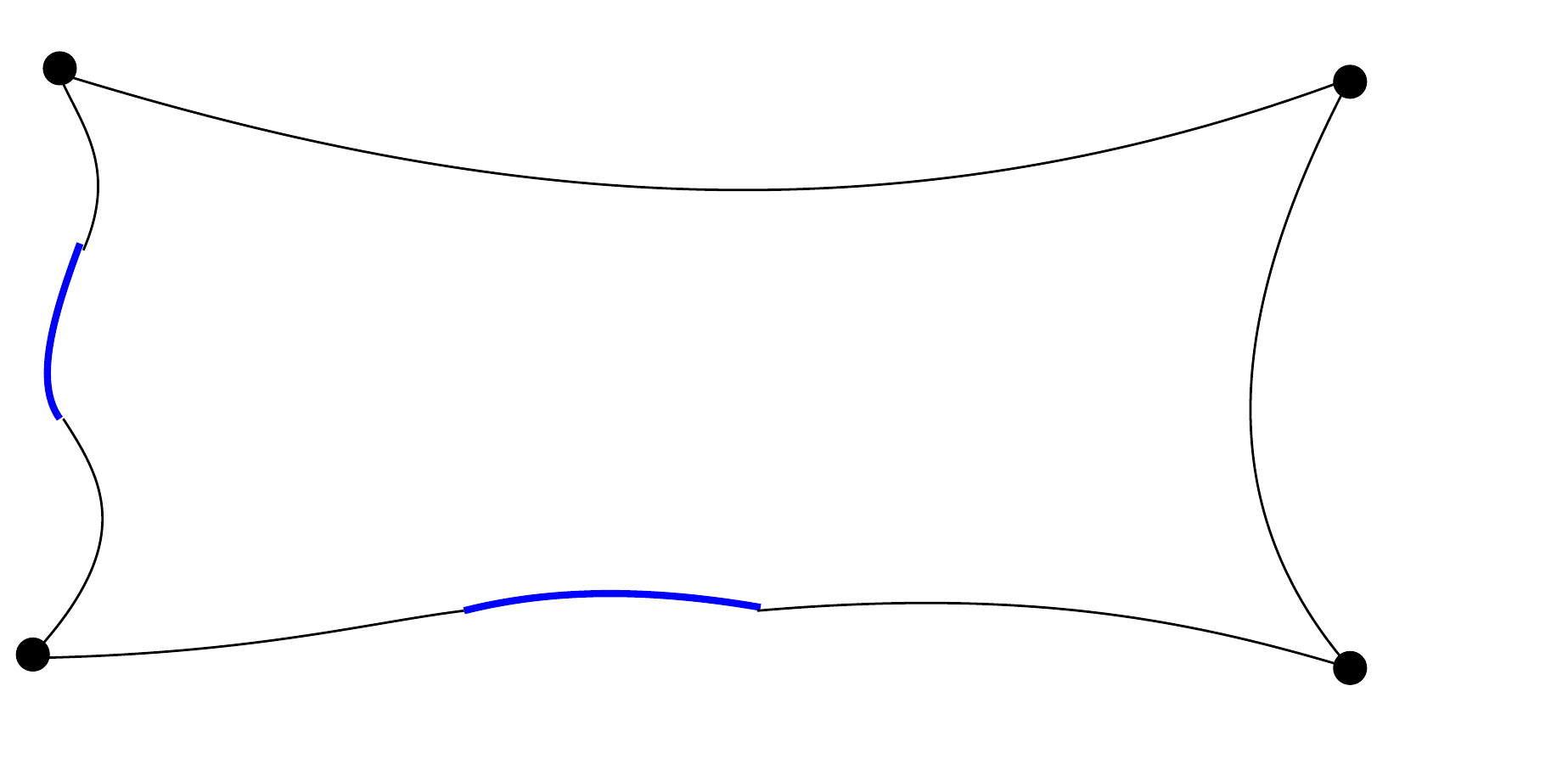
\caption{Case 2b}
\label{caseb}
\end{minipage}
\end{figure}

	b)  Suppose $c$ connects to an $H_i$--component $p$ of $[1,a]$ and an $H_i$--component $q$ of $[g,ag]$ but does not connect to any $H_i$--component of $[a,ag]$ (see Figure \ref{caseb}).  Then there is an edge $e$ labeled by an element of $H_i$ connecting $p_+$ to $q_+$, and $e$ is isolated in $e\cup [p_+,a]\cup[a,ag]\cup[ag,q_+]$.    Let $e_1,e_2$ be edges labeled by elements of $H_i$ connecting $c_-$ to $p_-$ and $c_+$ to $q_-$.  The edges $e_1,e_2$ are isolated in $e_1\cup [p_-,1]\cup[1,c_-]$ and $e_2\cup[q_-,g]\cup[g,c_+]$, respectively.  Thus by Corollary \ref{Yibound}, $\d_{Y_i}(p_+,q_+)\leq 4D$ and $\d_{Y_i}((e_j)_-,(e_j)_+)\leq 3D$ for $j=1,2$. By Lemma \ref{lem:boundb}, $\d_{Y_i}(p_-,p_+)\leq 4 \e D$ and $\d_{Y_i}(q_-,q_+)\leq 4 \e D$. It follows from the triangle inequality and (\ref{M}) that $$\d_{Y_i}(c_-,c_+)\leq (10+8\e)D\leq M,$$ contradicting (\ref{clong}).

	c) Suppose $c$ connects to an $H_i$--component $d$ of $[a,ag]$ and an $H_i$--component of at most one of $[1,a]$ and $[g,ag]$.
	
	c1) If $c$ does not connect to any $H_i$--component of $[ag,g]$, then let $e$ be an edge labeled by an element of $H_i$ connecting $c_+$ to $\d_+$ (see Figure \ref{casec1}).  The edge $e$ is isolated in $e\cup [\d_+,ag]\cup[ag,g]\cup [g,c_+]$, so by Lemma \ref{C}, there is a constant $C$ independent of $a$ and $g$ such that $\dl_{i}(e_-,e_+)\leq 4C.$ Let $u=[1,c_+]$ and $v=[a,\d_+]$.  Then the label of $uev^{-1}$ and $a$ represent the same element in $G$.  There are at most $R(\e) M$ choices for each of $u$ and $v$, as $u$ and $v$ are subpaths of $[1,g]$ and $[a,ag]$, respectively, both of which have length at most $R(\e) M$ by (\ref{d1g}).  Let $B$ be the number of elements in a ball of radius $4C$ in the metric space $(H_i,\widehat \d_i)$.  The number of choices for $e$ is bounded by $B$, and by Definition \ref{defhe}(b), $B<\infty$.  Therefore there are at most $(R(\e)M)^2B$ options for $a$.
\begin{figure}
\centering
\begin{minipage}{.49\textwidth}
  \centering
  \def\svgscale{0.43}
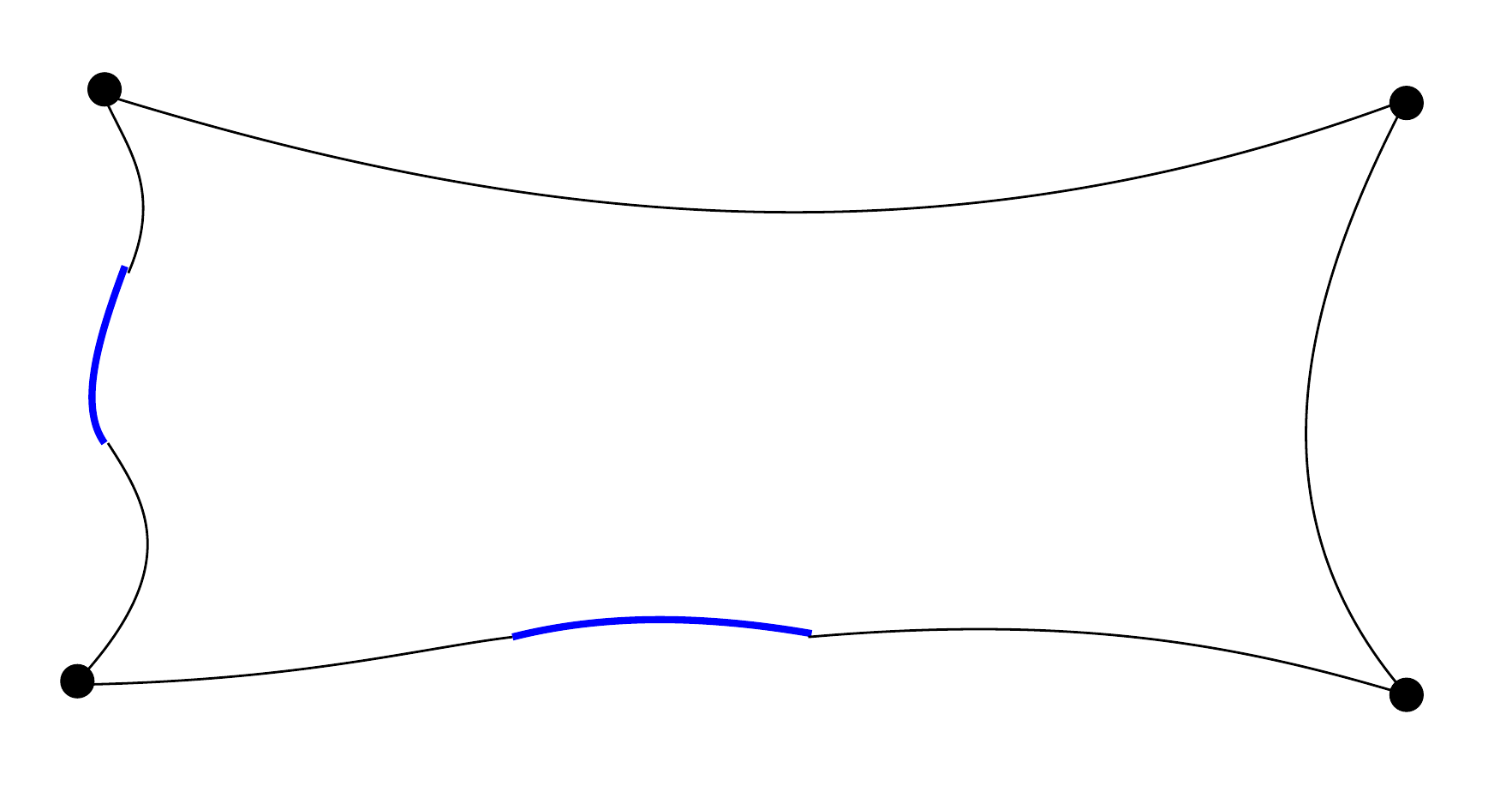
\caption{Case 2c1}
\label{casec1}
\end{minipage}%
\begin{minipage}{.49\textwidth}
  \centering
  \def\svgscale{0.43}
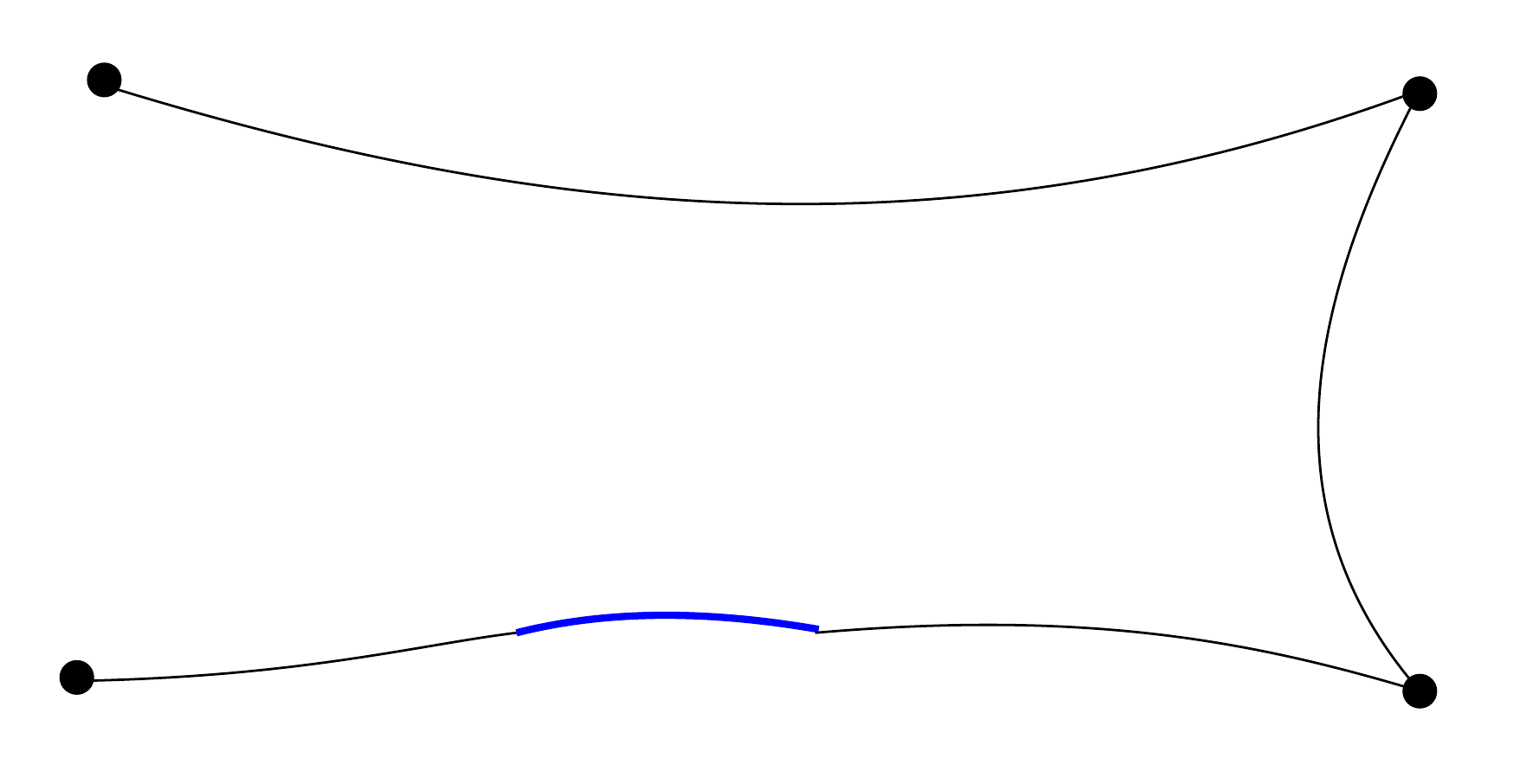
\caption{Case 2c2}
\label{casec2}
\end{minipage}
\end{figure}

	c2) If $c$ does not connect to any $H_i$--component of $[1,a]$, then by an argument symmetric to case c1), we obtain that there are at most $(R(\e)M)^2B$ options for $a$.

	d) Suppose $c$ connects to $H_i$--components $p$ of $[1,a]$, $d$ of $[a,ag]$, and $q$ of $[g,ag]$ (see Figure \ref{cased}).  Without loss of generality, we may assume that $a[1,g]=[a,ag]$.  Let $ac$ denote the image of the edge $c$ under the action of $a$.  Since $c$ is an $H_i$--component of $[1,g]$, it follows that $ac$ is an $H_i$--component of $[a,ag]$.
\begin{figure}
\centering
\def\svgscale{0.43}
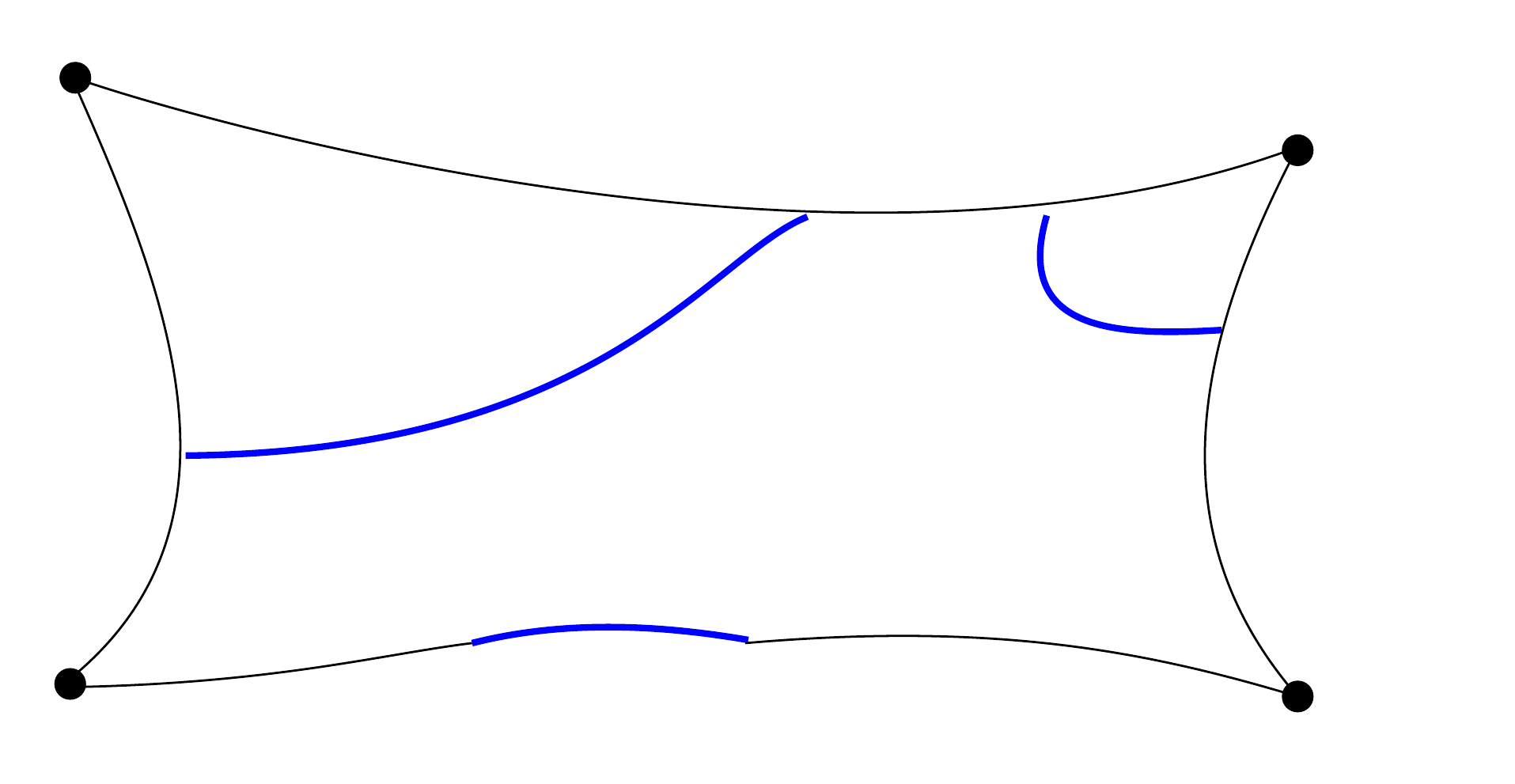
\caption{Case 2d}
\label{cased}
\end{figure}
Without loss of generality, we may assume that $ac$ belongs to $[a,d_-]$; the other case is symmetric.  Let $e_2$ be the edge labeled by an element of $H_i$ connecting $p_+$ to $d_-$.  By (\ref{clong}) and (\ref{M}), $$\d_{Y_i}(ac_-,ac_+)= \d_{Y_i}(c_-,c_+)>M>3D,$$ and thus Corollary \ref{Yibound} implies that $ac$ cannot be isolated in the quadrilateral $[1,a]\cup [a,ag]\cup [ag,g]\cup [g,1]$.

If $ac$ does not connect to $c$, then $ac$ must connect to an $H_i$--component  $f$ of $[1,a]$.
Since $\d_{Y_i}(ac_-,ac_+)>M,$ we can use the reasoning of Case 2a to reach a contradiction.  Therefore $c$ is connected to $ac$.  Since no two $H_i$--components of the geodesic $[a,ag]$ can be connected, we must have $ac=d$.
	
	Let $e_1,e_3,$ and $e_4$ be edges labeled by elements of $H_i$ connecting $c_-$ to $p_-$, $ac_+$ to $q_+$, and $q_-$ to $c_+$, respectively.  Applying Corollary \ref{Yibound} as in the previous cases, we obtain $\d_{Y_i}((e_j)_-,(e_j)_+)\leq 3D$ for $j=1,2,3,4$. 	
 By Lemma \ref{lem:boundb}, $\d_{Y_i}(p_-,p_+)\leq 4\e D$ and $\d_{Y_i}(q_-,q_+)\leq 4 \e D$. Thus, by the triangle inequality, $$\d_{Y_i}(c_-,ac_-)\leq 10\e D \quad \textrm{ and } \quad \d_{Y_i}(c_+,ac_+)\leq 10\e D.$$  It follows from (\ref{clong}) and (\ref{M}) that $$\d_{Y_i}(c_-,c_+)>M>R(10\e D).$$
 Therefore we can use the acylindricity of the action of $H_i$ on $\Gamma(H_i,Y_i)$ to conclude that there are at most $N(10\e D)$ choices for $(c_-)^{-1}ac_-$. Since there are at most $R$ choices for $c_-$, there are at most $RN(10\e D)$ choices for a.	

 In all cases, we have shown that there are at most $\max\{N(\e),RN(10\e D),(R(\e)M)^2B\}$ elements $a\in G$ satisfying (\ref{thinquad}), completing the proof.
 \end{proof}

We are now ready to prove Theorem \ref{main2}.

\begin{proof}[Proof of Theorem \ref{main2}]
That there is an injective, order preserving map $\iota_X\colon\mathcal H(H) \to\mathcal H(G)$ follows from Theorem \ref{AHO}.  It remains to show that elements of $\AHH$ are mapped to $\AHG$.
Since $H$ is hyperbolically embedded in $G$, there exists a subset $X$ of $G$ such that $H\h(G,X)$ and the action of $H$ on $\Gamma(G,X\sqcup H)$ is acylindrical (\cite[Theorem 5.4]{Osi16}).  The result then follows from Theorem \ref{genmain2}.
\end{proof}

\subsection{Applications}

We begin with the proof of Corollary \ref{SQ}. We will need the following two lemmas.

\begin{lem}\label{quotient}
Suppose that $Q$ is a quotient group of a group $P$. Then $\H(P)$ contains an isomorphic copy of $\H(Q)$.
\end{lem}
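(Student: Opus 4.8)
The plan is to exhibit an explicit order embedding $\H(Q)\hookrightarrow\H(P)$ obtained by canonically lifting generating sets along the quotient map. Write $\pi\colon P\to Q$ for the surjection and $N=\ker\pi$. For a generating set $X$ of $Q$ define
$$
\widetilde X=\pi^{-1}(X)\cup N\subseteq P .
$$
First I would check that $\widetilde X$ generates $P$: given $p\in P$, express $\pi(p)=x_1\cdots x_k$ with $x_i\in X^{\pm1}$, lift each $x_i$ to some $\widetilde x_i\in\pi^{-1}(X)^{\pm1}$, and note $p=\widetilde x_1\cdots\widetilde x_k\,n$ for some $n\in N\subseteq\widetilde X$. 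The same computation, together with the fact that $\pi$ maps $\widetilde X$ into $X\cup\{1\}$ (elements of $N$ go to $1$), yields the two-sided estimate
$$
|\pi(p)|_X\ \le\ |p|_{\widetilde X}\ \le\ |\pi(p)|_X+1\qquad\text{for all }p\in P
$$
with the convention $|1|_X=0$. Hence $\pi\colon(P,\d_{\widetilde X})\to(Q,\d_X)$ is a surjective quasi-isometry (indeed $1$-Lipschitz with additive distortion at most $1$), so $\Gamma(P,\widetilde X)$ is quasi-isometric to $\Gamma(Q,X)$ and one is hyperbolic if and only if the other is; in particular $[X]\in\H(Q)$ forces $[\widetilde X]\in\H(P)$.

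Next I would show that $X\mapsto\widetilde X$ descends to an order embedding on equivalence classes. Using the estimate above, for generating sets $X,X'$ of $Q$: if $X\preceq X'$ then for every $\widetilde x'\in\widetilde X'$ we have $|\widetilde x'|_{\widetilde X}\le|\pi(\widetilde x')|_X+1\le\sup_{x'\in X'}|x'|_X+1<\infty$, so $\widetilde X\preceq\widetilde X'$; conversely, if $\widetilde X\preceq\widetilde X'$ then every $x'\in X'$ admits a lift $\widetilde x'\in\pi^{-1}(X')\subseteq\widetilde X'$ with $|x'|_X=|\pi(\widetilde x')|_X\le|\widetilde x'|_{\widetilde X}\le\sup_{\widetilde X'}|\cdot|_{\widetilde X}<\infty$, so $X\preceq X'$. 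Thus $X\preceq X'\iff\widetilde X\preceq\widetilde X'$, which shows at once that $[X]\mapsto[\widetilde X]$ is a well-defined, order-preserving and order-reflecting map $\mathcal G(Q)\to\mathcal G(P)$; being order preserving and order reflecting, it is injective and an isomorphism onto its image. Restricting to hyperbolic structures (licensed by the first paragraph) gives the desired embedding $\H(Q)\hookrightarrow\H(P)$, and its image, with the order induced from $\H(P)$, is isomorphic to $\H(Q)$.

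I do not expect a serious obstacle; the only points requiring care are to take the \emph{canonical} lift $\widetilde X=\pi^{-1}(X)\cup N$ rather than $s(X)$ for a set-theoretic section $s$ (which would force an extra check that $[\widetilde X]$ is independent of $s$), and to remember that $1$ need not lie in $X$, which is why one adjoins $N$ by hand and picks up the harmless $+1$ in the metric comparison instead of an equality. An alternative route, which I would mention but not pursue, is to pull the cobounded action $Q\curvearrowright\Gamma(Q,X)$ back along $\pi$: the resulting $P$-action on $\Gamma(Q,X)$ is again cobounded (it has the same orbits) and on a hyperbolic space, so Proposition~\ref{prop-SM} produces the structure on $P$ and the preorder comparisons transfer verbatim; the direct generating-set computation above is simply more economical.
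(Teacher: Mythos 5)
Your proof is correct, and your closing remark actually identifies the paper's own argument: the authors prove this lemma in two sentences by pulling the cobounded action $Q\curvearrowright\Gamma(Q,X)$ back along $\pi$ to a cobounded $P$-action on the same hyperbolic space and invoking Proposition \ref{prop-SM}, exactly the ``alternative route'' you mention but do not pursue. Your main argument instead works entirely at the level of generating sets, with the canonical lift $\widetilde X=\pi^{-1}(X)\cup N$ and the estimate $|\pi(p)|_X\le|p|_{\widetilde X}\le|\pi(p)|_X+1$; this is a hands-on unpacking of what the Svarc--Milnor map would produce from the pulled-back action (applying Lemma \ref{lemMS} to $P\curvearrowright\Gamma(Q,X)$ yields a generating set equivalent to your $\widetilde X$), so the two proofs are morally the same construction seen from opposite ends. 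What your version buys is self-containedness and an explicit order-reflecting embedding $\mathcal G(Q)\to\mathcal G(P)$ at the level of $\mathcal G$, not just $\H$, with no appeal to the action formalism; what the paper's version buys is brevity, since equivalence and domination of actions are manifestly preserved under precomposition with $\pi$. Your handling of the two genuine pitfalls (avoiding a set-theoretic section, and adjoining $N$ to absorb the kernel and the identity) is exactly right.
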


\begin{proof}
This is obvious if we use Proposition \ref{prop-SM} and think of hyperbolic structures in terms of cobounded actions: every cobounded action of $Q$ on a hyperbolic space gives rise to an action of $P$ and passing from actions of $Q$ to actions of $P$ preserves the relation $\preceq$ on group actions.
\end{proof}

\begin{lem}\label{SQ-lem}
For every finitely generated group $H$, there exists a quotient group $Q$ of $F_2$ such that $H\h Q$.
\end{lem}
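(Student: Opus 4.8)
A group is a quotient of $F_2$ exactly when it can be generated by two elements, so the content of the lemma is that every finitely generated group $H$ embeds, hyperbolically, into a $2$--generated group. The plan is to build such a group by a small cancellation construction over a relatively hyperbolic group. Fix a finite generating set $h_1,\dots,h_k$ of $H$ and let $G_0=F_2\ast H$, where $F_2=\langle a,b\rangle$ has free basis $X=\{a,b\}$. Exactly as in the example of $H\ast\mathbb Z$ discussed above, the graph $\Gamma(G_0,X\sqcup H)$ is quasi-isometric to the Bass--Serre tree of the free splitting, and the corresponding relative metric $\widehat\d$ on $H$ satisfies $\widehat\d(h,h')=\infty$ whenever $h\ne h'$; hence $H\h(G_0,X)$, and since $X$ is finite, Theorem \ref{rhhe} gives that $G_0$ is hyperbolic relative to $\{H\}$. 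In particular $a$, $b$, and indeed every non-trivial element of the free factor $F_2$, act loxodromically on $\Gamma(G_0,X\sqcup H)$.

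Next I would choose words $w_1,\dots,w_k$ in the letters $a,b$ which are long enough and in general position (for instance long, pairwise non-commensurable, and with pairwise no long common subwords), and pass to the quotient
$$
Q\;=\;G_0\big/\ll h_1w_1(a,b)^{-1},\ \dots,\ h_kw_k(a,b)^{-1}\rr .
$$
Each relator $h_iw_i(a,b)^{-1}$ reads as a single edge labelled by a peripheral element followed by a long geodesic word in the hyperbolic directions, so for an appropriate choice of the $w_i$ these relators satisfy a small cancellation condition relative to $\{H\}$. The small cancellation theory over relatively hyperbolic groups (Ol'shanskii, Arzhantseva--Minasyan--Osin; see \cite{Ols95,AMO}) then yields that the natural homomorphism $H\to Q$ is injective and that the image of $H$ is hyperbolically embedded in $Q$ --- in fact that $Q$ is again hyperbolic relative to $\{H\}$. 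Identifying $H$ with its image, we obtain $H\h Q$.

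Finally, by construction $h_i=w_i(a,b)$ holds in $Q$ for every $i$, so the images of $a$ and $b$ already generate $Q$; thus $Q$ is $2$--generated, i.e.\ a quotient of $F_2$, which completes the argument. The hard part is entirely contained in the second step: choosing the $w_i$ and checking that the mixed relators $h_iw_i(a,b)^{-1}$ satisfy a small cancellation condition strong enough to guarantee simultaneously that $H$ embeds in $Q$ and that it stays hyperbolically embedded. This is precisely the input one draws from \cite{Ols95,AMO}, and granting it the remaining steps are routine bookkeeping.
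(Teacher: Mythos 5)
Your argument is correct and is essentially the same as the paper's: the paper proves this lemma simply by quoting the main result of \cite{AMO} (SQ-universality of non-elementary relatively hyperbolic groups, applied to $F_2$, which produces hyperbolically embedded images) together with the embedding of \cite{Ols95}, and your sketch --- passing to $F_2\ast H$, which is hyperbolic relative to $\{H\}$, and then imposing relations $h_i=w_i(a,b)$ via small cancellation over relatively hyperbolic groups --- is precisely an outline of the construction underlying those results, with the decisive small cancellation verification deferred to the same references. Nothing further is needed.
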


This lemma can be seen as a particular case of the main result of \cite{AMO}, where it is proved for every non-elementary relatively hyperbolic group in place of $F_2$. For hyperbolic groups (in particular, for $F_2$) one could also use the embedding constructed in \cite{Ols95}, which, in fact, leads to hyperbolically embedded subgroups.

\begin{proof}[Proof of Corollary \ref{SQ}]
Let $G$ be an acylindrically hyperbolic group. By \cite[Theorem 2.24]{DGO}, $G$ contains a hyperbolically embedded subgroup isomorphic to $F_2\times K$ for some (finite) group $K$. By Lemma \ref{quotient}, Theorem \ref{genmain2}, and Lemma \ref{SQ-lem}, 
we have $$\H(H)\hookrightarrow \H(F_2)\hookrightarrow \H(F_2\times K)\hookrightarrow \H(G),$$ where $\hookrightarrow$ denotes the embedding of posets.
\end{proof}

Our next goal is to discuss the action of $Out(G)$ on $\H (G)$ and $\AHG$.

For every group $G$, one can define a natural action of $Aut(G)$ on $\GG$ by precomposing $G$-actions with elements of $Aut(G)$. More precisely, for every automorphism $\alpha$ of a group $G$ and any $[X]\in \GG$, we define
\begin{equation}\label{aX}
\alpha([X])=[\alpha(X)].
\end{equation}

\begin{lem} \label{Out} Let $G$ be a group.
\begin{enumerate}
\item[(a)] Equation (\ref{aX}) defines an order preserving action of $Aut(G)$ on $\GG$, which leaves $\HG$ and $\AHG$ setwise invariant.
\item[(b)] The induced action of $Inn(G)$ is trivial.
\end{enumerate}
\end{lem}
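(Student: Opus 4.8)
The whole lemma reduces to one elementary observation, which I would record first: for any $\alpha\in Aut(G)$ and any generating set $Y$ of $G$, the set $\alpha(Y)$ again generates $G$, and the bijection $\alpha\colon G\to G$ extends to a graph isomorphism $\Gamma(G,Y)\to\Gamma(G,\alpha(Y))$ (an edge $\{g,gy\}$ with $y\in Y$ is sent to $\{\alpha(g),\alpha(g)\alpha(y)\}$ with $\alpha(y)\in\alpha(Y)$) which intertwines the two left $G$-actions through $\alpha$, i.e.\ $\alpha(hg)=\alpha(h)\alpha(g)$. In particular $\alpha$ is an isometry $(G,\d_Y)\to(G,\d_{\alpha(Y)})$, since a word in $Y^{\pm1}$ represents $g$ exactly when the corresponding word in $\alpha(Y)^{\pm1}$ represents $\alpha(g)$; hence $|g|_Y=|\alpha(g)|_{\alpha(Y)}$ for all $g$.

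Given this, part (a) is mostly formal. For well-definedness and order-preservation at once: for generating sets $X,Y$ one has $\sup_{y\in Y}|y|_X=\sup_{y\in Y}|\alpha(y)|_{\alpha(X)}=\sup_{z\in\alpha(Y)}|z|_{\alpha(X)}$, so $X\preceq Y$ if and only if $\alpha(X)\preceq\alpha(Y)$; taking $X=Y$ and using both directions gives $X\sim Y\Leftrightarrow\alpha(X)\sim\alpha(Y)$, so (\ref{aX}) descends to $\GG$, and the displayed equivalence is precisely the statement that $[X]\mapsto[\alpha(X)]$ is order-preserving. The identities $(\alpha\beta)([X])=\alpha(\beta([X]))$ and $\mathrm{id}([X])=[X]$ are immediate. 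Invariance of $\HG$ is clear because the graph isomorphism $\Gamma(G,Y)\cong\Gamma(G,\alpha(Y))$ transports hyperbolicity. For invariance of $\AHG$ I would use the twisted equivariance: writing $\phi\colon\Gamma(G,Y)\to\Gamma(G,\alpha(Y))$ for the above isomorphism, one has $h\cdot\phi(g)=\phi(\alpha^{-1}(h)\cdot g)$, so for any points $x=\phi(x_0)$, $y=\phi(y_0)$ of $\Gamma(G,\alpha(Y))$ and any $h\in G$, $\d(x,hx)=\d(x_0,\alpha^{-1}(h)x_0)$, and similarly for $y$; consequently the set counted in the acylindricity condition for $G\curvearrowright\Gamma(G,\alpha(Y))$ is the $\alpha$-image of the corresponding set for $G\curvearrowright\Gamma(G,Y)$, and since $\phi$ preserves distances and $\alpha$ is a bijection, the two actions are acylindrical with exactly the same functions $\e\mapsto(R(\e),N(\e))$. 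Keeping track of this twist by $\alpha$ in the defining condition of acylindricity is the only point that needs any care; everything else is bookkeeping.

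For part (b), I would take $\alpha=\mathrm{inn}(f)$ to be the inner automorphism $g\mapsto fgf^{-1}$ and let $X$ be any generating set of $G$. Since $X$ generates $G$ and $f\in G$, the length $k:=|f|_X=|f^{-1}|_X$ is finite, so for each $x\in X$ we get $|\alpha(x)|_X=|fxf^{-1}|_X\le 2k+|x|_X\le 2k+1$; hence $\sup_{z\in\alpha(X)}|z|_X<\infty$, that is, $\alpha(X)\preceq X$. Applying the same estimate to the inner automorphism $\mathrm{inn}(f^{-1})=\alpha^{-1}$ and the generating set $\alpha(X)$ (for which $f^{-1}\in G=\langle\alpha(X)\rangle$) yields $X=\alpha^{-1}(\alpha(X))\preceq\alpha(X)$. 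Therefore $\alpha(X)\sim X$, so $\alpha([X])=[\alpha(X)]=[X]$; thus $Inn(G)$ acts trivially and, combined with (a), the $Aut(G)$-action factors through an order-preserving $Out(G)$-action on $\GG$ leaving $\HG$ and $\AHG$ invariant. (Alternatively, (b) can be seen by noting that precomposition of $G\curvearrowright S$ with $\mathrm{inn}(f)$ is equivalent to $G\curvearrowright S$ via the isometry $s\mapsto f^{-1}s$, but the word-metric argument above is shorter and avoids invoking the Svarc--Milnor correspondence.)
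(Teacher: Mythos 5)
Your argument is correct and follows essentially the same route as the paper's proof: the observation that $g\mapsto\alpha(g)$ is an isometry $(G,\d_X)\to(G,\d_{\alpha(X)})$ yields well-definedness, order preservation and invariance of hyperbolicity, acylindricity is transported by pulling the counted set back under $\alpha^{-1}$ with the same constants $R(\e)$ and $N(\e)$, and part (b) rests on the conjugation bound $|fxf^{-1}|_X\le 2|f|_X+1$. The only (harmless) slip is a direction swap in part (b): with the paper's convention that $X\preceq Y$ means $\sup_{y\in Y}|y|_X<\infty$, your estimate $\sup_{z\in\alpha(X)}|z|_X<\infty$ is the statement $X\preceq\alpha(X)$ rather than $\alpha(X)\preceq X$ (and symmetrically for the second inequality), but since you establish both bounds the conclusion $\alpha(X)\sim X$, and hence the triviality of the $Inn(G)$-action, is unaffected.
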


\begin{proof}
We first show that $\alpha ([X])$ is well-defined. If $X$ is a generating set of $G$, then clearly so is $\alpha (X)$. Furthermore, if $Y$ is another generating set of $G$ such that $X\preceq Y$, then we obviously have $\alpha (X)\preceq \alpha (Y)$. In particular, if $X\sim Y$ then $\alpha (X)\sim \alpha (Y)$. This implies that $\alpha ([X])$ is independent of the choice of a particular representative in the equivalence class $[X]$ and the map $\alpha\colon \GG \to \GG $ is order preserving. Note also that the map $g\mapsto \alpha(g)$ induces an isometry between vertex sets of the Cayley graphs $\Gamma (G,X)$ and $\Gamma (G,\alpha(X))$ and therefore $\Gamma (G,X)$ is hyperbolic if and only if $\Gamma (G, \alpha (X))$ is as well.

Assume that the action of $G$ on $\Gamma (G,X)$ is acylindrical. Fix any $\e>0$ and let $R=R(\e)$, $N=N(\e)$ be the corresponding acylindricity constants. Let $x,y\in G$ be any two elements such that $\d_{\alpha(X)} (x,y)\ge R$ and let $$A=\{ g\in G\mid \ \d_{\alpha(X)} (x,gx)\le \e, \; \d_{\alpha(X)}(y, gy) \le \e\}.$$ Then every $f\in \alpha^{-1}(A)$ satisfies $\d_X(\alpha^{-1}(x), f\alpha^{-1}(x))\le \e$ and $\d_X(\alpha^{-1}(y), f\alpha^{-1}(y))\le \e$. Since $\d_X (\alpha^{-1}(x), \alpha^{-1}(y))=\d_{\alpha(X)} (x,y)\ge R$, it follows from acylindricity of the action of $G$ on $\Gamma (G,X)$ that $|A|=|\alpha^{-1} (A)|\le N$. Thus the action of $G$ on $\Gamma (G,\alpha(X))$ is acylindrical.

Finally let $[X]\in \GG$ and let $g\in G$. Let $L=|g|_X$. Then for every $y\in g^{-1}Xg$ we have $|y|_X\le 2L+1$. In particular, $X\preceq g^{-1}Xg$ and, symmetrically, $g^{-1}Xg\preceq X$. Thus $\alpha ([X])=[X]$ for all $\alpha \in Inn(G)$.
\end{proof}

\begin{cor}
Let $G$ be a group and let $\hat\alpha \in Out(G)$. Let $\alpha $ be a preimage of $\hat \alpha $ in $Aut (G)$. Then the formula $\hat\alpha ([X])=[\alpha (X)]$ defines an order preserving action of $Out(G)$ on $\GG$ which leaves $\HG$ and $\AHG$ setwise invariant.
\end{cor}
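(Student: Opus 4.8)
The plan is to deduce the corollary directly from Lemma \ref{Out} via the standard observation that an action of a group which is trivial on a normal subgroup descends to an action of the quotient. First I would record what is already available: by Lemma \ref{Out}(a), the assignment $\alpha \mapsto \bigl([X]\mapsto [\alpha(X)]\bigr)$ is an order-preserving action of $Aut(G)$ on $\GG$ leaving $\HG$ and $\AHG$ setwise invariant, and by Lemma \ref{Out}(b) the restriction of this action to $Inn(G)$ is trivial. The corollary is then a formal consequence.

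The one point requiring a word of justification is that the formula $\hat\alpha([X]) = [\alpha(X)]$ is well-defined, i.e. independent of the chosen lift $\alpha\in Aut(G)$ of $\hat\alpha\in Out(G)$. If $\alpha,\alpha'$ are two such lifts, then $\alpha' = \alpha\circ\beta$ for some $\beta\in Inn(G)$, so by Lemma \ref{Out}(b) we have $[\alpha'(X)] = \alpha(\beta([X])) = \alpha([X]) = [\alpha(X)]$ for every $[X]\in\GG$. Hence $\hat\alpha([X])$ depends only on $\hat\alpha$ and $[X]$.

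It remains to verify the action axioms, which I would do by passing through lifts. The identity of $Out(G)$ lifts to $\mathrm{id}_G$, which acts as the identity on $\GG$; and if $\hat\alpha,\hat\gamma\in Out(G)$ have lifts $\alpha,\gamma\in Aut(G)$, then $\alpha\circ\gamma$ is a lift of $\hat\alpha\hat\gamma$, so $(\hat\alpha\hat\gamma)([X]) = [(\alpha\circ\gamma)(X)] = \alpha([\gamma(X)]) = \hat\alpha(\hat\gamma([X]))$. Order preservation, as well as the setwise invariance of $\HG$ and $\AHG$, are inherited verbatim from the corresponding assertions for $Aut(G)$ in Lemma \ref{Out}(a), since each element of $Out(G)$ acts through one of its lifts. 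There is no genuine obstacle in this argument: the only substantive input is the triviality of the $Inn(G)$-action, already proved in Lemma \ref{Out}(b), and everything else is the routine descent of a group action to a quotient.
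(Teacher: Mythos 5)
Your argument is correct and is precisely the intended deduction: the paper states this corollary without proof as an immediate consequence of Lemma \ref{Out}, relying on exactly the descent-to-the-quotient reasoning you spell out (well-definedness via triviality of the $Inn(G)$-action, with order preservation and the invariance of $\HG$ and $\AHG$ inherited from part (a)). Nothing is missing.
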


We are now ready to prove the main result of this section, Theorem \ref{main6}. There are two main ingredients in the proof: the result about induced actions proved in the previous section (namely, Theorem \ref{genmain2}) and the classification of commensurating automorphisms of acylindrically hyperbolic groups obtained in \cite{AMS} (see also \cite{MO10} for the particular case of relatively hyperbolic groups).

\begin{proof}[Proof of Theorem \ref{main6}]
We denote by $K$ the kernel of the action of $Aut(G)$ on $\AHG$, i.e., the set of all automorphisms of $G$ that fix every element of $\AHG$. We want to show that $|K:Inn(G)|<\infty $ under the assumptions of part (a) and $K=Inn(G)$ under the assumptions of part (b).

Let $A=[X]\in \AHG$ be a non-elementary acylindrically hyperbolic structure. Since every $\alpha \in K$ fixes $A$, for every loxodromic element $g\in \mathcal L(A)$, we have $\alpha (g)\in \L(\alpha(A))=\mathcal L(A)$. Recall that two elements $a$, $b$ of a group $G$ are called \emph{commensurable} if some powers of $a$ and $b$ with non-zero exponents are conjugate. Our first goal is to prove the following claim for every $\alpha \in K$:

($\ast$) for every $g\in \mathcal L(A)$, the elements $g$ and $h=\alpha (g)$ are  commensurable.

Arguing by contradiction, assume that the elements $g$ and $h=\alpha (g)$ are not commensurable for some $g\in \mathcal L(A)$. Since $g$ and $h$ are loxodromic with respect to $A=[X]$, there exist virtually cyclic subgroups $E(g), E(h)\le G$ containing $g$ and $h$, respectively, such that $\{ E(g), E(h)\}\h (G, X)$ by \cite[Corollary 3.17]{Hull}. Let $A_1=[E(g)]$ and let $A_2$ be the equivalence class of a finite generating set $Y$ of $E(h)$. By Theorem \ref{genmain2}, $B=\iota_X(A_1,A_2)=[X\cup Y \cup E(g)]\in \AHG $. Note that $h\in \mathcal L(B)$ by part (b) of Theorem \ref{AHO}.  On the other hand, we obviously have $g\notin \mathcal L(B)$. Thus $\alpha $ does not fix $B$, which contradicts the assumption that $\alpha \in K$. The claim is proved.

Assume now that $G$ is finitely generated and let $K(G)$ denote its finite radical. By \cite[Corollary 7.4]{AMS}, for every automorphism $\alpha$ of $G$ satisfying ($\ast$), there exists a map (not necessarily a homomorphism) $\e \colon G\to K(G)$ and an element $w\in G$ such that $\alpha (g)= wg\e(g)w^{-1}$. In particular, the map $\alpha ^\prime \colon g\mapsto g\e(g)$ is an automorphism of $G$ and $\alpha ^\prime Inn(G)=\alpha Inn(G)$. Since $G$ is finitely generated, the automorphism $\alpha^\prime$ is completely defined by finitely many values; since $K(G)$ is finite, we conclude that there are finitely many cosets $\alpha Inn(G)$ of automorphisms $\alpha $ satisfying ($\ast$). Thus $|K:Inn(G)|<\infty $.

In case $K(G)=\{ 1\}$, every automorphism $\alpha$ of $G$ satisfying ($\ast$) is inner by \cite[Corollary 7.4]{AMS}. Therefore $K= Inn(G)$. Note that we do not use finite generation of $G$ in this case.
\end{proof}

The example below shows that the assumption that $G$ is finitely generated cannot be dropped from part (a) of Theorem \ref{main6}.

\begin{ex}\label{infker}
Let $G=F_\infty\times \mathbb Z_2$, where $F_\infty $ is the free group of countably infinite rank with a basis $x_1, x_2, \ldots $. Let $a$ be the non-trivial element of $\mathbb Z_2$. For any subset $A\subseteq \mathbb N$, the maps
$$
x_i\mapsto \left\{ \begin{array}{cc}
                       x_ia & {\rm if}\; i\in A \\
                       x_i & {\rm if}\; i\notin A
                     \end{array}\right.
$$
and $a\mapsto a$ extend to an automorphism $\alpha _A\in Aut(G)$. It is easy to see that $\alpha _A (g)\in \{ g, ga\}$ for all $g\in G$. It follows that $\alpha _A$ acts trivially on $\GG$ for every $A\subseteq \mathbb N$. On the other hand, we obviously have $\alpha _A Inn(G)\ne \alpha_B Inn (G)$ whenever $A$ and $B$ are distinct subsets of $\mathbb N$. Thus the kernel of the action of $Out(G)$ on $\GG$ (and hence on $\H(G)$ and $\AHG$) is infinite.
\end{ex}


\section{Loxodromic equivalence and rigidity}


\subsection{Loxodromic equivalence classes}

The goal of this section is to prove Theorem \ref{main3}.  We will first prove it for the particular case of purely loxodromic actions of $G=F(a,b)$, the free group on two generators, $a$ and $b$. Recall that, given some $[X]\in \AHG$, we denote by $[X]^{\AH}_{\mathcal L}$ the set of all acylindrically hyperbolic structures on $G$ with the same set of loxodromic elements (see Definition \ref{def-loxeq}).

\begin{prop} \label{prop:f2} Let $X=[\{a,b\}]\in\mathcal{AH}(F(a,b))$.  Then $\PN$ embeds into $[X]^{\AH}_{\mathcal L}$.
\end{prop}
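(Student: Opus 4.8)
The plan is to produce, for every subset $S\subseteq\mathbb N$, a purely loxodromic acylindrically hyperbolic structure $A_S$ on $G=F(a,b)$ so that $S\mapsto A_S$ is an order embedding of $\PN$ into $\AHG$. Since $[\{a,b\}]$ is purely loxodromic and $G$ is torsion free, every hyperbolic structure on $G$ with the same loxodromic set is again purely loxodromic; hence all the $A_S$ automatically lie in $[\{a,b\}]^{\AH}_{\mathcal L}$, and the only things to arrange are pure loxodromicity, acylindricity and the order embedding. The structures $A_S$ will be obtained by transporting an explicit copy of $\PN$ from $\mathcal{AH}(F_\infty)$ into $\AHG$ along an induced–structure map $\iota_X$ coming from a hyperbolically embedded copy of $F_\infty$ in $G$.

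\emph{Step 1: a copy of $\PN$ in $\mathcal{AH}(F_\infty)$ by purely loxodromic structures.} Write $F_\infty=F(x_1,x_2,\dots)$ and fix a partition of the basis into countably many infinite blocks, so that $F_\infty=\ast_{n\in\mathbb N}F(I_n)$ with each $F(I_n)\cong F_\infty$ and $I_n=\{y^{(n)}_1,y^{(n)}_2,\dots\}$. Inside $F(I_n)$ consider the two free bases $B^0_n=\{y^{(n)}_k\}_{k\ge1}$ and $B^1_n=\{y^{(n)}_1y^{(n)}_2\cdots y^{(n)}_k\}_{k\ge1}$ (the latter is a free basis because $y_k\mapsto y_1\cdots y_k$ is a triangular automorphism of $F(I_n)$, extended by the identity on the other factors it gives an automorphism of $F_\infty$). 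For $S\subseteq\mathbb N$ set $Y_S=\bigcup_{n}B^{\,\mathbf 1_S(n)}_n$. As a union of free bases of the free factors, $Y_S$ is a free basis of $F_\infty$, so $\Gamma(F_\infty,Y_S)$ is a simplicial tree carrying a free cobounded $F_\infty$–action; consequently $[Y_S]$ is a hyperbolic structure which is purely loxodromic (a free cobounded action of a torsion–free group on a tree) and acylindrical (this is checked directly, using that the cyclically reduced $Y_S$–length of every nontrivial element is at least $1$). Since $Y_S$ is adapted to the free product, the $Y_{S'}$–length of an element of $F(I_n)$ is computed inside $B^{\mathbf 1_{S'}(n)}_n$, and the elementary estimates $|y^{(n)}_k|_{B^1_n}\le 2$ and $|y^{(n)}_1\cdots y^{(n)}_k|_{B^0_n}=k$ give $Y_S\preceq Y_{S'}\iff S\subseteq S'$. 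Hence $S\mapsto[Y_S]$ is an order embedding of $\PN$ into $\mathcal{AH}(F_\infty)$.

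\emph{Step 2: a hyperbolically embedded $F_\infty$ in $G$.} Fix a sequence $g_1,g_2,\dots\in G$ of pairwise independent loxodromic elements for $[\{a,b\}]$ with $E(g_i)=\langle g_i\rangle$. Following the scheme of the proof of Proposition \ref{strongheF2} (via \cite{Hull,AMS}), but allowing a countable family of elementary subgroups, I would build a ``diagonal'' subgroup $H\cong F_\infty$ (generated by suitable products of powers $g_i^m$) together with a relative generating set $X$ of $G$ with respect to $H$, such that $H$ is strongly hyperbolically embedded in $(G,X)$ and, in addition, every element of $G$ not commensurable into $H$ acts loxodromically on $\Gamma(G,X\sqcup H)$ with its axis meeting each coset of $H$ in uniformly bounded diameter.

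\emph{Step 3: transport and conclusion.} Put $A_S=\iota_X([Y_S])=[X\cup Y_S]$. By Theorem \ref{genmain2}, $A_S\in\AHG$; by Theorem \ref{AHO}, $\iota_X$ is injective and order preserving. Moreover, by Theorem \ref{AHO}(b), $H\curvearrowright\Gamma(G,X\cup Y_S)\sim_w H\curvearrowright\Gamma(H,Y_S)$, and restricting a $G$–action to $H$ preserves $\preceq$; chasing these facts through Lemma \ref{eCay} shows that $A_S\preccurlyeq A_{S'}$ forces $[Y_S]\preccurlyeq[Y_{S'}]$, so together with Step 1 the map $S\mapsto A_S$ is an order embedding of $\PN$ into $\AHG$. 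Finally, each $A_S$ is purely loxodromic: for $g$ commensurable into $H$ this follows from the weak equivalence above and the pure loxodromicity of $[Y_S]$ on $F_\infty$; for $g$ not commensurable into $H$ one uses Step 2 together with the observation that passing from $\Gamma(G,X\sqcup H)$ to $\Gamma(G,X\cup Y_S)$ only alters the metric inside $H$–cosets, each of which is crossed by the $\langle g\rangle$–periodic axis of $g$ in boundedly many boundedly short pieces, so $g$ retains a positive translation number.

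\emph{Main obstacle.} The crux is Step 2 and the pure–loxodromicity part of Step 3: one must construct a genuinely (strongly) hyperbolically embedded $F_\infty\le F(a,b)$ with a relative generating set tame enough that elements transverse to it keep positive translation numbers after the $H$–cosets are re-coordinatized by the bases $Y_S$. I expect this to require a quantitative argument in the spirit of — and relying on — the ``isolated $H$–component'' estimates of Section 5 (Lemma \ref{C}, Corollary \ref{Yibound}), much as in the proof of Theorem \ref{genmain2}.
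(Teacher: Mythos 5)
Your overall strategy is genuinely different from the paper's, but it has a gap precisely where you flag the ``main obstacle'': Step 2 is not a technical loose end, it is the entire content of the argument, and nothing in the paper or in the results you cite supplies it. Proposition \ref{strongheF2} and its sources (\cite[Corollary 3.17]{Hull}, \cite[Theorem 6.14]{DGO}, \cite[Theorem 3.9]{AMS}) produce hyperbolically embedded subgroups generated by \emph{finitely many} independent loxodromic elements, and there is no mechanism here for passing to an infinite ``diagonal'' family: the small-cancellation/rotating-family arguments behind those results degrade as the number of elementary subgroups grows, and Lemma \ref{lem:Z}, Corollary \ref{Yibound} and Theorem \ref{genmain2} are all calibrated to finite collections. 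Note also that a subgroup of a hyperbolic group that is hyperbolically embedded with respect to a \emph{finite} relative generating set must be quasiconvex, hence finitely generated; so your $X$ would have to be infinite, at which point the acylindricity of $F(a,b)\curvearrowright\Gamma(F(a,b),X\sqcup H)$ and the extra property you need in Step 3 (axes of transverse elements meeting $H$-cosets in uniformly bounded pieces) are substantial unproven claims, not consequences of Section 5. Since the pure loxodromicity of the induced structures $[X\cup Y_S]$ for elements not conjugate into $H$ rests entirely on these claims, the proof does not close. (Two smaller points: your equivalence in Step 1 is reversed --- $Y_S\preceq Y_{S'}$ iff $S'\subseteq S$, harmless after complementation --- and the acylindricity of $F_\infty\curvearrowright\Gamma(F_\infty,Y_S)$ for an infinite free basis is asserted rather than checked, though it is believable.)

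For comparison, the paper avoids hyperbolically embedded subgroups altogether at this stage. It fixes a family $\mathcal Q$ of $7$-aperiodic positive words in $\{a,b\}$ satisfying the small cancellation condition $C^*(3/n)$ (Lemma \ref{lem:setQ}), and for each infinite $S\subseteq\mathcal Q$ uses Kapovich's construction to build a generating set $Z_S$ of $F(a,b,c)$ so that $F(a,b)\curvearrowright\Gamma(F(a,b,c),Z_S)$ is acylindrical, purely loxodromic, and has quasi-convex orbits; Proposition \ref{cobdd} then yields $[X_S]\in\mathcal{AH}(F(a,b))$. Well-definedness, injectivity and monotonicity of $[S]\mapsto[X_S]$ on $\mathcal P(\omega)/Fin$ are read off from the distance formula (Lemma \ref{distformula}) together with the $C^*$ condition, and pure loxodromicity is part of Kapovich's theorem, so membership in $[X]^{\AH}_{\mathcal L}$ is automatic. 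If you want to salvage your approach, you would need an independent construction of a strongly hyperbolically embedded $F_\infty\le F(a,b)$ with controlled coset geometry; as written, the proposal assumes the hardest part.
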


The proof of the proposition relies on the existence of a collection of words in the alphabet $\{a,b\}$ satisfying certain properties.  A word $w\in F(X)$ is $l$--\emph{aperiodic} if it has no non-empty subwords of the form $v^l$ for any $v\in F(X)$.

Given an infinite collection $\mathcal Q$ of distinct $7$--aperiodic words in $F(a,b)$ whose lengths approach infinity, I. Kapovich in \cite{Kap} shows how to construct a generating set $Z$ of $F(a,b,c)$ such that $\Gamma(F(a,b,c),Z)$ is hyperbolic and the natural action of $F(a,b)$  on $\Gamma(F(a,b,c),Z)$ is acylindrical and purely loxodromic.  Moreover, for every $p\in\Gamma(F(a,b,c),Z)$, the orbit $F(a,b)p$ is quasi-convex.  As this construction will be important in the proof of Proposition \ref{prop:f2}, we review it here.

Given $v_n\in\mathcal Q$, let $w_n=v_n c\in F(a,b,c)$.   A nontrivial freely reduced word $z\in F(a,b,c)$ is a \emph{$\mathcal W$--word} if for some $n\geq 1$ and some integer $m\neq 0$, the word $z$ is a subword of $w_n^m$, and $z$ is \emph{positive} if it does not contain $a^{-1},$ $b^{-1}$, or $c^{-1}$.  The set $Z$ is defined to be the set of all positive $\mathcal W$--words in $F(a,b,c)$.  Since $\{a,b,c\}\subseteq Z$, the set $Z$ generates $F(a,b,c)$.

Kapovich also proved the following distance formula \cite[Lemma 3.5]{Kap}, which will be useful.

\begin{lem} \label{distformula}
Given a nontrivial freely reduced word $w\in F(a,b,c)$, $|w|_{Z}$ is equal to the smallest $k\geq 1$ such that there exists a $\mathcal W$--decomposition of length $k$, that is, a decomposition
\begin{equation}  \label{wdecomp}
w\equiv z_1\dots z_k,
\end{equation} so that $z_i$ is a $\mathcal W$--word for $1\leq i\leq k$.
\end{lem}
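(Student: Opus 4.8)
The statement is a distance formula for the generating set $Z$, and my plan is to prove it as two inequalities, the trivial one being $|w|_Z \le k_{\min}$ (where $k_{\min}$ is the minimal $\mathcal W$--decomposition length) and the substantive one being $k_{\min}\le |w|_Z$. First I would record two structural facts about $\mathcal W$--words. Since by construction each $w_n = v_n c$ is a positive word, every subword of $w_n^m$ with $m>0$ is positive and every subword with $m<0$ is the inverse of a positive word; consequently the set of all $\mathcal W$--words is exactly $Z \cup Z^{-1}$. Moreover $\mathcal W$--words are closed under passing to nonempty contiguous subwords, because a subword of a subword of $w_n^m$ is again a subword of $w_n^m$, and under inversion, because $z$ being a subword of $w_n^m$ forces $z^{-1}$ to be a subword of $w_n^{-m}$ with $-m\ne 0$.

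Granting these facts, the inequality $|w|_Z \le k_{\min}$ is immediate: if $w \equiv z_1 \cdots z_k$ is a $\mathcal W$--decomposition of minimal length, then each $z_i$ lies in $Z \cup Z^{-1}$, exhibiting $w$ as a product of $k_{\min}$ elements of $Z\cup Z^{-1}$.

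For the reverse inequality I would begin with a geodesic expression $w = s_1 s_2 \cdots s_k$ where $k = |w|_Z$ and each $s_i \in Z \cup Z^{-1}$, this being a product in $F(a,b,c)$ that a priori involves cancellation. The heart of the argument is a contiguity lemma, proved by induction on $k$: if a freely reduced word $w$ equals a product $s_1 \cdots s_k$ of freely reduced words, then $w$ admits a \emph{graphical} decomposition $w \equiv t_1 t_2 \cdots t_k$ in which each $t_i$ is a (possibly empty) contiguous subword of $s_i$. In the inductive step I split off $s_1$: writing $s_1 = up$ and $s_2 \cdots s_k = p^{-1} v$ with $p$ the maximal cancelled block, I obtain $w \equiv uv$ with $u$ a prefix of $s_1$; since $v$ is a suffix of the word $t_2 \cdots t_k$ furnished by the inductive hypothesis, it regroups as a suffix of some $t_j$ followed by $t_{j+1}, \ldots, t_k$, which supplies the required pieces for $s_2, \ldots, s_k$. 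Each nonempty $t_i$ is then a contiguous subword of the $\mathcal W$--word $s_i$, hence is itself a $\mathcal W$--word; discarding the empty pieces yields a genuine $\mathcal W$--decomposition of $w$ of length at most $k$, so $k_{\min} \le k = |w|_Z$. Combining the two inequalities gives the formula.

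The main obstacle is the contiguity lemma: one must verify that, despite cancellation, each factor $s_i$ contributes a single uninterrupted block of the reduced word $w$ rather than a scattered set of surviving letters. I expect the careful bookkeeping of the maximal cancelled block $p$ and the regrouping of the suffix $v$ across the inductively supplied pieces to be the only delicate point. It is worth noting that the $7$--aperiodicity of the words in $\mathcal Q$ plays no role in this lemma and is needed only for the finer metric properties (acylindricity, quasi-convexity, pure loxodromicity) established elsewhere in the construction.
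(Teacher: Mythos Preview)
Your proof is correct. The paper does not supply its own argument for this lemma; it simply quotes the result as \cite[Lemma 3.5]{Kap}, so there is nothing in the present paper to compare against. Your approach---identifying $\mathcal W$-words with $Z\cup Z^{-1}$, noting their closure under nonempty subwords and inversion, and then invoking the standard ``contiguity'' fact that the free reduction of a product $s_1\cdots s_k$ of reduced words decomposes graphically as $t_1\cdots t_k$ with each $t_i$ a (possibly empty) subword of $s_i$---is exactly the natural argument and matches what one finds in Kapovich's paper.
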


For the proof of Proposition \ref{prop:f2}, we will require that the collection $\mathcal Q$ satisfies a small cancellation condition which is slightly stronger than $C'(1/6)$ and which we now introduce.

\begin{defn}\label{C*}
A set of words $\mathcal Q$ in an alphabet $X$ satisfies the \emph{$C^*(\lambda)$ condition} if the following two conditions hold.
\begin{enumerate}
\item[(a)] Let $v,w\in \mathcal Q$ be two distinct words and let $u$ be a subword of two cyclic shifts of $v$ and $w$. Then $|u| <\lambda \min \{ | v|, |w|\}$.
\item[(b)] Let $v\in \mathcal Q$. Then any word of length $| u| \ge \lambda |v|$ occurs in every cyclic shift of $v$ at most once.
\end{enumerate}
\end{defn}

The following lemma shows that the collection of words required for the proof of Proposition \ref{prop:f2} exists.

\begin{lem}\label{lem:setQ}
There exists a collection  $\mathcal Q=\{v_1,v_2,\dots\}$ of words in the alphabet $\{a,b\}$ satisfying the following properties:
\begin{enumerate}
\item[(a)] each $v_i$ is $7$--aperiodic;
\item[(b)] $|v_i|\to\infty$ as $i\to\infty$;
\item[(c)] each subset $\mathcal Q_n=\{v_n,v_{n+1},\dots\}$ satisfies $C^*(3/n)$.
\end{enumerate}
\end{lem}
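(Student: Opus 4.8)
The plan is to reduce the nested small‑cancellation requirement to a condition on \emph{pairs} of words, and then to construct $v_1,v_2,\dots$ one at a time by induction.

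\emph{Step 1: reduction to pairs.} Since $C^*(\lambda)$ weakens as $\lambda$ grows and $\mathcal Q_{n+1}\subseteq\mathcal Q_n$, and since we may freely insist $|v_1|<|v_2|<\cdots$, it suffices to arrange, for all $i<j$: \textbf{(i)} any word that is a factor of some cyclic shift of $v_i$ and of some cyclic shift of $v_j$ has length $<\tfrac3i|v_i|$; \textbf{(ii)} any word occurring more than once in a cyclic shift of $v_i$ has length $<\tfrac3i|v_i|$. Indeed, for $n\le i<j$ we have $\tfrac3n\ge\tfrac3i$ and $\min(|v_i|,|v_j|)=|v_i|$, so (i) and (ii) give $C^*(\tfrac3n)$ for $\mathcal Q_n$. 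Note that (i)--(ii) are vacuous for $i\le2$ and only mildly constraining for $i=3$ (no cyclic shift of $v_3$ is a factor of $v_jv_j$ for $j>3$); the substance lies in the range $i\ge3$, where we are also free to take $|v_i|$ as large as we wish. Replacing $v$ by $vv$ converts ``factor of a cyclic shift of $v$'' into ``factor of $vv$ of length $\le|v|$'', so that (i)--(ii) become finitely many forbidden‑factor and no‑long‑repeat conditions on the finite words $v_iv_i$.

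\emph{Step 2: the induction.} Suppose $v_1,\dots,v_{i-1}$ satisfy (i)--(ii) among themselves, and let $F_i$ be the set of all factors of the words $v_jv_j$ $(1\le j<i)$ of length $\lceil\tfrac3j|v_j|\rceil$. This is a finite set; by the remark above it is nonempty only for $j\ge3$, each member has length $\ge|v_3|$, and for each $j$ it contains at most $2|v_j|$ words of length $\lceil\tfrac3j|v_j|\rceil$, a vanishing proportion of all binary words of that length provided the earlier $|v_j|$ were chosen large along the way. The inductive step is then to choose $N_i$ large and produce a $7$‑aperiodic word $v_i$ of length $N_i$ such that $v_iv_i$ avoids $F_i$ and such that any word occurring more than once in a cyclic shift of $v_i$ has length $<\tfrac3iN_i$. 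Such a $v_i$ satisfies (i) against every earlier $v_j$ (a longer common factor would contain a member of $F_i$) and satisfies (ii), while $|v_i|=N_i\to\infty$; this closes the induction and yields $\mathcal Q=\{v_i\}$.

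\emph{Step 3: the combinatorial input and the main obstacle.} It remains to prove the purely word‑combinatorial statement that, given a finite set $F$ of (long) words over $\{a,b\}$ and $\varepsilon>0$, there is for every large $N$ a $7$‑aperiodic word $v$ of length $N$ with $vv$ avoiding $F$ and with no factor of $vv$ of length $\ge\varepsilon N$ occurring twice in a cyclic shift of $v$. This cannot be obtained by counting inside the set of all binary words of length $N$: a generic such word is not $7$‑aperiodic, since it almost surely contains $a^7$ or $b^7$, so $v$ must be built explicitly. The scheme I would use is the ``tagged block'' method, in the spirit of classical constructions of aperiodic small‑cancellation words: starting from a fixed infinite $7$‑aperiodic word over $\{a,b\}$ (Thue), cut out a pool of pairwise distinct equal‑length $7$‑aperiodic pieces, equip each with a short synchronizing marker avoiding $F$ and with a readable record of its index, arrange that no seam $\cdots C_kC_{k+1}\cdots$ creates a $7$‑th power or a member of $F$, and set $v=C_1C_2\cdots C_t$. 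Unique decodability of the block structure then forces every sufficiently long factor of $vv$ to contain a complete block, whose index pins down its location; this simultaneously yields the no‑long‑repeat property and, against any later word whose blocks carry a different index coordinate, property (i). Choosing $N$ and the number of blocks large enough that a single block is shorter than $\tfrac13\varepsilon N$ and the blocks can all be distinctly tagged makes the constants fit with $\varepsilon=3/i$. The technical heart, and the main obstacle, is precisely this reconciliation: $7$‑aperiodicity is a local, non‑generic property that must be imported block by block from Thue's word, whereas the small‑cancellation conditions are long‑range uniqueness statements, and the block‑and‑marker device is what lets the two coexist.
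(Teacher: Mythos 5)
Your Steps 1--2 (the reduction to pairwise conditions and the greedy induction) are sound, but the lemma's entire content lives in your Step 3, and there you give only a plan, not a proof. The statement you need --- that for every large $N$ there is a $7$--aperiodic word $v$ of length $N$ such that $vv$ avoids a prescribed finite set $F$ of long words and has no repeated factor of length $\ge \varepsilon N$ --- is exactly the hard part, and the ``tagged block'' scheme as described leaves its critical claims unverified: that markers and index records can be inserted without creating $7$-th powers at the seams; that unique decodability really forces every factor of length $\ge \varepsilon N$ to contain a \emph{complete} block together with enough context to pin down its position (you need a quantitative statement here, not just ``sufficiently long''); and that avoidance of $F$ --- whose elements have length comparable to the earlier $|v_j|$, so this is a long-range condition, not a condition on short markers --- is actually guaranteed by giving the blocks of $v_i$ ``a different index coordinate.'' Since you yourself flag this reconciliation as the main obstacle, the argument is incomplete at precisely the point where the lemma has content.

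For comparison, the paper avoids the induction entirely and builds all the $v_k$ by one uniform recipe that makes the block bookkeeping trivial. Let $\mathcal X(k)$ be the set of \emph{all} $6$--aperiodic positive words of length $k$ beginning and ending with $b$; this set has exponential size $f(k)$, so for $k\ge k_0$ one has $f(k-6)\ge k$ and can set $v_k=(a^6X_{k-6,1})(a^6X_{k-6,2})\cdots(a^6X_{k-6,k})$. The blocks need no artificial tags: they are pairwise distinct because $\mathcal X(k-6)$ consists of distinct words, and a block $a^6X_{k-6,i}a^6$ identifies $k$ (via the distance between consecutive $a^6$--runs) and $i$ simultaneously, which gives both the no-long-repeat property within $v_k$ and the cross-word condition between $v_i$ and $v_j$ at once. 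The $a^6$ separators between $6$--aperiodic words starting and ending with $b$ keep the whole word $7$--aperiodic, and $|v_k|=k^2$ with blocks of length $k$ makes the constant $3/n$ work out since any factor of length $\ge(3/n)|v_k|\ge 3k>2k+6$ must contain a full block. If you want to salvage your inductive framework, you should replace the Thue-word-plus-markers device by this ``all aperiodic words of a fixed length as blocks'' idea, which is what actually closes your Step 3.
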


\begin{proof}
We note that the existence of a collection of words satisfying (b) and (c) was proven in \cite[Proposition 3.3]{EO}, following an idea of Olshanskii \cite{Ols99}. We will show that a slight modification of this construction also satisfies (a).

Let $f(n)$ be the number of positive $6$--aperiodic words in the alphabet $\{a,b\}$ of length $n$ that start and end with $b$.  It is shown in \cite[Lemma 3]{Ols99} that $f$ is an exponential function.  Fix $k_0> 6$ such that for every $k\geq k_0$, $f(k-6)\geq k$.

Let $\mathcal X(k)=\{X_{k,1},\dots, X_{k,f(k)}\}$ be the set of all distinct $6$--aperiodic words of length $k$ over the alphabet $\{a,b\}$ that start and end with $b$ .  For every $k\geq k_0$,  consider the word
 \begin{align*}
 v_k=(a^6X_{k-6,1})(a^6X_{k-6,2})\cdots(a^6X_{k-6,k}).
 \end{align*}
  Let $\mathcal Q=\{ v_k\mid k\geq k_0\}$ and for each $n\geq k_0$, let $\mathcal Q_n=\{v_j\mid j\geq n\}$.   It is straightforward to show that $v_k$ is $7$--aperiodic for all $k\geq k_0$ (see \cite[Lemma 1.2]{Ols16}), so (a) is satisfied.  It is also clear that $|v_k|=k^2$, where $|\cdot |=|\cdot |_{\{a,b\}}$, and so (b) is satisfied as well.

The proof of (c) is essentially that of \cite[Lemma 3.6]{EO}, although with a different collection of words.  Since $\cal Q$ consists of positive words, we do not need to consider inverses of words from $\cal Q$.  Let $n\geq k_0$ and consider $v_j\in\mathcal Q_n$ and a subword $w$ of a cyclic shift $v_j'$ of $v_j$ such that $|w|\geq (3/n)|v_j'|\geq (3/j)|v_j'|=3j>2j+6.$  Then $v_j$ must contain a subword of the form \begin{equation}\label{subword} a^6X_{j-6,i}a^6,\end{equation} where $X_{j-6,i}\in\mathcal X(j-6)$.  Such a subword can only occur once in $v_j$, since elements of $\mathcal X(j-6)$ are distinct.

 Next suppose $v_i,v_j\in \mathcal Q_n$ are two distinct words and $w$ is a common initial subword of cyclic shifts of $v_i$ and $v_j$, respectively, such that $|w|>(3/n)\min\{|v_i|,|v_j|\}.$  By a similar argument, $w$ must contain a subword of the form (\ref{subword}) for $j=\min\{|v_i'|,|v_j'|\}$.  However, such a subword occurs in a unique word in $\mathcal Q_n$, which completes the proof of (c) and the lemma.
\end{proof}

We are now ready to prove Proposition \ref{prop:f2}.

\begin{proof}[Proof of Proposition \ref{prop:f2}]  Let $\mathcal Q$ be the infinite collection of words in the alphabet $\{a,b\}$ provided by Lemma \ref{lem:setQ}, and let $S$ be an infinite subset of $\mathcal Q$.  We will construct a generating set of $F(a,b,c)$ in an analogous way to \cite{Kap}.  Given $v_n\in S$, we let $w_n=v_nc\in F(a,b,c)$.  Let us call a nontrivial freely reduced word $z\in F(a,b,c)$ a \emph{$\mathcal W_S$--word} if for some $n\geq 1$ and some $m\neq 0$, the word $z$ is a subword of $w_n^m=(v_nc)^m$ for some $v_n\in S$.  Let $Z_S$ be set of all positive $\mathcal W_S$--words in $F(a,b,c)$.  Then $a,b,c\in Z_S$, and so $Z_S$ generates $F(a,b,c)$.  Note that since $S$ consists only of positive words, every word in $F(a,b)$ representing a word in $S$ is an element of $Z_S$.  Since $S$ is an infinite collection of distinct $7$--aperiodic words in $\{a,b\}$ whose lengths approach infinity, by \cite[Theorem A]{Kap}, the graph $\Gamma(F(a,b,c),Z_S)$ is hyperbolic and the action of $F(a,b)$ on $\Gamma(F(a,b,c),Z_S)$ is acylindrical and purely loxodromic. Moreover, for any $p\in \Gamma(F(a,b,c),Z_S)$, the orbit $F(a,b)p$ is quasi-convex.

If $w\in F(a,b)$ is a nontrivial freely reduced word and $w\equiv z_1\cdots z_k$ is a $\mathcal W_S$--decomposition of $w$ as in (\ref{wdecomp}), then each $z_i$ is a subword of $v_n$ or $v_n^{-1}$ for some $v_n\in S$ (\cite[Lemma 3.3]{Kap}).  Moreover, it is straightforward to check that if $w\in F(a,b)$ is a positive word, then each $z_i$ is a subword of $v_n$ for some $v_n\in S$.

The action $F(a,b)\curvearrowright \Gamma(F(a,b,c),Z_S)$ need not be cobounded.  However, by Proposition \ref{cobdd} there exist $[X_S]\in\mathcal{AH}(F(a,b))$ such that $$F(a,b)\curvearrowright \Gamma(F(a,b,c),Z_S)\sim_w F(a,b)\curvearrowright \Gamma(F(a,b),X_S).$$  Moreover, $[X_S]$ is a purely loxodromic structure.

For a technical reason, it is convenient to work with the poset $\mathcal P(\omega)/Fin$ instead of $\PN$. Recall that $\mathcal P(\omega)/Fin$ is the poset of equivalence classes of subsets of $\mathbb N$; two subsets $A,B\subseteq N$ are \emph{equivalent} if $|A\bigtriangleup B|<\infty$, and $[A]\le [B]$ if $|A\setminus B|<\infty $. Note that $\PN$ embeds in $\mathcal P(\omega)/Fin$. Indeed, let $M=\bigsqcup\limits_{i=1}^\infty N_i$, where $N_i=\mathbb N$, and let $j\colon M\to \mathbb N$ be a bijection. Given $S\subseteq \mathbb N$, let $S_i$ denote the copy of $S$ in $N_i$.  Then the map $S\mapsto j(\bigsqcup\limits_{i=1}^\infty S_i)$ is a required embedding. Thus it suffices to show that $\mathcal P(\omega)/Fin$ embeds in $[X]^\AH_{\L}$.

Identifying $\mathcal Q$ with $\mathbb N$, let $$f\colon\mathcal P(\omega)/Fin \to \mathcal{AH}(F(a,b))$$ be the map defined by $$[S]\mapsto [X_S].$$

We first show that $f$ is well-defined.  Recall that two subsets $S,T\subset \mathbb N$ are equivalent if $|S\triangle T|<\infty$, and $S\leq T$ if $|S\setminus T|<\infty$.  Suppose $S,T\subset \mathbb N$ are equivalent.  It suffices to consider the case $S=T\cup\{s\}$.  It is clear that  $F(a,b)\curvearrowright \Gamma(F(a,b,c),Z_S)\preceq F(a,b)\curvearrowright \Gamma(F(a,b,c),Z_T)$.  Let $g\in F(a,b)$ and $K=|g|_{Z_S}$.  By Lemma \ref{distformula}, $g$ has a $\mathcal W_S$--decomposition of length $K$, that is, $g\equiv p_1\dots p_K$.  Since $g\in F(a,b)$, each $p_j$ is a subword of $v_{n_j}$ or $v_{n_j}^{-1}$ for some $v_{n_j}\in S$.  If $v_{n_j}\in T$, then $p_j$ is a $\mathcal W_T$--word and so $|p_j|_{Z_T}=1$.  Otherwise, $p_j=s$ or $s^{-1}$, and so 
$|g|_{Z_T}\leq |s|_{Z_T}|g|_{Z_S}$.  Since $|s|_{Z_T}$ is a constant, it follows from Definition \ref{def-poset} that $F(a,b)\curvearrowright \Gamma(F(a,b,c),Z_T)\preceq F(a,b)\curvearrowright \Gamma(F(a,b,c),Z_S)$.  Thus the actions $F(a,b)\curvearrowright \Gamma(F(a,b,c),Z_T)$ and $F(a,b)\curvearrowright \Gamma(F(a,b,c),Z_S)$ are equivalent, and hence $[X_S] \sim [X_T]$ by Lemma \ref{e-we}.

We next show that $f$ is injective.  Suppose $S,T\subset\mathcal Q$ satisfy $|S\triangle T|=\infty$, so that $S$ and $T$ are not equivalent in $\mathcal P(\omega)/Fin$.  Without loss of generality, assume that $S\setminus T$ is infinite and consider distinct words $u_i\in S\setminus T$ for $i=1,2,\dots$.  Suppose $F(a,b)\curvearrowright \Gamma(F(a,b,c),Z_T)\preceq F(a,b)\curvearrowright \Gamma(F(a,b,c),Z_S)$.   Then by Lemma \ref{EA}, there exists a constant $C$ such that $|g|_{Z_T}\leq C|g|_{Z_S} +C$ for all $g\in F(a,b)$.  In particular, $|u_i|_{Z_T}\leq 2C$ for all $i$.  By Lemma \ref{distformula}, for each $i$ there is some $k\leq 2C$ such that $u_i$ has a $\mathcal W_T$--decomposition of length $k$, that is, $u_i\equiv p_1\dots p_k$.  Each $p_j$ is a subword of $u_i$, and since $u_i\in F(a,b)$ is a positive word, each $p_j$ is also a subword of some $v_{n_j}\in T$.  Moreover, at least one $p_j$ must have length at least $\frac{1}{k}|u_i|_{Z_T}$.    
The words $u_i$ and $v_{n_j}$  belong to $\mathcal Q_{\max\{n_j,i\}}$, and so for large enough $i$ these words satisfy $C^*(\frac{1}{2C})$.  Therefore $v_{n_j}=u_i$ for large enough $i$.  However, this contradicts the fact that $u_i\not\in T$.  Thus $X_T\not\sim X_S$.

Finally we show that $f$ is order-reversing.  Suppose $[T]\leq[S]$ in $\mathcal P(\omega)/Fin$. Changing the representatives of the equivalence classes if necessary, we can assume that $T\subseteq S$ and the inequality follows.

To see that $\mathcal P(\omega)/Fin$ embeds in $\mathcal{AH}(F(a,b))$, we precompose $f$ with the order-reversing automorphism of $\mathcal P(\omega)/Fin$ defined by $A\mapsto A^c$. Note that for every $[S]\in \mathcal P(\omega)/Fin$, $[X_S]$ is a purely loxodromic structure by \cite[Theorem A]{Kap}, and so $X_S\in [X]^\AH_{\L}$, as desired.
\end{proof}

To prove Theorem \ref{main3} we will need the following proposition, which describes how the set of loxodromic elements of an acylindrically hyperbolic structure changes after adding a hyperbolically embedded subgroup to the generating set.

\begin{prop} \label{lem:lox}
Let $G$ be an acylindrically hyperbolic group and let $[X]\in\AHG$.  Suppose a subgroup $H\h (G,X)$.  If $g\in\L([X])$, then either $g$ is conjugate to an element in $H$ or $g$ is a loxodromic element with respect to $[X\cup H]$.\end{prop}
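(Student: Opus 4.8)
The plan is to prove the following equivalent statement: if $g\in\L([X])$ and $g$ is \emph{not} loxodromic with respect to $[X\cup H]$, then $g$ is conjugate into $H$. First I set up the geometry. Since $X$ generates $G$, the Cayley graph $\Gamma(G,X\cup H)$ has the same vertex set and metric as the relative Cayley graph $\Gamma(G,X\sqcup H)$, hence is hyperbolic by Definition \ref{defhe}(a); and since $[X]\in\AHG$, the action $G\curvearrowright\Gamma(G,X)$ is acylindrical, so Proposition \ref{lem:Hacyl} shows $H$ is strongly hyperbolically embedded in $G$ with respect to $X$ and therefore $G\curvearrowright\Gamma(G,X\cup H)$ is also acylindrical. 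Applying Theorem \ref{tricho} to the virtually cyclic subgroup $\langle g\rangle$ acting on $\Gamma(G,X\cup H)$, this action is elliptic or lineal, and in the lineal case $g$ is loxodromic; so the hypothesis that $g$ is not loxodromic with respect to $[X\cup H]$ means exactly that $\langle g\rangle$ has bounded orbits there, i.e. $M:=\sup_{n\in\mathbb Z}|g^n|_{X\cup H}<\infty$. We may also assume $[X]$ is nontrivial (otherwise $\L([X])=\emptyset$), and then, as $G$ is acylindrically hyperbolic, Theorem \ref{main1} gives that $G\curvearrowright\Gamma(G,X)$ is of general type.

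Since $g$ is loxodromic for the acylindrical action $G\curvearrowright\Gamma(G,X)$, by \cite[Corollary 3.17]{Hull} there is a virtually cyclic subgroup $E=E_G(g)$ with $g\in E$ and $\{E\}\h(G,X)$; since $g$ has infinite order, $\langle g\rangle$ has finite index in $E$, so $E$ is infinite and (being a finite extension of $\langle g\rangle$) also has bounded orbits in $\Gamma(G,X\cup H)$, say $E\subseteq\{f\in G:|f|_{X\cup H}\le M'\}$ for some finite $M'$. The crux of the argument is the claim that \emph{if $g$ is not conjugate into $H$, then $\{E,H\}\h(G,X)$}. Granting this, one absorbs $H$ into the relative generating set -- a standard operation on hyperbolically embedded collections, see \cite{DGO} -- to get $\{E\}\h(G,X\cup H)$, so by Definition \ref{defhe}(b) the relative metric $\dl$ on $E$ coming from $\Gamma(G,(X\cup H)\sqcup E)$ is proper. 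Any $(X\cup H)$-word representing an element $f\in E$ gives a path using no $E$-edges, hence an admissible path, so $\dl(1,f)\le|f|_{X\cup H}$ for all $f\in E$; together with properness this forces $\{f\in E:|f|_{X\cup H}\le M'\}$ to be finite. But that set is all of $E$, which is infinite -- a contradiction. Hence $g$ is conjugate into $H$, as required.

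I expect the claim itself to be the main obstacle. The point is that $E$ and $H$ fail to form a single hyperbolically embedded collection only when they ``overlap'': using the combination theory of \cite{Hull} for adjoining the elementary closure of a loxodromic element to an existing hyperbolically embedded collection, one reduces to showing that $E\cap fHf^{-1}$ is infinite for some $f\in G$ only if $g$ itself is conjugate into $H$. Now if $E\cap fHf^{-1}$ is infinite, then, $E$ being virtually cyclic, it contains $g^k$ for some $k\neq 0$ with $f^{-1}g^kf\in H$; since $g^k$ is loxodromic for the general-type acylindrical action $G\curvearrowright\Gamma(G,X)$ and $H\h(G,X)$, almost-malnormality of $H$ and the coincidence of maximal elementary subgroups computed in $H$ and in $G$ give $E=E_G(g^k)\subseteq fHf^{-1}$, whence $g\in fHf^{-1}$. (The general-type hypothesis is essential here: for a lineal structure such as $[\{1\}]\in\mathcal{AH}(\mathbb Z)$ the analogous claim is false, which is precisely why the proposition assumes $G$ acylindrically hyperbolic.) An alternative, more hands-on route avoids this claim altogether: one analyzes the quasi-geodesic axis of $g$ in $\Gamma(G,X)$, and shows via the bounded-coset-penetration estimates of Corollary \ref{Yibound} and Lemma \ref{C}, together with geometric separation of the left cosets of $H$, that unless the axis is trapped $g$-periodically in a bounded neighbourhood of a single coset $fH$ (forcing $f^{-1}gf\in H$), it survives the coning-off of the cosets and $g$ stays loxodromic in $\Gamma(G,X\cup H)$.
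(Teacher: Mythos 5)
Your framing is right and your endgame is sound: if $\{E,H\}\h(G,X)$ held, then $E\h(G,X\cup H)$, the relative metric $\dl$ on $E$ would be proper and bounded above by $\d_{X\cup H}$ on $E$, and an infinite subgroup $E$ with bounded orbits in $\Gamma(G,X\sqcup H)$ would be impossible. The gap is the central Claim -- that $g$ not conjugate into $H$ forces $\{E,H\}\h(G,X)$ -- and your justification of it is circular. The combination results you appeal to (Hull's Corollary 3.17 for adjoining $E(g)$ to an existing hyperbolically embedded collection, or the quasiconvex/geometrically separated criterion behind it) require $g$ to be loxodromic, equivalently $E$ to be unbounded, quasiconvex and acting properly, \emph{with respect to the action on $\Gamma(G,X\sqcup H)$}, i.e.\ after $H$ has already been coned off. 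That is precisely what the proposition asserts; in the case you are arguing against, $E$ is bounded in $\Gamma(G,X\sqcup H)$ and none of these hypotheses are available. There is no general theorem saying that two subgroups, each hyperbolically embedded in $(G,X)$ and mutually almost malnormal, are \emph{jointly} hyperbolically embedded, so the ``reduction to showing $E\cap fHf^{-1}$ is infinite only if $g$ is conjugate into $H$'' is not licensed by anything you cite. (The Claim is true, but only as a consequence of the proposition itself.) A smaller issue: you invoke Theorem \ref{main1}(c) for the general-type statement, but in the paper that is deduced from Theorem \ref{main3}, whose proof uses this very proposition; cite Theorem \ref{tricho} instead.

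The paper's actual argument is the quantitative version of your one-sentence ``alternative route.'' Since the action on $\Gamma(G,X\sqcup H)$ is acylindrical and $\Gamma(G,X\sqcup H)$ is hyperbolic, a non-loxodromic $g$ is elliptic and every power $g^n$ is conjugate to an element of uniformly bounded $\d_{X\cup H}$-length, representable by a word $w$ all of whose cyclic permutations are geodesic. Building an $(N+1)$-gon with $N$ sides labelled $w$ and running a case analysis on how the $H$-components of the $w$-sides connect (the hypothesis that $g$ is not conjugate into $H$ is used exactly to rule out $w$ being a single $H$-letter), Corollary \ref{Yibound} bounds the $X$-length of every $H$-component of $w$ by a constant independent of $n$, so each $g^n$ is conjugate to an element of $X$-length at most $C$. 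Hence $\tau_X(g)=\tau_X(g^n)/n\le C/n$ for all $n$, so $\tau_X(g)=0$ and $g\notin\L([X])$. To salvage your approach you would have to supply essentially this argument (or some other direct proof that $E$ is unbounded in $\Gamma(G,X\sqcup H)$) \emph{before} any combination theorem can be brought to bear.
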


We break the proof into several lemmas. The next three results are stated under the hypotheses of Proposition \ref{lem:lox}.

\begin{lem}\label{lem:ellconjlength}
There exists a constant $K$ such that every element which is elliptic with respect to $[X\cup H]$ is conjugate to an element of $(X\cup H)$--length at most $K$.
\end{lem}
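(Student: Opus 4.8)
The plan is to exploit acylindricity of the action of $G$ on $\Gamma(G,X\cup H)$ together with the well-known fact that an acylindrical action on a hyperbolic space has no parabolic elements: every element of $G$ is either elliptic or loxodromic with respect to $[X\cup H]$. So we must find a uniform bound on the conjugacy-minimal $(X\cup H)$-length of elliptic elements. First I would fix $\delta$ with $\Gamma(G,X\cup H)$ being $\delta$-hyperbolic and let $R=R(1)$, $N=N(1)$ be the acylindricity constants for $\e=1$. Given an elliptic element $g$, consider its orbit $\{g^n\cdot 1\}_{n\in\mathbb Z}$ in $\Gamma(G,X\cup H)$; since $g$ is elliptic this orbit is bounded, hence (since $\langle g\rangle$ acts on it) $\langle g\rangle$ has a quasi-center, i.e.\ there is a point $x$ moved a bounded amount by $g$. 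The standard argument is: an isometry group with a bounded orbit in a $\delta$-hyperbolic space fixes a point up to bounded error, and one can take $x$ in the orbit of $1$, so $x=h\cdot 1$ for some $h\in G$; replacing $g$ by $h^{-1}gh$ we may assume $\d_{X\cup H}(1,g\cdot 1)=|g|_{X\cup H}$ is already bounded by a constant depending only on $\delta$. Then $K$ is this constant.

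More carefully: the key step is to show that if $g$ is elliptic then some conjugate $g'$ of $g$ satisfies $|g'|_{X\cup H}\le K$ for a uniform $K$. One route is via acylindricity directly. Suppose $g$ is elliptic and let $D=\operatorname{diam}(\langle g\rangle\cdot 1)$. If $D$ is large, pick $x,y$ on a geodesic segment of the orbit with $\d_{X\cup H}(x,y)\ge R$; since the orbit is $g$-invariant and $g$ moves points of the orbit by a bounded amount, a standard $\delta$-hyperbolic centering argument shows that all sufficiently large powers $g^n$ move both $x$ and $y$ by at most $1$ (here one uses that on a quasi-geodesic-free bounded set, powers of $g$ eventually act with small displacement on an interior point). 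This contradicts $|\langle g\rangle|=\infty$ via the bound $N$. Hence $D$ is bounded by a constant $D_0=D_0(\delta)$. Then, since $1\in\langle g\rangle\cdot 1$ up to translating by a group element, conjugating $g$ by an appropriate element $h$ moves a ``center'' of the orbit to the vertex $1$, and one checks $|h^{-1}gh|_{X\cup H}\le 2D_0+\text{(small constant)}$. Set $K=2D_0+C$ for the relevant $\delta$-constant $C$.

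The main obstacle I anticipate is making the ``centering'' step fully rigorous without properness of $\Gamma(G,X\cup H)$: one cannot just say $\langle g\rangle$ has a fixed point, since the space need not be proper. The clean workaround is to avoid fixed points entirely and argue with the translation number: $g$ is elliptic iff $\tau_{[X\cup H]}(g)=0$ iff $\inf_n |g^n|_{X\cup H}/n=0$. But to get a bound on a \emph{conjugate} length rather than a limiting average, I would instead invoke the quantitative statement already available in the literature on acylindrical actions — namely that in an acylindrical action on a hyperbolic space, elliptic elements have uniformly bounded ``stable'' displacement, and more precisely there is a constant depending only on the acylindricity data and $\delta$ bounding $\min_{h\in G}\d(hx_0,ghx_0)$ for any fixed basepoint $x_0$; this is essentially \cite[Lemma 2.4]{Osi16} or its consequences combined with \cite[Lemma 2.2]{Bow}-type arguments. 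Invoking that, the lemma follows: take $h$ realizing (up to additive error) the minimal displacement, conjugate, and read off $K$ from the resulting bound on $|h^{-1}gh|_{X\cup H}=\d_{X\cup H}(1,h^{-1}gh\cdot 1)$. I would present the proof in this last form, citing the acylindricity machinery, since it is the shortest and sidesteps the non-properness issue cleanly.
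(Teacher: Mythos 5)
Your first paragraph is, in substance, the paper's proof: the paper cites the quantitative fact that an elliptic element acting on a $\delta$--hyperbolic space has an orbit of diameter at most $4\delta+1$ (\cite[Corollary 6.7]{Osi15}), picks a point $v$ of such an orbit, replaces it by a nearby vertex at the cost of doubling the constant, and reads off $|v^{-1}gv|_{X\cup H}=\d_{X\cup H}(1,v^{-1}gv)\le 2(4\delta+1)=:K$. So the centering idea is exactly right. Your worry about non-properness is unfounded: the quasi-centre argument for a bounded subset of a $\delta$--hyperbolic geodesic space is purely metric and never uses properness, so no acylindricity input is needed anywhere in this lemma.

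Where the proposal goes astray is in the two fallback arguments you say you would actually write down. The acylindricity argument of your second paragraph (a large orbit diameter forces more than $N$ powers of $g$ to displace two far-apart points by at most $1$) only yields a contradiction when $\langle g\rangle$ is infinite; for a finite-order elliptic element there is no conflict with the bound $N$, so that route fails to bound the orbit diameter for torsion elements. And the references you propose to lean on in the ``final form'' --- \cite[Lemma 2.4]{Osi16}, which is just the reformulation of acylindricity with $\d(x,z)=R$ recorded as Remark \ref{acylwithequal} here --- do not contain the displacement bound you attribute to them. The statement you actually need is the elementary hyperbolic-geometry one above; citing it directly, as the paper does, is both shorter and correct, so you should present the proof in the form of your first paragraph and discard the rest.
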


\begin{proof}
Let $\delta$ be the hyperbolicity constant of $\Gamma(G,X\sqcup H)$.  The element $g$ is elliptic in the action $G\curvearrowright \Gamma(G,X\sqcup H)$, and so $g$ has an orbit with diameter $4\delta+1$ by \cite[Corollary 6.7]{Osi15}.  Let $v$ be a representative of this orbit.  After increasing the constant to $2(4\delta+1)$, we may assume that $v$ is a vertex, i.e., $v\in G$.  Then the orbit $\langle v^{-1}gv\rangle \cdot 1$ is contained in a $2(4\delta+1)$--ball around $1$.  Setting $K=2(4\delta+1)$ yields the result.
\end{proof}

Let us call an element $g\in G$ \emph{nice} if it can be represented by a word $w\in\langle X\cup H\rangle$ such that every cyclic permutation of $w$ is geodesic.  Let $K$ be the constant from Lemma \ref{lem:ellconjlength}.  We say an element $f\in G$ is \emph{super nice} if it is nice and satisfies $|f^n|_{X\cup H}\leq K+2K^2$ for all $n$.

\begin{lem}\label{lem:Xlength}
There exists a constant $C$ such that every element $g\in G$ that is super nice and is not conjugate to an element of $H$ satisfies $|g|_X\leq C$.
\end{lem}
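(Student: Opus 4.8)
The plan is to use super-niceness to reduce the statement to a uniform bound on the $X$--length of each individual $H$--syllable of a cyclic geodesic word for $g$, and then to extract that bound from Corollary \ref{Yibound} (equivalently, from Lemma \ref{C} together with Lemma \ref{lem:Z}) applied to a geodesic polygon built from the $\langle g\rangle$--orbit of $g$ in $\Gamma(G,X\sqcup H)$. The non-conjugacy hypothesis enters precisely to prevent the chosen syllable from being "absorbed'' by $H$ along the periodic structure.

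First I would record the consequences of super-niceness. Since $g$ is nice it is represented by a word $w\equiv a_1\cdots a_k$ over $X\sqcup H$ all of whose cyclic shifts are geodesic in $\Gamma(G,X\sqcup H)$; since $g$ is super nice, $k=|g|_{X\cup H}\le K+2K^2$, so $w$ has a uniformly bounded number of syllables, and $|g^n|_{X\cup H}\le K+2K^2$ for all $n$, which forces the whole bi-infinite periodic path $\ldots\,w\,w\,w\,\ldots$ through the vertices $(g^n)_{n\in\mathbb Z}$ to lie in a ball of uniformly bounded radius about $1$. Because $[X]\in\AHG$, the set $X$ generates $G$, so $H\le\langle X\rangle$ and Corollary \ref{Yibound} applies with $n=1$ and $Y_1=X$. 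It then suffices to show that each $H$--syllable $h=a_i$ satisfies $|h|_X=\d_X(a_1\cdots a_{i-1},\,a_1\cdots a_i)\le C'$ for a constant $C'$ independent of $g$: summing over the at most $K+2K^2$ syllables yields the lemma with, say, $C=(K+2K^2)\max\{1,C'\}$.

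Fix such an $H$--syllable $h$; after a cyclic permutation of $w$ we may assume $h$ is an $H$--component of the cyclic word $\bar w$ (so the syllables immediately before and after $h$ in $\bar w$ are not in $H$). Consider the geodesic polygon $P$ obtained from two (or a few) consecutive copies of $w$ along the orbit $1,g,g^2,\dots$ closed up by an $(X\sqcup H)$--geodesic segment $q$ back to $1$; by super-niceness $q$ is short, so $P$ has a uniformly bounded number of geodesic sides. If the relevant copy of $h$ is an isolated $H$--component of $P$, then Corollary \ref{Yibound} gives $|h|_X\le D\cdot(\#\text{sides of }P)$ and we are done. Otherwise $h$ is connected either (i) to an $H$--component on another copy of $w$, or (ii) to an $H$--component of $q$. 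In case (i) the periodicity identifies the two $H$--cosets up to a power of $g$ conjugated by an initial segment of $w$; using that every cyclic shift of $w$ is geodesic (so cross-period shortcuts cannot occur) together with the maximality of $h$ as a component of $\bar w$, one deduces $(a_1\cdots a_{i-1})^{-1}g^{\pm1}(a_1\cdots a_{i-1})\in H$, contradicting that $g$ is not conjugate into $H$. In case (ii) one observes that the subsegment of $q$ met by $h$ has both endpoints in a common left coset of $H$, hence — being itself geodesic — has length at most one; so $h$ is connected to a single $H$--edge of $q$, and the (at most two) $H$--edges realizing this connection are isolated $H$--components of geodesic polygons with boundedly many sides, so have bounded $X$--length by Corollary \ref{Yibound}, after which a short diagram chase bounds $|h|_X$.

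The main obstacle is the analysis of the non-isolated cases, and especially controlling the subsegment of the closing geodesic $q$ in case (ii): since the chosen syllable $h$ is \emph{mutually} connected to a component of $q$, neither can simply be isolated, and the argument must be arranged so that the non-conjugacy hypothesis — which kills the cross-period connections of case (i) — is exactly what forces that subsegment of $q$ to be short (and similarly rules out the residual "zigzag'' sub-configurations of case (i)). I expect the cleanest implementation is to run everything with the relative metric $\widehat\d$, using Lemma \ref{C} in place of Corollary \ref{Yibound} and converting back to $\d_X$ with Lemma \ref{lem:Z}, since the statement "an isolated $H$--component of a bounded geodesic polygon has bounded $\widehat\d$'' is the tool that meshes best with the periodic structure.
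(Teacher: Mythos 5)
Your overall architecture --- a geodesic polygon built from copies of $w$ along the $\langle g\rangle$-orbit plus a short closing geodesic, with isolated $H$-components bounded via Corollary \ref{Yibound} and the remaining configurations ruled out one by one --- is the same as the paper's, and your handling of connections between components on \emph{consecutive} copies of $w$ matches the paper's Case 3a (geodesicity of cyclic shifts kills proper shortcuts; non-conjugacy is only needed for the degenerate case where $w$ is a single $H$-letter). However, there are two genuine gaps. First, your case (i) does not cover connections between $H$-components on \emph{non-consecutive} copies of $w$. Writing $u_\ell=a_1\cdots a_\ell$, if the component at position $i$ of copy $m$ connects to the component at position $j$ of copy $m'$ with $|m-m'|\ge 2$, the coset identity you obtain is $u_{i-1}^{-1}g^{m'-m}u_{j-1}\in H$, which yields neither conjugacy of $g$ into $H$ nor a violation of geodesicity of a cyclic shift: the subpath of the periodic path between the two components is longer than one period, so it is not a subword of any cyclic shift of $w$. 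The paper's escape (its Case 3b) is to pass to an \emph{innermost} connected pair and bound instead the components on the intermediate $w$-sides, which are isolated in a smaller polygon; this works only because every $H$-component of the word $w$ recurs on every $w$-side, so it suffices to bound \emph{some} copy of each component. Your argument, which fixes one syllable and insists on bounding that particular copy, cannot use this.

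Second, your case (ii) does not close. If $h$ connects only to an $H$-component $v$ of the closing geodesic $q$, the connecting $H$-edges are indeed isolated and have bounded $X$-length, but the triangle inequality then gives $\d_X(h_-,h_+)\le \d_X(h_-,v_-)+\d_X(v_-,v_+)+\d_X(v_+,h_+)$, and the middle term --- the $X$-length of a single $H$-labelled edge of $q$ --- is not controlled by anything: the fact that $q$ is a short geodesic in $\Gamma(G,X\sqcup H)$ says nothing about the $X$-length of its $H$-labels, and $v$ is not isolated (it connects to $h$), so Corollary \ref{Yibound} does not apply to it. The paper sidesteps this configuration entirely by taking $N>K+2K^2$ copies of $w$: since $q$ has at most $K+2K^2$ components, each absorbing at most one connection in the relevant case, a pigeonhole argument guarantees that some copy of each $H$-component of $w$ is isolated in the big polygon. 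With only ``two or a few'' copies, as you propose, this configuration is unavoidable and the bound on $|h|_X$ is left unproven.
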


\begin{proof}
Since $g$ is super nice, $|g^n|_{X\cup H}\leq K+2K^2$.  Fix an integer $N$ such that $N>K+2K^2$.  Let $w$ be a representative of $g$ in $\langle X\cup H\rangle$ such that every cyclic permutation of $w$ is geodesic in $\Gamma(G,X\sqcup H)$.  Such a $w$ exists since $g$ is nice.  Then there is an $(N+1)$--gon $P$ in $\Gamma(G,X\sqcup H)$ consisting of $N$ sides that are each labeled by $w$, which we call  $w$--\emph{sides}, and a single side which is a geodesic of length at most $K+2K^2$ connecting $1$ to $g^N$.

We will use $P$ to bound the $X$--length of all $H$--components of $w$.  There are three cases to consider, depending on how the $H$--components of the $w$--sides connect.

{\bf Case 1.}  Suppose all $H$--components of all $w$--sides are isolated in $P$.  Then by Corollary \ref{Yibound}, there is a constant $D$ such that each $H$--component of $w$ has $X$--length at most $D(N+1)$.

{\bf Case 2.} Suppose no $H$--component of any $w$--side is connected to an $H$--component of another $w$--side.  Then any $H$--component of a $w$--side which is not isolated must connect to the single side of $P$ which is not labeled by $w$, and there are at most $K+2K^2$ such connections.  Since there are at least $K+2K^2+1$ $w$--sides, there is at least one copy of each $H$--component of $w$ that is isolated in $P$.   Therefore, as in Case 1, each $H$--component of $w$ has $X$--length at most $D(N+1)$.

\begin{figure}
\centering
\def\svgscale{0.7}
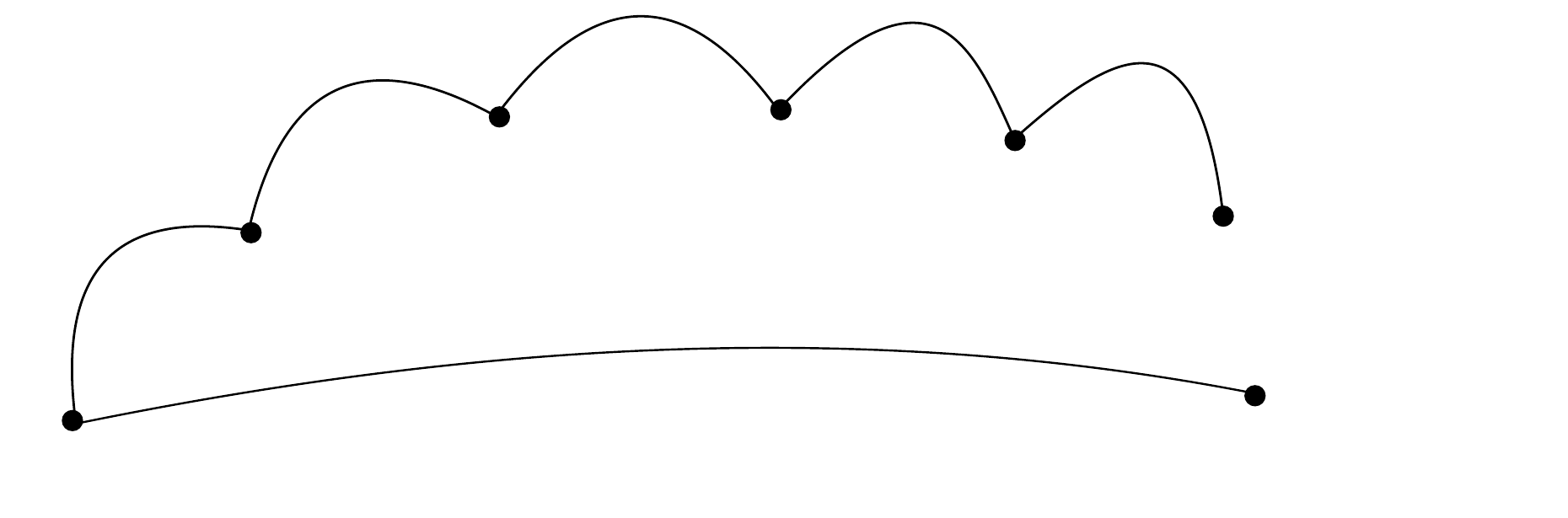
\caption{Case 3a}
\label{case673a}
\end{figure}

{\bf Case 3.}  Suppose an $H$--component $a$ of one $w$--side connects to an $H$--component $b$ of another $w$--side.

{\bf Case 3a.}  Suppose $a$ and $b$ are $H$--components of consecutive $w$--sides (See Fig. \ref{case673a}). If $a=w$, then $\d_{X\cup H}(w_-,w_+)=1$ and so $g$ is conjugate to an element $H$, contradicting our assumption on $g$.  Thus $a$ is a proper subsegment of $w$.  It follows that there is an edge $e$ labeled by an element $h\in H$ connecting $a^+$ to $b_-$.  Then there are two paths from $a_-$ to $e_+$, one a subpath of $P$ and one labeled by $ah$.  The length of the latter path is strictly less than the length of the former, so the former path is not geodesic. However, the label of the subpath of $P$ is a cyclic permutation of $w$, so this contradicts our choice of representative $w$.

{\bf Case 3b.}  Suppose $a$ and $b$ are $H$--components of two $w$--sides which are not consecutive (See Fig. \ref{case673b}).  Let $A$ be the number of $w$--sides between these two sides, so that $1\leq A\leq N-2$.  Without loss of generality, we may assume that every $H$--component of every $w$--side between the two connected $w$--sides is isolated.  To see this, notice that if not, we can choose the ``innermost" connected $H$--components instead; since no two consecutive $w$--sides are connected, this ensures that $A$ is still at least $1$.
Therefore each $H$--component of one of these sides is isolated in an $(A+3)$--gon formed by $A$ $w$--sides, the edge labeled by an element of $H$ connecting $a^+$ to $b_-$, and a subgeodesics of each of the $w$--sides containing $a$ and $b$.  Since $A\leq N-2$, every $H$--component of $w$ has $X$--length at most $D(N+1)$.

\begin{figure}
\centering
\def\svgscale{0.7}
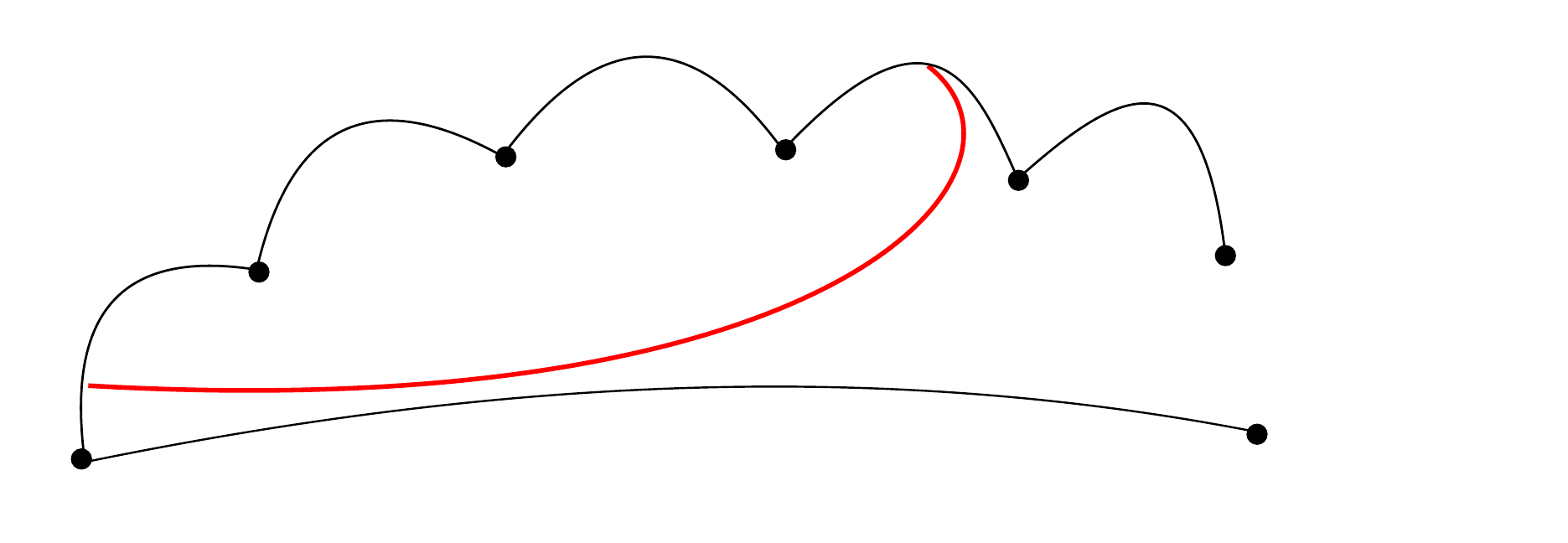
\caption{Case 3b}
\label{case673b}
\end{figure}

In all cases, we have bounded the $X$--length of every $H$--component of $w$ by $D(N+1)$.  Moreover, since $g$ is super nice, $|g|_{X\cup H}\leq K+2K^2$, and so
\[|g|_{X}\leq (K+2K^2)D(N+1).\]
Letting $C=(K+2K^2)D(N+1)$ completes the proof.
\end{proof}

\begin{cor}\label{cor:conjXlength}
There exists a constant $C$ such that every element $g\in G$ which is elliptic with respect to $[X\cup H]$ is conjugate to an element of $X$--length at most $C$.
\end{cor}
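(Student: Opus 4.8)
The plan is to show that every such $g$ has a conjugate which is \emph{super nice} (cf.\ the definition preceding Lemma~\ref{lem:Xlength}) and then to quote Lemma~\ref{lem:Xlength}. We will assume throughout that $g$ is not conjugate to an element of $H$; this is the only case used in the sequel, and it is the hypothesis under which Lemma~\ref{lem:Xlength} is available.

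First I would fix a hyperbolicity constant $\delta$ of $\Gamma(G,X\sqcup H)$ and the constant $K$ of Lemma~\ref{lem:ellconjlength}. Since $g$ is elliptic, $\langle g\rangle$ has bounded orbits on $\Gamma(G,X\sqcup H)$, so—repeating the argument in the proof of Lemma~\ref{lem:ellconjlength}, which invokes \cite[Corollary~6.7]{Osi15}—there is a vertex $v\in G$ whose $\langle g\rangle$--orbit has diameter at most $K$. Replacing $g$ by $v^{-1}gv$, we may assume $|g^n|_{X\cup H}\le K$ for all $n$. Next I would pass to a conjugate of $g$, still written $g$, of minimal $\d_{X\cup H}$--length and choose a geodesic word $w$ over the alphabet $X\sqcup H$ representing it: since every cyclic permutation of $w$ represents a conjugate of $g$ of the same length, minimality forces each cyclic permutation of $w$ to be geodesic, so $g$ is nice, and plainly $|g|_{X\cup H}\le K$.

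It then remains to upgrade ``nice'' to ``super nice'', i.e.\ to bound $|g^n|_{X\cup H}$ by $K+2K^2$ uniformly in $n$; this is the heart of the matter. The point is that the concatenation $w^n$ is a path from $1$ to $g^n$ in which every subpath of length at most $\ell:=|g|_{X\cup H}$ is geodesic (being a subword of a geodesic cyclic permutation of $w$), hence the bi-infinite concatenation through the vertices $g^k$ is an $\ell$--local geodesic line. Because $g$ is elliptic this line cannot be a quasi-geodesic, and a thin-triangles / local-geodesic analysis in the $\delta$--hyperbolic graph $\Gamma(G,X\sqcup H)$—comparing the $\langle g\rangle$--displacement at $1$ with the bounded displacement at a point whose $\langle g\rangle$--orbit has diameter at most $4\delta+1$, supplied by \cite[Corollary~6.7]{Osi15}—bounds $\ell$ in terms of $\delta$ and then bounds $|g^n|_{X\cup H}$ for every $n$ by $K+2K^2$. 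Once this is done, $g$ is super nice and, being a conjugate of the original element, still not conjugate into $H$; Lemma~\ref{lem:Xlength} then produces a constant $C$ with $|g|_X\le C$, and since the original element is conjugate to this $g$, the corollary follows.

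The step I expect to be the main obstacle is the super-nice bound in the third paragraph. Making an element nice may force one to conjugate by a long element, so the uniformity of the constant $K+2K^2$ is not formal and must be extracted from the geometry of $\Gamma(G,X\sqcup H)$; there is the additional technical nuisance that $\ell$ may be smaller than the threshold ($\approx 8\delta$) at which the cleanest local-to-global statement for local geodesics applies, so the $\ell$--local geodesic line has to be analysed directly using thinness of triangles together with the boundedness of the orbit of $\langle g\rangle$.
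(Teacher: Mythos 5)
Your first two steps are fine: Lemma \ref{lem:ellconjlength} (really its proof, which gives $|g_1^n|_{X\cup H}\le K$ for \emph{all} $n$ for the conjugate $g_1=v^{-1}gv$), and the observation that a minimal-length representative of the conjugacy class is nice. The gap is exactly where you flag it, and your proposed repair does not close it. You replace $g_1$ by a minimal-length conjugate $g=c^{-1}g_1c$, but $c$ is completely uncontrolled, so the inherited bound is only $|g^n|_{X\cup H}\le K+2|c|_{X\cup H}$. Your plan to recover super niceness is to show that the $\ell$-local geodesic $w^{\infty}$ forces $\ell\le 8\delta$ "and then" bounds $|g^n|_{X\cup H}$ by $K+2K^2$; but bounding $\ell=|g|_{X\cup H}$ says nothing uniform about $|g^n|_{X\cup H}$. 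In a $\delta$-hyperbolic space an elliptic isometry can displace the basepoint by less than $8\delta$ while its quasi-centre $v$ (the point with $\sup_n\d(v,g^nv)\le 4\delta+1$) lies arbitrarily far from the basepoint, in which case $\sup_n\d(1,g^n\cdot 1)$ is roughly $2\d(1,v)$ and hence unbounded in terms of $\delta$ alone (think of rotations by very small angles in $\mathbb H^2$). Nothing in your sketch bounds $\d(1,v)$ for the minimal-length conjugate, so the claimed bound $|g^n|_{X\cup H}\le K+2K^2$ does not follow.

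The paper avoids this entirely by never conjugating to a minimal-length representative. Instead, starting from $g_1$ with $|g_1^n|_{X\cup H}\le K$ for all $n$, it produces a nice conjugate by iteratively cyclically permuting the word and reducing: each step conjugates by a subword of the current word (length at most $K$) and strictly decreases the length, so the process stops after at most $K$ steps and the total conjugator has length at most $K^2$. The resulting nice element $g'$ then satisfies $|g'^n|_{X\cup H}\le |g_1^n|_{X\cup H}+2K^2\le K+2K^2$ purely formally, i.e., it is super nice, and Lemma \ref{lem:Xlength} applies. This combinatorial control of the conjugator is the missing idea; if you want to salvage your route you would have to prove that a minimal-length (or nice) representative can always be reached by a conjugator of length bounded in terms of $K$ and $\delta$, which is essentially the same observation. (Your remark that one must assume $g$ is not conjugate into $H$ is correct and consistent with how the corollary is used via Lemma \ref{lem:Xlength}.)
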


\begin{proof}
First observe that every element $g\in G$ is conjugate to a nice element $g'\in G$ by an element of $(X\cup H)$--length at most $|g|_{X\cup H}^2$.
To see this, note that to any given representative $w$ of $g$, to produce a nice element $g'$ from $g$, one cyclically permutes $w$ and then reduces, if possible.  Each cyclic permutation corresponds to conjugation of $g$ by a word of $(X\cup H)$--length at most $|g|_{X\cup H}$, and one needs to perform at most $|g|_{X\cup H}$ cyclic permutations.

Combining Lemma \ref{lem:ellconjlength} with the above observation shows that every element $g\in G$ which is elliptic with respect to $[X\cup H]$ is conjugate to a super nice element of $G$.  The result then follows by Lemma \ref{lem:Xlength}.
\end{proof}

\begin{proof}[Proof of Proposition \ref{lem:lox}]
Suppose $g$ is is not conjugate to an element of $H$ and is elliptic with respect to $[X\cup H]$.  We will show that $g$ must be elliptic with respect to $[X]$, which will contradict our assumption on $g$.

First, for any $n$, applying Corollary \ref{cor:conjXlength} to the element $g^n$ yields $\tau_X(g^n)\leq C$, where $C$ is independent of $n$.  Indeed, $g^n$ is elliptic with respect to $[X\cup H]$, since $g$ is, and thus it is conjugate to an element $g'$ such that $|g'|_X\leq C$.  The translation length of an element cannot exceed its length, and so $\tau_X(g')\leq C$.  Therefore $\tau_X(g^n)\leq C$, as translation length is constant on conjugacy classes.

On the other hand,  $$\tau_X(g)=\frac1n\tau_X(g^n)\leq \frac{C}{n},$$ for all $n$.  Letting $n\to \infty$ we have $$\tau_X(g)=0,$$  and so $g$ is not loxodromic with respect to $[X]$.  Since $[X]\in\AHG$, it follows that $g$ must be elliptic with respect to $[X]$.
\end{proof}

We are now ready to prove the general case, Theorem \ref{main3}.

\begin{proof}[Proof of Theorem \ref{main3}]
Let $G$ be an acylindrically hyperbolic group, and let $[Y]\in\AHG$ be non-elementary. By Theorem \ref{strongheF2}, there is a subgroup $H\simeq F_2\times K(G)$ which is strongly hyperbolically embedded in $G$ with respect to $Y$.  We naturally identify each $f\in F_2$ and $k\in K(G)$ with $(f,1)\in H$ and $(1,k)\in H$, respectively.

We first describe how to produce an element of $\mathcal{AH}(H)$ from an element of $\mathcal{AH}(F_2)$.
Let $\phi$
be the map defined by $$[X]\mapsto [X\cup K(G)].$$  It is clear that $\phi$ is an order-preserving injective map from $\mathcal{AH}(F_2)$ to $\AHH$.  It also clear that $(f,k)$ is a loxodromic element of $H$ with respect to $[X\cup K(G)]\in \AHH$ if and only if $f$ is a loxodromic element of $F_2$ with respect to $[X]\in\mathcal{AH}(F_2)$.

Let $\iota_Y$ be the map defined by (\ref{iotadef}). By Theorems \ref{AHO} and \ref{main2}, $\iota_Y$ is an order-preserving injective map $\mathcal{AH}(H)\hookrightarrow\mathcal{AH}(G)$.  Recall that given $[A]\in\mathcal{AH}(H)$, $\iota_Y([A])=[A\cup Y]$.  By Theorem \ref{AHO} (b), $H\curvearrowright \Gamma(H,A)\sim_w H\curvearrowright \Gamma(G,A\cup Y)$.  Therefore an element $h\in H$ is a loxodromic element of $H$ with respect to $[A]\in\AHH$ if and only if $h$ is a loxodromic element of $G$ with respect to $[A\cup Y]\in\AHG$.

Recall that $H=\langle a,b,K(G)\rangle$, where $\langle a,b\rangle \cong F_2$, and let $X=\{a,b\}$.   By Proposition \ref{prop:f2}, there is an embedding $\psi$ of $\mathcal P(\omega)$ into $[X]^\AH_{\L}$.  Thus there is an embedding of posets $$\iota_Y\circ\phi\circ \psi\colon P(\omega)\hookrightarrow \AHG.$$ We want to show that the image of $\iota_Y\circ \phi\circ \psi$ is contained in $[Y]^\AH_{\L}$.

Note that since $X\cup K(G)$ is finite, we have $(\iota_Y\circ\phi)([X])=[Y]$.  Thus it remains only to show that if $[X_1]\sim_{\L}[X]$ in $\mathcal{AH}(F_2)$, then $[X_1\cup K(G)\cup Y]\sim_{\L}[Y]$ in $\AHG$.

Since $ [X_1\cup K(G)\cup Y]\preccurlyeq [Y]$, it follows that every element of $G$ which is loxodromic with respect to $ [X_1\cup K(G)\cup Y]$ is loxodromic with respect to  $[Y]$. To show the opposite inclusion, suppose $g\in G$ is loxodromic with respect to $[Y]$ but is not conjugate to an element of $H$.  Then $g$ is loxodromic with respect to $[Y\cup H]$ by Lemma \ref{lem:lox}.  Since $X_1\cup K(G)\cup Y\subseteq Y\cup H$,
it follows that $g$ is loxodromic with respect to $[X_1\cup K(G)\cup Y]$.

An element $(f,k)\in H$ is a loxodromic element of $G$ with respect to $[X_1\cup K(G)\cup Y]$ exactly when $f$ is a loxodromic element of $F_2$ with respect to $[X_1]$.  Since $[X]$ is purely loxodromic and $[X_1]\sim_\L [X]$, every $f\in F_2\setminus\{1\}$ is a loxodromic element of $F_2$ with respect to $[X_1]$.  By Proposition \ref{strongheF2}, these are precisely the elements of $H$ which are loxodromic elements of the group $G$ with respect to $[Y]$, completing the proof.
\end{proof}

\subsection{Coarsely isospectral actions}

Recall that two actions $A=G\curvearrowright S$ and $B=G\curvearrowright T$ of the same group $G$ are said to be \emph{coarsely isospectral} if for every sequence of elements $(g_i)_{i\in \mathbb N} \subseteq G$, we have $$\lim\limits_{i\to\infty}\tau_{A}(g_i)=\infty \;\; \Longleftrightarrow \;\; \lim\limits_{i\to\infty}\tau_{B}(g_i)=\infty, $$ where $\tau _A(g_i)$, $\tau_B(g_i)$ are the translation numbers of $g_i$ defined as in (\ref{def-trn}).

We first verify that coarse isospectrality is invariant under weak equivalence (see Definition \ref{def-we}).

\begin{lem}\label{ciinv}
Any two weakly equivalent actions of the same group are coarsely isospectral.
\end{lem}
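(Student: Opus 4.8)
The statement asserts that weak equivalence preserves coarse isospectrality. The natural approach is to show that the translation numbers $\tau_A(g)$ and $\tau_B(g)$ are ``coarsely comparable'' in the appropriate sense whenever $A = G \curvearrowright R$ and $B = G \curvearrowright S$ are weakly equivalent. First I would recall that by Lemma \ref{we-alt}, weak equivalence of $G \curvearrowright R$ and $G \curvearrowright S$ gives a coarsely $G$-equivariant quasi-isometry $f$ from an orbit $Gr \subseteq R$ (with the induced metric) to an orbit $Gs \subseteq S$. Alternatively, and perhaps more directly, I would use Definition \ref{def-we} together with Lemma \ref{EA}: since $G \curvearrowright R \preceq G \curvearrowright S$ and $G \curvearrowright S \preceq G \curvearrowright R$, there is a constant $C$ such that for every $g \in G$,
$$
\frac{1}{C}\, \d_S(s, gs) - C \le \d_R(r, gr) \le C\, \d_S(s, gs) + C,
$$
where $r \in R$, $s \in S$ are fixed basepoints (the left inequality is just the reverse domination rewritten).

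The key computation is then to apply this inequality with $g$ replaced by $g^n$, divide by $n$, and pass to the limit. Since $\tau_A(g) = \lim_{n \to \infty} \d_R(r, g^n r)/n$ and $\tau_B(g) = \lim_{n \to \infty} \d_S(s, g^n s)/n$ (both limits existing and basepoint-independent, as recalled after (\ref{def-trn})), dividing the displayed inequality applied to $g^n$ by $n$ and letting $n \to \infty$ yields
$$
\frac{1}{C}\, \tau_B(g) \le \tau_A(g) \le C\, \tau_B(g)
$$
for every $g \in G$. From this two-sided bound the lemma is immediate: if $(g_i)$ is a sequence with $\tau_A(g_i) \to \infty$, then $\tau_B(g_i) \ge \tau_A(g_i)/C \to \infty$, and symmetrically; hence $\tau_A(g_i) \to \infty \Leftrightarrow \tau_B(g_i) \to \infty$, which is exactly coarse isospectrality.

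I do not anticipate a serious obstacle here; the only point requiring minor care is the justification that $\d_R(r, g^n r)/n$ converges to $\tau_A(g)$ when $r$ is an arbitrary basepoint rather than one lying on a quasi-geodesic axis — but this is precisely the basepoint-independence statement already granted in the text immediately after equation (\ref{def-trn}), so it may be quoted. One should also note that the constant $C$ from Lemma \ref{EA} can be taken uniform over all $g \in G$ once the basepoints are fixed, which is exactly the content of that lemma. Thus the proof is a short three-line argument: invoke weak equivalence and Lemma \ref{EA} to get the two-sided linear comparison of displacement functions, raise to powers and take limits to get $\frac{1}{C}\tau_B(g) \le \tau_A(g) \le C\,\tau_B(g)$, and conclude.
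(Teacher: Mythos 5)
Your proof is correct and follows essentially the same route as the paper: apply the domination inequality from Definition \ref{def-poset} to powers $g^n$, divide by $n$, and pass to the limit to obtain $\tau_A(g)\le C\,\tau_B(g)$, then use the symmetric inequality for the converse direction. The only cosmetic difference is that you package both directions into a single two-sided bound, whereas the paper states one inequality and invokes symmetry.
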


\begin{proof}
Let $A=G\curvearrowright S$ and $B=G\curvearrowright T$ be weakly equivalent actions. 
By Definition \ref{def-poset}, the inequality $A\preceq B$ means that there exist $C>0$ and $s\in S$, $t\in T$ such that $$\d_S (s,fs) \le C\d_T(t, ft)+C$$ for all $f\in G$. Let $g\in G$. Applying the previous inequality to powers of $g$, we obtain
$$
\tau_A(g)=\lim\limits_{n\to \infty} \frac{\d_S(s, g^ns)}{n} \le \lim\limits_{n\to \infty} \frac{C\d_T(t, g^nt)+C}{n}= C\lim\limits_{n\to \infty} \frac{\d_T(t, g^nt)}{n}=C\tau_B(g).
$$
Thus, for every sequence of elements $(g_i)$ of the group $G$, $\tau_A(g_i)\to \infty $ as $i\to \infty$ implies $\tau_B(g_i)\to \infty $ as $i\to \infty$. Similarly, the inequality $B\preceq A$ yields the converse implication.
\end{proof}

Lemma \ref{ciinv} allows us to define the notion of coarsely isospectral hyperbolic structures on a group $G$ as follows.

\begin{defn}\label{def-cis}
Two hyperbolic structures $[X], [Y]\in \H(G)$ are called \emph{coarsely isospectral} if the actions $G\curvearrowright \Gamma(G,X)$ and $G\curvearrowright \Gamma (G,Y)$ are coarsely isospectral.
\end{defn}

Our next goal is to prove Theorem \ref{main4} and Corollary \ref{cormain4}. To this end we will need several technical lemmas. The first one is well-known (see, for example, the proof of Proposition 21 and Chapter 2 of \cite{GH}).
\begin{lem}\label{GHs23}
Let $S$ be a $\delta$--hyperbolic space. Then for any $x,y,z,t\in S$, we have
\begin{equation}\label{Gprod}
(x|z)_t\ge \min \{ (x|y)_t, (y|z)_t\} -8\delta.
\end{equation}
\end{lem}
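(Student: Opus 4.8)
\textbf{Proof plan for Lemma \ref{GHs23}.} This is the standard ``approximate associativity'' estimate for the Gromov product in a $\delta$-hyperbolic space, and I would prove it via the Rips thin-triangles condition. Fix $x,y,z,t\in S$ and choose geodesics $[t,x]$, $[t,y]$, $[t,z]$, $[x,y]$, $[y,z]$, $[x,z]$. The first observation to record is the well-known comparison between the Gromov product $(a|b)_t$ and the distance from $t$ to a geodesic $[a,b]$: in a $\delta$-hyperbolic (Rips) space one has $(a|b)_t \le \d(t,[a,b]) \le (a|b)_t + 4\delta$ (or a similar constant depending on the bookkeeping convention). I would either cite this from \cite{GH} or include the one-line argument: consider the geodesic triangle $\Delta(t,a,b)$; the internal point of $[a,b]$ nearest $t$ is within $4\delta$ of the two other sides, which forces the distance estimate after cancelling the $\d(a,b)$ term.

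Granting this, the plan is to compare the three products against distances to geodesics and use thinness of the triangle $\Delta(x,y,z)$. Let $p$ be a point on $[x,z]$ with $\d(t,p)$ within $4\delta$ of $(x|z)_t$, i.e. essentially realizing $\d(t,[x,z])$. By the thin-triangles condition applied to $\Delta(x,y,z)$, the point $p$ lies within $\delta$ of $[x,y]\cup[y,z]$; say $\d(p,q)\le\delta$ for some $q\in[x,y]$ (the case $q\in[y,z]$ is symmetric and will give the other term in the minimum). Then
\[
(x|z)_t \ge \d(t,[x,z]) - 4\delta = \d(t,p) - 4\delta \ge \d(t,q) - \delta - 4\delta \ge \d(t,[x,y]) - 5\delta \ge (x|y)_t - 5\delta,
\]
and analogously $(x|z)_t \ge (y|z)_t - 5\delta$ in the other case. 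In either case $(x|z)_t \ge \min\{(x|y)_t,(y|z)_t\} - 5\delta$, which is stronger than the claimed bound with $8\delta$; since the paper's convention for the constant in \eqref{Gprod} is generous, the stated inequality follows a fortiori. I would then simply remark that the precise constant is irrelevant for our purposes and cite \cite{GH} for the bookkeeping.

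The only genuine subtlety — not really an obstacle — is matching the additive constant to whatever convention the paper has silently imported from \cite{GH} for the inequality $(a|b)_t \le \d(t,[a,b]) \le (a|b)_t + C\delta$; different sources use $C=2$, $3$, or $4$. Since Lemma \ref{GHs23} only asserts the bound with $8\delta$, any reasonable choice gives a final constant $\le 8\delta$, so I would not belabor it. If one wanted a fully self-contained argument avoiding the geodesic-distance comparison, one could instead argue directly with the four-point inequality characterizing hyperbolicity, but invoking the Rips condition as above is cleaner given that this is the definition of hyperbolicity adopted in the paper. Accordingly the write-up would be: state the geodesic comparison (with citation), pick the realizing point on $[x,z]$, apply $\delta$-thinness of $\Delta(x,y,z)$, and chain the inequalities as displayed above.
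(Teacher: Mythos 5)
Your proof is correct. Note that the paper does not actually prove Lemma \ref{GHs23}: it only cites the proof of Proposition 21 in Chapter 2 of \cite{GH}, so any self-contained derivation of the four-point inequality from the Rips condition is fair game here. Your route --- comparing each Gromov product $(a|b)_t$ with $\d(t,[a,b])$ and then applying $\delta$-thinness of $\Delta(x,y,z)$ to the point of $[x,z]$ nearest $t$ --- is the standard such derivation, and the case split (nearest point lands near $[x,y]$ or near $[y,z]$) correctly produces the minimum. Two small bookkeeping remarks: you should take $p$ to actually realize $\d(t,[x,z])$ (which it does, since $[x,z]$ is compact) rather than merely approximate it, so that the equality $\d(t,[x,z])=\d(t,p)$ in your displayed chain is literal; and the comparison $\d(t,[a,b])\le (a|b)_t+C\delta$ in fact holds with $C=2$ under the Rips condition (project the point of $[a,b]$ at distance $(t|b)_a$ from $a$ onto one of the other two sides), which improves your final constant from $5\delta$ to $3\delta$ --- in any case comfortably below the stated $8\delta$, exactly as you observe.
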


As before, given two points $x$, $y$ in a geodesic metric space $S$ we denote by $[x,y]$ a geodesic segment connecting $x$ to $y$.

The next result is similar to the result stated in \cite[7.2.C]{Gro}; proofs of analogous results can be found in \cite{GH} (see Theorem 16 in Chapter 5) as well as in \cite[Lemma 21]{Ols91}.

\begin{lem}\label{lemx0xn}
Let $x_0, x_1, \ldots, x_n$ be a sequence of points in a $\delta$--hyperbolic space $S$ such that
\begin{equation}\label{Gp}
(x_{i-1}| x_{i+1})_{x_i} \le C, \;\;\; 1\le i \le n-1,
\end{equation}
for some constant $C$ and
\begin{equation}\label{length}
\d_S (x_{i-1}, x_{i})> 2C+16\delta, \;\;\; 1\le i\le n.
\end{equation}
Then we have
\begin{equation}\label{dx0xn}
\d_S (x_0, x_n) \ge \sum\limits_{i=1}^n \d_S (x_{i-1}, x_{i}) - 2(n-1)(C+8\delta).
\end{equation}
\end{lem}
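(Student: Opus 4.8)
The plan is to deduce (\ref{dx0xn}) from a uniform bound on the Gromov products at consecutive vertices, and to prove that bound by induction on the index. Throughout write $e_i=\d_S(x_{i-1},x_i)$. First I would record the elementary telescoping identity
\[
\d_S(x_0,x_n)=\sum_{i=1}^{n}e_i-2\sum_{k=1}^{n-1}(x_0|x_{k+1})_{x_k},
\]
which follows from the three-point formula $\d_S(x_0,x_m)=\d_S(x_0,x_{m-1})+e_m-2(x_0|x_m)_{x_{m-1}}$ by a trivial induction on $n$. In view of this identity, (\ref{dx0xn}) is equivalent to the claim that $(x_0|x_{k+1})_{x_k}\le C+8\delta$ for every $1\le k\le n-1$, so the whole proof reduces to that claim.

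I would prove the claim by induction on $k$. The base case $k=1$ is precisely hypothesis (\ref{Gp}) with $i=1$, since $C\le C+8\delta$. For the inductive step, assume $(x_0|x_k)_{x_{k-1}}\le C+8\delta$. Combining the purely metric identity $(x_0|x_{k-1})_{x_k}+(x_0|x_k)_{x_{k-1}}=\d_S(x_{k-1},x_k)=e_k$ with hypothesis (\ref{length}) gives
\[
(x_0|x_{k-1})_{x_k}=e_k-(x_0|x_k)_{x_{k-1}}>(2C+16\delta)-(C+8\delta)=C+8\delta.
\]
Now apply Lemma \ref{GHs23} with basepoint $x_k$ to the triple $x_{k-1},x_0,x_{k+1}$ (so that $x_0$ plays the role of the middle point):
\[
(x_{k-1}|x_{k+1})_{x_k}\ \ge\ \min\bigl\{(x_0|x_{k-1})_{x_k},\,(x_0|x_{k+1})_{x_k}\bigr\}-8\delta.
\]
If the minimum were realized by $(x_0|x_{k-1})_{x_k}$, the right-hand side would exceed $(C+8\delta)-8\delta=C$, contradicting (\ref{Gp}) with $i=k$, which bounds the left-hand side by $C$. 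Hence the minimum is $(x_0|x_{k+1})_{x_k}$, and rearranging yields $(x_0|x_{k+1})_{x_k}\le(x_{k-1}|x_{k+1})_{x_k}+8\delta\le C+8\delta$, closing the induction. Feeding this bound back into the telescoping identity produces (\ref{dx0xn}).

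The one genuinely delicate point — the step I expect to demand the most care — is the inductive step just sketched: Lemma \ref{GHs23} only provides a \emph{lower} bound on a Gromov product, whereas what is needed is an \emph{upper} bound on $(x_0|x_{k+1})_{x_k}$. The resolution is that the long-segment hypothesis (\ref{length}) forces the competing quantity $(x_0|x_{k-1})_{x_k}$ to be strictly larger than $C+8\delta$, so it cannot be the term attaining the minimum in Lemma \ref{GHs23}; the inequality of Lemma \ref{GHs23} is thereby effectively inverted, with the small bound $C$ on its left-hand side (supplied by (\ref{Gp})) becoming exactly the upper bound we want. Once this mechanism is in place, the remaining verifications are routine manipulations with the three-point Gromov product formula, and one should double-check that the constant $8\delta$ from Lemma \ref{GHs23} propagates through the induction without accumulating, which it does precisely because each step compares only three consecutive vertices.
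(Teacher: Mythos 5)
Your proposal is correct and is essentially the paper's proof reorganized: the paper runs a single induction proving the distance bound together with the auxiliary invariant $(x_0|x_{n-1})_{x_n}>C+8\delta$, while you isolate the exact telescoping identity and induct only on the Gromov-product bound $(x_0|x_{k+1})_{x_k}\le C+8\delta$, but the key mechanism (deriving $(x_0|x_{k-1})_{x_k}>C+8\delta$ from the previous bound plus (\ref{length}) via $(x|y)_z+(x|z)_y=\d_S(y,z)$, then inverting Lemma \ref{GHs23} against (\ref{Gp})) is identical. The only nitpick is that (\ref{dx0xn}) is implied by, not equivalent to, the pointwise Gromov-product bound; the implication you use goes in the correct direction, so the argument stands.
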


\begin{proof}
We will prove the lemma by induction on $n$. In addition to (\ref{dx0xn}), at each step we will also prove the inequality
\begin{equation}\label{x0xn-1}
(x_0| x_{n-1})_{x_n}> C + 8\delta ,
\end{equation}
which will be necessary to make the inductive step.

For $n=1$ both (\ref{dx0xn}) and (\ref{x0xn-1}) are obvious. Assume now that (\ref{dx0xn}) and (\ref{x0xn-1}) hold for some $n\ge 1$; we want to prove the lemma for $n+1$ points.

Note first that
\begin{equation}\label{gpxkx0}
(x_0|x_{n+1})_{x_{n}} \le C+8\delta.
\end{equation}
Indeed, otherwise using (\ref{x0xn-1}) and applying Lemma \ref{GHs23} we obtain $(x_{n-1}|x_{n+1})_{x_{n}} > C$, which contradicts our assumption. Using the definition of the Gromov product, (\ref{gpxkx0}), and the inductive assumption, we obtain
$$
\d_S(x_0, x_{n+1}) = \d_S(x_0, x_n) +\d_S(x_n, x_{n+1}) - 2(x_0|x_{n+1})_{x_n} \ge \sum\limits_{i=1}^{n+1} \d_S (x_{i-1}, x_{i}) - 2n(C+8\delta).
$$
We obviously have
\begin{equation}\label{dxy}
(x|y)_z + (x|z)_y = \d_S (y,z)\;\;\; \forall \, x,y,z \in S.
\end{equation}
Using (\ref{dxy}), (\ref{gpxkx0}), and (\ref{length}) we obtain
$$
(x_0| x_{n})_{x_{n+1}} = \d_S (x_{n+1}, x_n)-(x_0, x_{n+1})_{x_n} > 2C+16\delta  - (C+8\delta) = C+8\delta.
$$
This completes the inductive step.
\end{proof}

\begin{lem}\label{z}
Let $G$ be a group and let $[X]\in \H_{gt} (G)$. Then for every infinite sequence of elements $a_1, a_2, \ldots$ of $G$, there exists $z\in \mathcal L ([X])$ and an infinite subsequence $b_1, b_2, \ldots $ of $a_1, a_2, \ldots$ such that
\begin{equation}\label{supz}
\sup_{n,i\in \mathbb N} (z^n| b_i)_1<\infty,\;\;\; {\rm and}\;\;\; \sup_{n,i\in \mathbb N} (z^{-n}| b_i^{-1})_1<\infty ,
\end{equation}
 where $(x |y)_1$ denotes the Gromov product of $x,y\in G$ with respect to $1$ computed in $\Gamma (G,X)$.
\end{lem}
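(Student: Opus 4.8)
The plan is to choose $z$ from among finitely many pairwise independent loxodromic elements, and to pass to a subsequence of $(a_i)$ by a pigeonhole argument. Throughout, write $(\,\cdot\,|\,\cdot\,)$ for the Gromov product at $1$ in $\Gamma(G,X)$ and let $\delta$ be a hyperbolicity constant of $\Gamma(G,X)$. Since $[X]\in\H_{gt}(G)$, the action $G\curvearrowright\Gamma(G,X)$ is of general type, so by Theorem \ref{ClassHypAct} the group $G$ contains infinitely many pairwise independent loxodromic elements; fix three of them, $z_1,z_2,z_3\in\L([X])$, so that the six boundary points $z_i^{\pm\infty}\in\partial\Gamma(G,X)$ are pairwise distinct.

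The first ingredient I would set up is a uniform finiteness statement: for any two independent loxodromic elements $f,g\in G$,
$$
\sup_{m,n\in\mathbb N}(f^m\,|\,g^n)<\infty \qquad\text{and}\qquad \sup_{m,n\in\mathbb N}(f^{-m}\,|\,g^{-n})<\infty .
$$
This is standard: the sequences $(f^m)$ and $(g^n)$ track quasi-geodesic rays converging to the distinct boundary points $f^{+\infty}\neq g^{+\infty}$, and two sequences converging to distinct points of the Gromov boundary have uniformly bounded mutual Gromov products (see e.g. \cite{GH}); the second inequality is the same statement on the negative side. Applying this to the three pairs drawn from $z_1,z_2,z_3$ produces a finite constant
$$
R=\max_{1\le i<j\le 3}\ \max\bigl\{\,\sup_{m,n}(z_i^m\,|\,z_j^n),\ \sup_{m,n}(z_i^{-m}\,|\,z_j^{-n})\,\bigr\}<\infty .
$$
I expect this finiteness to be the only point requiring more than a line of hyperbolic geometry; if one prefers not to cite it, it can be obtained by replacing the orbit sequences with genuine geodesic axes via the Morse lemma and then checking that two geodesic rays issuing from $1$ with distinct endpoints overlap only boundedly near $1$.

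Next I would prove the key dichotomy from Lemma \ref{GHs23}: for every $b\in G$ there is at most one index $i\in\{1,2,3\}$ with $\sup_n(z_i^n\,|\,b)>R+8\delta$, and at most one index $i$ with $\sup_n(z_i^{-n}\,|\,b^{-1})>R+8\delta$. Indeed, if $i\neq j$ and there were $m,n\ge 1$ with $(z_i^m\,|\,b)>R+8\delta$ and $(z_j^n\,|\,b)>R+8\delta$, then Lemma \ref{GHs23} applied to the triple $z_i^m,\,b,\,z_j^n$ with basepoint $1$ gives $(z_i^m\,|\,z_j^n)\ge\min\{(z_i^m\,|\,b),(z_j^n\,|\,b)\}-8\delta>R$, contradicting the definition of $R$; replacing $b$ by $b^{-1}$ and each $z_i$ by $z_i^{-1}$ yields the second assertion. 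Since there are three indices, for every $b\in G$ at least one index $j=j(b)\in\{1,2,3\}$ satisfies simultaneously $\sup_n(z_j^n\,|\,b)\le R+8\delta$ and $\sup_n(z_j^{-n}\,|\,b^{-1})\le R+8\delta$.

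Finally I would conclude by pigeonhole. For each $i\in\mathbb N$ fix such a $j(a_i)\in\{1,2,3\}$; since the index set is infinite, one value $j^\ast$ is attained for infinitely many $i$. Letting $(b_i)$ be the corresponding subsequence of $(a_i)$ and $z=z_{j^\ast}\in\L([X])$, we obtain $\sup_{n,i}(z^n\,|\,b_i)\le R+8\delta<\infty$ and $\sup_{n,i}(z^{-n}\,|\,b_i^{-1})\le R+8\delta<\infty$, which is exactly (\ref{supz}). The main obstacle is establishing the finiteness of $R$ above; everything after that is a short combinatorial argument resting only on Lemma \ref{GHs23} and pigeonhole.
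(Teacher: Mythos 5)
Your proof is correct and follows essentially the same route as the paper's: both arguments fix three independent loxodromic elements, use the fact that sequences tending to distinct boundary points have uniformly bounded Gromov products, and conclude by an exclusion/pigeonhole argument that at least one of the three must work for an infinite subsequence. Your version is simply a more quantitative rendering (explicit constants and an appeal to Lemma \ref{GHs23}) of the qualitative boundary-convergence sketch given in the paper.
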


\begin{proof}
Since the action of $G$ on $\Gamma (G,X)$ is of general type, there exist $3$ independent loxodromic elements $f,g,h\in G$. Note that if $\sup_{n,i\in \mathbb N} (z^n| a_i)_1=\infty $ (respectively, $\sup_{n,i\in \mathbb N} (z^{-n}| a_i^{-1})_1=\infty $) for some loxodromic element $z\in G$ then, up to passing to an infinite subsequence of $a_1, a_2, \ldots $, we have $\lim_{i\to \infty}a_i= z^+$ (respectively, $\lim_{i\to \infty}a_i^{-1} = z^-$). Using this observation and the fact that the limit points $f^+, f^-, g^+, g^-, h^+, h^-\in \partial \Gamma (G,X)$ are pairwise distinct, it is easy to see that the claim of the lemma holds for at least one $z\in \{ f,g,h\}$.
\end{proof}

\begin{proof}[Proof of Theorem \ref{main4}]
Verifying that equivalent actions are coarsely isospectral is straightforward.

To prove the other implication, assume $A=G\curvearrowright S$ and $B=G\curvearrowright T$ are two coarsely isospectral cobounded non-quasi-parabolic actions on hyperbolic spaces. By Proposition \ref{prop-SM}, we can also assume that $S=\Gamma(G,X)$, $T=\Gamma(G,Y)$ for some generating sets $X$ and $Y$ of $G$. Obviously if one of the actions $A$, $B$ is elliptic, then so is the other. Thus it suffices to consider the case $[X],[Y]\in \H_\ell(G)\cup \H_{gt}(G)$.

Arguing by contradiction, assume that $X\not\sim Y$. Without loss of generality, there exists a sequence $a_1, a_2, \ldots \in X$ such that
\begin{equation}\label{limai}
\lim\limits_{i\to \infty} |a_i|_Y=\infty.
\end{equation}
We consider two cases.

{\bf Case 1.} Assume first that $[Y]$ is lineal, i.e., $T$ is quasi-isometric to a line. Then (\ref{limai}) implies that $(a_i)\to \xi \in \partial T$ (possibly after passing to a subsequence). Let ${\bf x}= (a_i)$, $s=1$, and let $q_{\bf x}$ and $p$ be the corresponding Busemann quasi-character and pseudocharacter, see Definition \ref{Bpc}. By Remark \ref{rempq}, there is a constant $K$ such that for every $g\in G$ we have $$|q_{\bf x} (g)| \le |\d_Y(g, a_i)-\d_Y(1,a_i)| +K\le |g|_Y+K.$$ Applying this inequality to $g=a_i^n$, we obtain
$$
|p(a_i)|=\lim_{n\to \infty}\frac{|q_{\bf x} (a_i^n)|}{n}\le \lim_{n\to \infty}\frac{|a_i^n|_Y+K}{n}=\tau_B(a_i).
$$
It follows immediately from the definition of $p$ that $|p(a_i)|\to \infty$ as $i\to \infty$ and, therefore, we have $\tau_B(a_i)\to \infty$ as $i\to \infty$. On the other hand, we obviously have $\tau_A(a_i)\le |a_i|_X=1$. This contradicts our assumption that the actions $G\curvearrowright S$ and $G\curvearrowright T$ are coarsely isospectral.

{\bf Case 2.} Now we suppose that $[Y]\in \H_{gt}(G)$. Since $A$ and $B$ are coarsely isospectral, they have the same set of loxodromic elements $\mathcal L([X])=\mathcal L([Y])$. In what follows, by a \emph{loxodromic element} of $G$ we always mean loxodromic with respect to the actions on $S$ and $T$.

By Lemma \ref{z}, there exists a loxodromic element $z\in G$ and an infinite subsequence $b_1, b_2, \ldots $ of $a_1, a_2, \ldots$ satisfying (\ref{supz}), where the Gromov products are computed in $\Gamma (G,Y)$. Let $C$ be a constant such that
\begin{equation}\label{supzC}
 (z^n| b_i)_1\le C,\;\;\; {\rm and}\;\;\; (z^{-n}| b_i^{-1})_1\le C
\end{equation}
for all $n,i\in \mathbb N$. Again passing to a subsequence of $b_1, b_2, \ldots $ if necessary, we can assume that
\begin{equation}\label{bi}
|b_i|_Y \ge 2C+16\delta,\;\;\; i\in \mathbb N,
\end{equation}
where $\delta $ is the hyperbolicity constant of $\Gamma (G, Y)$. Further since $z$ is loxodromic, there exists $k\in \mathbb N$ such that
\begin{equation}\label{zn}
|z^k|_Y \ge 2C+16\delta.
\end{equation}
We now fix $i\in \mathbb N$ and let $h_i=b_i^{-1} z^k$. Inequalities (\ref{supzC}), (\ref{bi}), (\ref{zn}), and the obvious equality $(gx\mid gy)_{g}=(x\mid y)_1$ for every $g\in G$ and $x,y\in \Gamma (G,Y)$ allow us to apply Lemma \ref{lemx0xn} to $\Gamma (G,Y)$ and the sequence of points
$$
x_0=1, \;\; x_1=b_i^{-1},\;\; x_2=h_i=b_i^{-1} z^k,\;\; x_3= b_i^{-1}z^kb_i^{-1}, \;\; x_4 = h_i^2=b_i^{-1}z^k b_i^{-1} z^k, \;\; \ldots
$$
We conclude that for every $m\in \mathbb N$,
$$
\begin{array}{rcl}
|h^m_i|_Y & = & \d_Y (x_0, x_{2m})\ge  \sum\limits_{j=1}^{2m}  \d_Y (x_{i-1}, x_{i}) - 2(2m-1)(C+8\delta) \\&&\\
&\ge & m(|z^k|_Y +|b_i|_Y) - 2(2m-1)(C+8\delta).
\end{array}
$$
Consequently,
$$
\tau_B(h_i)=\lim_{m\to \infty} \frac{|h^m_i|_Y}{m} \ge |z^k|_Y +|b_i|_Y - 4(C+8\delta) \to \infty
$$
as $i\to \infty$ by (\ref{limai}). On the other hand, we have
$$
\tau _A(h_i) \le |h_i|_X \le |z^k|_X +|b_i|_X\le |z^k|_X+1.
$$
Since $z$ and $k$ are fixed, the rightmost side of the latter inequality is independent of $i$ and thus the translation lengths $\tau _A(h_i)$ are uniformly bounded. This contradicts our assumption that the actions $A$ and $B$ are coarsely isospectral.
\end{proof}

\begin{proof}[Proof of Corollary \ref{cormain4}]
Let $G$ be a group and let $[X],[Y]\in \AHG$ be coarsely isospectral structures. Since acylindrically hyperbolic structures cannot be quasi-parabolic (see Theorem \ref{tricho}), Theorem \ref{main4} applies to the actions $G\curvearrowright \Gamma(G,X)$ and $G\curvearrowright \Gamma (G,Y)$.
\end{proof}

We conclude this section with two examples. The first one shows that coboundedness of the action cannot be dropped from Theorem \ref{main4}.

\begin{ex}\label{ex-non-cb}
We say that an action $A=G\curvearrowright S$ is {\it translationally proper} if for every $c>0$ there are only finitely many conjugacy classes of elements $g\in G$ with $\tau_{A}(g)<c$. For a translationally proper action $A=G\curvearrowright S$ and a sequence $(g_i)_{i\in \mathbb N} \subseteq \mathcal L_A(G)$, we have $\lim_{i\to\infty}\tau_{A}(g_i)=\infty $ if and only if the sequence $(g_i)_{i\in \mathbb N}$ contains infinitely many pairwise non-conjugate elements. It follows that every two translationally proper actions of $G$ on metric spaces are coarsely isospectral.

Let $G=F_n\rtimes _\phi \mathbb Z$, where $F_n$ is a free group of rank $n$ and the automorphism $\phi $ is atoroidal, i.e., no power of $\phi $ fixes a non-trivial conjugacy class of $F_n$. Then $G$ is a hyperbolic group (see \cite{BeFe} and \cite{Bri}). We denote by $X$ and $Y$ some finite generating sets of $G$ and $F_n\le G$, respectively, and consider the standard actions of $F_n$ on the Cayley graphs $\Gamma (G,X)$ and $\Gamma (F_n, Y)$. Then both actions are translationally proper, hence they are coarsely isospectral. However these actions are not equivalent. It follows, for example, from the well-known fact that every infinite normal subgroup of a hyperbolic group is at least exponentially distorted, see \cite{BH}.
\end{ex}

The next example shows that the theorem may fail for quasi-parabolic actions.

\begin{ex}\label{pqcis}
Let $[X]$, $[Y]$ be the quasi-parabolic structures on the Baumslag-Solitar group $G$ constructed in Example \ref{BS}. Let $A=G\curvearrowright \Gamma(G,X)$, $B=G\curvearrowright\Gamma(G,Y)$. Let $\e \colon  G\to \mathbb Z $ be the homomorphism sending $a$ to $0$ and $b$ to $1$. It is easy to see that $\lim_{i\to\infty} \tau _A(g_i)\to \infty $ for a sequence $(g_i)\subseteq G$ if and only if $\lim_{i\to \infty} |\e (g_i)|=\infty$. The same holds true for translation numbers with respect to $B$. Hence $A$ and $B$ are coarsely isospectral. However, we proved in Example \ref{BS} that they are not equivalent.
\end{ex}

In fact, it is not difficult to show that \emph{all} non-trivial hyperbolic structures (not necessarily quasi-parabolic) on the Baumslag-Solitar group are coarsely isospectral. The same is true for the wreath product $\mathbb Z\, {\rm wr}\,\mathbb Z$. We leave the proofs of these fact as an exercise to the reader.


\section{Hyperbolic accessibility}


\subsection{Examples of inaccessible groups}

Recall that a group $G$ is said to be \emph{$\H$-accessible} (respectively \emph{$\AH$-accessible}) if the poset $\H (G)$ (respectively $\AHG$) contains the largest element.

It is easy to find examples of groups which are not $\H $-accessible. For instance, a rich source of examples is provided by direct products.

\begin{ex}\label{H-inacc}
Suppose that $G=A\times B$, where both $A$ and $B$ have hyperbolic structures of general type (e.g., we can take $A=B=F_2$). Then $G$ is not
$\H$-accessible. Indeed, let $[X]$ be the largest element in $\H (G)$. If one of the subgroups $A$, $B$ acts on $\Gamma (G,X)$ with bounded orbits, $[X]$ cannot be largest since we assume that $\H_{gt}(A)$ and $\H_{gt}(B)$ are non-empty. Hence by Lemma \ref{product} $[X]\in \H_\ell(G)$. Since every lineal structure is minimal, see Corollary \ref{linmin}, $[X]$ must be unique hyperbolic structure on $G$. However this again contradicts our assumption as every general type action of $A$ or $B$ on a hyperbolic space gives rise to a general type action of $G$ on the same space.
\end{ex}

Note that groups described in Example \ref{H-inacc} above are $\AH$-accessible. Indeed by \cite[Corollary 7.3]{Osi16} an acylindrically hyperbolic group cannot split as a direct product of two infinite groups. Since these groups are also not virtually cyclic, they only have the trivial acylindrically hyperbolic structure by Theorem \ref{tricho}.

It is much harder to find examples of groups, especially finitely generated or finitely presented ones, which are not $\mathcal{AH}$--accessible. To this end, we first need to recall several definitions, restated in the terminology of the present paper.  An element $g$ of an acylindrically hyperbolic group $G$ is called \emph{generalized loxodromic} if there exists an acylindrically hyperbolic structure $A\in \AHG$ such that $g\in \L(A)$. It is an open question whether for any two generalized loxodromic elements $f,g\in G$, there exists an acylindrically hyperbolic structure with respect to which \emph{both} $f$ and $g$ are loxodromic. Moreover, one could ask whether there exists a \emph{single} $A\in \AHG$ so that $\L(A)$ contains \emph{all} generalized loxodromic elements of $G$. We call such a structure \emph{loxodromically universal}.  It is easy to see that if $\AHG$ contains the largest element, then it is necessarily loxodromically universal since $A\preccurlyeq B$ for some $A,B\in \AHG$ obviously implies $\L(A)\subseteq \L (B)$.

The first (rather obvious) example of a group that does not have a loxodromically universal acylindrically hyperbolic structure was provided in \cite{Osi16}; it was the free product of groups $\mathbb Z\times \mathbb Z_n$ for $n\in \mathbb N$. This group is infinitely generated. The first finitely generated example was given in \cite{A}, where the first author proved that Dunwoody's group has no loxodromically universal acylindrically hyperbolic structure. We briefly describe Dunwoody's group $J$ and refer the reader to \cite{Dun93} for further details.  Let $H$ be the subgroup of the group of permutations of $\mathbb Z$ generated by the transposition $(0\, 1)$ and the shift map $i\mapsto i+1$, which we denote by $s$.  Let $H_\omega$ be the group of finitely supported permutations of $\mathbb Z$.  For any $n$, we have the following decomposition of Dunwoody's group:
$$J=G_1*_{K_1}\cdots G_{n-1}*_{K_{n-1}} J_n,$$
where each $G_i$ is a finite group and, letting $Q_n=G_n*_{K_n} G_{n+1}*_{K_{n+1}}\cdots$, $$J_n=Q_n*_{H_\omega}H.$$
Thus we obtain the following.

\begin{ex}\label{Dun}
The group $\ast _{n\in \mathbb N} \mathbb Z\times \mathbb Z_n$ and Dunwoody's group are not $\AH $-accessible.
\end{ex}

Finally, we give a finitely presented example.

\begin{thm}\label{AH-inacc}
There exists a finitely presented group that is neither $\mathcal{AH}$-accessible nor $\H$-accessible.
\end{thm}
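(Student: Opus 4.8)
The plan is to exhibit a finitely presented group $G$ that is simultaneously $\H$-inaccessible and $\AH$-inaccessible by combining the two phenomena already isolated in the excerpt. For $\H$-inaccessibility, Example~\ref{H-inacc} shows that any direct product of two groups each carrying a general type hyperbolic structure fails to be $\H$-accessible; so it suffices to build a finitely presented group that also witnesses $\AH$-inaccessibility and then take its square, or arrange the $\AH$-inaccessibility inside a direct-product construction. The real work is $\AH$-inaccessibility: I want a finitely presented group $G$ such that $\AHG$ has no largest element, equivalently (by the discussion after Example~\ref{Dun}) no loxodromically universal acylindrically hyperbolic structure, and moreover such that there is genuinely no $A\in\AHG$ whose loxodromic set dominates all the others.

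\textbf{Key steps.} First I would recall Dunwoody's inaccessible group $J$ from Example~\ref{Dun}: it is finitely generated but not finitely presented, and the first author showed in \cite{A} it is not $\AH$-accessible. The obstruction there comes from the infinite sequence of splittings $J=G_1*_{K_1}\cdots*_{K_{n-1}}J_n$ over finite subgroups, each giving a cobounded action on a Bass--Serre tree and hence an acylindrically hyperbolic structure; no single structure sees all the relevant generalized loxodromic elements. The second step is to upgrade this to a finitely presented example. The standard device is a Rips-type or Bieri--Stallings-type embedding: embed $J$ (or a suitable variant) into a finitely presented group $G$ in a way that (i) preserves enough of the tree actions to keep $\AHG$ without a maximum, and (ii) does not introduce a universal structure. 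Concretely I would look for a finitely presented group $G$ containing a normal subgroup, or a hyperbolically/peripherally embedded copy of each $J_n$, so that Theorem~\ref{main2} (or Theorem~\ref{genmain2}) lets the hierarchy of acylindrically hyperbolic structures on the pieces inject into $\AHG$; then one argues, as in \cite{A}, that no element of $\AHG$ can dominate all of them because the generalized loxodromic elements coming from different levels of the Dunwoody hierarchy are mutually incompatible. The third step is to check $\H$-inaccessibility: either observe directly that the same hierarchy of \emph{hyperbolic} (not necessarily acylindrical) structures has no maximum, or, more cheaply, replace $G$ by $G\times G$ (still finitely presented) and invoke Lemma~\ref{product} together with Corollary~\ref{linmin} exactly as in Example~\ref{H-inacc}, after verifying that $G$ itself carries a general type hyperbolic structure (which it does, as it is acylindrically hyperbolic and not virtually cyclic, by Theorem~\ref{main1}(c)). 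Finally I would assemble these into one group that is neither $\H$- nor $\AH$-accessible and confirm finite presentability of the final construction.

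\textbf{Main obstacle.} The hard part is making the Dunwoody-style obstruction survive the passage to a finitely presented overgroup. Finite presentability is in tension with accessibility (Dunwoody's theorem says finitely presented groups \emph{are} accessible in the Stallings--Dunwoody sense), so the infinite hierarchy of splittings over finite subgroups cannot be literally reproduced inside a finitely presented $G$; instead the non-maximality of $\AHG$ must come from an infinite family of acylindrically hyperbolic structures that are \emph{not} all visible as splittings of $G$ over finite subgroups — e.g.\ structures induced from a single (infinitely-ended, non-finitely-presented) hyperbolically embedded subgroup via Theorem~\ref{main2}, or from a relatively hyperbolic structure with an inaccessible peripheral subgroup. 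Controlling that the induced structures remain pairwise incomparable in the relevant sense, and that no structure on $G$ dominates the whole family, is the technical core. I would expect the cleanest route to be: take $G$ hyperbolic relative to a copy of Dunwoody's group (or of $\ast_{n}\mathbb Z\times\mathbb Z_n$), with $G$ finitely presented — which is possible since a group is finitely presented if it is hyperbolic relative to finitely presented... no: the peripheral subgroup here is not finitely presented, so one must instead use a \emph{finitely presented} $G$ that is hyperbolically embedded-equivalent to such a configuration, and this is precisely where the argument needs care. The bulk of the proof will be verifying that this $G$ exists and that Theorem~\ref{main2} transports the inaccessibility upward faithfully.
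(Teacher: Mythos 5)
There is a genuine gap: your proposal never actually produces the group. You correctly identify that the Dunwoody-style obstruction (an infinite hierarchy of splittings over finite subgroups) cannot survive inside a finitely presented group, and that making $G$ hyperbolic relative to a copy of Dunwoody's group or of $\ast_n\,\mathbb Z\times\mathbb Z_n$ fails because the peripheral is not finitely presented --- but you then leave exactly this point, which is the entire content of the theorem, unresolved. A second, more concrete error is the fallback for $\H$-inaccessibility: passing to $G\times G$ destroys $\AH$-inaccessibility rather than preserving it. As noted in the remark following Example \ref{H-inacc}, a direct product of two infinite groups is never acylindrically hyperbolic (by \cite[Corollary 7.3]{Osi16}), so by Theorem \ref{tricho} the only acylindrically hyperbolic structure on $G\times G$ is the trivial one, and $G\times G$ is therefore (trivially) $\AH$-accessible. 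You cannot obtain both inaccessibilities by bolting a direct-product trick onto an $\AH$-inaccessible group; the two properties must be witnessed by a single construction.

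The paper's actual construction is entirely different and avoids any infinite hierarchy. It takes a finitely presented non-hyperbolic normal subgroup $N$ of a hyperbolic group $G$ with $G/N\cong\mathbb Z$ (such subgroups exist by \cite{Bra}). Choosing $t\in G$ generating $G/N$ and $s=ta^n$ for a suitable $a\in N$, the group $G$ is hyperbolic relative to $\{E(s)\}$, to $\{E(t)\}$, and to $\{E(s),E(t)\}$, and the restrictions to $N$ of the actions on $\Gamma(G,X\cup E(s))$ and $\Gamma(G,X\cup E(t))$ are cobounded and acylindrical, giving just two acylindrically hyperbolic structures $A,B$ on $N$. The key claim is that any $[Z]\in\mathcal G(N)$ dominating both $A$ and $B$ must have $Z$ finite (proved by an isolated-components argument in $\Gamma(G,X\cup E(s)\cup E(t))$ using Lemmas \ref{C} and \ref{lem:Z}), which would make $N$ hyperbolic --- a contradiction. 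Thus neither $\H(N)$ nor $\AH(N)$ has a largest element. If you want to salvage your outline, you would need to replace the Dunwoody input with a mechanism of this kind: two explicitly incomparable structures on a finitely presented group whose only common upper bound is forced to be a finite (hence impossible) generating set.
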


\begin{proof}
Let $N$ be a finitely presented non-hyperbolic normal subgroup of a hyperbolic group $G$ such that $G/N\cong \mathbb Z$. Such subgroups do exist, see \cite{Bra}. Let $t$ be an element of $G$ such that $tN$ generates $G/N$. Further let $a\in N$ be an element of infinite order such that $a\notin E(t)$. Then there exists $n\in \mathbb N$ such that $G$ is hyperbolic relative to any of the following $3$ collections of subgroups: $\{ E(t)\}$, $\{E(s)\}$, $\{E(s), E(t)\}$, where $s=ta^n$ (this follows for example from \cite[Corollary 1.7 and Lemma 7.4]{Osi06b}).

Let $X$ be a finite generating set of $G$ and $S=\Gamma (G, X\cup E(s))$, $T=\Gamma (G, X\cup E(t))$. Then the graphs $S$ and $T$ are hyperbolic and the actions of $N$ on both of them is cobounded since $G=N E(t)=N E(s)$; this action is also acylindrical and so is the action of $G$ \cite[Proposition 5.2]{Osi16}. Let $A=\sigma ([N\curvearrowright S])$ and  $B=\sigma ([N\curvearrowright T])$ be the corresponding acylindrically hyperbolic structures on $N$. It suffices to prove the following.

{\bf Claim.} If some $[Z]\in \mathcal G (N)$ satisfies $A\preccurlyeq [Z]$ and $B\preccurlyeq [Z]$, then  $Z$ is finite.

Indeed if the claim is proved, then the largest element in either $\H (N)$ or $\AH (N)$ must correspond to a finite generating set, i.e., the group $N$ must be hyperbolic, which contradicts our assumption.

Let us prove the claim now. Since $A\preccurlyeq [Z]$ (respectively $B\preccurlyeq [Z]$), $Z$ must be a bounded set considered as a set of vertices in $S$ (respectively $T$). Let $K$ be a constant such that $Z$ belongs to the balls of radius $K$ centered at $1$ in both $S$ and $T$. Given $z\in Z$,  let $p_z$  (respectively $q_z$) be a geodesic going from  $1$ to $z$ in $S$ (respectively $T$). We can naturally think of $S$ and $T$ as subgraphs of $R=\Gamma (G, X\cup E(s)\cup E(t))$. Then $c_z=p_zq_z^{-1}$ is a cycle of length at most $2K$ in $R$. Note that all components of $c_z$  are isolated. Indeed two components of $p_z$ (respectively $q_z$) cannot be connected since this path is a geodesic in $S$ (respectively $T$), and a component of $p_z$ cannot be connected to a component of $q_z$ since they correspond to different hyperbolically embedded subgroups.  By Lemmas \ref{C} and \ref{lem:Z}, there is a finite subset $\mathcal F\subseteq G$ such that every component of $c_z$ (and hence every component of $p_z$) is labeled by an element of $\mathcal F$. Since $X$ is finite, it follows that there are only finitely many possible labels of paths $p_z$, $z\in Z$, i.e., $|Z|<\infty$.
\end{proof}

\begin{rem}
The group $N$ described in the proof of Theorem \ref{AH-inacc} does have a loxodromically universal acylindrical action, for example the one on a locally finite Cayley graph of $G$.
\end{rem}

\subsection{$\AH$-accessible groups}

The goal of this section is to prove Theorem \ref{main5}. In fact, we will prove something stronger for each of the classes of groups mentioned in Theorem \ref{main5}. We would like to note that parts (b) and (c) of Theorem \ref{main5} were independently and subsequently proven in \cite{ABD}. Part (d) of Theorem \ref{main5} is also proven in \cite{ABD} in the special case when the 3-manifold has no Nil or Sol in its prime decomposition. \cite{ABD} additionally proves the $\AH$-accessibility of certain other groups using different methods.

\begin{defn} A group $G$ is said to be \textit{strongly $\AH$-accessible} if there exists an element $[X] \in \AHG$ such that for every acylindrical action (not necessarily cobounded) of $G$ on any hyperbolic space $S$, we have $G \acts S \preceq G \acts \Ga(G,X)$, with respect to the preorder on group actions from Definition \ref{def-poset}.  We say that the structure $[X]$ \textit{realizes} the strong $\AH$-accessibility of $G$. Such a structure, if it exists, is obviously unique. In particular, a strongly $\AH$-accessible group is $\AH$-accessible.
\end{defn}

\begin{rem} Note that in the above definition, we do not restrict the cardinality of $S$. We consider actions of $G$ on \textit{all} hyperbolic metric spaces, not just those of bounded cardinality. \end{rem}

\begin{ex}\label{hypacc} Every hyperbolic group is strongly $\AH$-accessible. Indeed, if $G$ is a hyperbolic group, then there exists a finite generating set $X$ such that $[X] \in \AHG$. The result then follows from Lemma \ref{lem-Lip}.
\end{ex}

\begin{ex}\label{ztwoacc} Every group $G$ which is not acylindrically hyperbolic is strongly $\AH$-accessible. Indeed, this is an immediate consequence of Theorem \ref{tricho}. If $G$ is virtually cyclic, the result follows from Example \ref{hypacc}. If every acylindrical action of $G$ on a hyperbolic space is elliptic, then the trivial structure realizes the strong $\AH$-accessibility of $G$.

In particular, this example applies to groups with infinite amenable radicals (e.g. infinite center) (see \cite[Corollary 7.2]{Osi16}) and to direct products of two infinite groups (see \cite[Corollary 7.2]{Osi16}). In fact, if $G$ is the direct product of two groups with infinite order elements, then the trivial structure realizes the strong $\AH$-accessibility of $G$ (since $G$ contains $\ztwo$ as a subgroup, it is not virtually cyclic).

\end{ex}

We first turn our attention to relatively hyperbolic groups. Recall that a group $G$ is \emph{hyperbolic relative to subgroups} $H_1, \ldots, H_n$  if $\Hi \hookrightarrow_h (G,X)$ for some finite $X \subset G$ (see Theorem \ref{rhhe}). The subgroups $H_1, \ldots, H_n$ are called \textit{peripheral subgroups} of $G$.

\begin{thm} \label{relhypmax} Let $G$ be a relatively hyperbolic group with peripheral subgroups $H_1, \dots, H_n$. If each $H_i$ is strongly $\AH$-accessible, then $G$ is strongly $\AH$-accessible. \end{thm}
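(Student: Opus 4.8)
The strategy is to build the maximal acylindrically hyperbolic structure on $G$ explicitly from the maximal structures on the peripheral subgroups, using the machinery of induced hyperbolic structures (Theorem \ref{genmain2}) together with the fact that a relatively hyperbolic group acts acylindrically on its relative Cayley graph. Let $X_0$ be a finite relative generating set of $G$ with respect to $\{H_1,\dots,H_n\}$, so that $\{H_1,\dots,H_n\}\h(G,X_0)$ and the action of $G$ on $\Gamma(G,X_0\sqcup\mathcal H)$ is acylindrical by Theorem \ref{rhhe} and Proposition \ref{lem:Hacyl}; in particular $\{H_1,\dots,H_n\}$ is strongly hyperbolically embedded in $G$ with respect to $X_0$. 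For each $i$, let $[Z_i]\in\AHH$ be the structure realizing the strong $\AH$-accessibility of $H_i$. Set $[X]=\iota_{X_0}([Z_1],\dots,[Z_n])$, which lies in $\AHG$ by Theorem \ref{genmain2}. The claim is that $[X]$ realizes the strong $\AH$-accessibility of $G$.

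So let $G\curvearrowright S$ be an arbitrary acylindrical action of $G$ on a hyperbolic space $S$; I must show $G\curvearrowright S\preceq G\curvearrowright\Gamma(G,X)$. First I would fix base points and try to bound $\d_S(p,gp)$ in terms of $\d_X(1,g)$ for all $g\in G$, equivalently (by Lemma \ref{lem-Lip}) bound $\sup_{x\in X}\d_S(p,xp)$ — but $X$ is infinite, so this is exactly where the hypotheses must enter. The key observation is that $X$ decomposes essentially as $X_0\cup(\bigcup_i Z_i)$ with $X_0$ finite, so it suffices to bound $\sup_{z\in Z_i}\d_S(p,zp)$ for each $i$. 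Now the action $G\curvearrowright S$ restricts to an action $H_i\curvearrowright S$, which is still acylindrical (restriction of an acylindrical action to a subgroup is acylindrical). By strong $\AH$-accessibility of $H_i$, we get $H_i\curvearrowright S\preceq H_i\curvearrowright\Gamma(H_i,Z_i)$, which after applying Lemma \ref{lem-Lip} gives precisely a uniform bound $\sup_{z\in Z_i}\d_S(p,zp)<\infty$. Combined with the trivial bound $\sup_{x\in X_0}\d_S(p,xp)<\infty$ coming from finiteness of $X_0$, Lemma \ref{lem-Lip} then yields that the orbit map $(G,\d_X)\to S$ is Lipschitz, i.e.\ $G\curvearrowright S\preceq G\curvearrowright\Gamma(G,X)$, as required.

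The main technical point — and the step I expect to need the most care — is the transition from ``restrict to $H_i$ and apply strong accessibility of $H_i$'' to ``bound the $Z_i$-part of the generating set of $G$''. Here one must be careful that the structure $[X]$ genuinely contains (a set equivalent to) $Z_i$ as a subset: this is built into the definition of $\iota_{X_0}$ via (\ref{iotadef}), since $\iota_{X_0}([Z_1],\dots,[Z_n])=[X_0\cup(\bigcup_i Z_i)]$, so without loss of generality $X=X_0\cup(\bigcup_i Z_i)$. Then $\d_S(p,zp)$ for $z\in Z_i$ is controlled by the restricted action $H_i\curvearrowright S$ and nothing more. One should also note that the definition of strong $\AH$-accessibility quantifies over actions on hyperbolic spaces of \emph{arbitrary} cardinality (see the Remark following the definition), so there is no cardinality obstruction to applying it to the restricted action $H_i\curvearrowright S$. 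Finally, uniqueness of the realizing structure is automatic from the definition, so it remains only to observe that $[X]\in\AHG$ (already established) and that the required domination holds for every acylindrical action; assembling these gives the theorem.
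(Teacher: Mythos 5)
Your proposal is correct and follows essentially the same route as the paper: take a finite relative generating set $X_0$ with respect to which the peripheral collection is strongly hyperbolically embedded, push the maximal structures $[Z_i]$ of the $H_i$ through $\iota_{X_0}$ via Theorem \ref{genmain2}, and then bound an arbitrary acylindrical $G$-action by restricting to each $H_i$, invoking its strong $\AH$-accessibility, and finishing with Lemma \ref{lem-Lip}. The only cosmetic difference is that the uniform bound on $\d_S(p,zp)$ for $z\in Z_i$ comes directly from the definition of $\preceq$ (with Lemma \ref{EA} to adjust base points) rather than from Lemma \ref{lem-Lip}, which is only needed for the final Lipschitz conclusion.
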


\begin{proof} Let $X$ be a finite subset of $G$ such that $\Hi \hookrightarrow_h (G,X)$. Let $\mathcal{H} = \sqcup_{i =1}^{n} H_i$. By \cite[Proposition 5.2]{Osi16} (see also Example \ref{rhshe}), the action of $G$ on $\Ga(G, X \sqcup \mathcal{H})$ is acylindrical, i.e., $\Hi$ is strongly hyperbolically embedded in $G$ with respect to $X$.

Let $[Y_i]$ be the element in $\mathcal{AH}(H_i)$ that realizes the strong $\AH$-accessibility of $H_i$, for each $1 \leq i \leq n$. Then by Theorem \ref{genmain2}, $[X \cup Y_1 \cup... \cup Y_n] \in \AHG$. We will show that this element realizes the strong $\AH$-accessibility of $G$.

Indeed, suppose that $G \acts Z$ is an acylindrical action of $G$ on a hyperbolic space and let $d_Z$ denote the metric on $Z$.  Fix a base point $z \in Z$. Restricting the action of $G$ on $Z$ to each $H_i$, we obtain an acylindrical action of each $H_i$ on a hyperbolic space.  But then $$H_i \acts (Z, d_Z)  \preceq H_i \acts \Ga(H_i, Y_i)$$ for every $1 \leq i \leq n$. We can assume that there exists a constant C such that for every $i=1,...,n$, $d_Z(z, hz) \leq Cd_{Y_i} (1,h) + C$ for all $h \in H_i$.
In particular, if $y \in Y_i$, then $d_Z(z, yz) \leq 2C.$

Since $X$ is finite, Lemma \ref{lem-Lip} applies and we conclude that $$G \acts Z \preceq G \acts \Ga(G, X \cup Y_1 \cup... \cup Y_n).$$ \end{proof}

\begin{proof}[Proof of Theorem \ref{main5}(a)]
If G is a finitely generated relatively hyperbolic group, then it follows from \cite[ Theorem 1.1]{Osi06}  that the collection of peripheral subgroups is finite. Let $H$ be a peripheral subgroup of $G$, which by assumption, is not acylindrically hyperbolic. Example \ref{ztwoacc}  applies, and we conclude that each peripheral subgroup is strongly $\AH$-accessible. The result  follows from Theorem \ref{relhypmax}. \end{proof}

We next deal with the case of mapping class groups of closed punctured surfaces, for which we will need several facts and definitions taken from \cite{farb}, stated below. We refer the reader to \cite{farb} for proofs and details.

\begin{defn}[Complex of curves] A closed curve on $S$ is called \textit{essential} if it is not homotopic to a point or a puncture.  The complex of curves associated to $S$ is a graph defined as follows$\colon$ vertices of the complex of curves are isotopy classes of essential, simple closed curves, and two vertices are joined by an edge if the curves have disjoint representatives on $S$. The complex of curves is denoted by $C(S)$.
\end{defn}

We let $g$ denote the genus of the surface $S$ and $p$ denote the number of punctures. We adopt the convention that if $a$ is a vertex of $C(S)$, then by slight abuse of notation, we let $a$ denote the associated curve, and let $T_a$ be the Dehn twist about $a$. We now list some facts about Dehn twists and $C(S)$ which we will require for the proof.

\begin{itemize}
\item[(a)]\cite[Propositions 3.1 and 3.2]{farb} $T_a$ is a non-trivial infinite order element of $G$.
\item[(b)]\cite[Fact 3.6]{farb} $T_a = T_b$ if and only if $a = b$.
\item[(c)]\cite[Fact 3.8]{farb} For any $f \in \operatorname{Mod}(S)$ and any isotopy class $a$ of simple closed
curves in $S$, we have that $f$ commutes with $T_a$ if and only if $f(a) = a$.
\item[(d)]\cite[Fact 3.9]{farb} For any two isotopy classes $a$ and $b$ of simple closed curves in a
surface $S$, we have that $a$ and $b$ are connected by an edge in $C(S)$ if and only if $T_aT_b = T_bT_a$.
\item[(e)]\cite[Powers of Dehn twists, pg 75]{farb} For non-trivial Dehn twists $T_a$ and $T_b$, and non-zero integers $j,k$, we have $T^j_a = T^k _b$ if and only if $a= b ; j = k$.
\item[(f)]\cite[Sec 1.3]{farb}  $G$ admits a cocompact, isometric action on $C(S)$. (This follows from the change of co-ordinates principle).
\item[(g)]\cite[Theorem 1.1]{MaMin} $C(S)$ is a hyperbolic space. Except when $S$ is a sphere with 3 or
fewer punctures, C(S) has infinite diameter.
\item[(h)]\cite[Theorem 1.3]{bow} If $S$ is a surface satisfying $3g + p \geq 5$, then the action of $\operatorname{Mod}(S)$ on $C(S)$ is acylindrical.
\end{itemize}

\begin{proof}[Proof of Theorem \ref{main5}(b)]  Let $S$ be a compact, punctured surface without boundary, of genus $g$ and with $p$ punctures. We consider the following two cases.

{\bf Case 1.} First assume that $3g + p < 5$. The mapping class groups for the cases $g=0$ and $p=0,1,2,3$ are finite and hence $\AH$-accessible. In the cases of $g =0, p=4$ (the four-punctured sphere) and $g=1, p=0,1$ (the torus and the once punctured torus), the mapping class groups are hyperbolic groups. Example \ref{hypacc} applies and we conclude that these groups are also $\AH$-accessible.

{\bf Case 2.} We now assume that $3g + p \geq 5$. In this case, we will prove the result by using Corollary \ref{maxact-cor} applied to $\AHG \subset \GG$ for $G= \operatorname{Mod}(S)$.

By fact (f) above, $G$ admits a cocompact (hence cobounded), isometric action on $C(S)$. By facts (g) and (h) above, $C(S)$ is an infinite diameter hyperbolic space and the action of $\operatorname{Mod}(S)$ on $C(S)$ is acylindrical.

To apply Corollary \ref{maxact-cor}, we must consider stabilizers of vertices of $C(S)$. Let $H = Stab_G(a)$, where $a$ is a vertex of $C(S)$. By fact (a) above, $T_a$ is a non-trivial infinite order element of $G$. Further $T_a(a) =a$, so $T_a \in H$ by fact (c) above. For every element $f \in H$, using fact (c) again, we must have $fT_a = T_af$  since $f(a) =a$. This implies that $H$ has an infinite center and is thus not acylindrically hyperbolic (see Example \ref{ztwoacc}).

Since $C(S)$ is connected and unbounded, there exists a vertex $b \ne a$ of $C(S)$ connected by an edge to $a$. Then $T_b$ is a non-trivial infinite order element by fact (a), and by applying facts (d)and (c) above, $T_b \in H$. By using fact (e) above, one can easily show that $\langle T_a, T_b \rangle \cong \mathbb{Z}^2 \leq H$, so $H$ is not virtually cyclic. By Theorem \ref{tricho}, every acylindrical action of $H$ on a hyperbolic space is elliptic. In particular, for every acylindrical action of $G$ on a hyperbolic space, the induced action of $H$ is also acylindrical and thus elliptic. Applying Corollary \ref{maxact-cor}, it follows that $G$ is $\AH$-accessible with largest element $\sigma ([G \acts C(S)])$. \end{proof}

\begin{rem} The above proof also applies to the set $\mathcal{A}$ of acylindrical actions of $\op{Mod}(S)$ on hyperbolic spaces. In this case, Proposition \ref{maxact} allows us to conclude that $\op{Mod}(S)$ is strongly $\AH$-accessible. The same holds true for the cases of RAAGs and 3-manifolds discussed below.

\end{rem}

We now proceed to the case of right-angled Artin groups (RAAGs). We begin by defining a RAAG and its extension graph.

\begin{defn} Given a finite graph $\Ga$, the \textit{right-angled Artin group} on $\Ga$ is the group defined by the presentation $$A(\Ga) = \langle V(\Ga) \mid [a, b] = 1 \hspace{10pt} \forall \{a, b\} \in E(\Ga) \rangle.$$  For example, the RAAG corresponding to a complete graph on $n$ vertices is $\mathbb{Z}^n$. \end{defn}

\begin{defn} The \textit{extension graph} $\Ga ^e$ corresponding to $A(\Ga)$, introduced in \cite{KK}, is a graph with vertex set $$\{ v^g \mid v \in V(\Ga), g \in A(\Ga) \}$$ and edges defined by the following rule: two distinct vertices $u^g$ and $v^h$ are joined by an edge if and only if they commute in $A(\Ga)$.

There is a natural right-conjugation action of $A(\Ga)$ on $\Ga^e$ given by $gv^h = v^{hg}$ for $v \in V(\Ga)$ and $g,h \in A(\Ga)$. Further, we may write $$\Ga^e  = \bigcup_{g \in A(\Ga)} g \Ga ,$$ where $g \Ga$ denotes the graph $\Ga$ with its vertices replaced by the corresponding conjugates by $g$.
\end{defn}

We will require the following theorems for the proof. For the proofs of these theorems and further details concerning RAAGs, we refer the reader to \cite{KK}.

\begin{thm}\label{raagshyp} \cite[Lemma 26]{KK2} Let $\Ga$ be a finite connected graph. Then $\Ga^e$ is a quasi-tree.
\end{thm}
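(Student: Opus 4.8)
The plan is to prove the statement via Manning's bottleneck criterion for quasi-trees, after translating the combinatorics of $\Gamma^e$ into the geometry of a $\mathrm{CAT}(0)$ cube complex. First I would dispose of the trivial case: if $\Gamma$ is a complete graph then $A(\Gamma)\cong\mathbb Z^{|V(\Gamma)|}$ and all conjugates of generators commute, so $\Gamma^e$ has diameter at most $1$ and is trivially a quasi-tree; hence assume $\Gamma$ is connected but not complete. I would also record the easy half of the connectedness statement from \cite{KK}: the standard copy of $\Gamma$ sits in $\Gamma^e$ and is connected since $\Gamma$ is, while for a generator $w$ adjacent to $v$ in $\Gamma$ the vertices $v$ and $v^{w}=w^{-1}vw$ both commute with $w$, so $v-w-v^{w}$ is a path in $\Gamma^e$; iterating along a word for $g$ connects every $v^{g}$ to $v$, so $\Gamma^e$ is connected.

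The key reinterpretation is the following. Let $\widetilde X$ be the universal cover of the Salvetti complex of $A(\Gamma)$; it is a $\mathrm{CAT}(0)$ cube complex on which $A(\Gamma)$ acts freely and cocompactly, and its hyperplanes are in an $A(\Gamma)$-equivariant bijection with the vertices of $\Gamma^e$, the vertex $v^{g}$ corresponding to the hyperplane $\widehat{v^{g}}$ dual to the $v$-labelled edge at $g$, in such a way that two vertices of $\Gamma^e$ are joined by an edge precisely when the corresponding hyperplanes are transverse. Thus $\Gamma^e$ is the crossing graph of $\widetilde X$. I would then invoke Manning's bottleneck criterion: a connected graph $Y$ is quasi-isometric to a tree if and only if there is a constant $\Delta>0$ so that for all $x,y\in Y$ there is a point $m$ with $d(x,m)=d(y,m)=\tfrac12 d(x,y)$ such that every path from $x$ to $y$ in $Y$ meets the ball $B(m,\Delta)$.

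To verify the bottleneck property, take $a,b\in\Gamma^e$ and choose a $\Gamma^e$-geodesic $a=u_0,u_1,\dots,u_{2N}=b$ whose midpoint $c=u_N$ has the feature that the hyperplane $\widehat c$ \emph{separates} $\widehat a$ from $\widehat b$ in $\widetilde X$, i.e.\ $\widehat a$ and $\widehat b$ lie in distinct halfspaces determined by $\widehat c$. Granting such a choice, let $a=w_0,w_1,\dots,w_k=b$ be an arbitrary path in $\Gamma^e$; the hyperplanes $\widehat w_0,\dots,\widehat w_k$ have consecutive members transverse. Each $\widehat w_i$ is either transverse to (or equal to) $\widehat c$ or lies in exactly one open halfspace of $\widehat c$; and transverse hyperplanes in a $\mathrm{CAT}(0)$ cube complex are never separated by a third hyperplane, so consecutive $\widehat w_i,\widehat w_{i+1}$ lying in opposite halfspaces is impossible. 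Since $\widehat w_0=\widehat a$ and $\widehat w_k=\widehat b$ are on opposite sides of $\widehat c$, some $\widehat w_j$ must be transverse to or equal to $\widehat c$, whence $d_{\Gamma^e}(w_j,c)\le 1$. Thus the bottleneck property holds with $\Delta=1$, and Manning's criterion yields that $\Gamma^e$ is a quasi-tree.

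The main obstacle is the step I have glossed: producing, for given $a$ and $b$, a $\Gamma^e$-geodesic whose midpoint hyperplane $\widehat c$ separates $\widehat a$ from $\widehat b$. This is where the real content of \cite[Lemma 26]{KK2} resides. One natural route is to show that crossing-distance in $\Gamma^e$ is computed, up to bounded error, by nesting and separation patterns of hyperplanes — that a crossing-graph geodesic may be taken to consist of hyperplanes successively nested on the two sides of a separating one — using the normal form (or ``piling'') description of elements of $A(\Gamma)$ together with a disk-diagram argument ruling out backtracking. An alternative that trades one difficulty for another is to quote Hagen's theorem that the contact graph of any $\mathrm{CAT}(0)$ cube complex is quasi-isometric to a tree, and then show that the inclusion of the crossing graph $\Gamma^e$ into the contact graph of $\widetilde X$ is a quasi-isometry in this RAAG setting; the work then shifts to bounding osculations by crossings, which again reduces to the same combinatorial analysis of RAAG normal forms.
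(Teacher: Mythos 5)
The paper offers no proof of this statement: it is imported verbatim from \cite[Lemma 26]{KK2}, so there is no internal argument to measure yours against. Taken on its own terms, your outline is the standard one: identify $\Gamma^e$ with the crossing graph of hyperplanes of the universal cover of the Salvetti complex and verify Manning's bottleneck criterion. The one step you actually carry out is correct: two transverse hyperplanes are never separated by a third, so if $\widehat{c}$ separates $\widehat{a}$ from $\widehat{b}$, then any path $a=w_0,\dots,w_k=b$ in the crossing graph must contain some $w_j$ whose hyperplane meets or equals $\widehat{c}$, giving a bottleneck constant $\Delta=1$ \emph{provided} such a $c$ exists at the midpoint of a geodesic.

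That proviso is a genuine gap, and you identify it yourself. The existence, for arbitrary $a,b\in\Gamma^e$, of a geodesic whose midpoint hyperplane separates $\widehat{a}$ from $\widehat{b}$ is not a technical loose end; it is the entire content of the lemma. Crossing-graph distance is defined by transversality, not by the separation order on halfspaces, and a priori a crossing-graph geodesic could consist entirely of hyperplanes none of which separates the two endpoints. Bridging this is exactly where the normal-form/disk-diagram combinatorics of $A(\Gamma)$ must enter, or alternatively where one must pass to Hagen's contact graph and control osculations by crossings; you sketch both routes but execute neither, so the bottleneck property is never verified and the proof is incomplete. A secondary slip: your connectivity argument does not work as written, since if $w$ is adjacent to $v$ then $v^{w}=v$ and the path $v-w-v^{w}$ is degenerate, while for $w$ not adjacent to $v$ that path does not exist; the correct argument connects $v$ to $v^{s}$ for a generator $s$ by conjugating a $\Gamma$-path from $v$ to $s$ by $s$ and splicing at the penultimate vertex (which is fixed by that conjugation), then inducts on word length using the $A(\Gamma)$-action on $\Gamma^e$.
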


\begin{thm}\label{raagsacyl}\cite[Theorem 30]{KK} The action of $A(\Ga)$ on $\Ga^e$ is acylindrical.
\end{thm}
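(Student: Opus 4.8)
The plan is to verify the acylindricity criterion in its ``equal distance'' form (Remark \ref{acylwithequal}): for every $\e\geq 0$ I must produce constants $R=R(\e)$ and $N=N(\e)$ such that any two vertices $x,z\in\Ga^e$ with $d(x,z)=R$ admit at most $N$ elements $g\in A(\Ga)$ displacing each of $x,z$ by at most $\e$. Two structural inputs are available. First, $\Ga^e$ is a quasi-tree by Theorem \ref{raagshyp}, hence $\delta$-hyperbolic, and up to uniform distortion geodesics between far-apart points behave like paths in a tree. Second, under the right-conjugation action the vertex $v^h$ is the conjugate generator $h^{-1}vh$, so its stabilizer is the centralizer $h^{-1}C_{A(\Ga)}(v)h=h^{-1}\langle\mathrm{star}(v)\rangle h$, a conjugate of the star subgroup $\langle\mathrm{star}(v)\rangle\cong\langle v\rangle\times A(\mathrm{link}(v))$. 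The essential difficulty, which I would flag at the outset, is that $\Ga^e$ is \emph{not} locally finite (a vertex adjacent to $v$ has as neighbours all the conjugate generators lying in $\langle\mathrm{star}(v)\rangle$, of which there are infinitely many), so ``moved a bounded amount'' does not by itself restrict $gx$ to finitely many targets; the algebraic structure of the RAAG must be exploited.

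First I would carry out the geometric reduction. Fix a geodesic $[x,z]$ in $\Ga^e$ and suppose $g$ satisfies $d(x,gx)\le\e$ and $d(z,gz)\le\e$. Since $\Ga^e$ is a quasi-tree, a thin-quadrilateral argument of exactly the type used in Lemma \ref{defineE} shows that $g[x,z]$ lies in a uniformly bounded Hausdorff neighbourhood of $[x,z]$, with bound $\e'=\e'(\e,\delta)$; in particular every vertex $w$ in the ``middle portion'' of $[x,z]$ (all but a controlled initial and terminal segment) satisfies $d(w,gw)\le\e'$. Choosing $R$ large enough, the middle portion contains a geodesic subsegment $w_0,w_1,\dots,w_k$ of any prescribed length $k$, each vertex of which is displaced by at most $\e'$ under $g$. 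Thus the problem reduces to bounding the number of $g$ that $\e'$-coarsely preserve a long geodesic segment of $\Ga^e$.

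The algebraic heart is then a \emph{coarse rigidity} statement for long geodesics. I would first treat the exact case $\e'=0$: an element fixing $w_0,\dots,w_k$ pointwise lies in $\bigcap_{i=0}^k\mathrm{Stab}(w_i)$, an intersection of conjugates of star subgroups. Using that intersections of parabolic (conjugate-of-special) subgroups of a RAAG are again parabolic, together with the fact that the $w_i$ form a geodesic and hence their stabilizers are arranged ``non-nestedly'' along the path, one shows this intersection is trivial (equivalently, since $A(\Ga)$ is torsion-free, trivial) once $k$ exceeds a constant $L$ depending only on $\Ga$. The delicate point is the upgrade to the coarse setting: I must bound the number of $g$ with $d(w_i,gw_i)\le\e'$ for all $i$, even though each $w_i$ has infinitely many vertices within distance $\e'$. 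The idea is that each $gw_i$ is itself a conjugate generator lying in a ball of radius $\e'$ about $w_i$, and the requirement that $(gw_i)_i$ again form a geodesic of the same combinatorial type forces, through the support/normal-form description of conjugate generators near $[x,z]$ developed in \cite{KK,KK2}, that only finitely many coherent choices of the tuple $(gw_0,\dots,gw_k)$ can occur; each such choice then determines $g$ up to the now-trivial pointwise stabilizer, yielding the uniform bound $N(\e)$.

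I expect the coarse upgrade in the last paragraph to be the main obstacle: controlling, uniformly in the endpoints, how a group element may permute the infinitely many conjugate generators near a long geodesic while still coarsely preserving it. One clean way to organise this is to realise $\Ga^e$ (or a quasi-isometric model) through a Bestvina--Bromberg--Fujiwara projection complex built from the cosets of the relevant cyclic and star subgroups, for which acylindricity of the induced action is available and the bounded-projection estimates replace the ad hoc support analysis. I would pursue the direct combinatorial argument first and fall back on the projection-complex model should the rigidity estimates prove unwieldy.
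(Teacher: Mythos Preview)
The paper does not prove this statement at all: it is quoted verbatim as \cite[Theorem 30]{KK} and used as a black box in the proof of Lemma \ref{conngrs}. There is therefore no ``paper's own proof'' to compare against; your proposal attempts something the authors deliberately imported from the literature.

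That said, your outline is a reasonable sketch of how such a proof goes, and you have correctly identified the genuine obstacle (non-local-finiteness of $\Ga^e$ means bounded displacement does not give finitely many targets). The reduction via Lemma \ref{defineE}--type thin-quadrilateral estimates is sound. The part you flag as delicate---the ``coarse upgrade'' from pointwise stabilizers to $\e'$-coarse stabilizers---is indeed where the real work lies, and your proposal remains a plan rather than a proof there: you assert that ``only finitely many coherent choices of the tuple $(gw_0,\dots,gw_k)$ can occur'' but give no mechanism for this bound. In the actual Kim--Koberda argument this is handled through a careful analysis of how conjugate generators can be adjacent in $\Ga^e$ via the double-coset structure of star subgroups, and your fallback to a projection-complex model is also a viable route. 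But as written, the proposal stops short of an argument at precisely the step you yourself identify as the main obstacle.
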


We first prove the strong $\AH$-accessibility of RAAGs arising from finite connected graphs. We will then use this result to prove the $\AH$-accessibility of RAAGs arising from any finite graph.

\begin{lem}\label{conngrs} Let $\Gamma$ be a connected finite graph and $G = A(\Gamma)$. Then $G$ is strongly $\AH$-accessible.
\end{lem}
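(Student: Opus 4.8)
The plan is to combine the acylindricity of the $A(\Gamma)$-action on the extension graph $\Gamma^e$ (Theorems~\ref{raagshyp} and~\ref{raagsacyl}) with the extremality criterion of Proposition~\ref{maxact} (equivalently, Corollary~\ref{maxact-cor}). First I would replace $\Gamma^e$ by a connected graph $\Delta$ on which $A(\Gamma)$ acts cocompactly and which is quasi-isometric to $\Gamma^e$: since $\Gamma$ is finite and connected, the action of $A(\Gamma)$ on $\Gamma^e$ is cobounded (indeed the $A(\Gamma)$-translates of the finite subgraph $\Gamma\subseteq\Gamma^e$ cover $\Gamma^e$), and one can pass to a locally finite, cocompact model using the construction in the proof of Proposition~\ref{cobdd} (or simply observe that the ``combinatorial'' graph built there is cocompact here because the orbit of any vertex under the vertex stabilizer is bounded). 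Write $[X_0]=\sigma([A(\Gamma)\curvearrowright\Gamma^e])\in\AH(A(\Gamma))$; this lies in $\AH(A(\Gamma))$ by Theorem~\ref{raagsacyl}, the Svarc--Milnor map, and the fact that acylindricity is preserved under equivalence.

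Next I would identify the vertex stabilizers of $\Gamma^e$ and show that each of them acts elliptically under \emph{every} acylindrical action of $A(\Gamma)$ on a hyperbolic space. A vertex of $\Gamma^e$ is a conjugate $v^g$ of a generator $v\in V(\Gamma)$, and $\op{Stab}_{A(\Gamma)}(v^g)$ is the $A(\Gamma)$-stabilizer of $v^g$ for the conjugation action, which is $g^{-1}C(v)g$ where $C(v)$ is the centralizer of $v$ in $A(\Gamma)$. Up to conjugacy it suffices to treat $C(v)$. Now $C(v)$ contains the infinite-order central element $v$, so $C(v)$ has infinite center; by \cite[Corollary 7.2]{Osi16} (cf.\ Example~\ref{ztwoacc}) a group with infinite center is not acylindrically hyperbolic, and it is not virtually cyclic either (it contains $\langle v\rangle$ together with any generator adjacent to $v$, or more simply it is not virtually cyclic once $\Gamma$ has more than one vertex; the degenerate cases $|V(\Gamma)|\le 1$ give $G$ trivial or $\mathbb Z$, which are hyperbolic and handled by Example~\ref{hypacc}). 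Hence by Theorem~\ref{tricho} every acylindrical action of $C(v)$ on a hyperbolic space is elliptic, i.e.\ has bounded orbits.

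Now I would invoke Proposition~\ref{maxact} with $\Delta$ the cocompact model above and $\mathcal A$ the set of \emph{all} acylindrical actions of $G=A(\Gamma)$ on hyperbolic spaces: for every vertex $v\in V(\Delta)$ and every $A=G\curvearrowright S\in\mathcal A$, the restriction of the action to $\op{Stab}_G(v)$ is again acylindrical, hence elliptic by the previous paragraph, so it has bounded orbits on $S$. Proposition~\ref{maxact} then gives $A\preceq G\curvearrowright\Delta$ for all $A\in\mathcal A$. Passing to hyperbolic structures via Proposition~\ref{prop-SM}, this says exactly that $[X_0]$ realizes the strong $\AH$-accessibility of $G$: every acylindrical action of $G$ on any hyperbolic space (no cardinality restriction is needed, as Proposition~\ref{maxact} is stated for arbitrary metric spaces) is dominated by $G\curvearrowright\Gamma(G,X_0)\sim G\curvearrowright\Gamma^e$. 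Thus $G$ is strongly $\AH$-accessible.

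The main obstacle I anticipate is the bookkeeping around coboundedness/cocompactness of the extension graph and the identification of vertex stabilizers: I need $\Delta$ to be a connected graph with a \emph{cocompact} $G$-action (so that Proposition~\ref{maxact} applies verbatim), and I must be careful that the stabilizer of a vertex of $\Delta$ --- which is the stabilizer of a vertex of $\Gamma^e$, a conjugate of a generator centralizer --- genuinely has infinite center rather than merely being ``large.'' Once these two points are nailed down, the rest is a direct application of results already established in the paper (Theorem~\ref{tricho}, Proposition~\ref{maxact}, Proposition~\ref{prop-SM}, and Theorems~\ref{raagshyp}--\ref{raagsacyl}).
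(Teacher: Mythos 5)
Your proposal is correct and follows essentially the same route as the paper: both use the cocompact acylindrical action of $A(\Gamma)$ on the extension graph $\Gamma^e$, identify vertex stabilizers as conjugated centralizers of generators, observe these have infinite center (hence are not acylindrically hyperbolic) and contain $\mathbb Z^2$ (hence are not virtually cyclic), deduce ellipticity of the stabilizers in every acylindrical action via Theorem \ref{tricho}, and conclude with Proposition \ref{maxact}. The only superfluous step is your detour through a cocompact model $\Delta$: the action on $\Gamma^e$ is already cocompact since $\Gamma^e=\bigcup_{g}g\Gamma$ with $\Gamma$ finite, so the paper applies Proposition \ref{maxact} to $\Gamma^e$ directly.
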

\begin{proof}

Let $V(\Ga)$ denote the set of vertices of the graph $\Ga$. If $|V(\Ga)| =1$, then  $G \cong \mathbb{Z}$. Example \ref{hypacc} applies in this case and we conclude that $G$ is strongly $\AH$-accessible.

Thus we may assume that $|V(\Ga)| \geq 2$. In this case, we will prove the result by using Proposition \ref{maxact} applied to the set $\mathcal{A}$ of acylindrical actions of $G$ on hyperbolic spaces. Observe that $G \acts \Ga^e$ is cocompact and isometric. By Theorem \ref{raagshyp} and \ref{raagsacyl}, $G \act \Gamma^e$ is acylindrical and $\Gamma^e$ is a quasi-tree and hence a hyperbolic space. Thus $\sigma([G \acts \Ga^e]) \in \AHG$.

Since the action of $G$ on $\Ga^e$ is by conjugation, stabilizers of vertices of the extension graph correspond to centralizers of conjugates of standard generators of $G$. So we must consider $H = C_G(a^g)$, where $a$ represents a vertex of $\Gamma$, and $g$ is any element of $G$.

Since $\Gamma$ is connected and $|V(\Gamma)| \geq 2$, there exists a vertex $b \neq a$ such that $b$ is connected to $a$ in $\Ga^e$, i.e. $[a, b] =1$ in $G$. But then $[a^g, b^g] =1$, so $b^g \in H$. It can be easiliy shown that $\langle a^g, b^g \rangle \cong \langle a,b\rangle \cong \mathbb{Z}^2 \leq H$, since the RAAG corresponding to a graph with 2 vertices and an edge connecting them is $\mathbb{Z}^2$. Thus $H$ cannot be virtually cyclic.

Since the center of $H$ contains the infinite cyclic group $\langle a^g \rangle$, $H$ cannot be acylindrically hyperbolic by Example \ref{ztwoacc}. Thus $H$ cannot act non-elementarily and acylindrically on a hyperbolic space. By Theorem \ref{tricho}, for any acylindrical action of $G$ on a hyperbolic space, the induced action of $H$ is elliptic. Applying Proposition \ref{maxact}, we conclude that $G$ is strongly $\AH$-accessible.
\end{proof}

\begin{proof}[Proof of Theorem \ref{main5}(c)] If $\Ga$ is connected, the result follows from Lemma \ref{conngrs}. If $\Ga$ is a disconnected finite graph, then $\Ga$ has two or more connected components, say $\Ga_1, \Ga_2,..., \Ga_n$. Let $A(\Ga_i)$ denote the RAAG associated to the connected subgraph $\Ga_i$ of $\Ga$. It is easy to see that $G = A(\Ga_1) * A(\Ga_2) *...*A(\Ga_n)$, and so $G$ is hyperbolic relative to the collection $\{A(\Ga_i) \mid 1 \leq i \leq n\}$. By Lemma \ref{conngrs}, each RAAG $A(\Ga_i)$ is strongly $\AH$-accessible. Using Theorem \ref{relhypmax}, we conclude that $G$ is $\AH$-accessible.
\end{proof}

Lastly, we consider the case of fundamental groups of  compact, orientable 3-manifolds with empty or toroidal boundary. In order to prove the theorem, we will need the following results and definitions.

\begin{defn} A 3-manifold $N$ is said to be \textit{irreducible} if every embedded $S^2$ bounds a 3-ball. \end{defn}

\begin{defn} A 3-manifold $N$ is said to be \textit{atoroidal} if any map $T \rightarrow N$ which induces a monomorphism of fundamental groups can be homotoped into the boundary of $N$, i.e., $N$ contains no essential tori.
\end{defn}

\begin{defn} A \textit{Seifert fibered} manifold is a 3-manifold $N$ together with a decomposition into disjoint simple closed curves (called \textit{Seifert fibers}) such that each fiber has a tubular neighborhood that forms a standard fibered torus.

The \textit{standard fibered torus} corresponding to a pair of coprime integers $(a, b)$ with $a > 0$ is the surface bundle of the automorphism of a disk given by rotation by an angle of $\frac{2 \pi b}{a}$, equipped with natural fibering by circles.
 \end{defn}

\begin{lem}\label{normalsubgrp} \cite[Lemma 1.5.1]{AFW} Let $N$ be a Seifert fibered manifold. If $\pi_1(N)$ is infinite, then it contains a normal, infinite cyclic subgroup.

\end{lem}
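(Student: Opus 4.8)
The plan is to derive the statement from the classical structure theory of Seifert fibered spaces, so I will only indicate the steps. A Seifert fibered manifold $N$ carries a Seifert fibration $p\colon N\to B$ over a $2$-orbifold $B$ whose generic fibre is a circle; fix a regular fibre and let $h\in\pi_1(N)$ be the class it represents (well defined up to conjugacy and inversion). The claim will be that $H=\langle h\rangle$ is the desired subgroup, so two things must be verified: that $H$ is normal in $\pi_1(N)$, and that $H$ is infinite whenever $\pi_1(N)$ is.

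Normality is the easy part. Since the regular fibres form a locally trivial family of circles over $B$, a regular fibre can be continuously transported along any loop in $N$ back to a regular fibre; hence for each $g\in\pi_1(N)$ the conjugate $ghg^{-1}$ is again represented by a regular fibre, so $ghg^{-1}=h^{\pm 1}$ and $H\lhd\pi_1(N)$. Equivalently, one invokes the exact sequence of the Seifert fibration,
$$
\mathbb Z\xrightarrow{\ \iota\ }\pi_1(N)\longrightarrow\pi_1^{\mathrm{orb}}(B)\longrightarrow 1 ,
$$
with $\iota(1)=h$; by exactness $\operatorname{im}\iota=H$ is the kernel of the second map, hence normal.

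For infiniteness, assume $H$ is finite; I must show $\pi_1(N)$ is finite. If $\partial N\neq\emptyset$ then $B$ has nonempty boundary, and a short check of the cases ($B$ a disk with at most one cone point, so $N$ a solid torus; $B$ an annulus or Möbius band, so $N$ an $I$-bundle over a torus or Klein bottle; or $\chi^{\mathrm{orb}}(B)<0$) shows $h$ always has infinite order, so this situation does not arise. In the closed case, $\iota$ is injective unless $N$ is covered by $S^3$ or carries the geometry $S^2\times\mathbb R$: if $\chi^{\mathrm{orb}}(B)\le 0$, or $\chi^{\mathrm{orb}}(B)>0$ with nonzero orbifold Euler number, the total space is aspherical or $S^3$-geometric and $h$ has infinite order (this is the $\widetilde{SL_2}$, $\mathbb H^2\times\mathbb R$, $\operatorname{Nil}$, $\mathbb E^3$, $S^3$ trichotomy), so under our assumption $\langle h\rangle$ finite we are forced into the $S^3$- or $S^2\times\mathbb R$-geometric cases. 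In the $S^3$ case $\pi_1(N)$ is finite, as required; in the $S^2\times\mathbb R$ case $N$ is one of the finitely many such manifolds, and for each of these $\pi_1(N)$ is either finite or visibly contains a normal infinite cyclic subgroup (indeed $\pi_1(N)$ is $\mathbb Z$, $\mathbb Z\times\mathbb Z/2$, or the infinite dihedral group), so the lemma holds directly.

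The main obstacle is precisely the finite-order-fibre case: once $N$ falls outside the five geometries for which the base orbifold alone controls the order of $h$, one cannot argue abstractly and must appeal to the classification of $S^2\times\mathbb R$-manifolds together with a direct computation of their fundamental groups, and one has to keep the exact sequence, the list of exceptional fibrations, and the orbifold Euler-number bookkeeping consistent with whatever definition of ``Seifert fibered'' (closed versus bounded, orientable base versus reflector boundary) is in force. The normality of $H$ and the sign-of-$\chi^{\mathrm{orb}}(B)$ dichotomy that disposes of all the remaining cases are routine.
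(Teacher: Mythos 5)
The paper contains no proof of this lemma: it is quoted directly from [AFW, Lemma~1.5.1], so there is nothing internal to compare your argument against, and the only real question is whether your reconstruction of the classical proof is sound. It essentially is. Normality of the fiber subgroup $H=\langle h\rangle$ is best obtained, as you note, from exactness of $\langle h\rangle\to\pi_1(N)\to\pi_1^{\mathrm{orb}}(B)\to 1$; your first argument by transporting the fibre only shows that $ghg^{-1}$ is \emph{conjugate} to $h^{\pm1}$, which is not normality, so the exact sequence is the version to keep. The entire content is then that $h$ has infinite order whenever $\pi_1(N)$ is infinite, and your reduction --- aspherical geometries force infinite order, $S^3$-geometry forces $\pi_1(N)$ finite, and the closed $S^2\times\mathbb R$-manifolds are checked by hand --- is the standard route. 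Two caveats. First, the step ``$\chi^{\mathrm{orb}}(B)\le 0$ implies $h$ has infinite order'' rests on the theorem that every compact Seifert fibered space is geometric together with the fact that the fibre lifts to a line in the universal cover of each aspherical model; this is where the real work is hidden and should be cited (Scott, \emph{The geometries of 3-manifolds}) rather than treated as bookkeeping. Second, your bounded-case enumeration omits the base $D^2(2,2)$, the one remaining bounded orbifold with $\chi^{\mathrm{orb}}=0$ besides the annulus and M\"obius band; its total space is again the twisted $I$-bundle over the Klein bottle, so the conclusion is unaffected, but the list as written is incomplete. With these repairs the proposal is a correct proof of the statement the paper chose to import by citation.
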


\begin{defn} A compact 3-manifold is said to be \textit{hyperbolic} if its interior admits a complete metric of constant negative curvature $-1$.  \end{defn}

The following theorem was first announced by Waldhausen (\cite{wan}), and was proved independently by Jaco-Shalen (\cite{JS}) and Johannson (\cite{jon}).

\begin{thm}\label{jsj} \cite[Theorem 1.6.1]{AFW}[JSJ decomposition Theorem] Let $N$ be a compact, orientable, irreducible 3-manifold with empty or toroidal boundary. Then there exists a collection of disjointly embedded incompressible tori $T_1, T_2,..., T_k$ such that each component of $N$ cut along $T_1 \cup T_2 \cup...\cup T_k$ is atoroidal or Seifert fibered. Furthermore, any such collection of tori with a minimal number of components is unique up to isotopy.   \end{thm}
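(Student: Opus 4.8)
The plan is to follow the classical Jaco--Shalen--Johannson route, since this is precisely the (pre-geometrization) torus decomposition theorem for Haken manifolds. First I would dispose of the degenerate cases: if $N$ is itself atoroidal or Seifert fibered, take $k=0$; if $\partial N$ is compressible, compress and reduce to the incompressible case (using irreducibility to control $S^2$'s along the way). After these reductions one is in the situation of a compact, orientable, irreducible $3$--manifold with incompressible toroidal (or empty) boundary, which is Haken once it contains an essential torus, so the Haken hierarchy machinery is available.

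For existence, the first input is a finiteness statement: by the Kneser--Haken finiteness theorem there is a bound, depending only on a triangulation of $N$, on the number of disjointly embedded, pairwise non-parallel, two-sided incompressible surfaces in $N$. Applied to incompressible tori, this gives a \emph{maximal} collection of disjointly embedded incompressible tori in $N$ that are pairwise non-parallel and none boundary-parallel; cutting $N$ along their union produces finitely many compact pieces, each again irreducible with incompressible toroidal boundary. The issue is that a maximal collection over-cuts Seifert fibered pieces (e.g.\ a circle bundle over a higher-genus surface contains many essential non-peripheral vertical tori), so maximality alone does not yield the desired ``atoroidal or Seifert fibered'' pieces, and it does not give uniqueness. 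The correct packaging is through the characteristic submanifold.

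The heart of the proof is therefore the characteristic submanifold (JSJ) theorem: $N$ contains a submanifold $\Sigma$, unique up to isotopy, which is a maximal Seifert-fibered subpair enjoying the \emph{enclosing property} --- every essential torus and every essential annulus in $N$ can be isotoped into $\Sigma$. Taking $T_1,\dots,T_k$ to be the frontier of $\Sigma$ in $N$, the pieces lying in $\Sigma$ are Seifert fibered by construction, while any piece in the complement of $\Sigma$ contains no essential non-peripheral torus (such a torus would isotope into $\Sigma$ and hence be parallel to one of the $T_i$), so it is atoroidal; discarding any component of $\{T_i\}$ that is parallel to another or to a boundary torus yields a minimal such system. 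Uniqueness up to isotopy is inherited from the uniqueness of $\Sigma$: given any minimal torus system with atoroidal-or-SFS complementary pieces, a regular neighborhood of the union of the Seifert fibered pieces is a Seifert-fibered submanifold satisfying the enclosing property, hence isotopic to $\Sigma$, so its frontier is isotopic to the original system.

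The main obstacle is exactly the characteristic submanifold theorem --- establishing the existence and isotopy-uniqueness of a maximal Seifert-fibered subpair with the enclosing property requires the full apparatus of essential annuli and tori, Haken hierarchies, and the delicate matching of Seifert fiberings on overlapping pieces, and is a substantial chapter of $3$--manifold topology. Since in the present paper this statement is used only as a black box to supply the geometric pieces feeding the accessibility argument for Theorem~\ref{main5}(d), I would not reconstruct its proof but simply cite \cite{AFW} together with the original sources \cite{wan}, \cite{JS}, \cite{jon}.
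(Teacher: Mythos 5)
The paper offers no proof of this statement: it is quoted verbatim as Theorem~1.6.1 of Aschenbrenner--Friedl--Wilton and attributed to Waldhausen, Jaco--Shalen, and Johannson, exactly as you ultimately propose to do. Your sketch of the classical route via Kneser--Haken finiteness and the characteristic submanifold with the enclosing property is a fair account of the standard argument, but since both you and the paper treat the theorem as a citable black box, the approaches coincide.
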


The tori in the above theorem are referred to as \textit{JSJ-tori}. If $T = \cup_{i =1}^{k} T_k$, the connected components of $N \backslash T$ are called \textit{JSJ-components}. For details of atoroidal and Seifert fibered manifolds, we refer the reader to \cite[Sections 1.5 and 1.6]{AFW}.

The following was proven by Perelman in his seminal papers (See \cite{per02, per03a, per03b}).

\begin{thm}\label{hypthm} \cite[Theorem 1.7.5]{AFW}[Hyperbolization Theorem] Let $N$ be a compact, orientable, irreducible 3-manifold with empty or toroidal boundary. Suppose that $N$ is not homeomorphic to $S^1 \times D^2$ (solid torus), $T^2 \times I$ (torus bundle), $K^2 \stackrel{\sim}{\times} I$ (twisted klein bottle bundle). If $N$ is atoroidal and $\pi_1(N)$ is infinite, then $N$ is a hyperbolic manifold.
\end{thm}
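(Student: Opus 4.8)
The statement is the hyperbolization half of Thurston's Geometrization Conjecture, and the only known route to it for general (non-Haken) $N$ is the Hamilton--Perelman Ricci flow with surgery; the plan below follows that program. The first step is to reduce to a setting where the flow theory applies and to set up the flow: place an arbitrary smooth Riemannian metric $g_0$ on $N$ (in the toroidal boundary case one passes to the interior $N^\circ$ and works with complete metrics of finite-volume cusp type, or equivalently reduces to the closed case by the doubling and gluing techniques of Morgan--Tian and Bessi\`eres--Besson--Boileau--Maillot--Porti), and evolve it by the Ricci flow $\partial_t g = -2\,\mathrm{Ric}(g)$. Short-time existence is classical (Hamilton, via DeTurck's trick). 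The analytic core is to continue the flow past the singularities it inevitably develops: this requires Perelman's no-local-collapsing theorem (proved via monotonicity of the $\mathcal{W}$-entropy and of the reduced volume built from $\mathcal{L}$-geodesics), which provides a universal $\kappa>0$ making the flow $\kappa$-noncollapsed on bounded scales, together with the canonical neighborhood theorem, which shows that every point of sufficiently large curvature has a neighborhood modeled on a $\kappa$-solution (a shrinking round space form, an $\varepsilon$-neck, or a cap). One then performs surgery at a carefully chosen scale --- excising the high-curvature necks, capping them off, and discarding components with canonical-neighborhood structure --- and checks via Perelman's estimates that the surgery times do not accumulate, yielding a Ricci flow with surgery defined for all time.

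Next I would separate the finite-time from the infinite-time behavior. Because $N$ is irreducible with infinite fundamental group, the sphere theorem gives $\pi_2(N)=0$, so $N$ is aspherical; consequently the flow does not become extinct in finite time (finite-time extinction, in the sense of Colding--Minicozzi and Perelman, is confined to manifolds whose prime factors are spherical space forms and copies of $S^2\times S^1$). Hence the surgered flow exists for all $t$, and after some finite time all surgeries occur inside discarded components, so the topology of the surviving piece stabilizes to $N$ itself. The remaining step is Perelman's analysis of the long-time behavior: rescaling the metric as $\tilde g(t)=t^{-1}g(t)$ and letting $t\to\infty$, one obtains the thick--thin decomposition. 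On the $w$-thick part the rescaled metrics subconverge (in the pointed $C^\infty_{\mathrm{loc}}$ sense) to complete finite-volume hyperbolic manifolds, whose ends are cusps bounded by tori that embed incompressibly in $N$; on the $w$-thin part the manifold collapses with locally bounded curvature and is, by the collapsing theory (Cheeger--Fukaya--Gromov together with Perelman's collapsing theorem, made rigorous by Shioya--Yamaguchi, Morgan--Tian, Kleiner--Lott, Cao--Ge and Bessi\`eres et al.), a graph manifold.

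Finally I would invoke the topological hypotheses to eliminate everything but a single hyperbolic piece. The tori separating the hyperbolic (thick) pieces from one another and from the graph-manifold (thin) part are $\pi_1$-injective in $N$; since $N$ is atoroidal, any such torus must be boundary-parallel. This forces there to be no interior separating tori, so $N$ carries a single geometric structure. A nonempty thin part would either contribute an essential interior torus or would make $N$ Seifert fibered with infinite $\pi_1$ --- hence, by Lemma \ref{normalsubgrp}, contain a normal infinite cyclic subgroup and, after excluding the listed small cases $S^1\times D^2$, $T^2\times I$, and $K^2 \stackrel{\sim}{\times} I$, fall outside the atoroidal hypothesis. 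Thus the thin part is empty (apart from collars of the toroidal boundary, which become the hyperbolic cusps), the thick limit is all of $N$, and $N$ admits a complete finite-volume hyperbolic metric, i.e.\ $N$ is hyperbolic.

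The main obstacle is the entire analytic engine described above: the classification of $\kappa$-solutions underlying the canonical neighborhood theorem and the construction of a surgery that persists for all time constitute the deepest part of Perelman's work. Among the more self-contained pieces, the hardest single step is the long-time collapsing analysis showing that the thin part is a graph manifold, which relies both on the no-local-collapsing estimate (to keep the rescaled hyperbolic limits nondegenerate) and on the full strength of the collapsing theory under a lower curvature bound.
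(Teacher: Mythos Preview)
The paper does not prove this theorem at all: it is stated as a citation of \cite[Theorem 1.7.5]{AFW} and attributed to Perelman \cite{per02, per03a, per03b}, and is used as a black box in the proof of Theorem~\ref{main5}(d). Your outline of the Hamilton--Perelman Ricci-flow-with-surgery program is the correct (and only known, in the non-Haken case) route to the result, and as a high-level sketch it is accurate; but since the paper treats the statement as an imported result, there is no ``paper's own proof'' to compare against.
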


\begin{rem}\label{atorexp} Note that the manifolds $S^1 \times D^2$, $T^2 \times I$, or $K^2 \tilde{\times} I$, although atoroidal, are also Seifert fibered manifolds, and are hence considered to be Seifert fibered JSJ components. Under this convention, the Hyperbolization theorem implies that JSJ components of $N$ are either hyperbolic or Seifert fibered manifolds.
\end{rem}

\begin{defn} Let $N$ be a compact, orientable, irreducible 3-manifold with empty or toroidal boundary. We say $N$ is a \textit{graph manifold} if all its JSJ components are Seifert fibered manifolds.
\end{defn}

The next result can be found in \cite[Theorem 7.2.2]{AFW}. This result follows easily from a combination theorem proved by Dahmani (see \cite[Theorem 0.1]{Dah}) or a more general combination theorem, later proved by the third author (see \cite[Corollary 1.5]{Osidehn}). The result has also been proved by Bigdely and Wise (see \cite[Corollary E]{BigWise}).

\begin{thm}\label{manifoldshyprel} Let $N$ be a compact, orientable, irreducible 3-manifold with empty or toroidal boundary. Let $M_1,\ldots, M_k$ be the maximal graph manifold pieces of the JSJ-decomposition of $N$. Let $S_1,\ldots, S_l$ be the tori in the boundary of $N$ that adjoin a hyperbolic piece and let $T_1,\ldots, T_m$ be the tori in the JSJ-decomposition of $N$ that
separate two (not necessarily distinct) hyperbolic components of the JSJ-decomposition. The fundamental group of $N$ is hyperbolic relative to the set of peripheral subgroups $$\{H_i\} = \{\pi_1(M_p)\} \cup \{\pi_1(S_q)\} \cup \{\pi_1(T_r)\}.$$
\end{thm}

The last theorem we mention here is a combination of \cite[Lemma 2.4]{WZ} and \cite[Lemma 5.2]{MO}. Although \cite[Lemma 2.4]{WZ} was originally stated and proved for closed manifolds, the same proof also holds for manifolds with toroidal boundary. (See proofs of \cite[Lemma 2.3 and Lemma 2.4]{WZ}).

\begin{thm}\label{manifolddicho} Let $N$ be an orientable, irreducible 3-manifold with empty or toroidal boundary. Then either $N$ has a finite-sheeted covering space that is a torus bundle over a circle or the action of $\pi_1(N)$ on the Bass-Serre tree associated to the JSJ decomposition of $\pi_1(N)$ is acylindrical.
\end{thm}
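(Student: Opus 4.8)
The plan is to deduce the statement by assembling the graph-of-groups picture furnished by the JSJ decomposition and then quoting the two cited lemmas. First I would apply the JSJ Decomposition Theorem (Theorem \ref{jsj}) to present $\pi_1(N)$ as the fundamental group of a finite graph of groups: the vertex groups are the fundamental groups of the JSJ components (each Seifert fibered or hyperbolic, by the Hyperbolization Theorem (Theorem \ref{hypthm}) and Remark \ref{atorexp}), and the edge groups are the fundamental groups (each $\cong\mathbb{Z}^2$) of the JSJ tori. Let $T$ be the associated Bass-Serre tree. If there are no JSJ tori, then $T$ is a point, the action of $\pi_1(N)$ on $T$ is elliptic, and the acylindricity condition holds vacuously, so there is nothing to prove; hence I would assume the JSJ decomposition is nontrivial and focus on the action $\pi_1(N)\acts T$.

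Next I would invoke \cite[Lemma 2.4]{WZ}, which analyzes exactly this action: for the JSJ graph of groups of a compact orientable irreducible $3$--manifold with empty or toroidal boundary, the Seifert fibers of the pieces fail to propagate consistently around $T$ — so that the pointwise stabilizer of every sufficiently long path in $T$ is as small as possible — unless $N$ admits a finite-sheeted covering that is a torus bundle over the circle, in which case the fiber subgroups line up along a line in $T$. Following the remark preceding the theorem, I would observe that although \cite[Lemma 2.4]{WZ} is stated for closed $N$, the argument of \cite[Lemmas 2.3 and 2.4]{WZ} applies verbatim when $\partial N$ is a union of tori. I would then feed the non-exceptional conclusion into \cite[Lemma 5.2]{MO}, which upgrades this combinatorial smallness of path-stabilizers to acylindricity of $\pi_1(N)\acts T$ in the sense of Bowditch. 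Combining the two lemmas yields precisely the dichotomy: either $N$ is virtually a torus bundle over the circle, or $\pi_1(N)$ acts acylindrically on $T$.

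The one point that is not pure bookkeeping is verifying that the two black boxes dovetail: that the smallness property established by \cite[Lemma 2.4]{WZ} in the non-exceptional case is exactly the hypothesis required by \cite[Lemma 5.2]{MO}, and, for the sharpness of the dichotomy, that the exceptional manifolds genuinely fail to act acylindrically on $T$. For the latter I would note that when a finite-index subgroup of $\pi_1(N)$ is $\mathbb{Z}^2\rtimes\mathbb{Z}$ the $\mathbb{Z}^2$ factor fixes all of $T$ pointwise, so for any two vertices $x,y$ of the unbounded tree $T$ the set $\{g\in\pi_1(N):gx=x,\ gy=y\}$ is infinite, which is incompatible with acylindricity. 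Once these compatibilities are checked, the proof is the short combination of the two cited lemmas described above.
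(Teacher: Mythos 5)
Your proposal matches the paper's treatment: the paper likewise obtains the result by combining \cite[Lemma 2.4]{WZ} with \cite[Lemma 5.2]{MO} and noting that the proof of the former extends verbatim from closed manifolds to those with toroidal boundary. (Your final paragraph on sharpness of the dichotomy is not needed, since the statement is an inclusive disjunction, but it does no harm.)
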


\begin{proof}[Proof of Theorem \ref{main5}(d)]
We first observe that it suffices to prove the theorem for a compact, orientable, irreducible 3-manifold $N$ with empty or toroidal boundary. Indeed, if $N$ is not irreducible, we let $\widehat{N}$ denote the 3-manifold obtained from $N$ by gluing 3-balls to all spherical components of $\partial N$. Then $\widehat{N}$ is irreducible, and $\pi_1(\widehat{N}) = \pi_1(N)$. Also observe that if $\pi_1(N)$ is finite, then it is $\AH$-accessible by Example \ref{hypacc}, so we may assume that $\pi_1(N)$ is infinite in what follows. We consider the following two cases.

{\bf Case 1.} If there are no JSJ-tori, then it follows from Theorem \ref{jsj} that $N$ is either an atoroidal manifold or is Seifert fibered. If $N$ is atoroidal and not homeomorphic to $S^1 \times D^2$, $T^2 \times I$, or $K^2 \tilde{\times} I$, then it follows from Theorem \ref{hypthm} that $N$ is hyperbolic. Consequently, if $N$ is closed, $\pi_1(N)$ is a hyperbolic group and hence $\AH$-accessible by Example \ref{hypacc}. If $N$ has toroidal boundary, then $\pi_1(N)$ is hyperbolic relative to its peripheral subgroups, which are isomorphic to $\ztwo$ (see \cite{farbrel}). Applying Example \ref{ztwoacc}, we get that $\ztwo$ is stongly $\AH$-accessible. By Theorem \ref{relhypmax}, we conclude that $\pi_1(N)$ is $\AH$-accessible.

If $N$ is Seifert fibered (recall that $S^1 \times D^2$, $T^2 \times I$, or $K^2 \tilde{\times} I$ are considered Seifert fibered JSJ components, as explained in Remark \ref{atorexp}), then by Lemma \ref{normalsubgrp}, $\pi_1(N)$ has an infinite cyclic, normal subgroup. Since $\mathbb{Z}$ is not acylindrically hyperbolic, we can use \cite[Corollary 1.5]{Osi16} to conclude that $\pi_1(N)$ is not acylindrically hyperbolic. Applying Theorem \ref{tricho}, we conclude that either $\pi_1(N)$ is virtually cyclic, or every acylindrical action of $\pi_1(N)$ on a hyperbolic space is elliptic. In the former situation, $\pi_1(N)$ is $\AH$-accessible by Example \ref{hypacc}. In the latter case, $\pi_1(N)$ is obviously $\AH$-accessible.

{\bf Case 2.} We now assume that $N$ admits at least one JSJ torus, i.e., the JSJ decomposition of $N$ is non-trivial. By the Seifert-van Kampen theorem, the JSJ decomposition of $N$ induces a graph of groups decomposition of $\pi_1(N)$ whose vertex groups are the fundamental groups of the JSJ components, and the edge groups are the fundamental groups of the JSJ tori.

By Theorem \ref{manifoldshyprel}, it suffices to prove the strong $\AH$-accessibility of each peripheral subgroup $H_i$ provided by the theorem. Following the notation of Theorem \ref{manifoldshyprel}, if $H_i = \pi_1(S_q)$ or $H_i = \pi_1(T_r)$, then $H_i \simeq \ztwo$. By Example \ref{ztwoacc}, such $H_i$ are strongly $\AH$-accessible.

It thus remains to consider the graph manifolds $M_p$, which have at least one JSJ component. Using Theorem \ref{manifolddicho}, either $M_p$ has a finite-sheeted covering space that is a torus bundle over a circle or the action of $\pi_1(M_p)$ on the Bass-Serre tree associated to the JSJ decomposition of $\pi_1(M_p)$ is acylindrical. We denote this Bass-Serre tree by $\mathcal{T}_p$.

If $M_p$ has a finite-sheeted covering space that is a torus bundle over a circle, then $\pi_1(M_p)$ is virtually polycyclic and is hence not acylindrically hyperbolic. Further, since we have at least one JSJ torus, $\ztwo \leq \pi_1(M_p)$, which means that $\pi_1(M_p)$ is not virtually cyclic.  By Theorem \ref{tricho}, every acylindrical action of $\pi_1(M_p)$ on a hyperbolic space is elliptic, allowing us to conclude that the trivial structure realizes the strong $\AH$-accessibility of $\pi_1(M_p)$.

If the action of $\pi_1(M_p)$ on the Bass-Serre tree $\mathcal{T}_p$ is acylindrical, then we will use Proposition \ref{maxact} applied to the set $\mathcal{A}$ of acylindrical actions of $\pi_1(M_p)$ on hyperbolic spaces in order to prove that $\pi_1(M_p)$ is strongly $\AH$-accessible. Note that the action $\pi_1(M_p)\acts \mathcal{T}_p$ is cocompact and so $\sigma([\pi_1(M_p) \acts \mathcal{T}_p]) \in \mathcal{AH}(\pi_1(M_p))$. Stabilizers of vertices for this action are isomorphic to the vertex groups, which are the fundamental groups of Seifert fibered components. Let $M$ be a Seifert fibered component of $M_p$. Since we have at least one JSJ torus, $\ztwo \leq \pi_1(M)$ and $\pi_1(M)$ is infinite. Arguing as in Case 1 by using Lemma \ref{normalsubgrp}, we can conclude that $\pi_1(M)$ is not acylindrically hyperbolic.  Applying Theorem \ref{tricho} allows us to conclude that the induced action of $\pi_1(M)$ in any acylindrical action of $\pi_1(M_p)$ on hyperbolic spaces is  elliptic. Applying Proposition \ref{maxact}, we get that $\pi_1(M_p)$ is strongly $\AH$-accessible.
\end{proof}


\section{Open problems}\label{sec:OP}


The goal of this section is to discuss several open problems motivated by our work.

We begin with two problems about the preorder on group actions, see Definition \ref{def-poset}. The preorder $\preceq$ induces an order on the set of weak equivalence classes of $G$-actions on metric spaces (of cardinality at most $c$). The resulting poset is a lattice, denoted $\mathcal L(G)$, whose least element is the weak equivalence class consisting of $G$-actions with bounded orbits (see Example \ref{bo-ex}) and a maximal element exists if and only if $G$ is finitely generated, in which case the weak equivalence class consisting of geometric actions is the largest element. The proofs of these results are not difficult and we leave them as exercises for the reader.

\begin{prob}
Does there exist any interesting connection between algebraic or geometric properties of $G$ and algebraic properties of the lattice $\LG$?
\end{prob}

The cardinality of $\LG$ is either finite or at least $2^{\aleph_0}$.  For countable groups, finiteness of $\LG$ is equivalent to $G$ being finite, while for uncountable groups this is not true: the lattice corresponding to $G=Sym(\mathbb N)$ consists of a single element since all actions of this group on metric spaces have bounded orbits, see \cite{Cor}. In particular, the cardinality of $\LG$ is a trivial example of a quasi-isometry invariant of finitely generated groups. We can ask the following.

\begin{prob}
Which algebraic properties of $\LG$ are quasi-isometry invariants for finitely generated groups?
\end{prob}

In contrast, $\GG$ may not be a lattice, but is always a meet-semilattice. Indeed it is easy to see that for every $[A],[B]\in \GG$,  the meet operation is defined by
\begin{equation}\label{meet}
[A]\wedge [B] =[A\cup B].
\end{equation}
On the other hand, let
$$
G=\bigoplus_{n=2}^\infty \mathbb Z_{n^2-1}.
$$
Let $A=\{ f_i\mid i=2,3, \ldots\}$ and $B=\{ g_i\mid i=2,3, \ldots\}$,
where $f_i$ and $g_i$ are elements of $G$ defined by
$$
f_i(n)=\left\{ \begin{array}{ll}
1, & {\rm if}\; n=i,
\\ &\\
0, & {\rm otherwise}
\end{array}\right.
$$
and
$$
g_i(n)=\left\{ \begin{array}{ll}
i, & {\rm if}\; n=i,
\\ &\\
0, & {\rm otherwise}.
\end{array}\right.
$$
It is straightforward to verify that both $A$ and $B$ generate $G$ and there is no upper bound  for the subset $\{[A], [B]\}$ in $\GG$. It is also not difficult to construct a finitely generated group $G$ for which $\GG$ is not a lattice. We leave details to the reader.

\begin{prob}
When is $\GG$ a lattice? Or, more generally, is there any interesting connection between algebraic and geometric properties of $G$ and properties of the poset $\GG$?
\end{prob}

We now turn to problems about hyperbolic structures of groups. Our understanding of posets of quasi-parabolic and general type structures on groups is far from being complete. The ultimate goal would be to obtain a classification of possible isomorphism types of $\H_{qp}(G)$ and $\H_{gt}(G)$ similar to Theorem \ref{lineal} for lineal structures. Achieving this goal does not seem realistic, but there are many particular open questions about $\H_{qp}(G)$ and $\H_{gt}(G)$ which seem more approachable. In particular, we can ask the following.

\begin{prob}
 Does there exist a finitely generated group $G$ such that $\H_{qp} (G)$ contains an uncountable chain?
\end{prob}

Recall that there are uncountable antichains in $\H _{qp}(\mathbb Z\, {\rm wr}\, \mathbb Z)$, see Example \ref{ZwrZ}. Since $\mathcal D$ contains countable chains, so does $\H _{qp}(\mathbb Z\, {\rm wr}\, \mathbb Z)$. However we do not know if $\H _{qp}(\mathbb Z\, {\rm wr}\, \mathbb Z)$ contains uncountable chains.

The next question is motivated by our results showing that the cardinality of $\H^+_\ell (G)$ is always $0$, $1$, or at least continuum, while $\H_\ell (G)$ and $\H_{gt}(G)$ can have arbitrary cardinality.

\begin{prob}\footnote{Partially solved in \cite{Bal18}.}
What values can the cardinality of $\H_{qp} (G)$ take? In particular, does there exist a group $G$ such that $\H_{qp}(G)$ is non-empty and finite?
\end{prob}

It would be also nice to characterize general type actions for which the strategy of constructing smaller general type actions described after Theorem \ref{n-gt-intr} actually works. For example, it should work for actions considered in \cite{BFu}.

\begin{conj}
Suppose that a group $G$ acts on a hyperbolic space and there exist two loxodromic elements of $G$ which are non-equivalent in the sense of \cite{BFu}. Prove that $\H (F_2)$ embeds in $\H (G)$ in this case.
\end{conj}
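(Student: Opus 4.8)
The plan is to imitate, in this much weaker setting, the mechanism behind Theorem~\ref{main2} and Corollary~\ref{SQ}: produce a free subgroup $F\cong F_2$ of $G$ by ping-pong and then realise $\H(F)$ inside $\H(G)$ by an induced-structure map $[Y]\mapsto [X\cup Y]$ for a suitably chosen relative generating set $X$ of $G$ with respect to $F$. Concretely, fix loxodromic elements $g,h\in G$ acting on a hyperbolic space $S$ that are non-equivalent in the sense of \cite{BFu}. Non-equivalence of $g$ and $h^{\pm1}$ forces $\{g^{\pm\infty}\}\cap\{h^{\pm\infty}\}=\emptyset$, and, more importantly, the absence of long fellow-travelling between their quasi-axes means that for all sufficiently large $N$ the standard ping-pong argument \cite{Gro} shows that $F=\langle a,b\rangle$ with $a=g^N$, $b=h^N$ is free of rank $2$ and that the orbit $Fx_0$ is quasi-convex in $S$ (with the orbit map $F\to S$ a quasi-isometric embedding for a suitable word metric on $F$).

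The core of the argument is a combination/collapsing construction. First one collapses, in $S$, the $G$-invariant family $\{gFx_0\mid g\in G\}$ of uniformly quasi-convex subsets to obtain a hyperbolic $G$-space $\bar S$ in which $F$ (and each conjugate) is elliptic; this step requires that the members of this family have pairwise bounded coarse intersections, which is where the non-equivalence hypothesis must be used. Then, for each $[Y]\in\H(F)$, glue to $\bar S$ a copy of $\Gamma(F,Y)$ along each (now bounded) set $gFx_0$, $G$-equivariantly, obtaining a graph $T_Y$ on which $G$ acts. A combination theorem for trees of hyperbolic spaces shows $T_Y$ is hyperbolic, since the gluing loci are uniformly quasi-convex in $\bar S$ and the attached pieces are hyperbolic (as $[Y]\in\H(F)$); passing to the subgraph spanned by a $G$-orbit and applying Proposition~\ref{cobdd} (or the Svarc--Milnor map) yields a hyperbolic structure $f([Y])\in\H(G)$. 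Because $F$ is elliptic in $\bar S$, any path in $T_Y$ joining two points of $Fx_0$ that leaves the glued $\Gamma(F,Y)$-piece contributes only bounded distance, so $F\curvearrowright T_Y$ restricted to $F$ is weakly equivalent to $F\curvearrowright\Gamma(F,Y)$. Hence, exactly as in the proof of Theorem~\ref{AHO}(b) together with Lemma~\ref{e-we}, $f$ is injective, and monotonicity of the gluing in $Y$ makes $f$ order preserving, giving the desired embedding $\H(F_2)\hookrightarrow\H(G)$.

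The main obstacle is promoting the two-element hypothesis to the uniform statement actually needed: the collapse step requires the entire $G$-orbit of the quasi-axis of $F$ — equivalently all $G$-conjugates of the cyclic subgroups generated by reduced words in $a^{\pm1},b^{\pm1}$ — to form a family of uniformly quasi-convex subsets with uniformly bounded coarse intersections (a WWPD- or geometric-separation-type condition for $F$), whereas non-equivalence in \cite{BFu} is assumed only for the single pair $g,h$. A priori $G$ could carry the quasi-axis of $a$ close to that of $b$, or create long fellow-travelling among translates, and then $\bar S$ need not be hyperbolic. Plausible remedies are (i) to replace $S$ by the projection complex of Bestvina--Bromberg--Fujiwara built from the relevant family of axes, in which bounded coarse intersection is built in, and to check that the $F$-orbit there is still quasi-convex with the correct induced structure, or (ii) to first pass, using non-equivalence, to infinitely many pairwise non-equivalent loxodromics and to a subgroup for which the separation condition can be verified directly. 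Secondary technical points — the coboundedness bookkeeping in the last step, distinguishing structures $[Y_1]\ne[Y_2]$ having the same loxodromic elements (cf. the Kapovich-type examples of Section~6.1), and the case where the ambient action $G\curvearrowright S$ is quasi-parabolic rather than of general type — are expected to be routine once the separation issue is resolved.
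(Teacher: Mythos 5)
First, note that the statement you are addressing appears in Section \ref{sec:OP} as an open conjecture: the paper contains no proof of it, so your argument must stand entirely on its own. It does not, and you have in fact located the failure yourself. In the paper's own framework, everything in your plan after the ping-pong step amounts to showing that the free subgroup $F=\langle g^N,h^N\rangle$ is quasi-convex and geometrically separated with respect to the given action --- equivalently, hyperbolically embedded, by the criterion of \cite{Hull} invoked in the proof of Proposition \ref{strongheF2}. Once that is known, Theorem \ref{main2} already produces the order-preserving injection $\H(F_2)\hookrightarrow\H(G)$, and your collapsing-and-regluing construction (together with its combination theorem, the coboundedness bookkeeping via Proposition \ref{cobdd}, and the injectivity argument) is an elaborate detour. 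Conversely, if that separation statement is \emph{not} known, then the collapse of the family $\{gFx_0\mid g\in G\}$ has no reason to produce a hyperbolic space, and the rest of the construction never gets off the ground. The entire content of the conjecture is therefore concentrated in the step you flag as ``the main obstacle'': promoting the non-equivalence of the single pair $g,h$ in the sense of \cite{BFu} into a uniform bounded-coarse-intersection statement for \emph{all} $G$-translates of the axis system of $F$. Neither proposed remedy closes this: the Bestvina--Bromberg--Fujiwara projection complex requires uniformly bounded projections between distinct translates of the axes, which is again a global separation hypothesis on the $G$-orbit and does not follow from non-equivalence of one pair; and passing to infinitely many pairwise non-equivalent loxodromics does not by itself produce such separation either.

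The discussion following Theorem \ref{n-gt-intr} shows why this gap is fatal rather than cosmetic. For the group of Theorem \ref{n-gt-intr} with $n=1$ there exist independent loxodromic elements and a quasi-convex free subgroup of rank $2$, yet every hyperbolic structure obtained by collapsing cosets of its finitely generated subgroups is trivial, precisely because translates of the relevant quasi-convex sets fellow-travel too much. The conjecture's hypothesis is designed to exclude that example, and any proof must convert the qualitative non-equivalence of one pair into quantitative control over the whole orbit of translates; that conversion is exactly what is absent from your proposal. Two smaller remarks: you quietly strengthen the hypothesis from ``$g\not\sim h$'' to ``$g\not\sim h^{\pm1}$'' when deducing disjointness of the limit sets, and this needs justification or an explicit reduction; and your worry about the quasi-parabolic case is vacuous, since two loxodromic elements with a common fixed point at infinity have eventually fellow-travelling quasi-axes and hence cannot be non-equivalent (after possibly inverting one of them), so the hypothesis already forces the action to be of general type.
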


It follows from Theorem \ref{main2} that $\H (F_m)$ and $\H (F_n)$ embed in each other for all $m,n \ge 2$. The same result is true for the posets of acylindrically hyperbolic structures. However the following basic questions remain open.

\begin{prob}
\begin{enumerate}
\item[(a)] Is $\H (F_2)\cong \H (F_3)$?
\item[(b)] Is $\AH (F_2)\cong \AH (F_3)$?
\end{enumerate}
\end{prob}

Theorems \ref{main1} and \ref{inacc} imply that there are at least $4$ isomorphism types of posets $\AH (G)$: the posets of cardinality $1$ and $2$ and two posets of infinite cardinality, one with the largest element and one without it. However we do not know if the number of possible isomorphism types of $\AH (G)$ is infinite for countable groups. Note that the number of possible isomorphism types of $\H(G)$ is infinite by Theorem \ref{n-gt-intr}.

\begin{prob}
How many isomorphism types of posets $\AH(G)$ are there for countable groups $G$? What about finitely generated groups?
\end{prob}

\begin{prob}
Does there exist a group $G$ such that $\AH(G)$ contains a maximal element, but no largest element? How many maximal elements can $\AH(G)$ have?
\end{prob}

In particular, it would be interesting to understand if the acylindrically hyperbolic structures $A$ and $B$ on the group $N$ considered in the proof of Theorem \ref{AH-inacc} are maximal.

\begin{prob}
Are $\H$-accessibility and $\AH$-accessibility invariant under passing to subgroups of finite index and finite extensions?
\end{prob}

It is worth noticing that the structure of $\H (G)$ can be very different for a group $G$ and its subgroup of finite index; even the cardinality on $\H (G)$ can be different. Indeed the group $G=D_\infty ^2$ has exactly $3$ hyperbolic structures while the group $\mathbb Z^2$, which is a subgroup of index $4$ in $G$, has uncountably many, see the proof of Theorem \ref{lineal} and Example \ref{ex-Zn}. In contrast, Theorem \ref{main1} implies that the cardinality of $\AH (G)$ is invariant under passing to finite index subgroups and finite extensions.


\begin{thebibliography}{99}
\bibitem{A}
C. Abbott; Not all finitely generated groups have universal acylindrical actions; \emph{Proc. Amer. Math. Soc.} \textbf{144} (2016), no. 10, 4151-4155.

\bibitem{ABD} C. Abbott, J. Behrstock, M.G. Durham; Largest acylindrical actions and stability in hierarchically hyperbolic groups,  arXiv:1705.06219v1.

\bibitem{AHO}
C. Abbott, D. Hume, D. Osin; Extending group actions on metric spaces; arXiv:1703.03010.

\bibitem{AMS}
Y. Antolin, A. Minasyan, A. Sisto; Commensurating endomorphisms of acylindrically hyperbolic groups and applications; \emph{Groups Geom. Dyn.} \textbf{10} (2016), no. 4, 1149-1210.


\bibitem{AMO}
G. Arzhantseva, A. Minasyan, D. Osin; The SQ-universality and residual properties of relatively hyperbolic
groups; {\it J. Algebra} {\bf 315} (2007), no. 1, 165-177.

\bibitem{AFW} M. Aschenbrenner, S. Friedl, H. Wilton; 3-manifold groups; \emph{EMS Series of Lectures in Mathematics}, European Mathematical Society (EMS), Zurich, 2015.

\bibitem{Bal18}
S. Balasubramanya, Hyperbolic structures on wreath products; arXiv:1809.00776.

\bibitem{BDM}
J. Behrstock, C. Drutu, L. Mosher; Thick metric spaces, relative hyperbolicity and
quasi-isometric rigidity; \emph{Math. Ann.} \textbf{344} (2009), no. 3, 543-595.


\bibitem{BF}
M. Bestvina, M. Feighn; Bounding the complexity of simplicial group actions on trees; \emph{Invent. Math.} \textbf{103} (1991), no. 3, 449-469.

\bibitem{BFu} M. Bestvina, K. Fujiwara, Bounded cohomology of subgroups of mapping class groups, Geom. Topol. 6 (2002), 69-89.

\bibitem{BeFe}
M. Bestvina, M. Feighn; A combination theorem for negatively curved groups; \emph{J. Diff. Geom.} \textbf{35} (1992), 85-101; Addendum and correction to `A combination theorem for negatively curved groups', \emph{J. Diff. Geom.} \textbf{43} (1996), 783-788.


\bibitem{BigWise} H. Bigdely, D. Wise; Quasiconvexity and relatively hyperbolic groups that split, Michigan Math. J., Volume 62, Issue 2 (2013), 387-406.

\bibitem{lB}
A. Le Boudec; Groups acting on trees with almost prescribed local action; \emph{Comment. Math. Helv.} \textbf{91} (2016), no. 2, 253-293.

\bibitem{bowrel} B.H. Bowditch; Relatively hyperbolic groups, \emph{Internat. J. Algebra Comput.} \textbf{22} (2012), no. 3, 1250016.

\bibitem{Bow}
B.H. Bowditch; Tight geodesics in the curve complex; \textit{Invent. Math.}
\textbf{171} (2008), no. 2, 281-300.

\bibitem{bow}B.H. Bowditch; Intersection numbers and the hyperbolicity of the curve complex; \emph{J. Reine Angew. Math.} \textbf{598} (2006), 105-129.



\bibitem{Bra}
N. Brady; Branched coverings of cubical complexes and subgroups of hyperbolic groups; \emph{J. London Math. Soc.} \textbf{60} (1999), no. 2, 461-480.

\bibitem{Bre}
E. Breuillard, T. Gelander; On dense free subgroups of Lie groups;
\emph{J. Algebra} \textbf{261} (2003), no. 2, 448-467.


\bibitem{BH}
M. Bridson, A. Haefliger; Metric spaces of non-positive
curvature; Springer, 1999.

\bibitem{Bri}
P. Brinkmann; Hyperbolic automorphisms of free groups; \emph{GAFA} \textbf{10} (2000), no. 5, 1071-1089.

\bibitem{CCMT}
P.-E. Caprace, Y. Cornulier, N. Monod, R. Tessera; Amenable hyperbolic groups; \emph{J. Eur. Math. Soc. (JEMS)} \textbf{17} (2015), no. 11, 2903-2947.



\bibitem{Ghy}
Y. Carri\`ere, E. Ghys; Relations d'\'equivalence moyennables sur les groupes de Lie; \emph{C. R. Acad. Sci. Paris
S\'er. I} \textbf{300} (1985), no. 19, 677-680.


\bibitem{Cr}
C. Croke; Rigidity for surfaces of nonpositive curvature; \emph{Comment.
Math. Helv.} \textbf{65} (1990), no. 1, 150-169.


\bibitem{CM}
M. Culler, J. Morgan; Group actions on R-trees; \emph{Proc. London Math. Soc. } \textbf{55} (1987), no. 3, 571-604.

\bibitem{Cor}
Y. de Cornulier; Strongly bounded groups and infinite powers of finite group; \emph{Comm. Algebra} \textbf{34} (2006), 2337-2345.

\bibitem{Dah} F. Dahmani; Combination of convergence groups, Geometry and Topology, 7:933–963 (electronic), 2003.

\bibitem{DGO} F. Dahmani, V. Guirardel, D. Osin; Hyperbolically embedded subgroups and rotating families in groups acting on hyperbolic spaces;  \emph{Mem. Amer. Math. Soc.} \textbf{245} (2017), no. 1156, v+152 pp.

\bibitem{Dun85}
M. J. Dunwoody; The accessibility of finitely presented groups; \emph{Invent. Math.} \textbf{81} (1985), no. 3, 449-457.

\bibitem{Dun93}
M. J. Dunwoody; An inaccessible group. In: Geometric group theory, Vol. 1 (Sussex, 1991), 75-78. \emph{London Math. Soc. Lecture Note Series}, vol. \textbf{181}, Cambridge University Press, Cambridge, 1993.

\bibitem{EF}
D.B.A. Epstein, K. Fujiwara; The second bounded cohomology of word-hyperbolic groups;
\emph{Topology} \textbf{36} (1997), no. 6, 1275-1289.



\bibitem{EO}
A. Erschler, D. Osin;
On fundamental groups of asymptotic cones; \emph{Topology} \textbf{44} (2005), no.4, 827-843.

\bibitem{farbrel}  B. Farb; Relatively hyperbolic groups. Geom. Funct. Anal. 8 (1998), no. 5, 810-840.

\bibitem{farb} B. Farb, D. Margalit; A Primer on Mapping class groups; Princeton University Press, Princeton Mathematical series 49.



\bibitem{Gro}
M. Gromov; Hyperbolic groups; \emph{Essays in Group Theory, MSRI
Series}, Vol.8, (S.M. Gersten, ed.), Springer, 1987, 75--263.


\bibitem{GH}
E. Ghys, P. de la Harpe; Sur les groupes hyperboliques d'apr\`es Mikhael Gromov; \emph{Progress in Mathematics} \textbf{83}. Birkh\"auser Boston, Inc., Boston, MA, 1990.


\bibitem{H}
M. Hamann; Group actions on metric spaces: fixed points and free subgroups; \emph{Abh. Math. Semin. Univ. Hambg. } \textbf{87} (2017), no. 2, 245-263.

\bibitem{Hull}
M. Hull; Small cancellation in acylindrically hyperbolic groups; \emph{Groups Geom. Dyn.} \textbf{10} (2016), no. 4, 1077-1119.

\bibitem{HO}
M. Hull, D. Osin; Induced quasi-cocycles on groups with hyperbolically embedded subgroups; \emph{Alg. \& Geom. Topol.}, \textbf{13} (2013) 2635-2665.


\bibitem{JS} W. Jaco, P. Shalen; Seifert fibered spaces in 3-manifolds; Mem. Amer. Math. Soc. 21 (1979), no. 220.


\bibitem{jon} K. Johannson; Homotopy equivalences of 3-manifolds with boundaries; Lecture Notes in Mathematics, vol 761, (1979), springer-Verlag, Berlin.

\bibitem{Kap}
I. Kapovich;  On purely loxodromic actions;  \emph{Monatsh. Math.} \textbf{181} (2016), no. 1, 89-101.

\bibitem{KR}
I. Kapovich, K. Rafi; On hyperbolicity of free splitting and free factor complexes; \emph{Groups Geom. Dyn.} \textbf{8} (2014), no. 2, 391-414.

\bibitem{KK} Sang-hyun Kim, T. Koberda; The geometry of the curve graph of a right-angled Artin group; \emph{Int. J. Algebra Comput.} \textbf{24} (2014), no. 2, 121-169.

\bibitem{KK2} Sang-hyun Kim, T. Koberda; Embedability between right-angled Artin groups, Geom. Topol. 17 (2013),
no. 1, 493–530. MR 3039768.


\bibitem{Kunen}
K. Kunen; Set theory: An introduction to independence proofs, Studies in Logic and
the Foundations of Mathematics; Vol. 102, North-Holland Publishing Co., Amsterdam-New
York, 1980.

\bibitem{LS}
R.C. Lyndon, P.E. Shupp; Combinatorial Group Theory; Springer-Verlag, 1977.

\bibitem{Lin}
P. A. Linnell; On accessibility of groups; \emph{J. Pure Appl. Algebra}  \textbf{30} (1983), no. 1, 39-46.

\bibitem{Man}
J.F. Manning; Actions of certain arithmetic groups on Gromov hyperbolic spaces; \emph{Algebr. Geom. Topol.} \textbf{8} (2008), no. 3, 1371-1402.

\bibitem{Man06}
J.F. Manning; Quasi-actions on trees and property (QFA); With an appendix by N. Monod and B. R\'emy. \emph{J. London Math. Soc.} (2) \textbf{73} (2006), no. 1, 84?108.

\bibitem{MaMin} Howard A. Masur, Yair N. Minsky; Geometry of the complex of curves I: Hyperbolicity; \emph{Invent. Math.} \textbf{138} (1999) Issue 1, 103 - 149.


\bibitem{MO10}
A. Minasyan, D. Osin; Normal automorphisms of relatively hyperbolic groups; \emph{Trans. AMS} {\bf 362} (2010), no. 11, 6079-6103.

\bibitem{MO}
A. Minasyan, D. Osin; Acylindrical hyperbolicity of groups acting on trees;  \emph{Math. Ann.}  {\bf 362} (2015), no. 3-4, 1055-1105.


\bibitem{Ols91}
A.Yu. Ol'shanskii; Periodic quotients of hyperbolic groups, \emph{Mat. Sbornik} {\bf 182} (1991), no. 4, 543-567 (in Russian); English translation in \emph{Math. USSR Sbornik} {\bf 72} (1992), no. 2, 519-541.

\bibitem{Ols95}
A.Yu. Ol'shanskii; SQ-universality of hyperbolic groups;
\textit{Sb. Math.} {\bf 186} (1995), no. 8, 1199-1211.

\bibitem{Ols99}
A. Yu. Ol'shanskii; Distortion functions for subgroups; \textit{Geometric group theory down under (Canberra, 1996)}, 281-291, \textit{de Gruyter}, Berlin, 1999.

\bibitem{Ols16}
A. Yu. Ol'shanskii; Embedding construction based on amalgamations of group relators;
\emph{J. Topol. Anal.} {\bf 8} (2016), no. 1, 1-24.

\bibitem{ESBG} D. Osin, Elementary subgroups of relatively hyperbolic groups and bounded generation, \textit{Internat. J. Algebra Comput.}, {\bf 16} (2006), no. 1, 99--118.

\bibitem{Osi04} D. Osin; Weak relative hyperbolicity and free constructions; \emph{Contemp. Math.} \textbf{360} (2004), 103-111.

\bibitem{Osi06} D. Osin; Relatively hyperbolic groups: Intrinsic geometry, algebraic properties, and algorithmic problems; \emph{Memoirs Amer. Math. Soc.} {\bf 179} (2006), no. 843.

\bibitem{Osi06b}
D. Osin;
Elementary subgroups of relatively hyperbolic groups and bounded generation;
\textit{Internat. J. Algebra Comput.}
{\bf 16} (2006), no. 1, 99-118.

\bibitem{Osidehn} D. Osin; Relative Dehn functions of amalgamated products and HNN extensions; \emph{Contemp. Math.} \textbf{394} (2006), 209-220.

\bibitem{Osi15} D. Osin; On acylindrical hyperbolicity of groups with positive first $\ell^2$-Betti number;  \emph{Bull. Lond. Math. Soc.} \textbf{47} (2015), no. 5, 725-730.

\bibitem{Osi16} D. Osin; Acylindrically hyperbolic groups; \emph{Trans. Amer. Math. Soc.} \textbf{368} (2016), no. 2, 851-888.

\bibitem{Osi18}
D. Osin; Groups acting acylindrically on hyperbolic spaces; Proc. ICM-2018, Rio de Janeiro, Vol. 1, 915--936, World Scientific, 2018;  arXiv:1712.00814.

\bibitem{Ot} J.-P. Otal; Le spectre marqu\'e des longueurs des surfaces \`a courbure n\'egative; (French) \emph{Ann. of Math.} \textbf{131} (1990), no. 1, 151-162.

\bibitem{per02} G. Perelman; The entropy formula for the Ricci flow and its geometric applications; arXiv:math/0211159.

\bibitem{per03a} G. Perelman; Finite extinction time for the solutions to the Ricci flow on certain three-manifolds; arXiv:math/0307245.

\bibitem{per03b} G. Perelman; Ricci flow with surgery on three-manifolds; arXiv:math/0303109.

\bibitem{S}
Z. Sela; Acylindrical accessibility for groups; \emph{Invent. Math.} \textbf{129} (1997), no. 3, 527-565.

\bibitem{Serre}
J.-P. Serre, Trees. Translated from the French original by John Stillwell. Corrected 2nd printing of the 1980 English translation. Springer Monographs in Mathematics. Springer-Verlag, Berlin, 2003.


\bibitem{wan} F. Waldhausen; On the determination of some bounded 3-manifolds by their fundamental groups alone; \emph{Proc. Internat. Sympos. on Topology and its Applications}; Beograd, (1969), 331-332.

\bibitem{WZ} H. Wilton, P. Zalesskii; Profinite properties of graph manifolds; \emph{Geom. Dedicata} \textbf{147} (2010), 29-45.
\end{thebibliography}
\end{document}